\newtheorem{theorem}{Theorem}[section]
\newaliascnt{lemma}{theorem}
\newtheorem{lemma}[lemma]{Lemma}
\newaliascnt{proposition}{theorem}
\newaliascnt{assumption}{theorem}
\newaliascnt{corollary}{theorem}
\newtheorem{corollary}[corollary]{Corollary}
\newaliascnt{definition}{theorem}
\newaliascnt{example}{theorem}
\newaliascnt{remark}{theorem}
\newtheorem{remark}[remark]{Remark}
\newaliascnt{hypothesis}{theorem}
\newaliascnt{property}{theorem}
\let\originalleft\left
\let\originalright\right
\renewcommand{\left}{\mathopen{}\mathclose\bgroup\originalleft}
\renewcommand{\right}{\aftergroup\egroup\originalright}
\def\L{\mathrm{L}}
\def\C{\mathrm{C}}
\def\J{\mathrm{J}}
\def\no{\nonumber}
\def\P{\mathbb{P}}
\def\H{\mathrm{H}}
\def\red{\textcolor{black}}
\newcommand{\Addresses}{{
		\footnote{
			\noindent \textsuperscript{1}Department of Mathematics, Indian Institute of Technology Roorkee (IITR), Roorkee, Uttarakhand 247667, INDIA.\par\nopagebreak
			\noindent  \textit{e-mails:} Arbaz Khan:  \texttt{arbaz@ma.iitr.ac.in,} Sumit Mahajan: \texttt{sumit{\_}m@ma.iitr.ac.in} .
			
			\noindent \textsuperscript{*}Corresponding author.\\
			\textit{Keywords: Burgers-Huxley equation, $hp$-FEM, weakly singular kernel, A priori error analysis, DG time stepping.} 
			
			Mathematics Subject Classification (2020):  45L10, 65N12, 65N15, 65N30.
}}}
\begin{document}
	\title[\lowercase{$hp$}-DGFEM for GBHE with memory]{\lowercase{$hp$}-Discontinuous Galerkin method for the Generalized Burgers-Huxley equation with weakly singular kernels\Addresses}
	\author[S. Mahajan and A. Khan ]{Sumit Mahajan\textsuperscript{1*} and Arbaz Khan\textsuperscript{1}}
	
	\begin{abstract}
In this work, we investigate the $hp$-discontinuous Galerkin (DG) time-stepping method for the generalized Burgers-Huxley equation with memory, a non-linear advection-diffusion-reaction problem featuring weakly singular kernels. We derive a priori error estimates for the semi-discrete scheme using $hp$-DG time-stepping, with explicit dependence on the local mesh size, polynomial degree, and solution regularity, achieving optimal convergence in the energy norm. For the fully-discrete scheme, we initially implement the $hp$-finite element method (conforming), followed by the $hp$-discontinuous Galerkin method. We establish the well-posedness and stability of the fully-discrete scheme and provide corresponding a priori estimates. The effectiveness of the proposed method is demonstrated through numerical validation on a series of test problems.
	\end{abstract}
	\maketitle
\section{Introduction}\setcounter{equation}{0} 
In this work we will study the discretization in both time and space of the \emph{generalized Burgers-Huxley equation (GBHE) with memory}, which is  defined on $\Omega \times (0,T]$ where $\Omega\subset \mathbb{R}^d$ $(1\leq d\leq 3)$ be a bounded Lipschitz domain. The GBHE with memory is defined as:
\begin{equation}\label{5.GBHE}
\left\{\begin{aligned}
\frac{\partial u(x,t)}{\partial t}&+\alpha u(x,t)^{\delta}\sum_{i=1}^d\frac{\partial u(x,t)}{\partial x_i}-\nu\Delta u(x,t)-\eta\int_{0}^{t} K(t-s)\Delta u(x,s) \mathrm{~d}s\\&=\beta u(x,t)(1-u(x,t)^{\delta})(u(x,t)^{\delta}-\gamma)+f(x,t),  \ (x,t)\in\Omega\times(0,T],\\ u(x,t)&=0, \ x \in \partial\Omega ,\ t\in(0,T],\\
u(x,0)&=u_0(x), \ x\in\overline{\Omega},
\end{aligned}\right.
\end{equation}
where $K(t)$ denotes the weakly singular kernel. 

The generalized Burgers-Huxley equation is a non-linear advection-diffusion-reaction problem with broad applications across diverse fields. Examples include modeling traffic flows, nuclear waste disposal, the movement of fish populations, and the dynamics of domain walls in ferroelectric materials under electric fields.

The incorporation of weakly singular kernels is crucial in such models, particularly in scenarios like heat conduction with memory. In traditional formulations, memory effects, such as time delays, are often overlooked when deriving equations like the heat equation using Fourier's law. However, in contexts like population or epidemic models, these memory terms play a significant role. They reflect how current diffusion processes are influenced by both the current and past concentrations of species, thus shaping the system's dynamics (see \cite{SWW} and references therein). The existence, uniqueness, and regularity results for the generalized Burgers-Huxley equation (GBHE) with weakly singular kernels have been established by the authors in \cite{GBHE} and the numerical approximation using conforming finite element scheme have also been discussed. For the $1D$ case of GBHE we refer to \cite{MKH}. Additionally, \cite{GBHE2} discusses numerical approximation methods under minimal regularity assumptions, using conforming, non-conforming, and discontinuous Galerkin methods ($h$-version). For the stationary counterpart of the GBHE, the numerical results using different finite element methods are covered in \cite{KMR}.

Discontinuous Galerkin (DG) methods, initially developed in the early 1970's for neutron transport problems \cite{LRA,RHi}, have since evolved into a versatile numerical technique with a wide range of applications. The DGFEM  was also explored as a time-stepping approach for initial value ordinary differential equations (ODEs) in \cite{RHi}, demonstrating its alignment with specific Gauss–Radau type Runge–Kutta schemes. For parabolic partial differential equations (PDEs), DG time-stepping was introduced by Jamet \cite{Jam}. Subsequent work by Eriksson, Johnson, Thomée, and their  collaborators \cite{EJo,KJC,EJT,SVL,Vth} expanded the application of DGFEM to parabolic problems. Initially, these studies focused on the convergence of the approximate solution to the exact solution by refining the space discretization ($h \rightarrow 0$) and the time discretization ($k \rightarrow 0$). This approach, known as the $h$-version of FEM, achieves algebraic convergence in both space and time.

Solutions to parabolic problems often exhibit piecewise analytic behavior over time and may develop time singularities due to incompatible or discontinuous data. In cases involving weak singularities in the kernel that gradually smooth out, it becomes clear that the $h$-version of the Finite Element Method (FEM) alone may not suffice. To address these challenges, the $p$- and $hp$-versions of FEM, initially developed by Babuška, Suri, and their colleagues in the early 1980s \cite{BSu}, offered a powerful approach by combining mesh refinement with polynomial degree enhancement. This strategy enabled exponential rates of convergence even for non-smooth solutions arising from singularities associated with domain geometry or mixed boundary conditions. Building on this foundation, Schötzau and Schwab extended the $hp$-FEM concept to Discontinuous Galerkin (DG) time stepping in 2000 \cite{SSc1,SSc2}, further advancing the method's application to time-dependent problems.

A substantial body of literature exists on the $hp$-DGFEM for parabolic integrodifferential equations and Volterra integrodifferential equations, particularly focusing on the application of DG time-stepping methods. For example, the $hp$-version of the discontinuous Galerkin (DG) method was initially developed for non-linear initial value problems as presented in \cite{SSc1} and later extended to parabolic problems in \cite{SSc2}. This approach was subsequently applied to Volterra integral and integro-differential equations (VIDEs), including parabolic VIDEs, as detailed in \cite{BSc1,MBM}. Moreover, an $hp$-version of the continuous Galerkin method for non-linear initial value problems was introduced in \cite{Wih}. The $hp$-version of the continuous Petrov–Galerkin (CPG) method was also explored, specifically for linear and non-linear VIDEs with smooth solutions, as outlined in \cite{Yi,YGU}. Recently, the exponential convergence of the $hp$-time-stepping method in space-time has been proven in \cite{PSZ}, and the $hp$-version of the discontinuous Galerkin method for fractional integro-differential equations with weakly singular kernels was established in \cite{CCH}.

To the best of the authors' knowledge, the literature on the $hp$-version for non-linear problems is scarce, with no existing studies specifically addressing the GBHE with weakly singular kernels. The main contributions of this works are as follow:
\begin{itemize} \item \textbf{$hp$-DG Time Stepping:} We introduce a novel $hp$-DG time stepping method for the GBHE with a weakly singular kernel. Our method achieves optimal error estimates for the semi-discrete scheme. While the literature extensively covers $hp$-DG methods for linear integro-differential and Volterra integral equations, approximation results for non-linear models are still limited.
	
	\item \textbf{Fully-Discrete Scheme:} We explore a fully-discrete scheme that uses $hp$-version conforming finite elements for spatial discretization and $hp$-DG time stepping for temporal discretization. The analysis shows optimal error estimates. We also investigate the $h$-$p$-$r$-$q$ variant using Discontinuous Galerkin Finite Element Methods (DGFEM), building on the novel formulations introduced in \cite{GBHE2}. The error estimates are optimal in all parameters except for the $p$ and $r$ space and time polynomial approximations.
	
	\item \textbf{Numerical Validation:} Numerical tests validate the accuracy of our method across various problem dimensions. Section \ref{5.sec4} discusses the application of our algorithm to solving fractional differential equations, demonstrating its effectiveness in different scenarios. \end{itemize}

This work introduces novel a priori error estimates for the $hp$-FEM, showcasing spectral convergence. Furthermore, we present the fully-discrete scheme using $hp$-DG time-stepping, combined with $hp$-DGFEM in space, through a novel formulation similar to the $h$-version of DGFEM discussed in \cite{GBHE2}. This formulation relaxes the conditions on parameter restrictions and contributes to the well-posedness of the problem.

The outline of this work is as follows: In Section \ref{5.sec2}, we first present some preliminaries and notations used throughout the paper, along with the abstract formulation for the GBHE with memory as defined in \eqref{5.GBHE}. The $hp$-DG time-stepping method is proposed in Section \ref{5.ss2}, where we also discuss key lemmas that are useful for deriving a priori error estimates. Theorem \ref{5.th2} provides the error estimates for the semi-discrete scheme, and Corollary \ref{5.co1} presents the convergence estimates for uniform discretization. In Section \ref{5.sec3}, we introduce the fully-discrete FEM scheme \eqref{5.3.2}, starting with the conforming finite elements, followed by the $hp$-DGFEM in Section \ref{5.sub3}. The formulation for DGFEM is discussed in detail in \eqref{5.3.2Dg}. Finally, numerical results are presented in Section \ref{5.sec4} to validate the scheme on various test problems.

\section{$hp$-DG time-stepping }\label{5.sec2}\setcounter{equation}{0}
\subsection{Notations and Weak formulation}
Let $\mathrm{C}_0^{\infty}(\Omega)$ denotes the space of all infinitely differentiable functions with compact support on $\Omega$. For $p\in[2,\infty)$, the Lebesgue spaces are denoted by $\L^p(\Omega)$ and the Sobolev spaces are denoted by $\H^{k}(\Omega)$. The norm in $\L^p(\Omega)$ is denoted by $\H^1_{\L^p}$ and for $p=2$, the inner product in $\L^2(\Omega)$ is denoted by $(\cdot,\cdot)$. Let $\H_0^1(\Omega)$ denotes closure of $\mathrm{C}_0^{\infty}(\Omega)$ in $\|\nabla \cdot\|_{\L^2}$ norm. Using the Poincar\'e inequality, we have $\|\nabla \cdot\|_{\L^2}$ is equivalent to full $\H^1$ norm. As $\Omega$ is a bounded domain, note that $\|\nabla\cdot\|_{\L^2}$  defines a norm on $\H^1_0(\Omega)$ and we have the continuous embedding $\H_0^1(\Omega)\subset\L^2(\Omega)\subset\H^{-1}(\Omega)$, where $\H^{-1}(\Omega)$ is the dual space of $\H_0^1(\Omega)$. Remember that the embedding of $\H_0^1(\Omega)\subset\L^2(\Omega)$ is compact. The duality paring between $\H_0^1(\Omega)$ and $\H^{-1}(\Omega)$ is denoted by $\langle\cdot,\cdot\rangle$.
The abstract formulation of the system (\ref{5.GBHE}) is given by 
\begin{equation}\label{5.AFGBHEM}
	\left\{
	\begin{aligned}
		\frac{\mathrm{~d}u(t)}{\mathrm{~d}t}+\nu Au(t)+\eta ( K*Au)(s)&=-\alpha B(u(t))+\beta c(u(t))+f(t), \ \text{ in }\ \H^{-1}(\Omega),\\
		u(0)&=u_0\in\L^2(\Omega),
	\end{aligned}
	\right.
\end{equation}
for a.e. $t\in[0,T],$ where the operators involved are as follows
\begin{align*}
	Au =-\Delta u, \quad (K*Au(t)) = \int_0^tK(t-s)Au(s) \mathrm{d}s, \quad B(u)= u^{\delta}\sum\limits_{i=1}^d\frac{\partial u}{\partial x_i},\quad \text{ and }\ c(u)=u (1-u^{\delta} )(u^{\delta} -\gamma). 
\end{align*}
For the Dirichlet Laplacian operator $A$, the $D(A) = \H^2(\Omega)\cap \H_0^1(\Omega).$ The non-linear locally Lipschitz operators (see Sec 2.1.2., \cite{GBHE}) are defined as $B(\cdot): \H_0^1(\Omega)\cap \L^{2(\delta+1)}(\Omega) \to\H^{-1}(\Omega)+\L^{\frac{2(\delta+1)}{2\delta+1}}(\Omega)$ and $c(\cdot):\L^{2(\delta+1)}(\Omega)\to\L^{\frac{2(\delta+1)}{2\delta+1}}(\Omega).$

 As the embedding  $\H_0^1(\Omega)\hookrightarrow \L^2(\Omega)$ is compact, $A^{-1}$ exists and is a symmetric compact operator on $\L^2(\Omega)$. Then, by using spectral theorem \cite{RDJL}, there exists a sequence of eigenvalues of $A$, $0<\lambda_1\leq\lambda_2\leq\ldots\to\infty$  and an orthonormal basis $\{w_k\}_{k=1}^{\infty}$ of $\L^2(\Omega)$ consisting of eigenfunctions of $A$ (\cite{RDJL} p. 504). We associate with $A$ the bilinear form $A: \H_0^1(\Omega) \times \H_0^1(\Omega)\rightarrow \mathbb{R}$ defined in the in terms of eigenfunctions as: For $u,v \in \H_0^1(\Omega)$ 
$$ (Au, v) := \sum_{k=1}^{\infty} \lambda_k u_k v_k, \qquad u_k = \langle u, w_k\rangle \text{ and }  v_k = \langle v, w_k\rangle.$$

Moreover, one can define the fractional powers of $A$ and 
$$\|A^{1/2}u\|_{\L^2}^2=|\sum_{j=1}^{\infty}\lambda_j\langle u,w_j\rangle|^2\geq \lambda_1\sum_{j=1}^{\infty}|\langle u,w_j\rangle|^2,$$ which is the Poincar\'e inequality. Note also that $\|u\|_{\H^{s}}=\|A^{s/2}u\|_{\L^2},$ for all $s\in\mathbb{R}$. An integration by parts yields $$(Au,v)=(\nabla u,\nabla v)=:a(u,v), \ \text{ for all } \ v\in\H_0^1(\Omega),$$ so that $A:\H_0^1(\Omega)\to\H^{-1}(\Omega)$. The bilinear form $a(u,v)$ is symmetric, continuous and coercive. That is, we have 
\begin{align*}
	a(u,v) &= a(u,v) \hspace{10mm}  \forall u,v \in \H_0^1(\Omega),\\
	a(u,v) &\leq \|u\|_{\H_0^1}\|v\|_{\H_0^1} \hspace{3mm}  \forall u,v \in \H_0^1(\Omega),\\
	a(u,u) &\geq  \|u\|_{\H_0^1} \hspace{12mm}  \forall u,v \in \H_0^1(\Omega).
\end{align*}

For any $v\in \H_0^1(\Omega)$ we have 
\begin{align*}
	\eta\left\langle\int_{0}^{t} K(t-s)A u(s) \mathrm{~d}s,v\right\rangle &= \left\langle \int_0^tK(t-s) \sum_{k=1}^{\infty} \lambda_k u_k(s) w_k \mathrm{~d}s, \sum_{j=1}^{\infty} v_j w_j\right\rangle\\\nonumber & = \sum_{k=1}^{\infty} \lambda_k \int_{0}^{t}K(t-s) u_k(s) \mathrm{~d}s \left\langle w_k, \sum_{j=1}^{\infty} v_jw_k\right\rangle \\\nonumber & = \sum_{k=1}^{\infty} \int_{0}^{t}K(t-s)\lambda_k u_k(s) v_k \mathrm{~d}s \\\nonumber & =  \int_{0}^{t}K(t-s)\sum_{k=1}^{\infty}\lambda_k u_k(s) v_k \mathrm{~d}s = \int_0^t K(t-s) (Au(s),v) \mathrm{~d}s.
\end{align*}
The kernel function $K(\cdot)$ is  \emph{weakly singular} such  that $K\in\L^1(0,T)$.  Moreover, we assume that the function $K(\cdot)$ is a \emph{positive kernel}, that is,  if for any $T> 0$, we have
\begin{align}\label{5.pk}
	\int_0^T \int_0^t K(t-s)u(s)u(t)ds \mathrm{~d}t\geq 0,\  \ \text{ for all }\ u\in \L^2(0, T).
\end{align}
The regularity results, under the smoothness assumptions on the initial data $u_0\in D(A)$ and external forcing $f\in\H^1(0,T;\L^2(\Omega))\cap \L^2(0,T;\H^1(\Omega))$, ensure that the solution $u \in \L^{\infty}(0,T;\H^2(\Omega))$ and $\partial_tu\in\L^{\infty}(0,T;\L^2(\Omega))\cap\L^2(0,T;\H_0^1(\Omega))$. These results have already been established in \cite[Theorem 2.11]{GBHE}.
\subsection{Time discretization}\label{5.ss2}
To study the $hp$-DG method, consider a partition $\mathcal{M}$ of the time interval $[0, T]$ (which may be non-uniform) given by 
$ 0 = t_0 < t_1 < \cdots < t_N = T. $
Define the sub-intervals $\J_n = (t_{n-1}, t_n]$, the time step sizes $ k_n = t_n - t_{n-1} $, and the maximum time step size $ k = \max\limits_{1 \leq n \leq N} k_n $. Each sub-interval $\J_n$ is associated with a polynomial degree $ p_n \in \mathbb{N}_0 $, which are collectively stored in the vector
\begin{align}\label{5.6}
	\textbf{p}\coloneqq (p_1,p_2, \cdots, p_n).
\end{align} 
We introduce the discontinuous finite element space as follows:
\begin{align}\label{5.7}
	\mathcal{S}(\mathcal{M},\textbf{p})=\{{v}: [0,T] \rightarrow \H_0^1(\Omega) : {v}|_{\J_n} \in \mathbb{P}_{p_n}, 1 \leq n \leq N\},
\end{align}
for $\mathbb{P}_{p_n}$ being the space of all polynomials of degree at most $p_n$ on $\J_n$. We adopt the usual convention that a function $v \in 	\mathcal{S}(\mathcal{M},\textbf{p})$ is left-continuous at each time level $t_n$, as follows
\begin{align*}
v^n = v(t_n)= v(t_n^{-}), \quad v_{+}^n = v(t_n^{+}), \quad [v]^n = v_{+}^n - v^{n}.
\end{align*}
The $hp$-DG formulation $U \in \mathcal{S}(\mathcal{M},\textbf{p}) $ of the GBHE with memory \eqref{5.AFGBHEM} in this context reads: Given $U(t)$ for $0 \leq t \leq t_{n-1},$
find the approximation $U\in \mathbb{P}_{p_n}$ on the next time-steps $\J_n$ by 
\begin{align}\label{5.DGS}
\nonumber	\langle U_{+}^{n-1}, X_{+}^{n-1} \rangle + \int_{\J_n}\Big[\langle U',X\rangle &+ \nu a(U,X) + \alpha(B(U),X) + \eta \left(K*\nabla U,\nabla X \right) \\& - \beta (c(U),X) \Big] \mathrm{~d}t = (U^{n-1},X_{+}^{n-1}) + \int_{\J_n} \langle f,X \rangle \mathrm{~d}t \qquad \forall X\in \mathbb{P}_{p_n}.
\end{align}

This time-stepping procedure begins with an appropriate approximation $U_0$ of $u_0$, and after 
$N$ steps, it provides the approximate solution $U \in \mathcal{S}(\mathcal{M},\textbf{p})$, for $0\leq t \leq t_N$. 

The global formulation for the DG scheme \eqref{5.DGS} will be given by 
 \begin{align}\label{5.GWF}
 \nonumber F_N(U,X) &= \langle U_{+}^{0}, X_{+}^{0} \rangle +\sum_{n=1}^{N-1}\langle [U]^n, X_{+}^n \rangle + \sum_{n=1}^N \int_{t_{n-1}}^{t_n}\Big[\langle U',X\rangle + \nu a(U,X) + \alpha(B(U),X) \\&\quad + \eta \left(K*\nabla U,\nabla X \right)  - \beta (c(U),X) \Big] \mathrm{~d}t.
 \end{align}
By summing over all the time steps, the DG method can equivalently be written as follows: Find $U\in \mathcal{S}(\mathcal{M},\textbf{p})$ such that 
\begin{align}\label{5.DGwf}
	F_N(U,X) = \langle U^0, X_{+}^0\rangle + \int_0^{t_N}\langle f,X \rangle \mathrm{~d}t  \quad \forall ~X\in  \mathcal{S}(\mathcal{M},\textbf{p}).
\end{align}

\begin{remark}\label{5.remark1}
	
	The alternative expression for $F_N$ can be derived by applying integration by parts to equation \eqref{5.GWF} as follows
	\begin{align}\label{5.AGWF}
		\nonumber F_N(U,X) &= \langle U^{N}, X^{N} \rangle - \sum_{n=1}^{N-1}\langle U^n, [X]^n \rangle + \sum_{n=1}^N \int_{t_{n-1}}^{t_n}\Big[-\langle U,X'\rangle + \nu a(U,X) + \alpha(B(U),X) \\&\quad + \eta  \left(K*\nabla U,\nabla X \right) - \beta (c(U),X) \Big] \mathrm{~d}t.
	\end{align}
\end{remark}
\begin{remark}\label{5.rem2}
	For the polynomial approximation, we set $U$ on the interval $J_n$ as follows:
	\begin{equation}
		U = \sum_{j=0}^{p} \tilde{U}_j k_n^{-j}\left(t - t_{n-1}\right)^j,
	\end{equation}
	where $\tilde{U}_j \in \L^2(\Omega)$ are the coefficients to be determined. This form is analogous to that given in \cite[eq. 2.2]{EJT}.
	
	The determination of the coefficients $\tilde{U}_j $ is governed by the following system, where $\delta_{i, j}$ denotes the Kronecker delta function:
	\begin{align}\label{5.sys}
	&\no	\int_{J_n}\bigg(\sum\limits_{j=1}^{p} j \tilde{U}_j k_n^{-j}\left(s-t_{n-1}\right)^{j-1}+\sum\limits_{j=0}^{p} A \tilde{U}_j k_n^{-j}\left(s-t_{n-1}\right)^j+\sum\limits_{j=0}^{p} B ( \tilde{U}_j k_n^{-j}\left(s-t_{n-1}\right)^j)\\&\no+\sum\limits_{j=0}^{p} c\big( \tilde{U}_j k_n^{-j}\left(s-t_{n-1}\right)^j\big) + \sum\limits_{j=0}^{p}\int_0^tK(t-s) A \tilde{U}_j k_n^{-j}\left(s-t_{n-1}\right)^j\bigg) k_n^{-l}\left(s-t_{n-1}\right)^l d s+\tilde{U}_0 \delta_{l, 0} \\&
		=U^{n-1} \delta_{l, 0}+\int_{J_n} f(s) k_n^{-l}\left(s-t_n\right)^l d s \quad \text { for } \quad l=0,1, \ldots, p.
	\end{align}

	If we choose approximated solution to be a constant, that is, $\textbf{p}= (0,\cdots,0)$ in \eqref{5.DGS}, we have $U(t) = \tilde{U}_0 = U^{n}$ in $\J_n.$ Thus, we are solving

	\begin{align*}
		\langle U^{n}, X \rangle + k_n\Big[ \nu a(U^{n},X) + \alpha(B(U^{n}),X) & - \beta (c(U^{n}),X) \Big] \mathrm{~d}t + \eta\int_{J_n} \left(K*\nabla U,\nabla X \right) \mathrm{~d}t \\&= (U^{n-1},X) + \left(\ \int_{\J_n}f(t)\mathrm{~d}t,X \right) .
	\end{align*}
	for memory term to be zero ($\eta = 0$)  it can written as 
	\begin{align*}
		\left(\frac{U^{n}-U^{n-1}}{k_n}, X \right)+ \nu a(U^{n},X) + \alpha(B(U^{n}),X) & - \beta (c(U^{n}),X) \mathrm{~d}t = \frac{1}{k_n}\left( \int_{\J_n}f(t)\mathrm{~d}t,X \right) .
	\end{align*}
	which is the backward Euler scheme, where $f$ is replaced by the average over the $\J_n$. For the non zero memory ($\eta \neq 0$) term, we have 
	$$
	\frac{1}{k_n} \int_{J_n} \left(K*\nabla U,\nabla X \right)\mathrm{~d}t =\frac{1}{k_n} \int_{t_{n-1}}^{t_n}\int_0^t K(t-s)(\nabla U(s),\nabla X)\mathrm{~d}s \mathrm{~d}t = \sum_{j=1}^n \omega_{n j} \nabla U^j k_j,
	$$
	where 
	$$
	\omega_{n j}=\frac{1}{k_n k_j} \int_{t_{n-1}}^{t_n} \int_{t_{j-1}}^{\min \left(t, t_j\right)} K(t-s) \mathrm{~d}s \mathrm{~d}t .
	$$
	Thus, at each time step we are solving system of non linear equation given as
	$$
	U^{n} + k_n\nu AU^n + k_n\alpha B(U^n) -k_n\beta c(U^n)  +\eta\omega_{n n} A U^{n}=U^{n-1}+k_n \int_{\J_n}f(t)-k_n\eta \sum_{j=1}^{n-1} \omega_{n j} A U^j k_j .
	$$
	Note that for an approximating solution of degree $ j$, we are solving a system of  $j$ equations as discussed in \eqref{5.sys}.

\end{remark}
\subsection {Projection operators}
	Let us introduce a projection operator for the analysis of the $h p$-version of DG time-stepping methods, \cite{MBM}. In our setting, it is given as follows: For a continuous function $\widehat{u}$ : $[-1,1] \rightarrow \H_0^1$, we define $\widehat{\Pi}^p \widehat{u}:[-1,1] \rightarrow \mathbb{P}_p$ given by $(p+1)$ conditions as 	
	 \begin{align}\label{5.8}
	 	 \widehat{\Pi}^p \widehat{u}(1)=\widehat{u}(1) \in \H_0^1 \quad and \quad \int_{-1}^1\left\langle\widehat{u}-\widehat{\Pi}^p \widehat{u}, v\right\rangle d t=0 \quad \forall v \in \mathbb{P}_{p-1}.
	 \end{align} 
	Now for the linear case ($p=0$), the first condition $\widehat{\Pi}^p \widehat{u}(1)=\widehat{u}(1)$ is necessary. From \cite[Lemma 3.2]{SSc2}, it follows that $\widehat{\Pi}^p$ is well defined.
	
	Consider the affine linear transinformation, $F_n:[-1,1] \rightarrow \bar{J}_n$ given by $F_n(\hat{t})=\left(k_n \hat{t}+t_n+t_{n-1}\right) / 2$. For any continuous function $u:[0, T] \rightarrow \H_0^1$ we now define the piecewise $hp$-interpolant $\Pi u:[0, T] \rightarrow \mathcal{S}(\mathcal{M}, \mathbf{p})$ by setting
	\begin{align}\label{5.hpp}
	\left.(\Pi u)\right|_{\J_n}=\widehat{\Pi}^{p_n}\left(u \circ F_n\right) \circ F_n^{-1}, \quad 1 \leq n \leq N.
	\end{align}
Further to discuss the approximation properties of $\Pi$, we define
	$$
	\Gamma_{p, q}=\frac{\Gamma(p+1-q)}{\Gamma(p+1+q)},\qquad \text{and} \qquad	\|\phi\|_{\J_n}=\sup _{t \in \J_n}\|\phi(t)\|.
	$$
	The approximation results \cite{SSc1, SSc2} are given as:
	\begin{theorem}\label{5.th1}
	 For $1 \leq n \leq N$, let $u \in \C\left(\left[t_{n-1}, t_n\right] ; \H_0^1\right)$. Then we have the following:\\
	(i) If $u$ is analytic on $\left[t_{n-1}, t_n\right]$ with values in $\H_0^1$, there holds
	$$
	\int_{t_{n-1}}^{t_n}\|\nabla(\Pi u-u)\|_{\L^2}^2 \mathrm{~d}t+k_n\|\Pi u-u\|_{\J_n}^2 \leq C k_n \exp \left(-\tilde{b} p_n\right).
	$$
	(ii) For any $0 \leq q_n \leq p_n$ and $\left.u\right|_{\J_n} \in \H^{q_n+1}\left(\J_n ; \H_0^1\right)$, there holds
	$$
	\int_{t_{n-1}}^{t_n}\|\nabla(\Pi u-u)\|_{\L^2}^2 \mathrm{~d}t \leq \frac{C}{\max \left\{1, p_n^2\right\}}\left(\frac{k_n}{2}\right)^{2 q_n+2} \Gamma_{p_n, q_n} \int_{t_{n-1}}^{t_n}\left\|\nabla u^{\left(q_n+1\right)}\right\|_{\L^2}^2 \mathrm{~d}t.
	$$
	(iii) For any $0 \leq q_n \leq p_n$ and $\left.u\right|_{\J_n} \in \H^{q_n+1}\left(\J_n ; \L^2\right)$, there holds
	\begin{align*}
			\|\Pi u-u\|_{\J_n}^2 &\leq C\left(\frac{k_n}{2}\right)^{2 q_n+1} \Gamma_{p_n, q_n} \int_{t_{n-1}}^{t_n}\left\|u^{\left(q_n+1\right)}\right\|^2_{\L^2} \mathrm{~d}t\\
			&\leq C\left(\frac{k_n}{2}\right)^{2 q_n+1} \Gamma_{p_n, q_n} \int_{t_{n-1}}^{t_n}\left\|\nabla u^{\left(q_n+1\right)}\right\|^2_{\L^2} \mathrm{~d}t,
	\end{align*}

	where the constant $C$ and $\tilde{b} $ is independent of $k_n$ and $p_n.$
	\end{theorem}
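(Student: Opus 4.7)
The plan is to reduce everything to the reference interval $[-1,1]$ via the affine map $F_n$ and to prove the corresponding approximation bounds for $\widehat{\Pi}^p$ there; the three estimates on $\J_n$ then follow by rescaling. Using $\d t = (k_n/2)\d \hat{t}$ together with the chain rule $u^{(q+1)}(t) = (2/k_n)^{q+1}\hat{u}^{(q+1)}(\hat{t})$, the precise powers of $k_n/2$ and the integrals over $\J_n$ appearing in (ii) and (iii) emerge automatically from reference estimates of the form
\begin{equation*}
\int_{-1}^1 \|\nabla(\widehat{\Pi}^p\hat{u}-\hat{u})\|_{\L^2}^2\,\d\hat{t} \leq \frac{C\,\Gamma_{p,q}}{\max\{1,p^2\}}\int_{-1}^1\|\nabla \hat{u}^{(q+1)}\|_{\L^2}^2\,\d\hat{t}.
\end{equation*}

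To prove the reference estimate, I would expand $\hat{u}$ in a basis tailored to the defining conditions of $\widehat{\Pi}^p$. Because the endpoint constraint $\widehat{\Pi}^p\hat{u}(1)=\hat{u}(1)$ prevents $\widehat{\Pi}^p$ from being the $\L^2$-orthogonal projection, the natural basis is the family of integrated Legendre polynomials $\psi_k(\hat{t}) = \int_{-1}^{\hat{t}} L_{k-1}(s)\,\d s$, together with the constant. In this basis, the coefficients of $\hat{u}$ are essentially the Legendre coefficients of $\hat{u}'$, the endpoint condition is encoded by the constant term, and $\widehat{\Pi}^p\hat{u}$ reduces to simple truncation at degree $p$. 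The error $\hat{u}-\widehat{\Pi}^p\hat{u}$ then admits an orthogonal expansion whose $\L^2(-1,1;X)$-norm (for $X=\L^2$ or $X=\H_0^1$) can be bounded in terms of $\|\hat{u}^{(q+1)}\|_{\L^2(-1,1;X)}^2$ by using the $\L^2$-norms of Legendre polynomials, the identity $L_k'(1)=k(k+1)/2$, and the resulting factorial factors, which produce precisely $\Gamma_{p,q}=\Gamma(p+1-q)/\Gamma(p+1+q)$. Specialising to $X=\H_0^1$ and differentiating under the integral using Markov--Bernstein type inequalities (which gain an $O(p^{-1})$ factor per derivative) yields (ii). Part (iii) follows by evaluating the same orthogonal series termwise on $[-1,1]$, using $|L_k(\hat{t})|\leq 1$, and noting that the missing factor of $k_n$ (absent because the norm is not integrated over $\J_n$) reduces the power of $k_n/2$ by one.

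For the analytic case (i), the hypothesis that $\hat{u}$ is $\H_0^1$-valued analytic on $[-1,1]$ implies that it extends holomorphically to a Bernstein ellipse $E_\rho$ with $\rho>1$. This forces geometric decay $\|a_k\|_{\H_0^1}\lesssim \rho^{-k}$ of the coefficients in the integrated Legendre expansion. Substituting this decay directly into the truncation-error series (rather than the Sobolev bound of (ii)) immediately gives $C k_n\exp(-\tilde{b}\,p_n)$, with $\tilde{b}$ proportional to $\log\rho$; the factor of $k_n$ in front reappears through the $F_n$-rescaling in Step 1.

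The main technical obstacle is pinning down the sharp constant $\Gamma_{p_n,q_n}$: this requires careful book-keeping of the normalization constants of Legendre polynomials, the derivative identities at $\hat{t}=1$, and the expansion of $\widehat{\Pi}^p$ in the integrated-Legendre basis, and it is the step where the endpoint condition genuinely complicates the analysis. A secondary subtlety is the $\max\{1,p^2\}^{-1}$ improvement in (ii), which requires using the explicit form of the error expansion (not just a crude derivative estimate) to extract an extra $p^{-1}$ from differentiating each $\psi_k$. Once the reference estimate is in hand, the passage to $\J_n$ via $F_n$ is mechanical, and (ii) versus (iii) differ only in the choice of $X$ and in whether a factor of $k_n$ is integrated out.
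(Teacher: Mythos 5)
Your outline is essentially a reconstruction of the standard Schötzau--Schwab argument, whereas the paper does not prove this theorem at all: its entire proof is a citation of Theorem 3.2 and Remark 3.3 of \cite{MBM} (which in turn rest on \cite{SSc1,SSc2}). So you are not taking a different route from the underlying literature --- the affine rescaling to $[-1,1]$, the characterization of $\widehat{\Pi}^p$ via a (integrated) Legendre expansion in which the projection becomes coefficient truncation plus an endpoint correction, the factorial bookkeeping that produces $\Gamma_{p,q}=\Gamma(p+1-q)/\Gamma(p+1+q)$, and the Bernstein-ellipse coefficient decay for the analytic case (i) are exactly the ingredients of the cited proofs --- but you are supplying an argument where the paper supplies only a pointer. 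That said, as written your text is a plan rather than a proof: the two steps you yourself flag as ``the main technical obstacle'' (the sharp $\Gamma_{p_n,q_n}$ constant and the extra $\max\{1,p_n^2\}^{-1}$ factor distinguishing the $\L^2$-in-time bound (ii) from the sup-in-time bound (iii)) are precisely the nontrivial content of the theorem, and they are asserted rather than carried out. One small additional caution: the claim that $\widehat{\Pi}^p\hat u$ ``reduces to simple truncation at degree $p$'' in the integrated-Legendre basis needs to be stated carefully --- the endpoint constraint shifts the top coefficient (the projection agrees with the $\L^2$-orthogonal projection only up to degree $p-1$, with the degree-$p$ term adjusted to enforce $\widehat{\Pi}^p\hat u(1)=\hat u(1)$), and it is exactly this adjustment that makes the error expansion non-orthogonal and forces the careful bookkeeping you defer. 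If you fill in those computations (or simply cite \cite{SSc1,SSc2,MBM} as the paper does), the argument is sound.
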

\begin{proof}
The subsequent result is directly inferred from Theorem 3.2 and Remark 3.3 in \cite{MBM}.
\end{proof}
To derive the error bounds in $\H^1_{J_n}$, we use the following inverse estimate [\cite{SSc1}, lemma 3.1] as follow
\begin{lemma}\label{5.leminv}
	Let $\phi\in \mathcal{S}(\mathcal{M},\textbf{p}).$ Then for $1\leq n\leq N$, we have 
	\begin{align}\label{5.inv}
		\|\phi\|_{J_n}^2 \leq C \left(\log(p_n+2)\int_{t_{n-1}}^{t_n}\|\phi'\|_{\L^2}(t-t_{n-1}) \mathrm{~d}t + \|\phi^n\|_{\L^2}\right).
	\end{align} 
\end{lemma}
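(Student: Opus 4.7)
The plan is to pull the inequality back to the reference interval $[-1,1]$ via the affine map $F_n$ of \eqref{5.hpp} and then establish a universal estimate for polynomials of degree $\leq p$ taking values in $\L^2(\Omega)$. Setting $\widehat{\phi}(\widehat{s}) = \phi(F_n(\widehat{s}))$, which is again a polynomial of degree at most $p_n$, the change of variables yields $\|\phi\|_{\J_n} = \|\widehat{\phi}\|_{\L^\infty([-1,1];\L^2)}$ and converts the weight $(t-t_{n-1})$ into $\tfrac{k_n}{2}(1+\widehat{s})$. It therefore suffices to show, for any polynomial $\widehat{\phi}$ of degree $\leq p$ valued in $\L^2(\Omega)$,
\begin{equation*}
\|\widehat{\phi}\|_{\L^\infty([-1,1];\L^2)}^2 \;\leq\; C\Bigl(\log(p+2) \int_{-1}^1 \|\widehat{\phi}'(\widehat{s})\|_{\L^2}^2 (1+\widehat{s}) \, \mathrm{d}\widehat{s} + \|\widehat{\phi}(1)\|_{\L^2}^2\Bigr),
\end{equation*}
after which the lemma follows by unwinding the scaling (reading the right-hand side of \eqref{5.inv} with the natural squares on both summands).

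The next step would be, for any $\widehat{t}\in(-1,1]$, to write $\widehat{\phi}(\widehat{t}) = \widehat{\phi}(1) - \int_{\widehat{t}}^{1}\widehat{\phi}'(\widehat{s})\,\mathrm{d}\widehat{s}$ and then apply the triangle inequality together with a weighted Cauchy-Schwarz inequality with weight $1+\widehat{s}$. This produces the pointwise bound
\begin{equation*}
\|\widehat{\phi}(\widehat{t})\|_{\L^2}^2 \;\leq\; 2\|\widehat{\phi}(1)\|_{\L^2}^2 + 2\log\!\Bigl(\frac{2}{1+\widehat{t}}\Bigr)\int_{-1}^{1}\|\widehat{\phi}'(\widehat{s})\|_{\L^2}^2(1+\widehat{s})\,\mathrm{d}\widehat{s}.
\end{equation*}
On the ``safe'' region $\{1+\widehat{t}\geq c(p+2)^{-2}\}$ the logarithmic prefactor is $O(\log(p+2))$, so on this region the target inequality is already visible.

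The main obstacle is the residual sliver $\widehat{t}\in[-1,\,-1+c(p+2)^{-2})$ adjacent to the singular endpoint, where the naive Cauchy-Schwarz bound produces a divergent logarithm. Overcoming it is where the polynomial structure of $\widehat{\phi}$ must enter decisively. My plan is to invoke a Markov-type inverse inequality asserting that every scalar polynomial $q$ of degree $\leq p$ on $[-1,1]$ satisfies
\begin{equation*}
\sup_{\widehat{t}\in[-1,\,-1+c(p+2)^{-2}]}|q(\widehat{t})| \;\leq\; C\sup_{\widehat{t}\in[-1+c(p+2)^{-2},\,1]}|q(\widehat{t})|,
\end{equation*}
with $C$ independent of $p$ — a classical consequence of Bernstein–Markov theory, equivalently of the $p^{-2}$-clustering of Gauss–Lobatto nodes near the endpoints. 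Applied pointwise in the spatial $\L^2(\Omega)$-variable to $\widehat{\phi}$, this transfers the supremum over the sliver onto the safe region where the previous paragraph already furnishes a uniform $O(\log(p+2))$ bound, closing the estimate. Finally, reversing the affine scaling and tracking the resulting factors of $k_n$ converts the weight $(1+\widehat{s})$ back into $(t-t_{n-1})$ and yields \eqref{5.inv} on $\J_n$; the remaining work is bookkeeping of constants.
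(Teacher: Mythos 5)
Your argument is essentially correct, and it is worth noting that the paper itself offers no proof of this lemma: it is quoted verbatim from Sch\"otzau--Schwab, whose original argument expands $\widehat{\phi}$ in Legendre polynomials on $[-1,1]$ and extracts the $\log(p_n+2)$ factor from a weighted Cauchy--Schwarz on the coefficient sum $\sum_i|\widehat{\phi}_i|$. Your route --- fundamental theorem of calculus from the right endpoint, weighted Cauchy--Schwarz producing the factor $\log\bigl(2/(1+\widehat{t})\bigr)$, and a polynomial inverse estimate to handle the sliver $1+\widehat{t}<c(p+2)^{-2}$ where that logarithm blows up --- is a genuinely different and arguably more elementary decomposition. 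Your scaling bookkeeping is also right: the weight $(t-t_{n-1})$ makes $\int_{J_n}\|\phi'\|_{\L^2}^2(t-t_{n-1})\,\mathrm{d}t$ invariant under the affine pullback, so no powers of $k_n$ survive. You are also correct to read \eqref{5.inv} with squares on both right-hand terms; as printed the inequality is dimensionally inconsistent, and the squared form is what the cited source proves and what the paper actually uses downstream.

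Two points deserve tightening. First, the sliver estimate is cleanest not as a Remez-type statement but directly from Markov's inequality: choosing $c<1$, for $\widehat{t}$ in the sliver one has $|q(\widehat{t})|\le|q(-1+c(p+2)^{-2})|+c(p+2)^{-2}\,p^2\|q\|_{\infty,[-1,1]}$, and the last term absorbs into the left after taking the supremum, since $c(p+2)^{-2}p^2\le c<1$. Second, and this is the only genuine gap, "applied pointwise in the spatial $\L^2(\Omega)$-variable" does not literally work: bounding $\sup_{\text{sliver}}|\widehat{\phi}(\cdot)(x)|$ by $\sup_{\text{safe}}|\widehat{\phi}(\cdot)(x)|$ for each $x$ and then integrating in $x$ does not control $\sup_{\text{sliver}}\|\widehat{\phi}\|_{\L^2}$ by $\sup_{\text{safe}}\|\widehat{\phi}\|_{\L^2}$, because the supremum in $\widehat{t}$ and the integral in $x$ do not commute. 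The fix is easy: either apply the scalar inverse estimate to $\widehat{t}\mapsto\langle\widehat{\phi}(\widehat{t}),w\rangle$ for fixed $w\in\L^2(\Omega)$ and take the supremum over $\|w\|_{\L^2}\le1$ by duality, or apply it to the scalar polynomial $\widehat{t}\mapsto\|\widehat{\phi}(\widehat{t})\|_{\L^2}^2$, which has degree at most $2p_n$ (the constant in the Markov step is still uniform since the sliver width is $O(p^{-2})$). With either repair the proof closes.
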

\subsection{Error Estimates}
We will now deduce the error estimate associated with the  $hp$-DG approximation. Consider $U$ as the approximate solution defined in equation \eqref{5.DGwf}, and let $u : [0,T]\rightarrow \H_0^1$ be a continuous function. For the analysis of the error, we will employ the following lemma for kernel and the discrete version of Gr\"onwall's inequality.
The following lemmas are useful for the further analysis:
\begin{lemma}\cite{MTM}\label{5.l11}
	Let $K \in \L^1(0,T)$ and $\phi\in \L^2(0,T)$ for some $T>0$. Then
	$$\left(\int_0^{T}\left(\int_0^sK(s-\tau)\phi(\tau)\mathrm{~d}\tau\right)^2\mathrm{~d}s\right)^{\frac{1}{2}}\leq \left(\int_0^{T}|K(s)|\mathrm{~d}s\right)\left(\int_0^{T}\phi^2(s)\mathrm{~d}s\right)^\frac{1}{2}.$$
\end{lemma}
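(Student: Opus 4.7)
The plan is to recognize the estimate as a special case of Young's convolution inequality $\|K*\phi\|_{\L^2}\leq \|K\|_{\L^1}\|\phi\|_{\L^2}$, adapted to the causal convolution on $(0,T)$, and to prove it by a Cauchy--Schwarz splitting of $|K|$ followed by Fubini.

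First I would pass to absolute values under the inner integral, since the squared quantity satisfies $\bigl|\int_0^s K(s-\tau)\phi(\tau)\,\d\tau\bigr|^2\leq \bigl(\int_0^s |K(s-\tau)||\phi(\tau)|\,\d\tau\bigr)^2$. Then I would factor $|K(s-\tau)|=|K(s-\tau)|^{1/2}\cdot|K(s-\tau)|^{1/2}$ and apply the Cauchy--Schwarz inequality in $\tau$ to obtain
\begin{equation*}
\left(\int_0^s |K(s-\tau)||\phi(\tau)|\,\d\tau\right)^2 \leq \left(\int_0^s |K(s-\tau)|\,\d\tau\right)\left(\int_0^s |K(s-\tau)||\phi(\tau)|^2\,\d\tau\right).
\end{equation*}
The first factor on the right is bounded by $\|K\|_{\L^1(0,T)}$ after the change of variables $u=s-\tau$.

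Next I would integrate over $s\in(0,T)$ and interchange the order of integration (using Fubini, which is justified since the integrand is nonnegative and $K\in\L^1$, $\phi\in\L^2$). Switching to the region $\{0\leq\tau\leq s\leq T\}$ and again using $u=s-\tau$ gives
\begin{equation*}
\int_0^T\!\!\int_0^s |K(s-\tau)||\phi(\tau)|^2\,\d\tau\,\d s = \int_0^T |\phi(\tau)|^2\int_0^{T-\tau}|K(u)|\,\d u\,\d\tau \leq \|K\|_{\L^1(0,T)}\int_0^T |\phi(\tau)|^2\,\d\tau.
\end{equation*}
Combining these bounds yields $\int_0^T\bigl(\int_0^s K(s-\tau)\phi(\tau)\,\d\tau\bigr)^2\d s\leq \|K\|_{\L^1}^2\|\phi\|_{\L^2}^2$, and taking square roots gives the claim.

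There is no real obstacle here; the only point that requires mild care is the bookkeeping with the causal truncation (the inner integral runs up to $s$, not $T$), which is handled cleanly by the $u=s-\tau$ substitution and by noting that $\int_0^{T-\tau}|K(u)|\d u\leq\|K\|_{\L^1(0,T)}$. No sign assumption on $K$ is needed since everything is done with $|K|$; the weakly singular nature of $K$ is irrelevant as long as $K\in\L^1(0,T)$.
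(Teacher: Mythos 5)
Your proof is correct. Note that the paper does not actually prove this lemma — it simply cites it from the reference \cite{MTM} — so there is no in-paper argument to compare against. What you have written is the standard proof of the causal Young convolution inequality $\|K*\phi\|_{\L^2(0,T)}\leq\|K\|_{\L^1(0,T)}\|\phi\|_{\L^2(0,T)}$: the splitting $|K|=|K|^{1/2}\cdot|K|^{1/2}$, Cauchy--Schwarz in $\tau$, the bound $\int_0^s|K(s-\tau)|\,\mathrm{d}\tau\leq\|K\|_{\L^1(0,T)}$, and Fubini on the triangle $\{0\leq\tau\leq s\leq T\}$ are all correctly executed, and your closing remarks (no sign condition on $K$, the weak singularity being irrelevant once $K\in\L^1$) are accurate. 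This is a complete and self-contained justification of the cited result.
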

In the particular case for $K(t) = \frac{1}{t^{\epsilon-1}}$ for $0\leq \epsilon\leq 1$, we have the subsequent lemma.
\begin{lemma}[\cite{SVL}, lemma 6.3]\label{5.l1}
	Let $\phi\in \L^2(0,T)$ and $\alpha\in(0,1)$, then
	$$\int_0^{T}\left(\int_0^s(t-s)^{\alpha-1}\phi(\tau)\mathrm{~d}\tau\right)^2\mathrm{~d}s\leq \frac{T^{\alpha}}{\alpha} \int_0^{T}(T-s)^{\alpha-1}\int_0^{s}\phi^2(\gamma)\mathrm{~d}\gamma\mathrm{~d}s.$$
\end{lemma}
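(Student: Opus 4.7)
The plan is to combine a weighted Cauchy--Schwarz inequality on the inner convolution with an application of Fubini's theorem to recast the resulting double integral in the form stated on the right-hand side. The key observation underpinning the proof is that the double integrals $\int_0^T\int_0^s(s-\tau)^{\alpha-1}\phi^2(\tau)\,d\tau\,ds$ and $\int_0^T(T-s)^{\alpha-1}\int_0^s\phi^2(\gamma)\,d\gamma\,ds$ are in fact equal, both being alternative Fubini-rewritings of $\tfrac{1}{\alpha}\int_0^T(T-\tau)^\alpha\phi^2(\tau)\,d\tau$. Once this symmetry is recognized, the rest is a routine estimate.

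Concretely, I would proceed as follows. First, split the kernel symmetrically as $(s-\tau)^{\alpha-1}=(s-\tau)^{(\alpha-1)/2}\cdot(s-\tau)^{(\alpha-1)/2}$ and apply Cauchy--Schwarz to the inner integral:
\[
\left(\int_0^s(s-\tau)^{\alpha-1}\phi(\tau)\,d\tau\right)^{2}\le\left(\int_0^s(s-\tau)^{\alpha-1}\,d\tau\right)\left(\int_0^s(s-\tau)^{\alpha-1}\phi^2(\tau)\,d\tau\right).
\]
Second, evaluate the first factor explicitly as $s^{\alpha}/\alpha$, and use $s\le T$ together with $\alpha>0$ to bound it by $T^{\alpha}/\alpha$. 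Third, integrate the resulting inequality over $s\in[0,T]$ and pull out the constant to obtain
\[
\int_0^T\left(\int_0^s(s-\tau)^{\alpha-1}\phi(\tau)\,d\tau\right)^{2}ds\le\frac{T^{\alpha}}{\alpha}\int_0^T\int_0^s(s-\tau)^{\alpha-1}\phi^2(\tau)\,d\tau\,ds.
\]
Fourth, apply Fubini's theorem to the remaining double integral: exchanging the order of integration on the left yields $\tfrac{1}{\alpha}\int_0^T(T-\tau)^\alpha\phi^2(\tau)\,d\tau$, and applying Fubini once more in the reverse direction (using $(T-\tau)^\alpha=\alpha\int_\tau^T(T-s)^{\alpha-1}ds$) rewrites this as $\int_0^T(T-s)^{\alpha-1}\int_0^s\phi^2(\gamma)\,d\gamma\,ds$, giving the claimed inequality.

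The only real obstacle is selecting the correct Cauchy--Schwarz splitting: the symmetric choice $(s-\tau)^{(\alpha-1)/2}\cdot(s-\tau)^{(\alpha-1)/2}$ is essential, because the naive splitting $(s-\tau)^{\alpha-1}\cdot 1$ would require $(s-\tau)^{2(\alpha-1)}$ to be locally integrable, which fails for $\alpha\in(0,1/2]$. With the symmetric weighting this issue is avoided for the whole range $\alpha\in(0,1)$, since $(s-\tau)^{\alpha-1}$ is integrable precisely because $\alpha>0$. The subsequent Fubini manipulation is straightforward once one notices that both sides of the identity for the double integral reduce to the same single-variable expression involving $(T-\tau)^\alpha\phi^2(\tau)$.
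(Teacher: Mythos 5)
Your proof is correct. The paper itself gives no argument for this lemma --- it is simply quoted from the cited reference --- so there is nothing to compare against, but your route (the symmetric Cauchy--Schwarz splitting $(s-\tau)^{\alpha-1}=(s-\tau)^{(\alpha-1)/2}(s-\tau)^{(\alpha-1)/2}$, the bound $\int_0^s(s-\tau)^{\alpha-1}\,d\tau=s^\alpha/\alpha\le T^\alpha/\alpha$, and the Fubini identity showing both double integrals equal $\tfrac{1}{\alpha}\int_0^T(T-\tau)^\alpha\phi^2(\tau)\,d\tau$) is exactly the standard proof of this estimate, and your remark about why the naive splitting fails for $\alpha\le 1/2$ is apt. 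Note also that you have implicitly (and correctly) read the integrand in the statement as $(s-\tau)^{\alpha-1}$; the $(t-s)^{\alpha-1}$ appearing in the displayed formula is a typographical slip in the paper.
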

\begin{lemma}[\cite{SVL}, lemma 6.4]\label{5.l2}
Let $\{a_j\}_{j=1}^{N}$ and $\{b_j\}_{j=1}^{N}$  be a sequence of non-negative numbers with $0\leq b_1\leq b_2\leq \cdots \leq b_N.$ Assume that there exists a constant $K\geq 0,$ such that
$$
a_n\leq b_n + K\sum_{j=1}^n a_j\int_{t_{j-1}}^{t_j}(t_n-t)^{\alpha-1} \mathrm{~d}t \quad \text{for}\quad 1\leq n\leq N\quad \text{and}\quad \alpha \in (0,1).
$$
Assume further that $\kappa = \frac{Kk^{\alpha}}{\alpha}<1$. Then for $n= 1, \cdots, N, $ we have $a_n\leq Cb_n$, where $C$ is a constant that depends on $K, T, \alpha,$ and $\kappa.$
\end{lemma}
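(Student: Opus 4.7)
The plan is to split the hypothesis into its diagonal and strictly lower-triangular parts, reduce the latter to a continuous integral inequality, and then apply a standard fractional Gronwall bound. First, compute the diagonal integral:
$$K\int_{t_{n-1}}^{t_n}(t_n-t)^{\alpha-1}\mathrm{~d}t = \frac{K k_n^{\alpha}}{\alpha} \leq \frac{K k^{\alpha}}{\alpha} = \kappa < 1.$$
Thus the $j=n$ contribution on the right of the hypothesis is at most $\kappa a_n$; isolating it and moving it to the left yields
$$(1-\kappa)\, a_n \leq b_n + K\sum_{j=1}^{n-1} a_j \int_{t_{j-1}}^{t_j}(t_n-t)^{\alpha-1}\mathrm{~d}t,$$
which is now genuinely strictly lower-triangular in $n$ and thus amenable to iteration.

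Next, introduce the piecewise constant step function $\phi\colon[0,T]\to[0,\infty)$ defined by $\phi|_{\J_j}=a_j$; then $a_j\int_{t_{j-1}}^{t_j}(t_n-t)^{\alpha-1}\mathrm{~d}t = \int_{\J_j}(t_n-s)^{\alpha-1}\phi(s)\mathrm{~d}s$, so the previous display rewrites as
$$\phi(t_n) \leq \frac{b_n}{1-\kappa} + \frac{K}{1-\kappa}\int_0^{t_n}(t_n-s)^{\alpha-1}\phi(s)\mathrm{~d}s.$$
Applying the classical weakly singular Gronwall inequality (Henry's inequality, yielding Mittag--Leffler type bounds) and invoking the monotonicity of $\{b_n\}$ then produces $\phi(t_n) \leq C\, b_n$ with $C = C(K,T,\alpha,\kappa)$, which is the stated conclusion.

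The main obstacle is that a naive induction at the second step fails: bounding $a_j \leq C_{n-1} b_n$ for $j < n$ gives a recursion $C_n = (1 + C_{n-1} K T^{\alpha}/\alpha)/(1-\kappa)$, whose solution grows geometrically in $n$ and therefore blows up as the mesh is refined. The remedy is to keep the singular weight inside the iteration and exploit the Beta-function identity
$$\int_s^t (t-u)^{\alpha-1}(u-s)^{\alpha-1}\mathrm{~d}u = B(\alpha,\alpha)(t-s)^{2\alpha-1},$$
so that the $m$-fold iterate carries a factor $\Gamma(m\alpha+1)^{-1}$; summing in $m$ gives the Mittag--Leffler function $E_\alpha(\widetilde{K}\, T^{\alpha}\Gamma(\alpha))$ with $\widetilde{K} = K/(1-\kappa)$, which is finite and furnishes the uniform constant $C$.
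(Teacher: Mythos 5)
The paper does not prove this lemma at all; it is quoted verbatim as \cite{SVL}, Lemma~6.4, so there is no in-paper argument to compare against. Your proposal is a correct reconstruction of the standard proof of the discrete weakly singular Gr\"onwall inequality, and the two key ideas are exactly right: (i) the diagonal term $j=n$ contributes at most $\kappa a_n$ and is absorbed into the left side using $\kappa<1$, which is precisely where the hypothesis $Kk^{\alpha}/\alpha<1$ is consumed; (ii) a naive induction on $n$ produces a constant that grows geometrically in the number of steps, and the correct remedy is to iterate the full convolution inequality, using the Beta-function identity so that the $m$-fold iterated kernel decays like $\Gamma(m\alpha+1)^{-1}$ and the series sums to a Mittag--Leffler value depending only on $K$, $T$, $\alpha$, $\kappa$. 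The monotonicity $b_1\le\cdots\le b_N$ is used, as you say, to replace $b_j$ by $b_n$ throughout the iteration (equivalently, to make the step function $B(t)=b_n$ on $\J_n$ nondecreasing so that Henry's inequality returns $\phi(t)\le CB(t)$ rather than a convolution of $B$).

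One step deserves tightening. Henry's inequality requires the integral inequality for \emph{all} $t\in(0,T]$, whereas your reduction establishes it only at the nodes $t=t_n$. The extension is routine but should be stated: for $t\in\J_n$ one has $\phi(t)=a_n$, and for $j\le n-1$ and $s\in\J_j$ the bound $(t_n-s)^{\alpha-1}\le(t-s)^{\alpha-1}$ holds since $s\le t_{n-1}\le t\le t_n$ and $\alpha-1<0$; hence
\begin{align}
\phi(t)\;\le\;\frac{1}{1-\kappa}\left(B(t)+K\int_0^{t_{n-1}}(t-s)^{\alpha-1}\phi(s)\,\mathrm{d}s\right)\;\le\;\frac{1}{1-\kappa}\left(B(t)+K\int_0^{t}(t-s)^{\alpha-1}\phi(s)\,\mathrm{d}s\right),
\end{align}
the last step by nonnegativity of the integrand. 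Note that this pointwise extension only works because the diagonal has already been absorbed: for the $j=n$ term the analogous comparison fails when $t$ is close to $t_{n-1}$, since $(t-t_{n-1})^{\alpha}/\alpha$ can be much smaller than $k_n^{\alpha}/\alpha$. So your ordering of the two steps is not merely convenient but necessary. With that sentence added, the argument is complete. (Alternatively, one can avoid the continuous reformulation entirely and iterate the discrete inequality directly, bounding the iterated sums $\sum_{j_1<n}\omega_{nj_1}\sum_{j_2<j_1}\omega_{j_1j_2}\cdots$ by the corresponding iterated integrals; this is the form the argument takes in the cited source.)
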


For the error $U-u$, we decompose it into two terms as:
\begin{align}\label{5.10}
	\underbrace{U-u}_{e} = \underbrace{(U - \Pi u)}_{\Upsilon} + \underbrace{(\Pi u- u)}_{\rho},
\end{align}
where $\Pi$ is the $hp$-version interpolation operator defined in \eqref{5.hpp}. It's important to note that we already possess estimates for $\rho$ as provided in Theorem \eqref{5.th1}. Hence, our focus now shifts to estimating the first term $\Upsilon \in \mathcal{S}(\mathcal{M},\textbf{p})$. It's worth observing that due to the construction of the interpolant $\Pi$, we have $\rho^n = 0$ for all $1 \leq n \leq N$.
\begin{align}
\nonumber F_N(U,X)- F_N(u,X)&= \langle (U-u)_{+}^{0}, X_{+}^{0} \rangle +\sum_{n=1}^{N-1}\langle [U-u]^n, X_{+}^n \rangle + \sum_{n=1}^N \int_{t_{n-1}}^{t_n}\Big[\langle (U-u)',X\rangle\\\nonumber&\quad  + \nu a(U-u,X) + \alpha(B(U)-B(u),X) \\&\quad + \eta \left(K*\nabla (U-u),\nabla X \right)  - \beta (c(U)-c(u),X) \Big] \mathrm{~d}t.
\end{align}
Using the decomposition \eqref{5.10} and the alternative form in Remark \ref{5.AGWF} for $\rho$, we have 
\begin{align}\label{5..31}
	\nonumber&\langle \Upsilon_{+}^{0}, X_{+}^{0} \rangle +\sum_{n=1}^{N-1}\langle [\Upsilon]^n, X_{+}^n \rangle + \sum_{n=1}^N \int_{t_{n-1}}^{t_n}\Big[\langle \Upsilon',X\rangle + \nu a(\Upsilon,X)  + \eta \left(K*\nabla \Upsilon,\nabla X \right)   \Big] \mathrm{~d}t= \langle U^0-u_0, X_+^{0} \rangle  \\&- \int_{0}^{t_n}\Big[\nu a(\rho,X) + \eta \left(K*\nabla \rho,\nabla X \right) + \alpha(B(U)-B(u),X) - \beta (c(U)-c(u),X) \Big] \mathrm{~d}t,
\end{align}
 where, by utilizing the definition of the interpolant $\Pi$, we obtain $\int_{t_{n-1}}^{t_n} \langle \rho, X' \rangle \mathrm{~d}t = 0$ (it's important to note that for $p_n = 0$, we have $X' = 0$).
 \begin{lemma}\label{5.l3}
 	For $1\leq n\leq N,$ we have
 \begin{align}
 	\nonumber & \|\Upsilon^n\|_{\L^2} ^2 + \nu \int_{0}^{t_n}\|\nabla\Upsilon\|^2_{\L^2}\mathrm{~d}t + \frac{\beta}{4}\int_0^{t_n}\|U^{\delta} \Upsilon\|_{\L^2}^2 \mathrm{~d}t+ \frac{\beta}{4} \int_0^{t_n}\|(\Pi u)^{\delta} \Upsilon\|_{\L^2}^2 \mathrm{~d}t \\\nonumber&\leq \|U^0-u_0\|_{\L^2} + 2\left(\nu+ \frac{C_K\eta^2}{\nu}\right) \int_{0}^{t_n} \|\nabla\rho\|^2_{\L^2} \mathrm{~d}t 
+ \int_0^{t_n}C(\alpha,\nu)\left(\|U\|^{\frac{8\delta}{4-d}}_{\L^{4\delta}}+\|\Pi U\|^{\frac{8\delta}{4-d}}_{\L^{4\delta}}\right)\|\Upsilon\|_{\L^2}^2\mathrm{~d}t\\&\quad + C(\beta,\gamma, \delta) \int_{0}^{t_n}\|\Upsilon\|_{\L^2} \mathrm{~d}t -  2\alpha\int_0^{t_n} \big(B(\Pi u)-B(u),\Upsilon\big) \mathrm{~d}t +2\beta \int_0^{t_n}\big(c(\Pi u)-c(u),\Upsilon\big) \mathrm{~d}t.
 \end{align}
 \end{lemma}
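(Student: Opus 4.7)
I would test the step-$n$ analogue of equation \eqref{5..31} with $X=\Upsilon$ and reorganize the resulting identity so that the four coercive quantities $\tfrac12\|\Upsilon^n\|_{\L^2}^2$, $\nu\!\int_0^{t_n}\!\|\nabla\Upsilon\|_{\L^2}^2\d t$, $\tfrac\beta4\!\int_0^{t_n}\!\|U^\delta\Upsilon\|_{\L^2}^2\d t$ and $\tfrac\beta4\!\int_0^{t_n}\!\|(\Pi u)^\delta\Upsilon\|_{\L^2}^2\d t$ emerge on the left, bounding every other contribution by Young's inequality and multiplying by $2$ at the end. The time-derivative plus jump contributions produce the standard DG telescoping identity
$$
\sum_{j=1}^n\int_{t_{j-1}}^{t_j}\!\langle\Upsilon',\Upsilon\rangle\,\d t+\sum_{j=1}^{n-1}\langle[\Upsilon]^j,\Upsilon_+^j\rangle+\langle\Upsilon_+^0,\Upsilon_+^0\rangle=\tfrac12\|\Upsilon^n\|_{\L^2}^2+\tfrac12\|\Upsilon_+^0\|_{\L^2}^2+\tfrac12\sum_{j=1}^{n-1}\|[\Upsilon]^j\|_{\L^2}^2,
$$
while the memory contribution $\eta\!\int_0^{t_n}(K*\nabla\Upsilon,\nabla\Upsilon)\d t\ge0$ by the positivity assumption \eqref{5.pk} applied componentwise to $\nabla\Upsilon$, and is simply discarded. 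The initial discrepancy $\langle U^0-u_0,\Upsilon_+^0\rangle$ is handled by Cauchy--Schwarz and Young, absorbing $\|\Upsilon_+^0\|^2$. For the two linear interpolation-error pieces $\nu a(\rho,\Upsilon)$ and $\eta(K*\nabla\rho,\nabla\Upsilon)$ I would apply Young's inequality with parameter calibrated to $\nu$; for the latter, Lemma~\ref{5.l11} produces the factor $C_K=\|K\|_{\L^1}^2$, yielding the coefficient $2(\nu+C_K\eta^2/\nu)$ displayed on the right.

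For the convective nonlinearity I would split $B(U)-B(u)=(B(U)-B(\Pi u))+(B(\Pi u)-B(u))$, retaining the second summand in the form displayed in the statement. For the first summand I would use $B(w)=(\delta+1)^{-1}\mathbf{1}\cdot\nabla(w^{\delta+1})$ and integrate by parts, bound pointwise $|U^{\delta+1}-(\Pi u)^{\delta+1}|\lesssim(|U|^\delta+|\Pi u|^\delta)|\Upsilon|$, and close with the Gagliardo--Nirenberg inequality $\|\Upsilon\|_{\L^4}\le C\|\Upsilon\|_{\L^2}^{1-d/4}\|\nabla\Upsilon\|_{\L^2}^{d/4}$ valid for $d\le3$. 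Young's inequality with exponent $8/(4-d)$ is the one that produces the factor $\|U\|_{\L^{4\delta}}^{8\delta/(4-d)}+\|\Pi u\|_{\L^{4\delta}}^{8\delta/(4-d)}$ multiplying $\|\Upsilon\|_{\L^2}^2$, with the residual $\|\nabla\Upsilon\|_{\L^2}^2$ absorbed into the diffusion budget.

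The reaction nonlinearity is handled analogously: split $c(U)-c(u)$ and retain $(c(\Pi u)-c(u),\Upsilon)$ on the right. Expanding $c(w)=-w^{2\delta+1}+(1+\gamma)w^{\delta+1}-\gamma w$, the odd-power piece provides the coercive contribution via the identity $(a^{2\delta+1}-b^{2\delta+1})(a-b)=(a-b)^2\!\sum_{j=0}^{2\delta}a^j b^{2\delta-j}$ combined with the elementary lower bound $\sum_{j=0}^{2\delta}a^j b^{2\delta-j}\ge c_\delta(a^{2\delta}+b^{2\delta})$, which yields a contribution of size $c_\delta\beta(\|U^\delta\Upsilon\|_{\L^2}^2+\|(\Pi u)^\delta\Upsilon\|_{\L^2}^2)$. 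The middle $(1+\gamma)$ term and the linear $\gamma w$ term are controlled by Cauchy--Schwarz and Young, sacrificing half of this coercive quantity and leaving exactly $\tfrac\beta4\|U^\delta\Upsilon\|_{\L^2}^2+\tfrac\beta4\|(\Pi u)^\delta\Upsilon\|_{\L^2}^2$ on the left, together with a residual $C(\beta,\gamma,\delta)\|\Upsilon\|_{\L^2}^2$ on the right. The main technical obstacle lies precisely here: one must simultaneously extract the symmetric-sum lower bound with the correct constant $c_\delta$ and tune the Young parameter for the middle term small enough to preserve exactly the coefficient $\beta/4$ displayed in the statement; the Gagliardo--Nirenberg bookkeeping of the preceding paragraph, while tedious, is routine by comparison.
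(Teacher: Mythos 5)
Your proposal is correct and follows essentially the same route as the paper: test with $X=\Upsilon$, telescope the time-derivative and jump terms, drop the memory term by positivity of the kernel, use Lemma \ref{5.l11} with Young's inequality for the $K*\nabla\rho$ term, split both nonlinearities through the intermediate $\Pi u$, and extract the $\beta/4$ coercive contributions from the odd-power part of $c$. The only cosmetic difference is that the paper obtains the coercivity of $-\beta(U^{2\delta+1}-(\Pi u)^{2\delta+1},\Upsilon)$ via an exact algebraic identity yielding the constant $\beta/2$ directly (its equation \eqref{5.25}) rather than your symmetric-sum lower bound with a generic $c_\delta$, but both arguments are equivalent in substance.
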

\begin{proof}
 Take $X = \Upsilon$ and using the fact that $\langle \Upsilon',\Upsilon\rangle= \frac{1}{2}\frac{d}{dt}\|\Upsilon\|^2$ in \eqref{5..31}, we have 
 \begin{align}\label{5.14}
 	\nonumber &\|\Upsilon_{+}^{0}\|_{\L^2} ^2 + \|\Upsilon^n\|_{\L^2} ^2 +\sum_{n=1}^{N-1}\| [\Upsilon]^n\|_{\L^2} ^2+ 2\nu \int_{0}^{t_n}\|\nabla\Upsilon\|^2_{\L^2}\mathrm{~d}t +2 \eta \left(K*\nabla \Upsilon,\nabla \Upsilon \right)\\\nonumber&=2 \langle U^0-u_0, \Upsilon_+^{0} \rangle - 2\int_{0}^{t_n}\Big[\nu a(\rho,\Upsilon) + \eta \left(K*\nabla \rho,\nabla \Upsilon \right)\\&\quad\no+ \alpha(B(U)-B(u),\Upsilon) - \beta (c(U)-c(u),\Upsilon) \Big] \mathrm{~d}t\\& \coloneqq 2 \langle U^0-u_0, \Upsilon_+^{0} \rangle + 2\sum_{i=1}^4 \mathcal{J}^n_i .
 \end{align}
 Using  Cauchy-Schwarz and Young's inequalities gives
 $$
 2 \langle U^0-u_0,\Upsilon_+^{0} \rangle \leq  \|U^0-u_0\|^2_{\L^2} + \|\Upsilon_+^{0}\|^2_{\L^2}.
$$ 
Estimating $\mathcal{J}_1^n$ using  Cauchy-Schwarz and Young's inequalities as 
\begin{align}\label{5.15}
	\nonumber|\mathcal{J}_1^n| = \nu|a(\rho,\Upsilon)| = \nu\left| \int_{0}^{t_n}(\nabla \rho,\nabla \Upsilon) \mathrm{~d}t\right| &\leq \nu \int_{0}^{t_n} \|\nabla\rho\|_{\L^2}\|\nabla\Upsilon\|_{\L^2} \mathrm{~d}t \\&\leq \nu\int_{0}^{t_n}\|\nabla\rho\|_{\L^2}^2 \mathrm{~d}t +\frac{\nu}{4} \int_{0}^{t_n}\|\nabla\Upsilon\|_{\L^2}^2\mathrm{~d}t.
\end{align}
An application of Lemma \ref{5.l11} with $(T=t_n)$, we attain
\begin{align}\label{5.16}
|\mathcal{J}_2^n| = \eta \left(K * \nabla \rho, \nabla \Upsilon \right) &\leq \eta \int_{0}^{t_n} \|K * \nabla\rho\|_{\L^2} \|\nabla \Upsilon\|_{\L^2} \mathrm{~d}s 
\\&\leq\frac{ \eta^2}{\nu} \int_{0}^{t_n} \|K * \nabla\rho\|_{\L^2}^2 \mathrm{~d}s + \frac{\nu}{4} \int_{0}^{t_n} \|\nabla \Upsilon\|_{\L^2} \mathrm{~d}s \\&
\leq \frac{C_K\eta^2}{\nu} \int_{0}^{t_n} \|\nabla\rho\|_{\L^2}^2 \mathrm{~d}s + \frac{\nu}{4} \int_{0}^{t_n} \|\nabla \Upsilon\|_{\L^2} \mathrm{~d}s,
\end{align}
for $C_K = \Big(\int_0^T|K(t)| \mathrm{d}t\Big)^2.$
Rewriting the other terms, we get
\begin{align}
	\mathcal{J}_3^n &= -\alpha\int_0^{t_n}(B(U)-B(u),\Upsilon)\mathrm{~d}t  =  -\alpha\int_0^{t_n}\Big(\big(B(U)-B(\Pi u),\Upsilon\big) + \big(B(\Pi u)-B(u),\Upsilon\big)\Big) \mathrm{~d}t, \\
	\mathcal{J}_4^n &= \beta\int_0^{t_n}(c(U)-c(u),\Upsilon)\mathrm{~d}t  =  \beta\int_0^{t_n}\Big(\big(c(U)-c(\Pi u),\Upsilon\big) + \big(c(\Pi u)-c(u),\Upsilon\big)\Big) \mathrm{~d}t.
\end{align}
We derive a bound for $-\alpha\langle B(U)-B(\Pi u),(U-u)\rangle$, using integrating by parts, Taylor's formula $(0< \zeta< 1)$,  H\"older's and Young's inequalities as 
\begin{align}\label{5.18}
	-\alpha\langle B(U)-B(\Pi u),\Upsilon\rangle&=\frac{\alpha}{\delta+1} \left((U^{\delta+1}-(\Pi u)^{\delta+1})\left(\begin{array}{c}1\\\vdots\\1\end{array}\right),\nabla \Upsilon\right)\nonumber\\&=\alpha\left(\Upsilon(\zeta U+(1-\zeta) \Pi u)^{\delta}\left(\begin{array}{c}1\\\vdots\\1\end{array}\right),\nabla \Upsilon\right)\nonumber\\&\leq 2^{\delta-1} \alpha(\|U\|^{\delta}_{\L^{4\delta}}+\|\Pi U\|^{\delta}_{\L^{4\delta}})\|\Upsilon\|_{\L^{4}}\|\nabla \Upsilon\|_{\L^2}\nonumber\\&\leq2^{\delta-1} \alpha(\|U\|^{\delta}_{\L^{4\delta}}+\|\Pi U\|^{\delta}_{\L^{4\delta}})\|\Upsilon\|_{\L^{2}}^{\frac{4-d}{4}}\|\nabla \Upsilon\|_{\L^2}^{\frac{4+d}{4}}\nonumber\\&\leq \frac{\nu}{4}\|\nabla \Upsilon\|_{\L^2}^2+C(\alpha,\nu)\left(\|U\|^{\frac{8\delta}{4-d}}_{\L^{4\delta}}+\|\Pi U\|^{\frac{8\delta}{4-d}}_{\L^{4\delta}}\right)\|\Upsilon\|_{\L^2}^2,
\end{align}
for $C(\alpha, \nu) = \left(\frac{4+d}{\nu}\right)^{\frac{4+d}{4-d}}\left(\frac{4-d}{8}\right)(2^{\delta-1}\alpha)^{\frac{8}{4-d}}$.
It can be easily seen that
\begin{align}\label{5.24}
	&\beta(c(U)-c(\Pi u),\Upsilon)=\beta\left[(U(1-U^{\delta})(U^{\delta}-\gamma)-\Pi u(1-(\Pi u)^{\delta})((\Pi u)^{\delta}-\gamma),\Upsilon)\right]\nonumber\\&=-\beta\gamma\|\Upsilon\|_{\L^2}^2-\beta(U^{2\delta+1}-(\Pi u)^{2\delta+1},\Upsilon)+\beta(1+\gamma)(U^{1+\delta}-(\Pi u)^{\delta+1},\Upsilon).
\end{align}
Estimating the term $-\beta(U^{2\delta+1}-(\Pi u)^{2\delta+1},\Upsilon)$ from \eqref{5.24} as
\begin{align}\label{5.25}
	&-\beta(U^{2\delta+1}-(\Pi u)^{2\delta+1},\Upsilon)\nonumber\\&= -\beta(U^{2\delta},\Upsilon^2) -\beta((\Pi u)^{2\delta},\Upsilon^2)-\beta(U^{2\delta}(\Pi u)-(\Pi u)^{2\delta}U,\Upsilon)\nonumber\\&=-\beta\|U^{\delta}\Upsilon\|_{\L^2}^2-\beta\|(\Pi u)^{\delta}\Upsilon\|_{\L^2}^2-\beta(U(\Pi u),U^{2\delta}+(\Pi u)^{2\delta})+\beta((\Pi u)^2,u^{2\delta})+\beta((\Pi u)^2,U^{2\delta})\nonumber\\&=-\frac{\beta}{2}\|U^{\delta}\Upsilon\|_{\L^2}^2-\frac{\beta}{2}\|(\Pi u)^{\delta}\Upsilon\|_{\L^2}^2-\frac{\beta}{2}(U^{2\delta}-(\Pi u)^{2\delta},U^2-(\Pi u)^2)\nonumber\\&\leq -\frac{\beta}{2}\|U^{\delta}\Upsilon\|_{\L^2}^2-\frac{\beta}{2}\|u^{\delta}\Upsilon\|_{\L^2}^2.
\end{align}
Estimating $\beta(1+\gamma)(U^{\delta+1}-(\Pi u)^{\delta+1},\Upsilon)$ from \eqref{5.24} using Taylor's formula, H\"older's and Young's inequalities as
\begin{align}\label{5.26}
	\beta(1+\gamma)(U^{\delta+1}-(\Pi u)^{\delta+1},\Upsilon)&=\beta(1+\gamma)(\delta+1)((\zeta U+(1-\zeta)\Pi u)^{\delta}\Upsilon,\Upsilon)\nonumber\\&\leq \beta(1+\gamma)(\delta+1)2^{\delta-1}(\|U^{\delta}\Upsilon\|_{\L^2}+\|\Pi u^{\delta}\Upsilon\|_{\L^2})\|\Upsilon\|_{\L^2}\nonumber\\&\leq\frac{\beta}{4}\|U^{\delta}\Upsilon\|_{\L^2}^2+\frac{\beta}{4}\|\Pi u^{\delta}\Upsilon\|_{\L^2}^2+\frac{\beta}{2}2^{2\delta}(1+\gamma)^2(\delta+1)^2\|\Upsilon\|_{\L^2}^2.
\end{align}
By combining \eqref{5.25} and \eqref{5.26} and substituting them into \eqref{5.24}, we obtain
\begin{align}\label{5.27}
	&\beta\left[(U(1-U^{\delta})(U^{\delta}-\gamma)-\Pi u(1-(\Pi u)^{\delta})((\Pi u)^{\delta}-\gamma),\Upsilon)\right]\nonumber\\&\leq -\beta\gamma\|\Upsilon\|_{\L^2}^2-\frac{\beta}{4}\|U^{\delta}\Upsilon\|_{\L^2}^2-\frac{\beta}{4}\|(\Pi u)^{\delta}\Upsilon\|_{\L^2}^2+\frac{\beta}{2}2^{2\delta}(1+\gamma)^2(\delta+1)^2\|\Upsilon\|_{\L^2}^2.
\end{align}
Combining \eqref{5.15}-\eqref{5.27}, substituting back in \eqref{5.14} and using the positivity of the kernel \eqref{5.pk},  we attain the desired result. 
\end{proof}
To estimate the final two terms on the right-hand side, corresponding to the advection and reaction terms, we present the following lemma:
 \begin{lemma}\label{5.l4}
	For $u\in\L^{2(2\delta+1)}(J_j;\L^{2(2\delta+1)}(\Omega)) \cap\H^{q_j+1}(J_j; \H_0^1),$   $1\leq n \leq N, 0\leq q_j \leq p_j,$ we have 
	\begin{align}
\int_0^{t_n} \Big(-2\alpha\big(B(\Pi u)-B(u),\Upsilon\big)+2\beta\big(c(\Pi u)-c(u),\Upsilon\big)\Big) \mathrm{~d}t\leq C\int_0^{t_n} \left( \|\nabla \rho\|^2_{\L^2}+ \|\Upsilon\|^2_{\L^2}\right) \mathrm{~d}t.
\end{align}
\end{lemma}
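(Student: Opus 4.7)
The plan is to reduce each nonlinear difference to an expression that is linear in $\rho = \Pi u - u$ (with coefficients controlled by bounded powers of $u$ and $\Pi u$), and then to estimate using H\"older, Sobolev embeddings, and Young's inequality in such a way that only $\|\nabla \rho\|_{\L^2}^2$ and $\|\Upsilon\|_{\L^2}^2$ appear on the right-hand side. The crucial structural point is that every spatial derivative must be kept attached to $\rho$, since a $\|\nabla\Upsilon\|_{\L^2}^2$ term is not admissible in the target estimate. For this reason I would \emph{not} integrate by parts on the $B$-difference (in contrast to \eqref{5.18} in the proof of Lemma \ref{5.l3}), but instead split it in product form.

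For the advection term I would write
\begin{align*}
B(\Pi u) - B(u) = \bigl((\Pi u)^\delta - u^\delta\bigr) \sum_{i=1}^d \partial_i \Pi u + u^\delta \sum_{i=1}^d \partial_i \rho,
\end{align*}
and use the Taylor identity $(\Pi u)^\delta - u^\delta = \delta (\zeta \Pi u + (1-\zeta) u)^{\delta-1} \rho$ to linearize the first piece in $\rho$. A four-factor H\"older inequality with exponents $(\infty, 4, 4, 2)$ bounds the first piece by $C\|\rho\|_{\L^4}\|\nabla \Pi u\|_{\L^4}\|\Upsilon\|_{\L^2}$, where the $\L^\infty$ control on $(\zeta \Pi u + (1-\zeta) u)^{\delta-1}$ comes from the regularity $u \in \L^\infty(0,T;\H^2(\Omega))$ recalled in Section 2.1 (via $\H^2 \hookrightarrow \L^\infty$ for $d\le 3$) together with the $\L^\infty$-stability of the time interpolant $\Pi$. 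Sobolev embedding gives $\|\rho\|_{\L^4} \le C\|\nabla \rho\|_{\L^2}$, and Young's inequality produces the required $\|\nabla\rho\|_{\L^2}^2 + \|\Upsilon\|_{\L^2}^2$. The second piece is handled directly via $|(u^\delta \sum_i \partial_i \rho, \Upsilon)| \le \|u\|_{\L^\infty}^\delta \|\nabla \rho\|_{\L^2}\|\Upsilon\|_{\L^2}$, again closed by Young.

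For the reaction term I would expand $c(v) = -\gamma v + (1+\gamma) v^{\delta+1} - v^{2\delta+1}$ and take differences monomial by monomial, applying Taylor to each to get a factor of the form $m w^{m-1} \rho$. Since $c$ involves no spatial derivative, a direct H\"older estimate gives $|(c(\Pi u) - c(u), \Upsilon)| \le C\|\rho\|_{\L^2}\|\Upsilon\|_{\L^2}$, with $C$ depending on $\gamma, \delta$ and on the $\L^\infty$-norms of $u$ and $\Pi u$; Poincar\'e yields $\|\rho\|_{\L^2}\le C\|\nabla \rho\|_{\L^2}$, and Young's inequality closes the bound.

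The main obstacle will be justifying the uniform $\L^\infty$ and $\L^4$ bounds on $u$, $\Pi u$, and $\nabla \Pi u$ that enter the H\"older constants, and in particular showing that the stability of the time interpolant $\Pi$ built in \eqref{5.hpp} inherits these spatial bounds from $u$ uniformly in the mesh $\mathcal{M}$ and in the polynomial-degree vector $\mathbf{p}$. Once this is in place, the constant $C$ in the statement depends only on the problem parameters $\nu, \alpha, \beta, \gamma, \delta$ and on fixed regularity norms of the exact solution, as required.
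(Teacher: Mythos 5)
Your route is genuinely different from the paper's, and the difference is exactly the one you identify: the paper \emph{does} integrate by parts on the $B$-difference. It writes $-\alpha(B(\Pi u)-B(u),\Upsilon)=\frac{\alpha}{\delta+1}\sum_i((\Pi u)^{\delta+1}-u^{\delta+1},\partial_{x_i}\Upsilon)$, applies Taylor's formula to get a factor $(\zeta\Pi u+(1-\zeta)u)^{\delta}\rho$, and then uses H\"older with the splitting $\L^{4\delta}$--$\L^4$--$\L^2$, so that only $\|\Pi u\|_{\L^{4\delta}}^{2\delta}+\|u\|_{\L^{4\delta}}^{2\delta}$ (controlled by the assumed $\L^{2(2\delta+1)}$ space--time regularity) and $\|\rho\|_{\L^4}\le C\|\nabla\rho\|_{\L^2}$ appear, at the price of a term $\frac{\nu}{8}\int_0^{t_n}\|\nabla\Upsilon\|^2_{\L^2}\,\mathrm{d}t$; the cubic part of $c$ is treated the same way and contributes another $\frac{\nu}{8}\int\|\nabla\Upsilon\|^2$. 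These $\|\nabla\Upsilon\|^2$ contributions are simply carried along (they appear explicitly in the paper's displayed estimate \eqref{5.28} even though the lemma statement omits them) and are absorbed into the coercive term $\nu\int_0^{t_n}\|\nabla\Upsilon\|^2_{\L^2}$ when the lemma is combined with Lemma \ref{5.l3} in Lemma \ref{5.l.4}. So the premise driving your construction --- that a $\|\nabla\Upsilon\|^2_{\L^2}$ term is inadmissible --- is not how the paper actually uses this lemma: a small multiple of $\|\nabla\Upsilon\|^2$ is perfectly acceptable downstream.

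The cost of your alternative is the gap you yourself flag, and it is a real one: your H\"older splitting needs $\|(\zeta\Pi u+(1-\zeta)u)^{\delta-1}\|_{\L^\infty}$, $\|u\|_{\L^\infty}^{\delta}$ and $\|\nabla\Pi u\|_{\L^4}$ uniformly in time, hence $\L^\infty$- and $W^{1,4}$-stability of the temporal interpolant $\Pi$ uniformly in $\mathcal{M}$ and $\mathbf{p}$. The paper's approximation results (Theorem \ref{5.th1}) are stated only for the $\L^2$ and $\H_0^1$ spatial norms, and the hypotheses of the lemma ($u\in\L^{2(2\delta+1)}(J_j;\L^{2(2\delta+1)})\cap\H^{q_j+1}(J_j;\H_0^1)$) do not give you $u\in C([0,T];\H^2)$ with the time-regularity needed to transfer sup-in-time $\H^2$ (hence $\L^\infty$ and $W^{1,4}$) bounds to $\Pi u$; you would have to import the stronger regularity from Section 2 and prove an $\H^2$-valued analogue of Theorem \ref{5.th1}(iii) with $p$-uniform constants. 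That is doable in principle (the projector acts only in time, so the spatial norm is inert), but it is additional machinery the paper's argument avoids entirely by never asking for pointwise-in-space control: it only uses integral-in-time $\L^{4\delta}$ norms of $u$ and $\Pi u$, which follow from the stated hypothesis, and lets the coercivity absorb the resulting $\|\nabla\Upsilon\|^2$. If you keep your route, you must either add the $\H^2$-regularity and interpolant-stability lemma explicitly, or switch to the paper's $\L^{4\delta}$ splitting and accept the absorbable gradient term.
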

	Note that for the initial data $u_0\in \L^{d\delta}(\Omega)$, we have that $u\in\L^{2(2\delta+1)}(0, T ;  \L^{2(2\delta+1)}(\Omega)),$ as discussed in \cite[Remark 2.6]{GBHE}. 
 \begin{proof}
	For the first part, using an integration by parts, Taylor's formula, H\"older's and Young's inequalities, we rewrite $\big(B(\Pi u)-B(u),\Upsilon\big)$ as
	\begin{align}\label{5.32}
		-\alpha\int_0^{t_n}\big(B(\Pi u)-B(u),\Upsilon\big)\mathrm{~d}t&=-\frac{\alpha}{\delta+1}\sum_{i=1}^d\int_0^{t_n}\left(\frac{\partial}{\partial{x_i}}(({\Pi u})^{\delta+1}-u^{\delta+1}),\Upsilon\right)\mathrm{~d}t\no\\&=\frac{\alpha}{\delta+1}\sum_{i=1}^d\int_0^{t_n}\left((\Pi u)^{\delta+1}-u^{\delta+1},\frac{\partial\Upsilon}{\partial x_i}\right)\mathrm{~d}t\nonumber\\&=\alpha\sum_{i=1}^d\int_0^{t_n}\left((\zeta (\Pi u)+(1-\zeta)u)^{\delta}(\Pi u-u),\frac{\partial\Upsilon}{\partial{x_i}}\right)\mathrm{~d}t\nonumber\\&\leq 2^{\delta-1}\alpha\int_0^{t_n}\left(\|(\Pi u)^{2\delta}\|_{\L^2}^{\frac{1}{2} }+ \|u^{2\delta}\|_{\L^2}^{\frac{1}{2}} \right)\|\Pi u-u\|_{\L^4}\|\nabla \Upsilon\|_{\L^2}\mathrm{~d}t\nonumber\\&\leq \frac{2^{2\delta-1}\alpha^2}{\nu}\int_0^{t_n}\left(\|\Pi u\|^{2\delta}_{\L^{4\delta}}+ \|u\|^{2\delta}_{\L^{4\delta}} \right)\|\nabla \rho\|^2_{\L^2} \mathrm{~d}t+ \frac{\nu}{8}\int_0^{t_n}\|\nabla \Upsilon\|^2_{\L^2}\mathrm{~d}t.
	\end{align}
	And we can also rewrite the second term $(c(\Pi u)-c(u),\Upsilon)$ as 
	\begin{align}\label{5.33}
	&\int_0^{t_n}	(c(\Pi u)-c(u),\Upsilon)\mathrm{~d}t\\&= \underbrace{\beta(1+\gamma)\int_0^{t_n}((\Pi u)^{\delta+1}-u^{\delta+1},\Upsilon)\mathrm{~d}t}_{\mathcal{J}^n_5}-\underbrace{\beta\gamma\int_0^{t_n}(\Pi u-u,\Upsilon)\mathrm{~d}t}_{\mathcal{J}^n_6}
	-\underbrace{\beta\int_0^{t_n}((\Pi u)^{2\delta+1}-u^{2\delta+1},\Upsilon)\mathrm{~d}t}_{\mathcal{J}^n_7}.
	\end{align}
	We estimate $\mathcal{J}^n_5$ using Taylor's formula, H\"older's and Young's inequalities as 
	\begin{align}\label{5.35}
		\no \mathcal{J}^n_5&=
		2\beta(1+\gamma)(\delta+1)\int_0^{t_n} ((\zeta \Pi u+(1-\zeta)u)^{\delta}(\Pi u-u),\Upsilon)\mathrm{~d}t\\
		&\leq 2^{\delta}\beta(1+\gamma)(\delta+1)\int_0^{t_n} \left(\|{(\Pi u)}^{2\delta}\|_{\L^2}^{1/2}+\|{u}^{2\delta}\|_{\L^2}^{1/2}\right)\|\Pi u-u\|_{\L^4}\|\Upsilon\|_{\L^2}\mathrm{~d}t\no\\&\leq \frac{\beta}{2}\int_0^{t_n} \left(\|{\Pi u}\|_{\L^{4\delta}}^{2\delta}+\|{u}\|_{\L^{4\delta}}^{2\delta}\right)\|\nabla\rho\|^2_{\L^2}\mathrm{~d}t+ 2^{2\delta-1}\beta(1+\gamma)^2(\delta+1)^2\int_0^{t_n} \|\Upsilon\|^2_{\L^2}\mathrm{~d}t.
	\end{align}
	Using Cauchy-Schwarz and Young's inequalities, an estimate for $J_6$ reads 
	\begin{align}\label{5.36}
	\mathcal{J}^n_6 \leq \beta\gamma\int_0^{t_n}\|\Pi u-u\|_{\L^2}\|\Upsilon\|_{\L^2}\mathrm{~d}t\leq\frac{\beta\gamma}{2}\int_0^{t_n}\|\rho\|^2_{\L^2}\mathrm{~d}t+ \frac{\beta\gamma}{2}\int_0^{t_n}\|\Upsilon\|^2_{\L^2}\mathrm{~d}t.
\end{align}
A bound for $\mathcal{J}^n_7$ can be obtained by applying Taylor's formula in conjunction with H\"older's and Young's inequalities, as follows:
	\begin{align}\label{5.37}
		\mathcal{J}^n_7&=-(2\delta+1)\beta\int_0^{t_n}((\zeta \Pi u+(1-\zeta)u)^{2\delta}(\Pi u-u),\Upsilon)\mathrm{~d}t \nonumber\\
		&\leq 2^{2\delta-1}(2\delta+1)\beta\int_0^{t_n} \left(\|{(\Pi u)}^{2\delta}\|_{\L^2}+\|{u}^{2\delta}\|_{\L^2}\right)\|\Pi u-u\|_{\L^4}\|\Upsilon\|_{\L^4}\mathrm{~d}t \no\\&\leq \frac{\nu}{8}\int_0^{t_n}\|\nabla\Upsilon\|_{\L^2}^2\mathrm{~d}t + 2^{4\delta-1}(2\delta+1)^2\beta^2 \int_0^{t_n}\left(\|{\Pi u}\|^{4\delta}_{\L^{4\delta}}+\|{u}\|^{4\delta}_{\L^{4\delta}}\right)\|\nabla\rho\|_{\L^2}^2\mathrm{~d}t.
	\end{align}
	Combining \eqref{5.32}-\eqref{5.37}, we have 
	\begin{align}\label{5.28}
		\nonumber&\int_0^{t_n} \Big(-2\alpha\big(B(\Pi u)-B(u),\Upsilon\big)+2\beta\big(c(\Pi u)-c(u),\Upsilon\big)\Big) \mathrm{~d}t\\&\leq  \left(\frac{2^{2\delta-1}\alpha^2}{\nu}+\frac{\beta}{2}\right)\int_0^{t_n}\left(\|\Pi u\|^{2\delta}_{\L^{4\delta}}+ \|u\|^{2\delta}_{\L^{4\delta}} \right)\|\nabla \rho\|^2_{\L^2} \mathrm{~d}t+ \frac{\nu}{4}\int_0^{t_n}\|\nabla \Upsilon\|^2_{\L^2}\mathrm{~d}t\no\\&\quad+ \left(2^{2\delta-1}\beta(1+\gamma)^2(\delta+1)^2+\frac{\beta\gamma}{2}\right)\int_0^{t_n} \|\Upsilon\|^2_{\L^2}\mathrm{~d}t+\frac{\beta\gamma}{2}\int_0^{t_n}\|\rho\|^2_{\L^2}\mathrm{~d}t\no\\&\quad+ 2^{4\delta-1}(2\delta+1)^2\beta^2 \int_0^{t_n}\left(\|{\Pi u}\|^{4\delta}_{\L^{4\delta}}+\|{u}\|^{4\delta}_{\L^{4\delta}}\right)\|\nabla\rho\|_{\L^2}^2\mathrm{~d}t.
	\end{align}
	Finally using the regularity of $u\in\L^{2(2\delta+1)}(J_j;\L^{2(2\delta+1)}(\Omega)) \cap\H^{q_j+1}(J_j; \H_0^1),$ and  Poincar\'e inequality we have the desired result.
	\end{proof}
So, we achieve the following bound
	\begin{lemma}\label{5.l.4}
		For $1\leq n\leq N,$ we have
		\begin{align}
			 \|\Upsilon^n\|_{\L^2} ^2 + \nu \int_{0}^{t_n}\|\nabla\Upsilon\|^2_{\L^2}\mathrm{~d}t \leq C\left(\|U^0-u_0\|^2_{\L^2} + \int_0^{t_n} \left( \|\nabla \rho\|^2_{\L^2}\right) \mathrm{~d}t\right).
		\end{align}
	\end{lemma}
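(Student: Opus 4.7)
The plan is to combine the estimates from Lemma \ref{5.l3} and Lemma \ref{5.l4}, and then close the argument with a Gronwall-type inequality. First, I would substitute the bound from Lemma \ref{5.l4} on the advection and reaction terms directly into the right-hand side of Lemma \ref{5.l3}. After absorbing the manifestly positive terms $\frac{\beta}{4}\|U^{\delta}\Upsilon\|_{\L^2}^2$ and $\frac{\beta}{4}\|(\Pi u)^{\delta}\Upsilon\|_{\L^2}^2$ into the left-hand side (where they contribute non-negatively), and treating $\|\Upsilon\|_{\L^2}\leq \tfrac12(1+\|\Upsilon\|_{\L^2}^2)$ via Young's inequality for the linear-in-$\|\Upsilon\|$ term, the combined estimate takes the form
\begin{align*}
\|\Upsilon^n\|_{\L^2}^2+\nu\int_0^{t_n}\|\nabla\Upsilon\|_{\L^2}^2\,\mathrm{d}t\leq C\|U^0-u_0\|_{\L^2}^2+C\int_0^{t_n}\|\nabla\rho\|_{\L^2}^2\,\mathrm{d}t+\int_0^{t_n}\Phi(s)\|\Upsilon(s)\|_{\L^2}^2\,\mathrm{d}s,
\end{align*}
where $\Phi(s)=C\bigl(1+\|U(s)\|_{\L^{4\delta}}^{\frac{8\delta}{4-d}}+\|\Pi u(s)\|_{\L^{4\delta}}^{\frac{8\delta}{4-d}}\bigr)$.

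Next I would verify that $\Phi\in\L^1(0,T)$. For the exact solution $u$ this follows from the regularity $u\in\L^{2(2\delta+1)}(0,T;\L^{2(2\delta+1)}(\Omega))$ noted after Lemma \ref{5.l4}, together with standard stability properties of the $hp$-interpolant $\Pi$. For the discrete solution $U$, an analogous bound follows from a stability estimate obtained by testing the scheme \eqref{5.DGwf} with $X=U$ itself, using positivity of the kernel and the structure of $c(\cdot)$ to control the $\L^{2\delta+2}$-type norm that dominates $\|U\|_{\L^{4\delta}}^{\cdot}$ after a Sobolev embedding.

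Then I would apply a continuous Gronwall inequality. The energy identity underlying Lemma \ref{5.l3} is valid not only at the mesh points $t_n$ but, by stopping the time integration at an arbitrary $t\in(t_{n-1},t_n]$, also for any such $t$; the interior jump terms $\sum\|[\Upsilon]^j\|^2$ remain non-negative and can be discarded, and the kernel contribution remains non-negative by \eqref{5.pk}. Hence the same inequality holds pointwise in $t$ with $\|\Upsilon(t)\|_{\L^2}^2$ on the left. The standard Gronwall lemma then yields
\begin{align*}
\|\Upsilon(t)\|_{\L^2}^2+\nu\int_0^{t}\|\nabla\Upsilon\|_{\L^2}^2\,\mathrm{d}s\leq C\exp\!\Bigl(\int_0^T\Phi(s)\,\mathrm{d}s\Bigr)\Bigl(\|U^0-u_0\|_{\L^2}^2+\int_0^{T}\|\nabla\rho\|_{\L^2}^2\,\mathrm{d}s\Bigr),
\end{align*}
and specialising $t=t_n$ gives the claim with the exponential factor absorbed into $C$.

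The main obstacle is the Gronwall closure: one needs a pointwise-in-$t$ (not merely nodal) energy inequality to integrate, and one needs $\Phi\in\L^1(0,T)$, which in turn demands a stability bound on the high-integrability norm $\|U\|_{\L^{4\delta}}$ of the $hp$-DG iterate. The positivity \eqref{5.pk} of the memory kernel is essential here: without it the $\eta$-term would not drop out cleanly from the left-hand side and the Gronwall argument would instead need the fractional variant of Lemma \ref{5.l2}. Once these ingredients are in place, the combination of Lemmas \ref{5.l3} and \ref{5.l4} is purely algebraic.
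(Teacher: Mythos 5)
Your proposal follows essentially the same route as the paper: the paper's proof simply inserts the bound of Lemma \ref{5.l4} into the energy estimate of Lemma \ref{5.l3}, drops the non-negative terms $\frac{\beta}{4}\|U^{\delta}\Upsilon\|_{\L^2}^2$ and $\frac{\beta}{4}\|(\Pi u)^{\delta}\Upsilon\|_{\L^2}^2$, and closes with Gr\"onwall's inequality, absorbing the exponential factor $\exp\bigl(C\int_0^{t_n}(\|U\|_{\L^{4\delta}}^{8\delta/(4-d)}+\|u\|_{\L^{4\delta}}^{8\delta/(4-d)})\,\mathrm{d}t\bigr)$ into the constant. The two technical points you flag — that the Gr\"onwall coefficient must be integrable (requiring a stability bound on $\|U\|_{\L^{4\delta}}$) and that the nodal energy inequality must be upgraded to hold for arbitrary $t$ before a continuous Gr\"onwall argument applies — are left implicit in the paper, so your treatment is, if anything, more careful on the same path.
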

\begin{proof}
	Combining Lemma \ref{5.l2} with Lemma \ref{5.l3}, gives
	 \begin{align}
		\nonumber & \|\Upsilon^n\|_{\L^2} ^2 + \nu \int_{0}^{t_n}\|\nabla\Upsilon\|^2_{\L^2}\mathrm{~d}t + \frac{\beta}{4}\int_0^{t_n}\|U^{\delta} \Upsilon\|_{\L^2}^2 \mathrm{~d}t+ \frac{\beta}{4} \int_0^{t_n}\|(\Pi u)^{\delta} \Upsilon\|_{\L^2}^2 \mathrm{~d}t \\\nonumber&\leq \|U^0-u_0\|_{\L^2} + 2\left(\nu+ \frac{C_K\eta^2}{\nu} \right) \int_{0}^{t_n} \|\nabla\rho\|^2_{\L^2} \mathrm{~d}t 
+ \int_0^{t_n}C(\alpha,\nu)\left(\|U\|^{\frac{8\delta}{4-d}}_{\L^{4\delta}}+\|\Pi U\|^{\frac{8\delta}{4-d}}_{\L^{4\delta}}\right)\|\Upsilon\|_{\L^2}^2\mathrm{~d}t\\&\quad + C(\beta,\gamma, \delta) \int_{0}^{t_n}\|\Upsilon\|_{\L^2} \mathrm{~d}t+ \int_0^{t_n} \left( \|\nabla \rho\|^2_{\L^2}+ \|\Upsilon\|^2_{\L^2}\right) \mathrm{~d}t.
	\end{align}
An application of Gr\"onwall's inequality, yields the desired result
\begin{align}
	\nonumber \|\Upsilon^n\|_{\L^2} ^2 + \nu \int_{0}^{t_n}\|\nabla \Upsilon\|^2_{\L^2}\mathrm{~d}t&\leq  C\bigg(\|U^0-u_0\|_{\L^2}   + 2\left(\nu+ \frac{C_K\eta^2}{\nu} \right) \int_{0}^{t_n} \|\nabla\rho\|^2_{\L^2} \mathrm{~d}t\bigg)\times e^{\left(C(\beta,\gamma,\delta)\right)T}\\&\quad\times\exp\bigg\{\int_0^{t_n}C(\alpha,\nu)\left(\|U(t)\|^{\frac{8\delta}{4-d}}_{\L^{4\delta}}+\|u(t)\|^{\frac{8\delta}{4-d}}_{\L^{4\delta}}\right)\mathrm{~d}t\bigg\},
\end{align}
which yields the required result.
\end{proof}
Our first main result for the error bounds in the semi-discrete case ($hp$-DG time stepping) is given by:
\begin{theorem}\label{5.th2}
	Let $u$ be the exact solution and let $U$ be the approximated DG solution defined by \eqref{5.DGS}, for $1\leq n \leq N, 0\leq q_j \leq p_j,$ and $u\in \H^{q_j+1}(J_j; \H_0^1),$ we have 
	\begin{align}
		\nonumber&\int_0^{t_n} \|\nabla(U-u)\|_{\L^2}^2 \mathrm{~d}t +\|(U-u)^n\|_{\L^2}^2 \leq  C \left(\|U^0-u_0\|_{\L^2}^2 + \int_{0}^{t_n}\|\nabla\rho\|_{\L^2}^2\mathrm{~d}t \right).
	\end{align}
\end{theorem}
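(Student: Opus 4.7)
The plan is to derive Theorem \ref{5.th2} directly from Lemma \ref{5.l.4} by combining it with the triangle inequality applied to the splitting (5.10) and with the vanishing of $\rho$ at the time nodes. The heavy lifting, namely the energy identity, the absorption of the nonlinear advection and reaction contributions via Taylor's formula, Hölder and Young, the handling of the memory term through the positivity property \eqref{5.pk}, and the final Grönwall-type estimate, has already been carried out in Lemmas \ref{5.l3}, \ref{5.l4} and \ref{5.l.4}, so what remains is essentially a bookkeeping step.

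Concretely, I would first write $U-u=\Upsilon+\rho$ as in (5.10) and apply $\|a+b\|^2\le 2\|a\|^2+2\|b\|^2$ on both sides of the desired inequality, obtaining
\begin{align*}
\|(U-u)^n\|_{\L^2}^2 &\le 2\|\Upsilon^n\|_{\L^2}^2 + 2\|\rho^n\|_{\L^2}^2,\\
\int_0^{t_n}\|\nabla(U-u)\|_{\L^2}^2\,\mathrm{d}t &\le 2\int_0^{t_n}\|\nabla \Upsilon\|_{\L^2}^2\,\mathrm{d}t + 2\int_0^{t_n}\|\nabla \rho\|_{\L^2}^2\,\mathrm{d}t.
\end{align*}

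Second, I would use the nodal interpolation condition $\widehat{\Pi}^{p_n}\widehat u(1)=\widehat u(1)$ from \eqref{5.8}. After applying the affine transformation $F_n$, this gives $(\Pi u)(t_n)=u(t_n)$, so $\rho^n=0$ for every $1\le n\le N$ (this fact is also recorded immediately after the decomposition (5.10)). Consequently the $\|\rho^n\|_{\L^2}^2$ term drops out entirely.

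Third, I would invoke Lemma \ref{5.l.4} to control $\|\Upsilon^n\|_{\L^2}^2+\nu\int_0^{t_n}\|\nabla\Upsilon\|_{\L^2}^2\,\mathrm{d}t$ by $C\bigl(\|U^0-u_0\|_{\L^2}^2+\int_0^{t_n}\|\nabla\rho\|_{\L^2}^2\,\mathrm{d}t\bigr)$. Substituting this into the two triangle-inequality bounds above and collecting the remaining $\int_0^{t_n}\|\nabla\rho\|_{\L^2}^2\,\mathrm{d}t$ contribution into the right-hand side yields precisely the conclusion of the theorem with a possibly enlarged (but still $u$- and $U$-independent) constant $C$.

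Since all of the analytically delicate steps are already packaged in the preceding lemmas, I do not expect a real obstacle here; the only point that needs care is remembering that the interpolant is defined so as to match the value at the right endpoint of each $\J_n$, which is what kills the $\|\rho^n\|_{\L^2}^2$ jump and allows the $L^2$-part of the error at $t_n$ to be controlled purely through $\Upsilon$. If the same bound were sought at a generic time $t\in\J_n$ rather than at the nodes, one would additionally need the pointwise approximation estimate from Theorem \ref{5.th1}(iii) and the inverse estimate in Lemma \ref{5.leminv}, but for the statement as written this is not required.
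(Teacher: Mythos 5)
Your proposal is correct and follows exactly the route the paper takes: decompose $U-u=\Upsilon+\rho$ as in \eqref{5.10}, apply the triangle inequality, use $\rho^n=0$ (from the nodal matching condition of the interpolant) to eliminate the nodal $\rho$-term, and absorb the remaining $\int_0^{t_n}\|\nabla\rho\|_{\L^2}^2\,\mathrm{d}t$ contribution while invoking Lemma \ref{5.l.4} for the $\Upsilon$-part. No gaps; your closing remark about what would be needed at non-nodal times is a correct observation but, as you note, not required here.
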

\begin{proof}
Using the decomposition $U - u$ defined in \eqref{5.10}, the triangle inequality, Lemma \ref{5.l.4}, and the fact that $\rho^n = 0$ for $1 \leq n \leq N$, the result follows.
\end{proof}
\begin{remark}
	Given that the kernel $ K(\cdot)$ is not necessarily of positive type, but instead satisfies $|K(t-s)| \leq |t-s|^{1-\epsilon} $ for $ 0 \leq \epsilon \leq 1 $, we can avoid the positive restriction on the kernel. Under these conditions, the term $ \mathcal{J}_2^n $ can be estimated using Lemma $\ref{5.l1}$ as follows:
	
	\begin{align}\label{5.16.1}
		|\mathcal{J}_2^n| = \eta \left(K*\nabla \rho,\nabla \Upsilon \right) &\leq \eta \int_{0}^{t_n}\int_{0}^{t}(t-s)^{\alpha-1}\|\nabla \rho(s)\|_{\L^2}\|\nabla \Upsilon(t)\|_{\L^2} \, \mathrm{d}s \, \mathrm{d}t \nonumber\\
		&\leq \eta \left(\int_{0}^{t_n}\left(\int_{0}^{t}(t-s)^{\alpha-1}\|\nabla\rho(s)\|_{\L^2}\, \mathrm{d}s\right)^2 \, \mathrm{d}t\right)^{\frac{1}{2}} \left(\int_0^{t_n} \|\nabla \Upsilon(t)\|^2_{\L^2} \, \mathrm{d}t\right)^{\frac{1}{2}} \nonumber\\
		&\leq \frac{\eta^2}{\nu} \int_{0}^{t_n}\left(\int_{0}^{t}(t-s)^{\alpha-1}\|\nabla\rho(s)\|_{\L^2}\, \mathrm{d}s\right)^2 \, \mathrm{d}t + \frac{\nu}{4}\int_0^{t_n} \|\nabla \Upsilon(t)\|^2_{\L^2} \, \mathrm{d}t \nonumber\\
		&\leq \frac{\eta^2 t_n^{\alpha}}{\nu\alpha} \int_{0}^{t_n}(t_n-t)^{\alpha-1}\int_{0}^t \|\nabla\rho(s)\|^2_{\L^2}\, \mathrm{d}s \, \mathrm{d}t + \frac{\nu}{4}\int_0^{t_n} \|\nabla \Upsilon(t)\|^2_{\L^2} \, \mathrm{d}t \nonumber\\
		&\leq \frac{\eta^2 t_n^{2\alpha}}{\nu\alpha^2} \int_{0}^{t_n} \|\nabla \rho(t)\|^2_{\L^2} \, \mathrm{d}t + \frac{\nu}{4}\int_0^{t_n} \|\nabla \Upsilon(t)\|^2_{\L^2} \, \mathrm{d}t.
	\end{align}
	
	Similarly, we can estimate:
	\begin{align}\label{5.17}
		\eta \left(K*\nabla \Upsilon,\nabla \Upsilon \right) &\leq \frac{\eta^2 t_n^{\alpha}}{\nu\alpha} \int_{0}^{t_n}(t_n-t)^{\alpha-1}\int_{0}^t \|\nabla \Upsilon(s)\|^2_{\L^2} \, \mathrm{d}s \, \mathrm{d}t + \frac{\nu}{4}\int_0^{t_n} \|\nabla \Upsilon(t)\|^2_{\L^2} \, \mathrm{d}t.
	\end{align}
	The final result follows by applying the discrete Gronwall inequality (Lemma \ref{5.l2}), provided the condition
	$$
	\frac{2\eta^2 T^{\alpha}}{\nu\alpha^2}k^{\alpha} < 1,
	$$
	is satisfied. It is important to note that this condition is independent of the polynomial degrees $p_n $.
\end{remark}
We will now establish the subsequent bound as follows:
\begin{lemma}\label{5.l2.8}
	For $1\leq n\leq N,$ we achieve
	\begin{align}\label{5..48}
	\nonumber\int_{t_{n-1}}^{t_n}\|\Upsilon'\|^2_{\L^2}(t-t_{n-1})\mathrm{~d}t &\leq C p_n^2\Big(\|U^0-u_0\|_{\L^2} + \int_{0}^{t_n} \|\nabla\rho\|^2_{\L^2} \mathrm{~d}t   + 2\alpha\int_0^{t_n} \big(B(U)-B(u),(t-t_{n-1})\Upsilon'\big) \mathrm{~d}t \\&\quad +2\beta \int_0^{t_n}\big(c(U)-c(u), (t-t_{n-1})\Upsilon'\big) \mathrm{~d}t\Big).
	\end{align}
\end{lemma}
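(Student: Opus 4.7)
The strategy is to test the local DG error equation on $J_n$ with the weighted time-derivative $X = (t - t_{n-1})\Upsilon'$. Since $\Upsilon|_{J_n}\in\mathbb{P}_{p_n}$ implies $\Upsilon'|_{J_n}\in\mathbb{P}_{p_n-1}$, we have $X\in\mathbb{P}_{p_n}$ with values in $\H_0^1$, and the weight forces $X(t_{n-1}^+) = 0$, which kills the jump term $\langle[\Upsilon]^{n-1},X_+^{n-1}\rangle$. Moreover, the interpolation identity in \eqref{5.8} combined with integration by parts, together with $\rho(t_n^-) = 0$ and $X(t_{n-1}^+) = 0$, gives $\int_{J_n}\langle\rho',X\rangle\,\mathrm{d}t = -\int_{J_n}\langle\rho, X'\rangle\,\mathrm{d}t = 0$ (since $X' \in \mathbb{P}_{p_n-1}$), so the time-derivative part of the tested equation reduces exactly to the target $\int_{J_n}(t-t_{n-1})\|\Upsilon'\|_{\L^2}^2\,\mathrm{d}t$.

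Next, I would treat each of the remaining terms in the tested equation. The diffusion of $\Upsilon$, namely $\nu\int_{J_n}(t-t_{n-1})(\nabla\Upsilon,\nabla\Upsilon')\,\mathrm{d}t = \tfrac{\nu}{2}\int_{J_n}(t-t_{n-1})\tfrac{\mathrm{d}}{\mathrm{d}t}\|\nabla\Upsilon\|_{\L^2}^2\,\mathrm{d}t$, is integrated by parts in time to extract the non-negative boundary term $\tfrac{\nu k_n}{2}\|\nabla\Upsilon^n\|^2_{\L^2}$ (simply dropped) plus $-\tfrac{\nu}{2}\int_{J_n}\|\nabla\Upsilon\|^2_{\L^2}\,\mathrm{d}t$, which is absorbed via Lemma~\ref{5.l.4} into $C(\|U^0-u_0\|_{\L^2} + \int_0^{t_n}\|\nabla\rho\|_{\L^2}^2\,\mathrm{d}t)$. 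The interpolation-residual cross term $\nu\int_{J_n}(t-t_{n-1})(\nabla\rho,\nabla\Upsilon')\,\mathrm{d}t$ is split by Cauchy--Schwarz and Young's inequality with the weight $\sqrt{t-t_{n-1}}$ distributed symmetrically, producing a harmless $\int_{J_n}(t-t_{n-1})\|\nabla\rho\|^2$ and a $\int_{J_n}(t-t_{n-1})\|\nabla\Upsilon'\|^2$. The latter is bounded in terms of $\int_{J_n}\|\nabla\Upsilon\|^2$ via a Markov-type inverse inequality for polynomials in $\mathbb{P}_{p_n}$ on $J_n$ (rescaled to the reference interval $[-1,1]$ through $F_n$): this is the source of the $p_n^2$ factor in the statement, and $\int_{J_n}\|\nabla\Upsilon\|^2$ is then again absorbed via Lemma~\ref{5.l.4}. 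The two memory contributions $\eta\int_{J_n}(t-t_{n-1})(K\ast\nabla\Upsilon,\nabla\Upsilon')$ and $\eta\int_{J_n}(t-t_{n-1})(K\ast\nabla\rho,\nabla\Upsilon')$ are handled in exactly the same way, invoking Lemma~\ref{5.l11} to control the convolution. The two nonlinear terms are left on the right-hand side in their original integrated form, as in the statement.

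The main obstacle is the inverse (Markov-type) estimate: a naive unweighted Markov on $\|\Upsilon'\|$ followed by a trivial weight bound would yield a factor $p_n^4$ rather than $p_n^2$. To recover the optimal $p_n^2$, one must appeal to a Jacobi-weighted Markov--Bernstein inequality on $[-1,1]$ after the affine rescaling $F_n$, with the Jacobi weight chosen to match the $(t-t_{n-1})$ factor. A secondary technical issue is arranging the weighted memory term so that Lemma~\ref{5.l11} applies compatibly with this Markov step; since $K$ is only weakly singular, care is needed so that the kernel estimate does not introduce additional $p_n$-dependent factors that would spoil the $p_n^2$ bound.
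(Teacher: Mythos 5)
Your proposal is correct and follows the same overall strategy as the paper: test \eqref{5..31} on $J_n$ with $X=(t-t_{n-1})\Upsilon'$ (which kills the jump term and, via the orthogonality in \eqref{5.8} applied to $X'\in\mathbb{P}_{p_n-1}$, the $\rho$-time-derivative term), integrate the $\nu a(\Upsilon,(t-t_{n-1})\Upsilon')$ term by parts in time and drop the non-negative boundary contribution, control the memory terms through Lemma \ref{5.l11}, leave the nonlinear terms on the right, and absorb every leftover $\int\|\nabla\Upsilon\|_{\L^2}^2$ through Lemma \ref{5.l.4}. The one place you diverge is the mechanism producing the factor $p_n^2$. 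You split the weight $\sqrt{t-t_{n-1}}$ symmetrically in Young's inequality and then invoke a Jacobi-weighted Markov--Bernstein inequality to bound $\int_{J_n}(t-t_{n-1})\|\nabla\Upsilon'\|_{\L^2}^2$ by $Cp_n^2\int_{J_n}\|\nabla\Upsilon\|_{\L^2}^2$; this works, but it imports a tool the paper never needs. The paper instead bounds $(t-t_{n-1})\le k_n$ and chooses the Young weights asymmetrically, as in \eqref{5.53}: the $\|\nabla\Upsilon'\|_{\L^2}^2$ term is given the prefactor $k_n^2p_n^{-2}$ and the $\|\nabla\rho\|_{\L^2}^2$ term the prefactor $p_n^2$, after which the \emph{plain} Markov inverse estimate $\int_{J_n}\|\nabla\Upsilon'\|_{\L^2}^2\le Cp_n^4k_n^{-2}\int_{J_n}\|\nabla\Upsilon\|_{\L^2}^2$ turns the first contribution into $Cp_n^2\int_{J_n}\|\nabla\Upsilon\|_{\L^2}^2$. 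So the ``obstacle'' you identify --- that unweighted Markov forces $p_n^4$ --- is an artifact of the symmetric splitting, not of the inverse estimate itself; balancing Young's inequality already recovers $p_n^2$. Your route buys a slightly sharper intermediate bound (the weight $(t-t_{n-1})$ is retained on $\|\nabla\rho\|^2$), while the paper's route is more elementary and keeps the toolbox to the inverse estimates it has already stated.
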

\begin{proof}
	We choose $X = (t-t_{n-1})\Upsilon' \in \mathbb{P}_{p_n} $ on $J_n$ and zero elsewhere in \eqref{5..31}, we have 
	\begin{align}\label{5.49}
\nonumber&\int_{t_{n-1}}^{t_n}\left[\|\Upsilon'\|^2_{\L^2}(t-t_{n-1}) + \nu a(\Upsilon,(t-t_{n-1})\Upsilon')  + \eta \left(K*\nabla \Upsilon,(t-t_{n-1})\nabla\Upsilon' \right)   \right] \mathrm{~d}t\\\nonumber&= - \int_{t_{n-1}}^{t_n}\Big[\nu a(\rho,(t-t_{n-1})\Upsilon') + \eta \left(K*\nabla \rho, (t-t_{n-1})\nabla\Upsilon' \right)\\&\quad + \alpha(B(U)-B(u),(t-t_{n-1})\Upsilon') - \beta (c(U)-c(u),(t-t_{n-1})\Upsilon') \Big] \mathrm{~d}t.
\end{align}
Using integration by parts, we attain
\begin{align}\label{5.50}
\nonumber	\int_{t_{n-1}}^{t_n}a(\Upsilon,(t-t_{n-1})\Upsilon')\mathrm{~d}t &= \frac{1}{2}\int_{t_{n-1}}^{t_n}(t-t_{n-1})\frac{d}{dt} a(\Upsilon,\Upsilon)\mathrm{~d}t\\& = \frac{k_n}{2}\|\nabla \Upsilon^n\|_{\L^2} - \frac{1}{2}	\int_{t_{n-1}}^{t_n}\|\nabla \Upsilon\|_{\L^2} \mathrm{~d}t.
\end{align}
Substituting the result \eqref{5.50} back to \eqref{5.49}, we get
\begin{align}\label{5.51}
		\int_{t_{n-1}}^{t_n}\|\Upsilon'\|^2_{\L^2}(t-t_{n-1})\mathrm{~d}t & \leq \frac{1}{2}	\int_{t_{n-1}}^{t_n}\|\nabla \Upsilon\|_{\L^2} \mathrm{~d}t + \sum_{i=8}^{10} \mathcal{J}^n_i + 2\alpha\int_0^{t_n} \big(B(U)-B(u),(t-t_{n-1})\Upsilon'\big) \mathrm{~d}t \\&\quad +2\beta \int_0^{t_n}\big(c(U)-c(u), (t-t_{n-1})\Upsilon'\big) \mathrm{~d}t,
	\end{align}
where
\begin{align}\label{5.52}
	\no&\mathcal{J}^n_8 = -\nu\int_{t_{n-1}}^{t_n}a(\rho,(t-t_{n-1})\Upsilon')\mathrm{~d}t,\qquad \quad\qquad \mathcal{J}^n_{9} = -\eta\int_{t_{n-1}}^{t_n}\left(K*\nabla \rho, (t-t_{n-1})\nabla\Upsilon' \right)\mathrm{~d}t,\\& \mathcal{J}^n_{10} = -\eta \int_{t_{n-1}}^{t_n}\left(K*\nabla \Upsilon, (t-t_{n-1})\nabla\Upsilon' \right)\mathrm{~d}t. 
\end{align}
Estimating $\mathcal{J}^n_8$ using Cauchy-Schwarz inequality and the inverse estimate as
\begin{align}\label{5.53}
\no|\mathcal{J}^n_8| &\leq \nu\int_{t_{n-1}}^{t_n}\|\nabla\rho\|_{\L^2}(t-t_{n-1})\|\nabla\Upsilon'\|_{\L^2}\mathrm{~d}t \\&\leq \frac{\nu^2k_n^2p_n^{-2}}{2}\int_{t_{n-1}}^{t_n}\|\nabla\Upsilon'\|^2_{\L^2} \mathrm{~d}t + \frac{p_n^2}{2} \int_{t_{n-1}}^{t_n}\|\nabla\rho\|^2_{\L^2}\mathrm{~d}t\no\\&\leq \frac{\nu^2p_n^{2}}{2}\int_{t_{n-1}}^{t_n}\|\nabla\Upsilon\|^2_{\L^2} \mathrm{~d}t + \frac{p_n^2}{2} \int_{t_{n-1}}^{t_n}\|\nabla\rho\|^2_{\L^2}\mathrm{~d}t.
\end{align}
To estimate $|\mathcal{J}^n_{9}|$ using the Lemma \ref{5.l11} for $T = t_n$ and the inverse inequality as follows
\begin{align}\label{5.54}
	|\mathcal{J}^n_{9}| &\leq \eta\int_{t_{n-1}}^{t_n} \|K*\nabla \rho(s)\|_{\L^2} \mathrm{~d}s ~k_n\|\nabla\Upsilon'\|_{\L^2} \mathrm{~d}t\no
	\\& \leq \left(\int_0^{t_n}\left(\int_0^tK(t-s)\|\nabla \rho(s)\|_{\L^2}\mathrm{~d}s~\right)^2\mathrm{~d}t\right)^{\frac{1}{2}}\left(k_n^2\int_{t_{n-1}}^{t_n} \|\nabla\Upsilon'\|_{\L^2}^2 \mathrm{~d}t\right)^{\frac{1}{2}}\no\\&\leq C_K^{\frac{1}{2}}\left(\int_0^{t_n}\|\nabla\rho(t)\|_{\L^2}^2 \mathrm{~d}t \right)^{\frac{1}{2}}\left(p_n^4\int_{t_{n-1}}^{t_n}\|\nabla \Upsilon\|_{\L^2}^2 \mathrm{~d}t\right)^{\frac{1}{2}}\no\\&\leq C_K^2p_n^2 \int_0^{t_n}\|\nabla\rho(s)\|_{\L^2}^2\mathrm{~d}s + \frac{p_n^2}{2}\int_{t_{n-1}}^{t_n}\|\nabla \Upsilon\|_{\L^2}^2 \mathrm{~d}t.
\end{align}

Similarly, we have 
\begin{align}\label{5.55}
	|\mathcal{J}^n_{10}| \leq C_K^{\frac{1}{2}}\left(\int_0^{t_n}\|\nabla\Upsilon(s)\|_{\L^2}^2 \mathrm{~d}t \right)^{\frac{1}{2}}\left(p_n^4\int_{t_{n-1}}^{t_n}\|\nabla \Upsilon\|_{\L^2}^2 \mathrm{~d}t\right)^{\frac{1}{2}}\leq C_K^{\frac{1}{2}}p_n^2 \int_0^{t_n}\|\nabla\Upsilon\|_{\L^2}^2\mathrm{~d}t.
\end{align}
By combining the estimates \eqref{5.50} through \eqref{5.54} and substituting them into \eqref{5.49}, the desired estimate \eqref{5..48} follows directly.
\end{proof}
It remains to bound the non-linear terms, which can be estimated as follows:
\begin{lemma}\label{5.l2.9} 
	There holds:
	\begin{align}
		&\int_{t_{n-1}}^{t_n} \Big[\alpha(B(U)-B(u),(t-t_{n-1})\Upsilon') - \beta (c(U)-c(u),(t-t_{n-1})\Upsilon') \Big] \\&\leq Cp_n^2\bigg(\int_{t_{n-1}}^{t_n}\|\nabla \rho\|_{\L^2}^2\mathrm{~d}t +\int_{t_{n-1}}^{t_n}\|\nabla e\|_{\L^2}^2\mathrm{~d}t \bigg).
	\end{align}
\end{lemma}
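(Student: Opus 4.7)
The plan is to mimic the strategy of Lemma~\ref{5.l4}, but now use polynomial inverse inequalities to absorb the extra derivative on $\Upsilon'$ at the cost of a $p_n^2$ factor, exactly as in the proof of Lemma~\ref{5.l2.8}. I handle the advection contribution and the reaction contribution separately, in each case first linearizing the nonlinear difference via integration by parts together with Taylor's formula, and then applying Young's inequality with the carefully chosen weight $\lambda=p_n^2$.

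For the advection piece, integration by parts in space followed by the Taylor identity $U^{\delta+1}-u^{\delta+1}=(\delta+1)(\zeta U+(1-\zeta)u)^{\delta}e$ gives
\begin{equation*}
\alpha\bigl(B(U)-B(u),(t-t_{n-1})\Upsilon'\bigr)
= \alpha\sum_{i=1}^d\bigl((\zeta U+(1-\zeta)u)^{\delta}e,\,(t-t_{n-1})\partial_{x_i}\Upsilon'\bigr).
\end{equation*}
H\"older's inequality together with the Sobolev embedding $\H_0^1\hookrightarrow\L^4$ (valid for $d\le 3$) bounds the integrand by $C(\|U\|_{\L^{4\delta}}^{\delta}+\|u\|_{\L^{4\delta}}^{\delta})\,\|\nabla e\|_{\L^2}(t-t_{n-1})\|\nabla\Upsilon'\|_{\L^2}$. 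Young's inequality with weight $p_n^2$, exactly as in the treatment of $\mathcal{J}_8^n$ in \eqref{5.53}, produces $Cp_n^2\|\nabla e\|_{\L^2}^2+Ck_n^2p_n^{-2}\|\nabla\Upsilon'\|_{\L^2}^2$. The polynomial inverse estimate $\int_{\J_n}\|\nabla\Upsilon'\|_{\L^2}^2\le Cp_n^4k_n^{-2}\int_{\J_n}\|\nabla\Upsilon\|_{\L^2}^2$ then turns the second summand into $Cp_n^2\int_{\J_n}\|\nabla\Upsilon\|_{\L^2}^2$, and $\|\nabla\Upsilon\|\le\|\nabla e\|+\|\nabla\rho\|$ delivers the desired right-hand side.

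For the reaction piece I decompose $c(U)-c(u)$ exactly as in \eqref{5.24} into a linear piece $-\gamma e$, a piece $(1+\gamma)(U^{\delta+1}-u^{\delta+1})$, and a piece $-(U^{2\delta+1}-u^{2\delta+1})$. The two nonlinear pieces are linearized by Taylor's formula, producing factors $(\zeta U+(1-\zeta)u)^{\delta}$ and $(\zeta U+(1-\zeta)u)^{2\delta}$ multiplying $e$, and are then bounded by the same H\"older--Sobolev--Young chain as above, this time using $\|\Upsilon'\|_{\L^4}\le C\|\nabla\Upsilon'\|_{\L^2}$. The linear piece is handled by Cauchy--Schwarz followed by Young with weight $p_n^2$, giving $\tfrac{p_n^2}{2}\|e\|_{\L^2}^2+\tfrac{k_n^2p_n^{-2}}{2}\|\Upsilon'\|_{\L^2}^2$; the inverse estimate plus Poincar\'e completes this term. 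The $\L^{4\delta}$-type factors $\|U\|_{\L^{4\delta}}^{2\delta}$ and $\|u\|_{\L^{4\delta}}^{2\delta}$ appearing as coefficients are absorbed into $C$ using the regularity $u\in\L^{2(2\delta+1)}(\J_j;\L^{2(2\delta+1)}(\Omega))$ recalled before Lemma~\ref{5.l4}, together with the analogous a priori stability of $U$.

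The main obstacle is bookkeeping: one must pick Young's weight so that, after multiplying by the Markov-type inverse factor $p_n^4 k_n^{-2}$, the power of $p_n$ on the $\nabla\Upsilon$ side is exactly $p_n^2$ and no larger. This forces the uniform choice $\lambda=p_n^2$ at every application of Young's inequality, matching the calibration already used in \eqref{5.53}--\eqref{5.55}. A secondary care point is to verify that the $\L^{4\delta}$-norms of $U$ and $u$ that appear as coefficients can be treated as $\L^\infty(\J_n)$ bounded, which again follows from the stated regularity and the stability of the DG approximation on each subinterval.
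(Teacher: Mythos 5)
Your proof is correct, but it follows a genuinely different route from the paper's. The paper first rewrites $(t-t_{n-1})\Upsilon' = (t-t_{n-1})e' - (t-t_{n-1})\rho'$ and estimates the four resulting pairings ($\mathcal{J}^n_{11}$ through the two versions of $\mathcal{J}^n_{13}$) separately; in doing so it applies Markov-type inverse inequalities to $e'$ and $\rho'$ (e.g.\ passing from $p_n^{-2}\int_{\J_n}(t-t_{n-1})\|\nabla e'\|_{\L^2}^2\,\mathrm{d}t$ to $p_n^{2}\int_{\J_n}\|\nabla e\|_{\L^2}^2\,\mathrm{d}t$), even though neither $e=U-u$ nor $\rho=\Pi u-u$ is a piecewise polynomial in time, so those steps require additional justification that the paper does not supply. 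It also handles the linear reaction contribution by integrating $-\beta\gamma(e,(t-t_{n-1})e')$ by parts in time, which produces the boundary term $k_n\beta\gamma\|e^n\|_{\L^2}^2$ that must then be carried to the left-hand side. You keep $\Upsilon'$ intact, apply the inverse estimate $\int_{\J_n}\|\nabla\Upsilon'\|_{\L^2}^2\,\mathrm{d}t\le Cp_n^4k_n^{-2}\int_{\J_n}\|\nabla\Upsilon\|_{\L^2}^2\,\mathrm{d}t$ only to the genuine polynomial $\Upsilon$, and only at the very end convert $\|\nabla\Upsilon\|_{\L^2}\le\|\nabla e\|_{\L^2}+\|\nabla\rho\|_{\L^2}$; this reaches the same right-hand side with the same $p_n^2$ calibration while avoiding both the questionable inverse estimates and the time integration by parts. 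The one point you share with the paper, and should state explicitly rather than defer, is that the coefficients $\|U\|_{\L^{4\delta}}^{\delta}$, $\|u\|_{\L^{4\delta}}^{\delta}$ (and their squares) are uniformly bounded on $[0,T]$; both arguments absorb these into $C$ by appealing to the regularity of $u$ and the a priori stability of $U$, and that is where the hidden constant's dependence on the data enters.
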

\begin{proof}
	The decomposition \eqref{5.10} we have $e = \Upsilon + \rho$ can be rewritten as $\Upsilon = e-\rho$ or  more preciely, we are going to use $(t-t_{n-1})\Upsilon' = (t-t_{n-1})e'-(t-t_{n-1})\rho'$ as follow
	\begin{align}\label{5.57.1}
		&\int_{t_{n-1}}^{t_n} \alpha(B(U)-B(u),(t-t_{n-1})\Upsilon') - \beta (c(U)-c(u),(t-t_{n-1})\Upsilon') \Big] \mathrm{~d}t\no\\&=   \alpha\int_{t_{n-1}}^{t_n}(B(U)-B(u),(t-t_{n-1})e') \mathrm{~d}t  - \alpha\int_{t_{n-1}}^{t_n}(B(U)-B(u),(t-t_{n-1})\rho') \mathrm{~d}t\no\\&\quad- \beta\int_{t_{n-1}}^{t_n}\big(c(U)-c( u),(t-t_{n-1}) e'\big) \mathrm{~d}t + \beta\int_{t_{n-1}}^{t_n} \big(c(U)-c(u),(t-t_{n-1})\rho'\big) \mathrm{~d}t.
	\end{align}

	\begin{align}\label{5.58.1}
		|\mathcal{J}^n_{11}|= 	&\bigg|\int_{t_{n-1}}^{t_n}\alpha(B(U)-B( u),(t-t_{n-1}) e')\mathrm{~d}t\bigg|\\&\leq \frac{\nu p_n^{-2} }{4}\int_{t_{n-1}}^{t_n}(t-t_{n-1})\|\nabla e'\|_{\L^2}^2\mathrm{~d}t+C(\alpha,\nu)p_n^2\int_{t_{n-1}}^{t_n}\left(\|U\|^{\frac{8\delta}{4-d}}_{\L^{4\delta}}+\|u\|^{\frac{8\delta}{4-d}}_{\L^{4\delta}}\right)\|e\|_{\L^2}^2\mathrm{~d}t\\&\leq \frac{\nu p_n^{2} }{4}\int_{t_{n-1}}^{t_n}\|\nabla e\|_{\L^2}^2\mathrm{~d}t+C(\alpha,\nu)p_n^2\int_{t_{n-1}}^{t_n}\left(\|U\|^{\frac{8\delta}{4-d}}_{\L^{4\delta}}+\|u\|^{\frac{8\delta}{4-d}}_{\L^{4\delta}}\right)\|e\|_{\L^2}^2\mathrm{~d}t,
	\end{align}
	for $C(\alpha, \nu) = \left(\frac{4+d}{\nu}\right)^{\frac{4+d}{4-d}}\left(\frac{4-d}{8}\right)(2^{\delta-1}\alpha)^{\frac{8}{4-d}}$. On the similar lines we get 
	\begin{align}\label{5.59.1}
		|\mathcal{J}^n_{12}|=& \bigg|\int_{t_{n-1}}^{t_n}\alpha(B(U)-B( u),(t-t_{n-1}) \rho')\mathrm{~d}t\bigg|\\&\leq \frac{\nu p_n^{2} }{4}\int_{t_{n-1}}^{t_n}\|\nabla \rho\|_{\L^2}^2\mathrm{~d}t+C(\alpha,\nu)p_n^2\int_{t_{n-1}}^{t_n}\left(\|U\|^{\frac{8\delta}{4-d}}_{\L^{4\delta}}+\|u\|^{\frac{8\delta}{4-d}}_{\L^{4\delta}}\right)\|e\|_{\L^2}^2\mathrm{~d}t.
	\end{align}
	It can be easily seen that the term $\beta(c(U)-c(u),(t-t_{n-1})e')$ in \eqref{5.57.1} can be expended as
	\begin{align}\label{5.60.1}
		\no	\mathcal{J}^n_{13}= \beta(c(U)-c(u),(t-t_{n-1})e')&=-\beta\gamma(e,(t-t_{n-1})e')-\beta(U^{2\delta+1}-(u)^{2\delta+1},(t-t_{n-1})e')\\&\quad+\beta(1+\gamma)(U^{1+\delta}-(u)^{\delta+1},(t-t_{n-1})e').
	\end{align}
	The first term of \eqref{5.60.1} be estimated as 
	\begin{align}\label{5.61.1}
		&-\beta\gamma\int_{t_{n-1}}^{t_n}(e,(t-t_{n-1})e') \mathrm{~d}t= -\frac{\beta\gamma}{2}\int_{t_{n-1}}^{t_n}  (t-t_{n-1})\frac{d}{dt}\|e\|^2 \mathrm{~d}t= -k_n\beta\gamma\|e^n\|^2 +\beta \gamma \int_{t_{n-1}}^{t_n}\|e\|^2\mathrm{~d}t.
	\end{align}
	To estimate the term $-\beta(U^{2\delta+1}-u^{2\delta+1},(t-t_{n-1})e')$ we employ Taylor's expansion, H\"older's inequality, Gagliardo-Nirenberg inequalities, Young's inequalities followed by inverse estimate as 
	\begin{align}\label{5.62.1}
	&\big|(2\delta+1)\beta\int_{0}^{t_n}\left((\Upsilon U+(1-\Upsilon)u)^{2\delta}e,(t-t_{n-1})e'\right)\mathrm{~d}t\big|\nonumber\\&\leq 2^{2\delta-1}(2\delta+1)\beta\int_{0}^{t_n}\left(( |U|^{2\delta}+|u|^{2\delta})e,(t-t_{n-1})e'\right)\mathrm{~d}t\nonumber\\&\leq  2^{2\delta-1}(2\delta+1)\beta\int_{0}^{t_n}\left(\|U\|_{\L^{4\delta}}^{2\delta}+\|u\|_{\L^{4\delta}}^{2\delta}\right)\|e\|_{\L^{\frac{2d}{d-1}}}\|(t-t_{n-1})e'\|_{\L^{2d}}\mathrm{~d}t\nonumber\\&\leq  2^{2\delta-1}(2\delta+1)\beta\int_{0}^{t_n}\left(\|U\|_{\L^{4\delta}}^{2\delta}+\|u\|_{\L^{4\delta}}^{2\delta}\right)\|e\|_{\L^2}^{\frac{1}{2}}\|\nabla e\|_{\L^2}^{\frac{1}{2}}\|(t-t_{n-1})e'\|_{\L^{2d}}\mathrm{~d}t	\nonumber\\&\leq 2^{2\delta-1}(2\delta+1)\beta p_n^{-2}k_n^2\int_{0}^{t_n}\|\nabla e'\|_{\L^2}^2 \mathrm{~d}t+ \frac{\nu}{4}\int_{0}^{t_n}\|\nabla e\|_{\L^2}^2 \mathrm{~d}t + \frac{2^{4\delta-2}(2\delta+1)^2\beta^2}{\nu}\int_{0}^{t_n}\left(\|U\|_{\L^{4\delta}}^{8\delta}+\|u\|_{\L^{4\delta}}^{8\delta}\right)\|e\|_{\L^2}^2\mathrm{~d}t\no\\&\leq \left(2^{2\delta-2}(2\delta+1)\beta p_n^{2}+\frac{\nu}{4}\right)\int_{t_{n-1}}^{t_n}\|\nabla e\|_{\L^2}^2 + \frac{2^{4\delta-4}(2\delta+1)^2\beta^2p_n^2}{\nu}\int_{t_{n-1}}^{t_n}\left(\|U\|_{\L^{4\delta}}^{8\delta}+\|u\|_{\L^{4\delta}}^{8\delta}\right)\|e\|_{\L^2}^2\mathrm{~d}t.
\end{align}
	Estimating $\beta(1+\gamma)(U^{\delta+1}-u^{\delta+1},(t-t_{n-1})e')$ using Taylor's formula, inverse estimate, H\"older's and Young's inequalities  as
	\begin{align}\label{5.63.1}
		&\no	\beta(1+\gamma)\int_{t_{n-1}}^{t_n}(U^{\delta+1}-u^{\delta+1},(t-t_{n-1})e')\mathrm{~d}t\\&
		\leq \frac{\nu p_n^{2} }{4}\int_{t_{n-1}}^{t_n}\|\nabla e\|_{\L^2}^2\mathrm{~d}t+C(\alpha,\beta,\gamma)p_n^2\int_{t_{n-1}}^{t_n}\left(\|U\|^{\frac{8\delta}{4-d}}_{\L^{4\delta}}+\|u\|^{\frac{8\delta}{4-d}}_{\L^{4\delta}}\right)\|e\|_{\L^2}^2\mathrm{~d}t.
	\end{align}
	Substituting \eqref{5.61.1}-\eqref{5.63.1} into \eqref{5.60.1}, we attain
	\begin{align}\label{5.64}
		\no \mathcal{J}^n_{13}=&\beta(c(U)-c(u),(t-t_{n-1})e')+k_n\beta\gamma\|e^n\|^2\\&\leq C p_n^2\int_{t_{n-1}}^{t_n}\|\nabla e\|_{\L^2}^2\mathrm{~d}t + C(\alpha,\beta,\gamma)p_n^2\int_{t_{n-1}}^{t_n}\left(\|U\|^{\frac{8\delta}{4-d}}_{\L^{4\delta}}+\|u\|^{\frac{8\delta}{4-d}}_{\L^{4\delta}}\right)\|e\|_{\L^2}^2\mathrm{~d}t \no \\&\quad + \frac{2^{4\delta-4}(2\delta+1)^2\beta^2}{\nu}\int_{t_{n-1}}^{t_n}\left(\|U\|_{\L^{4\delta}}^{8\delta}+\|u\|_{\L^{4\delta}}^{8\delta}\right)\|e\|_{\L^2}^2\mathrm{~d}t.
	\end{align}
using the inverse estimate, Poincar\'e  inequality and Young's inequalities
	\begin{align}\label{5.61.2}
\beta\gamma\int_{t_{n-1}}^{t_n}(e,(t-t_{n-1})\rho') \mathrm{~d}t&\leq \beta\gamma\int_{t_{n-1}}^{t_n}\|e\|_{\L^2}\|(t-t_{n-1})\rho'\|_{\L^2} \mathrm{~d}t\\&\leq   \frac{k_n^2p_n^{-2}}{2}\int_{t_{n-1}}^{t_n}\|\rho'\|^2_{\L^2} \mathrm{~d}t + \beta^2\gamma^2p_n^2\int_{t_{n-1}}^{t_n}\|e\|^2_{\L^2} \mathrm{~d}t\\&\leq C\left(\frac{p_n^2}{2}\int_{t_{n-1}}^{t_n}\|\nabla \rho\|^2_{\L^2} \mathrm{~d}t + \beta^2\gamma^2p_n^2\int_{t_{n-1}}^{t_n}\|\nabla e\|^2_{\L^2} \mathrm{~d}t\right).
\end{align}
	Using the analogues results as in \eqref{5.64}, we have 
	\begin{align}\label{5.65}
		\no \mathcal{J}^n_{13}=&\beta(c(U)-c(u),(t-t_{n-1})\rho')\\&\leq C p_n^2\int_{t_{n-1}}^{t_n}\|\nabla \rho\|_{\L^2}^2\mathrm{~d}t + C(\alpha,\beta,\gamma)p_n^2\int_{t_{n-1}}^{t_n}\left(\|U\|^{\frac{8\delta}{4-d}}_{\L^{4\delta}}+\|u\|^{\frac{8\delta}{4-d}}_{\L^{4\delta}}\right)\|e\|_{\L^2}^2\mathrm{~d}t \no \\&\quad + \frac{2^{4\delta-4}(2\delta+1)^2\beta^2p_n^2}{\nu}\int_{t_{n-1}}^{t_n}\left(\|U\|_{\L^{4\delta}}^{8\delta}+\|u\|_{\L^{4\delta}}^{8\delta}\right)\|e\|_{\L^2}^2\mathrm{~d}t.
	\end{align}
	Finally from the estmates of $\mathcal{J}^n_i ~(i = 10,\cdots,13)$, we obtain 
	\begin{align}
		&\int_{t_{n-1}}^{t_n} \Big[\alpha(B(U)-B(u),(t-t_{n-1})\Upsilon') - \beta (c(U)-c(u),(t-t_{n-1})\Upsilon') \Big] + k_n\beta\gamma\|e^n\|^2 \mathrm{~d}t \\&\leq Cp_n^2\bigg(\int_{t_{n-1}}^{t_n}\|\nabla \rho\|_{\L^2}^2\mathrm{~d}t +\int_{t_{n-1}}^{t_n}\|\nabla e\|_{\L^2}^2\mathrm{~d}t  +  \int_{t_{n-1}}^{t_n}\left(\|U\|^{\frac{8\delta}{4-d}}_{\L^{4\delta}}+\|u\|^{\frac{8\delta}{4-d}}_{\L^{4\delta}}\right)\|e\|_{\L^2}^2\mathrm{~d}t \no \\&\quad + \int_{t_{n-1}}^{t_n}\left(\|U\|_{\L^{4\delta}}^{8\delta}+\|u\|_{\L^{4\delta}}^{8\delta}\right)\|e\|_{\L^2}^2\mathrm{~d}t\bigg).
	\end{align}
	By utilizing the regularity of the solutions $u$ and $U$ along with the Poincar\'e  inequality, we can derive the desired result.
\end{proof}

Further we are getting the following bound
\begin{lemma}\label{5.l2.13}
	For $1\leq n\leq N,$ we achieve
	\begin{align}\label{5.100}
	\int_{t_{n-1}}^{t_n}\|\Upsilon'\|^2_{\L^2}(t-t_{n-1})\mathrm{~d}t &\leq C p_n^2\Big(\|U^0-u_0\|_{\L^2} + \int_{0}^{t_n} \|\nabla\rho\|^2_{\L^2} \mathrm{~d}t\Bigg).
	\end{align}
\end{lemma}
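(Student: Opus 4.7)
The plan is to chain together Lemma \ref{5.l2.8}, Lemma \ref{5.l2.9}, and Theorem \ref{5.th2} in that order. Lemma \ref{5.l2.8} already provides the desired estimate with two extra nonlinear remainder terms on the right-hand side, namely
\begin{align*}
2\alpha\int_0^{t_n}\bigl(B(U)-B(u),(t-t_{n-1})\Upsilon'\bigr)\mathrm{~d}t+2\beta\int_0^{t_n}\bigl(c(U)-c(u),(t-t_{n-1})\Upsilon'\bigr)\mathrm{~d}t.
\end{align*}
Since $\Upsilon'$ vanishes outside $J_n$ for the test function used in Lemma \ref{5.l2.8}, these integrals are effectively over $J_n$, so they are exactly the quantities controlled by Lemma \ref{5.l2.9}.

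The first step is to substitute the bound from Lemma \ref{5.l2.9} into Lemma \ref{5.l2.8}. This yields
\begin{align*}
\int_{t_{n-1}}^{t_n}\|\Upsilon'\|^2_{\L^2}(t-t_{n-1})\mathrm{~d}t \leq C p_n^2\Big(\|U^0-u_0\|_{\L^2} + \int_{0}^{t_n} \|\nabla\rho\|^2_{\L^2} \mathrm{~d}t + \int_{t_{n-1}}^{t_n}\|\nabla e\|^2_{\L^2}\mathrm{~d}t\Big),
\end{align*}
so everything is reduced to absorbing $\int_{t_{n-1}}^{t_n}\|\nabla e\|^2_{\L^2}\mathrm{~d}t$ into the right-hand side.

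For the second step, I would enlarge the interval of integration from $J_n$ to $[0,t_n]$ and invoke Theorem \ref{5.th2}, which gives precisely
\begin{align*}
\int_0^{t_n}\|\nabla e\|^2_{\L^2}\mathrm{~d}t \leq C\bigl(\|U^0-u_0\|^2_{\L^2}+\int_0^{t_n}\|\nabla\rho\|^2_{\L^2}\mathrm{~d}t\bigr).
\end{align*}
Plugging this into the previous inequality and collecting constants yields the claimed bound \eqref{5.100}.

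I do not anticipate any serious obstacle: the nonlinear terms have already been handled in Lemma \ref{5.l2.9} through Taylor expansion, H\"older/Gagliardo--Nirenberg and inverse estimates, and the $\L^2(0,t_n;\H^1_0)$-control on the full error $e$ is exactly the semi-discrete result of Theorem \ref{5.th2}. The only mild subtlety is making sure the $p_n^2$ factor is not further inflated: Lemma \ref{5.l2.8} already carries the $p_n^2$, Lemma \ref{5.l2.9} keeps the same $p_n^2$, and Theorem \ref{5.th2} introduces no additional polynomial-degree weight, so the final constant $C$ is independent of $k_n$ and $p_n$ (depending only on $T$, the regularity of $u$, the kernel constant $C_K$, and the problem parameters $\alpha,\beta,\gamma,\delta,\nu,\eta$).
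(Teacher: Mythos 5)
Your proposal is correct and follows essentially the same route as the paper: the paper's proof likewise substitutes Lemma \ref{5.l2.9} into Lemma \ref{5.l2.8} and then absorbs the remaining $\int_{t_{n-1}}^{t_n}\|\nabla e\|_{\L^2}^2\,\mathrm{d}t$ term via the energy estimate of Lemma \ref{5.l.4} (of which your Theorem \ref{5.th2} is the immediate triangle-inequality consequence). Your remark that the test function $(t-t_{n-1})\Upsilon'$ is supported in $J_n$, so the nonlinear integrals reduce to $J_n$, correctly reconciles the two lemmas.
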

\begin{proof}
	By an application of Lemma \ref{5.l2.9} in Lemma \ref{5.l2.8}, we obtain:
\begin{align}\label{5.101}
	\int_{t_{n-1}}^{t_n}\|\Upsilon'\|^2_{\L^2}(t-t_{n-1})\mathrm{~d}t &\leq C p_n^2\Big(\|U^0-u_0\|_{\L^2} + \int_{0}^{t_n} \|\nabla\rho\|^2_{\L^2} \mathrm{~d}t  +\int_{t_{n-1}}^{t_n}\|\nabla e\|_{\L^2}^2\mathrm{~d}t\Big).
\end{align}
	Finally, using the estimate for $\int_{t_{n-1}}^{t_n}\|\nabla e\|_{\L^2}^2\mathrm{~d}t$ from Lemma \ref{5.l.4}, we conclude the proof.
\end{proof}
Let us now derive the following bound for $\Upsilon = U-\Pi u$ as
\begin{theorem}\label{5.lem12}
	For $1\leq n\leq N$, we have 
	\begin{align}\label{5.supnorm}
		\|\Upsilon\|_{I_n}^2\leq C \log(|\textbf{p}|_n+2)|\textbf{p}|_n^2\left(\|U^0-u_0\|_{\L^2}^2 + \int_{0}^{t_n}\|\nabla\rho\|_{\L^2}^2\mathrm{~d}t \right),
	\end{align}
and consequently the following estimate holds
\begin{align}
	\|U-u\|_{I_n}^2 \leq C \|u-\Pi u\|_{I_n}^2 + C \log(|\textbf{p}|+2)|\textbf{p}|_n^2\sum_{n=1}^N \hat{p_j}^{-2}\left(\frac{k_j}{2}\right)^{2q_j+2} {\Gamma_{p_j, q_j}} \int_{t_{j-1}}^{t_j}\left\|u^{\left(q_j+1\right)}\right\|_{\H_0^1}^2 \mathrm{~d}t,
\end{align}
where the norm $\H^1_{I_n}$, for $I_n = (0,t_n]$ is defined as $\|\phi\|_{I_n} = \sup\limits_{t\in I_n}\|\phi(t)\|_{\L^2}$ and $
|\textbf{p}|_n := \max \left\{ \max\limits_{j=1}^n p_j, 1 \right\}.
$
\end{theorem}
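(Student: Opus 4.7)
The plan is to apply the inverse estimate of Lemma \ref{5.leminv} on each subinterval and then plug in the bounds on $\|\Upsilon^n\|_{\L^2}^2$ and $\int_{t_{n-1}}^{t_n}\|\Upsilon'\|_{\L^2}^2(t-t_{n-1})\mathrm{d}t$ that have already been established in Lemmas \ref{5.l.4} and \ref{5.l2.13}, respectively. Since both of these right-hand sides are monotone (the integrals extend over $[0,t_j]$ and thus are bounded by the same integrals over $[0,t_n]$), aggregating across the subintervals will be routine.

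Concretely, I would first fix $j\in\{1,\ldots,n\}$ and apply Lemma \ref{5.leminv} to $\Upsilon\in \mathcal{S}(\mathcal{M},\mathbf{p})$ on $J_j$, which yields a bound of the form
\begin{align*}
\|\Upsilon\|_{J_j}^2 \leq C\Bigl(\log(p_j+2)\int_{t_{j-1}}^{t_j}\|\Upsilon'\|_{\L^2}^2(t-t_{j-1})\mathrm{~d}t \,+\, \|\Upsilon^j\|_{\L^2}^2\Bigr).
\end{align*}
Substituting \eqref{5.100} for the first summand and the estimate of Lemma \ref{5.l.4} for the second, both right-hand sides are dominated by $C\,p_j^2\bigl(\|U^0-u_0\|_{\L^2}^2 + \int_0^{t_n}\|\nabla\rho\|_{\L^2}^2\mathrm{~d}t\bigr)$. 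Taking the supremum $\|\Upsilon\|_{I_n}^2 = \max_{1\leq j\leq n}\|\Upsilon\|_{J_j}^2$ and using $p_j\leq |\mathbf{p}|_n$ together with the monotonicity of $\log(\cdot +2)$ produces the factor $\log(|\mathbf{p}|_n+2)|\mathbf{p}|_n^2$ and hence \eqref{5.supnorm}.

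For the consequence, I would split $U-u=\Upsilon+\rho$ and estimate the two pieces separately by the triangle inequality. The bound on $\|\Upsilon\|_{I_n}$ is the one just obtained, while $\|\rho\|_{I_n}=\|u-\Pi u\|_{I_n}$ is bounded by summing the subinterval estimate of Theorem \ref{5.th1}(iii) across $j=1,\ldots,n$. The $\int_0^{t_n}\|\nabla\rho\|_{\L^2}^2\mathrm{~d}t$ term appearing inside \eqref{5.supnorm} is then absorbed using Theorem \ref{5.th1}(ii) applied on each $J_j$, giving the stated sum over $j$ with the weights $\hat p_j^{-2}(k_j/2)^{2q_j+2}\Gamma_{p_j,q_j}$; Poincaré's inequality converts the $\L^2$ norms of the time derivative of $u$ to the $\H_0^1$ norms appearing on the right-hand side.

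The main obstacle is bookkeeping: one must track the logarithmic and polynomial factors produced by the inverse estimate uniformly in $n$ and verify that the constants remain independent of the $k_j$ and $p_j$. A related subtlety is that the initial-data error and the running $\rho$-integral appear in both \eqref{5.100} and Lemma \ref{5.l.4} with the same structure, so no additional Grönwall argument is needed at this stage — the estimate is purely algebraic once \eqref{5.100} is in hand. Everything else reduces to taking maxima over $j$ and applying Theorem \ref{5.th1}.
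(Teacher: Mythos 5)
Your proposal is correct and follows essentially the same route as the paper: apply the inverse estimate of Lemma \ref{5.leminv} on each $J_j$, insert the bounds from Lemma \ref{5.l.4} and Lemma \ref{5.l2.13}, take the maximum over $j$ using $p_j\leq|\mathbf{p}|_n$ and the monotonicity of the right-hand side in $j$, and obtain the second estimate by the triangle inequality for $U-u=\Upsilon+\rho$ combined with the approximation results of Theorem \ref{5.th1}. (Your version of the inverse estimate with squared norms on the right-hand side is in fact the dimensionally consistent form of Lemma \ref{5.leminv}, so no issue there.)
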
 
\begin{proof}
	The first inequality follows using the inverse inequality stated in Lemma \ref{5.leminv} and the results derived in Lemma \ref{5.l.4} and Lemma \ref{5.lem12}, we obtain for $1\leq j\leq n\leq N$, 
	\begin{align}
		\|\Upsilon\|_{I_j}^2& \leq C \left(\log(p_j+2)\int_{t_{j-1}}^{t_j}\|\Upsilon'\|_{\L^2}(t-t_{j-1}) \mathrm{~d}t + \|\Upsilon^j\|_{\L^2}\right)\\&\leq C\log(p_j+2) \left(p_j^2\|U^0-u_0\|_{\L^2}^2 + p_j^2 \int_{0}^{t_j}\|\nabla\rho\|_{\L^2}^2\mathrm{~d}t\right)\\&\leq C \log(|\textbf{p}|_n+2)|\textbf{p}|_n^2\left(\|U^0-u_0\|_{\L^2}^2 + \int_{0}^{t_j}\|\nabla\rho\|_{\L^2}^2\mathrm{~d}t \right).
	\end{align} 
	Note that the right-hand side is independent of the time level $j$. Therefore, the desired result follows in the norm defined in Theorem \eqref{5.lem12}.
	Further the second estimate follows using triangle inequality and the estimate proved in \eqref{5.supnorm}.
\end{proof}
Let us now combine the results obtained above to derive explicit $hp$-version error estimates in terms of the step-size $k_j$, the polynomial degree $p_j$, and the regularity parameter $q_j$.
 \begin{theorem}\label{5..th2}
 	Let $u$ be the exact solution and let $U$ be the approximated solution defined by \eqref{5.DGS}, for $1\leq n \leq N, 0\leq q_j \leq p_j,$ and $u\in \H^{q_j+1}(J_j; \H_0^1),$ we have 
 	\begin{align}
 		\nonumber&\int_0^{t_n} \|\nabla(U-u)\|_{\L^2}^2 \mathrm{~d}t +\|(U-u)^n\|_{\L^2}^2 \leq C \sum_{j=1}^n \hat{p}_j^{-2}\left(\frac{k_j}{2}\right)^{2q_j+2} {\Gamma_{p_j, q_j}} \int_{t_{j-1}}^{t_j}\left\|\nabla u^{\left(q_j+1\right)}\right\|_{\L^2}^2 \mathrm{~d}t.
 	\end{align}
 \begin{align}
 	\|U-u\|_{I_n}^2 &\leq C \max_{j=1}^n\left(\frac{k_j}{2}\right)^{2q_j+1}{\Gamma_{p_j, q_j}} \int_{t_{j-1}}^{t_j}\left\|u^{\left(q_j+1\right)}\right\|_{\H_0^1}^2 \mathrm{~d}t \\&\quad+ C \log(|\textbf{p}|_n+2)|\textbf{p}|_n^2\sum_{j=1}^n \hat{p}_j^{-2}\left(\frac{k_j}{2}\right)^{2q_j+2} {\Gamma_{p_j, q_j}} \int_{t_{j-1}}^{t_j}\left\|\nabla u^{\left(q_j+1\right)}\right\|_{\L^2}^2 \mathrm{~d}t,
 \end{align}
where we define $\hat{p}_j \coloneqq \max\{1,p_j\}.$
 \end{theorem}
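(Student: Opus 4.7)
The plan is to combine the a priori bounds already established in Theorem \ref{5.th2} and Theorem \ref{5.lem12}, which express the error in terms of $\|U^0-u_0\|_{\L^2}^2$ and $\int_0^{t_n}\|\nabla \rho\|_{\L^2}^2\mathrm{~d}t$, with the $hp$-approximation estimates for $\rho = \Pi u - u$ provided by Theorem \ref{5.th1}. Under the standing assumption that $U^0$ is chosen so that $\|U^0-u_0\|_{\L^2}$ is controlled at least as well as the interpolation error (or can be absorbed), the whole task reduces to inserting the elementwise interpolation bounds into the abstract estimates.

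For the first inequality I would start from the energy estimate of Theorem \ref{5.th2}:
\[
\int_0^{t_n}\|\nabla(U-u)\|_{\L^2}^2\mathrm{~d}t + \|(U-u)^n\|_{\L^2}^2 \leq C\bigg(\|U^0-u_0\|_{\L^2}^2 + \int_0^{t_n}\|\nabla \rho\|_{\L^2}^2\mathrm{~d}t\bigg).
\]
Splitting $\int_0^{t_n}\|\nabla\rho\|_{\L^2}^2\mathrm{~d}t = \sum_{j=1}^n \int_{t_{j-1}}^{t_j}\|\nabla\rho\|_{\L^2}^2\mathrm{~d}t$ and applying Theorem \ref{5.th1}(ii) on each sub-interval $J_j$ with the prescribed regularity index $q_j$ yields
\[
\int_{t_{j-1}}^{t_j}\|\nabla\rho\|_{\L^2}^2\mathrm{~d}t \leq \frac{C}{\hat p_j^{2}}\bigg(\frac{k_j}{2}\bigg)^{2q_j+2}\Gamma_{p_j,q_j}\int_{t_{j-1}}^{t_j}\|\nabla u^{(q_j+1)}\|_{\L^2}^2\mathrm{~d}t.
\]
Summing over $j$ from $1$ to $n$ delivers the first claim directly.

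For the sup-norm estimate, the triangle inequality gives $\|U-u\|_{I_n}\leq \|U-\Pi u\|_{I_n} + \|\Pi u - u\|_{I_n}$, so I would bound the two contributions separately. The first is handled by the already-proved Theorem \ref{5.lem12}, whose right-hand side involves the same $\int_0^{t_n}\|\nabla\rho\|_{\L^2}^2\mathrm{~d}t$ that was just estimated; inserting the elementwise bound from Theorem \ref{5.th1}(ii) produces exactly the second summand in the claimed inequality, with the $\log(|\mathbf{p}|_n+2)|\mathbf{p}|_n^2$ prefactor retained from Lemma \ref{5.leminv}. The remaining piece $\|\Pi u - u\|_{I_n} = \max_{1\leq j\leq n}\|\Pi u - u\|_{J_j}$ is controlled by Theorem \ref{5.th1}(iii), which produces the first summand
\[
\|\Pi u - u\|_{I_n}^2 \leq C\max_{1\leq j\leq n}\bigg(\frac{k_j}{2}\bigg)^{2q_j+1}\Gamma_{p_j,q_j}\int_{t_{j-1}}^{t_j}\|u^{(q_j+1)}\|_{\H_0^1}^2\mathrm{~d}t.
\]

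There is no real obstacle beyond careful bookkeeping: all the analytical work (energy identity, nonlinear absorption, Gr\"onwall, inverse estimate in time, handling of the memory term through positivity of the kernel) has already been done in Lemmas \ref{5.l3}--\ref{5.l2.13} and packaged into Theorems \ref{5.th2} and \ref{5.lem12}. The only care needed is to keep the same regularity index $q_j\in[0,p_j]$ on each time slab so that Theorem \ref{5.th1}(ii)--(iii) can be invoked locally, and to notice that the sup-norm and energy-norm contributions scale differently in $k_j$ (by one power) so they cannot be merged; this is what produces the two separate summands in the final statement.
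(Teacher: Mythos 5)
Your proposal is correct and follows essentially the same route as the paper: the authors likewise obtain the result by inserting the interpolation estimates of Theorem \ref{5.th1}(ii)--(iii) slab-by-slab into the abstract bounds of Theorems \ref{5.th2} and \ref{5.lem12}, using the triangle inequality and the Poincar\'e inequality for the sup-norm part. No gaps.
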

 \begin{proof}
The results are obtained immediately from Theorems \ref{5.th2} and \ref{5.lem12} together with the interpolation estimate in Theorem \ref{5.th1}. Additionally, the second estimate utilizes the Poincar\'e inequality to derive the result.
 \end{proof}
For the uniform parameters $k, p,$ and $q$ (i.e., $k_j=k, p_j=p, q_j=q$), the bounds in the above Theorem \ref{5.th2} results in the following error estimates.
\begin{corollary}\label{5.co1}
	For $1\leq n \leq N, 0\leq q_j \leq p_j,$ and $u\in \H^{q_j+1}(J_j; \H_0^1),$ we have the error bounds 
	\begin{align}
		\nonumber&\int_0^{t_n} \|\nabla(U-u)\|_{\L^2}^2 \mathrm{~d}t +\|(U-u)^n\|_{\L^2}^2 \leq  C\frac{k^{2\min\{p,q\}+2}}{p^{2q+2}} \int_{0}^{t_n}\|\nabla u^{(q+1)}\|^2_{\L^2} \mathrm{~d}t.
	\end{align}
and 
 \begin{align}
	\|U-u\|_{I_n}^2 \leq C \frac{k^{2\min\{p,q\}+2}}{p^{2q}}\left(\max_{j=1}^n\max_{t\in J_n}\|\nabla u^{\left(q_j+1\right)}\|_{\L^2}^2 + \log(p+2) \int_{0}^{t_n}\|\nabla u^{(q+1)}\|^2_{\L^2} \mathrm{~d}t\right).
\end{align}
\end{corollary}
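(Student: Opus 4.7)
The plan is to specialise the general non-uniform estimates of Theorem \ref{5..th2} to the uniform setting $k_j=k$, $p_j=p$, $q_j=q$; the only non-trivial ingredient beyond direct substitution is the classical gamma-ratio bound
\begin{equation*}
\Gamma_{p,q} = \frac{\Gamma(p+1-q)}{\Gamma(p+1+q)} \leq C\,p^{-2q}, \qquad 0\leq q \leq p,
\end{equation*}
which follows from Stirling's formula and is standard in the $hp$-version literature \cite{SSc1,SSc2,MBM}. Since the hypothesis $0\leq q\leq p$ forces $\min\{p,q\}=q$, the target exponent $k^{2\min\{p,q\}+2}$ in the statement collapses to $k^{2q+2}$, which matches the form delivered by the specialised bounds.

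For the energy-norm estimate, I would start from the first bound of Theorem \ref{5..th2}, pull the now $j$-independent factors $(k/2)^{2q+2}$, $\Gamma_{p,q}$, and $\hat p_j^{-2}=p^{-2}$ outside the sum, and use the additivity identity $\sum_{j=1}^n\int_{t_{j-1}}^{t_j}\|\nabla u^{(q+1)}\|_{\L^2}^2\,\mathrm{d}t = \int_0^{t_n}\|\nabla u^{(q+1)}\|_{\L^2}^2\,\mathrm{d}t$. Combining this with $p^{-2}\Gamma_{p,q}\leq Cp^{-(2q+2)}$ yields $Ck^{2q+2}p^{-(2q+2)}\int_0^{t_n}\|\nabla u^{(q+1)}\|_{\L^2}^2\,\mathrm{d}t$, which is the first claim.

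For the $L^\infty$ bound I would treat the two summands on the right-hand side of the second inequality of Theorem \ref{5..th2} separately. In the maximum term, estimate $\int_{t_{j-1}}^{t_j}\|u^{(q+1)}\|_{\H_0^1}^2\,\mathrm{d}t \leq k\max_{t\in \J_j}\|u^{(q+1)}\|_{\H_0^1}^2$; then $\|\cdot\|_{\H_0^1}=\|\nabla\cdot\|_{\L^2}$ together with $(k/2)^{2q+1}\cdot k\cdot \Gamma_{p,q}\leq Ck^{2q+2}p^{-2q}$ delivers a contribution of $Ck^{2q+2}p^{-2q}\max_{1\leq j\leq n}\max_{t\in \J_j}\|\nabla u^{(q+1)}\|_{\L^2}^2$. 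In the remaining sum, the product $|\textbf{p}|_n^2 \hat p_j^{-2}$ equals $1$, so only the $\log(p+2)$ factor survives; the sum of integrals collapses as in the first part, and $(k/2)^{2q+2}\Gamma_{p,q}\leq Ck^{2q+2}p^{-2q}$ supplies the second summand. Adding the two contributions produces the second estimate.

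There is no genuine obstacle — the argument is essentially bookkeeping. The only point deserving attention is the cancellation $|\textbf{p}|_n^2 \hat p_j^{-2}=1$, which is precisely what reduces what would otherwise be a full $p^2$ factor in the $L^\infty$ bound to the mild logarithmic factor $\log(p+2)$.
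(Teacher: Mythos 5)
Your proposal is correct and follows exactly the paper's route: the paper's own proof simply specialises Theorem \ref{5..th2} to uniform $k$, $p$, $q$ and invokes $\Gamma_{p,q}\sim p^{-2q}$ via Stirling's formula. Your additional bookkeeping (the cancellation $|\textbf{p}|_n^2\hat p_j^{-2}=1$, the extra factor of $k$ from bounding the interval integral by the sup in the max term, and $\min\{p,q\}=q$ under the hypothesis $q\le p$) is accurate and merely makes explicit what the paper leaves implicit.
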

\begin{proof}
	This follows directly from Theorem \ref{5.th2} and the fact that $\Gamma_{p,q} \sim p^{-2q}$ for $p \rightarrow \infty$ using Stirling's formula or Jordan's Lemma \cite{CSc}.
\end{proof}
Corollary \ref{5.co1} indicates that the DG time-stepping scheme converges either by decreasing the time step size $k$ (i.e., $k \rightarrow 0$) or by increasing the polynomial degree $p$ (i.e., $p \rightarrow \infty$). The convergence rate is optimal with respect to both $k$ and $p$ in the first case, while the second case is suboptimal by one power of $p$. For large values of $q$, it is more beneficial to increase $p$ while keeping $k$ fixed (the $p$-version of the DG method) rather than reducing $k$ with $p$ fixed (the $h$-version of the DG method). For smooth solutions $u$, arbitrarily high order convergence rates can be attained by increasing $p$, which is known as spectral convergence. Specifically, if $u$ is analytic on $[0, t_n]$ and belongs to $\H_0^1(\Omega)$, exponential convergence rates can be demonstrated for the $p$-version of the method with a fixed step size $k$:
\begin{align}
	\int_0^{t_n} \|\nabla(U-u)\|_{\L^2}^2 + \|U-u\|_{I_n}^2 \leq C \exp\left(-\tilde{b} p_n\right),
\end{align}
which directly follows from the first approximation result in Theorem \ref{5.th1}.

	\section{Fully-discrete scheme}\label{5.sec3}\setcounter{equation}{0}
	\subsection{$hp$-FEM in space}\label{5.sub2}
	In this section, we will discuss the fully-discrete scheme and the error estimate for the model GHBE \eqref{5.GBHE}. We will employ $hp$-DG time stepping combined with a  $hp$-finite element (conforming) discretization in space.
	Let $\mathcal{T}$ be a quasi uniform triangulation of $\Omega$, where $h=\max\limits_{K \in \mathcal{T}}(\operatorname{diam} K).$ To the triangulation $\mathcal{T}$ of $\overline{\Omega}$, we associate a finite dimensional space $\mathcal{V}_{\textbf{h}}^{\textbf{r}}$ of continuous piecewise  polynomials of degree $r_K$ in each $K\in \mathcal{T}$  stored in  $\textbf{r}= \{r_K \colon {K\in \mathcal{T}}\}$ and set $|\textbf{r}|  = \max\limits_{K\in \mathcal{T}} r_K.$ It is assumed that  that $\textbf{r}$ is of bounder variation meaning there exists a constant $\zeta\geq 1$ independent of the mesh, such that for any adjacent elements $K,K'\in \mathcal{T}$, we have $\zeta^{-1}\leq \frac{r_K}{r_{K'}}\leq \zeta$.  Similarly,  $h_K$ denotes the diameter of $K$ and $\textbf{h}  =\{h_K\colon K\in\mathcal{T}\}$ stores the mesh sizes corresponding to different elements. The discrete space  is given as
		$$
		\mathcal{V}_{\textbf{h}}^{\textbf{r}}=\left\{v_{h}: v_{h} \in C^{0}(\bar{\Omega})\cap\H_0^1(\Omega) : \left.v_{h}\right|_{K} \in \P_{r_K}(K) \ \forall \ K \in \mathcal{T}\right\}.
		$$
		For a partition $\mathcal{M}=\{\J_n\}_{n=1}^N $ of $(0,T)$ and a degree vector $\mathbf{p} = (p_1, p_2, \cdots, p_N)$, the trial space is now given by
 \begin{align}\label{5.3.1}
 	\mathcal{S}(\mathcal{M},\textbf{p},\mathcal{V}_{\textbf{h}}^{\textbf{r}})=\{U_h : [0,T] \rightarrow \mathcal{V}_{\textbf{h}}^{\textbf{r}} : {U_h}|_{\J_n} \in \mathbb{P}_{p_n}(\mathcal{V}_{\textbf{h}}^{\textbf{r}}), 1 \leq n \leq N\},
 \end{align}
where $\mathbb{P}_{p}(\mathcal{V}_{\textbf{h}}^{\textbf{r}})$ is defined as the space of polynomials of degree $\leq p$ in the time variable, where the coefficients are drawn from $\mathcal{V}_{\textbf{h}}^{\textbf{r}}$.  Consequently, functions $U_h(x,t)$ belonging to $\mathcal{S}(\mathcal{M},\textbf{p},\mathcal{V}_h)$ exhibit continuity in the spatial variable $x$, yet may exhibit discontinuities precisely at discrete time points $t = t_n$. 
Using standard finite elements in space and an $hp$-DG time-stepping method, we establish the following fully-discrete $hp$-DG finite element scheme: Find $U_h \in \mathcal{S}(\mathcal{M},\textbf{p},\mathcal{V}_h)$ such that
\begin{align}\label{5.3.2}
\nonumber	F_N(U_h,X) &= \langle U_h^0, X_{+}^0\rangle + \int_0^{t_N}\langle f(t),X(t) \rangle \mathrm{~d}t  \quad \forall X\in  \mathcal{S}(\mathcal{M},\textbf{p}, \mathcal{V}_h ),\\
U_h(0) &= U_h^0,
\end{align}
for a suitable approximation $U_h^0 \in \mathcal{V}_{\textbf{h}}^{\textbf{r}}$ to $u_0$.

Further to discuss the error estimates, we sate the approximation result defined in  \cite[Lemma 4.5]{BSu} as
\red{\begin{lemma}\label{5.lemm3}
	Suppose that a triangulation $\mathcal{T}$ of $\Omega$ is formed by $d$-dimensional simplices or parallelepipeds. Then for an arbitrary $u \in \H^{\mathbf{m}}(\Omega, \mathcal{T})$, $\mathbf{m} = (m_K, K \in \mathcal{T})$, and for each $r = (r_K, K \in \mathcal{T})$, $r_K \in \mathbb{N}$, there exists a projector
	$$
	\mathcal{R}^{\mathbf{h}}_{\mathbf{r}} : \H^{\mathbf{s}}(\Omega, \mathcal{T}) \to 	\mathcal{S}(\mathcal{M},\textbf{p},\mathcal{V}_{\textbf{h,r}}),
	$$
	such that for $0 \leq b \leq m_K$,
\begin{align}\label{5.7a1}
		\| u -\mathcal{R}^{\mathbf{h}}_{\mathbf{r}}  u \|_{\H^b} \leq C \frac{h_K^{s_K - b}}{r_K^{m_K-b}} \| u \|_{\H^{m_K}},
\end{align}
	where $s_K = \min(r_K +1,m_K )$ and $C$ is a constant independent of $u, h_K,$ and $r_K$ but
	dependent on $m = \max\limits_{K\in\mathcal{T}} m_K.$
\end{lemma}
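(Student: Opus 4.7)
The plan is to reduce the estimate to the reference element and then use classical $hp$-approximation theory there, patching local projectors into a globally conforming operator. Since the target space $\mathcal{S}(\mathcal{M},\mathbf{p},\mathcal{V}_{\mathbf{h},\mathbf{r}})$ requires continuity across interelement boundaries (the trial functions lie in $C^0(\bar\Omega)\cap\H_0^1(\Omega)$), the challenge is not merely a local polynomial approximation bound but the construction of a projector whose image sits in this conforming space.

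First I would fix a reference element $\hat K$ (either the unit simplex or the unit cube, matching the two cases in the statement) and recall the Babu\v{s}ka--Suri projection-based $hp$-interpolation operator $\hat{\mathcal R}_{r}:\H^m(\hat K)\to\mathbb{P}_r(\hat K)$. This operator is built by orthogonal projection onto Legendre-type polynomial bases and, crucially, preserves vertex values, edge traces, and face traces so that local interpolants on neighbouring elements match. On $\hat K$ it satisfies the sharp bound
\begin{equation*}
\|\hat u-\hat{\mathcal R}_{r}\hat u\|_{\H^b(\hat K)}\;\leq\;C\,r^{-(m-b)}\,\|\hat u\|_{\H^m(\hat K)},\qquad 0\leq b\leq m,
\end{equation*}
where $C$ depends only on $m$ and the reference geometry. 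This is the engine of the whole proof.

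Next I would transport the reference estimate to a physical element $K\in\mathcal T$ via the affine map $F_K:\hat K\to K$ (with Jacobian of order $h_K$). Setting $\mathcal R^{h_K}_{r_K}u:=(\hat{\mathcal R}_{r_K}(u\circ F_K))\circ F_K^{-1}$ and applying standard scaling identities, a Sobolev seminorm of order $b$ on $K$ picks up a factor $h_K^{b-d/2}$ and the $\H^m$-norm on $\hat K$ pulls back to $h_K^{m-d/2}\|u\|_{\H^{m_K}(K)}$. Combining these with the reference bound yields the elementwise estimate
\begin{equation*}
\|u-\mathcal R^{h_K}_{r_K}u\|_{\H^b(K)}\;\leq\;C\,\frac{h_K^{s_K-b}}{r_K^{m_K-b}}\,\|u\|_{\H^{m_K}(K)},
\end{equation*}
with $s_K=\min(r_K+1,m_K)$; the minimum arises because the polynomial degree caps the rate of $h$-convergence at $r_K+1$ while the regularity caps it at $m_K$.

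The main obstacle, and the step I would treat most carefully, is gluing these local operators into a global $C^0$-conforming projector $\mathcal R^{\mathbf h}_{\mathbf r}$ when the polynomial degrees vary across adjacent elements. Because $\hat{\mathcal R}_{r}$ is constructed hierarchically, trace preservation on each $(d{-}1)$-face of $K$ is automatic, but on a shared face between $K$ and $K'$ with $r_K\neq r_{K'}$ one must reconcile the two traces. Here I would invoke the bounded-variation assumption $\zeta^{-1}\leq r_K/r_{K'}\leq\zeta$ together with the standard Babu\v{s}ka--Suri minimum-rule correction: restrict the trace on the shared face to the polynomial space of degree $\min(r_K,r_{K'})$ and spread the resulting small defect into the interior bubble modes. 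Since this correction is bounded by a constant multiple of the local interpolation error, it does not pollute the estimate and the global bound inherits the elementwise one. Finally, the homogeneous Dirichlet condition is imposed by zeroing out the boundary trace modes, which is compatible with the hierarchical construction. The lemma then follows from the elementwise estimates after squaring and summing (or taking the maximum, since the bound is stated elementwise).
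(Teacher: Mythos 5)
The paper gives no proof of this lemma: it is quoted directly as \cite[Lemma~4.5]{BSu} and used as a black box, so there is no argument of the authors' to compare yours against. Your sketch is, in outline, the standard proof of that cited result --- reference-element $hp$-estimate of Babu\v{s}ka--Suri type, affine scaling to obtain the $h_K^{s_K-b}$ factor with $s_K=\min(r_K+1,m_K)$ capping the rate, and a conforming assembly of the local operators --- and the scaling bookkeeping you give (the $h_K^{b-d/2}$ and $h_K^{m-d/2}$ factors cancelling to leave $h_K^{m-b}$, truncated at $r_K+1$) is correct.

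The one place where your write-up asserts rather than proves the hard content is the gluing step. Saying that the mismatch on a shared face can be ``spread into the interior bubble modes'' and is ``bounded by a constant multiple of the local interpolation error'' is precisely the polynomial trace-lifting theorem with $p$-uniform constants, which is the genuinely difficult part of the Babu\v{s}ka--Suri construction (elementary in 1D, manageable in 2D via edge liftings, and requiring the Mu\~noz-Sola/Babu\v{s}ka--Suri extension results in 3D, where the statement of the present lemma explicitly allows $d=3$). A referee would want you either to cite those lifting results explicitly or to carry out the face-correction estimate; as written, the claim that the correction ``does not pollute the estimate'' is exactly what needs proof. Likewise, imposing the homogeneous Dirichlet condition by ``zeroing out the boundary trace modes'' needs the same lifting machinery to show the resulting perturbation is controlled. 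None of this invalidates your approach --- it is the right one, and it is how the cited lemma is actually established --- but these steps are the substance of the proof, not afterthoughts.
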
}

\subsection{Error estimates}
To derive error estimates for the fully-discrete case \eqref{5.3.2}, we decompose the error similarly to \eqref{5.10} into three distinct components:
\begin{align}\label{5.3.3}
	U_h - u &= \underbrace{(U_h - \Pi \mathcal{R}^{\mathbf{h}}_{\mathbf{r}} u)}_{\psi} + \underbrace{(\Pi \mathcal{R}^{\mathbf{h}}_{\mathbf{r}} u - \Pi u)}_{\Pi \xi } + \underbrace{(\Pi u - u)}_{\rho}, 
\end{align}
where  $\xi =\mathcal{R}^{\mathbf{h}}_{\mathbf{r}}  u - u$ represents the error due to spatial approximation, and $\rho$ signifies the error due to temporal approximation as defined in \eqref{5.10}.
\begin{theorem}\label{5.th3}
	If $u$ is the solution of problem \eqref{5.GBHE}, and $U_h \in \mathcal{V}_{\textbf{h}}^{\textbf{r}}$ is the approximate solution defined by  \eqref{5.3.2}, then 
	\begin{align}\label{5.3.5}
		&\nonumber F_N(U_h, X) - F_N(\Pi \mathcal{R}^{\mathbf{h}}_{\mathbf{r}} u, X)\\& \no=  \langle (U_h^0-\mathcal{R}^{\mathbf{h}}_{\mathbf{r}} u_0), X_{+}^0\rangle - \int_{0}^{t_N}\langle\xi',X\rangle \mathrm{~d}t   - \int_{0}^{t_N}\Big[ \nu a(\rho,X) +\nu a(\Pi\xi,X) + \alpha(B(\Pi \mathcal{R}^{\mathbf{h}}_{\mathbf{r}} u)-B(\Pi u),X)\\&\quad + \alpha(B(\Pi u)-B( u),X)   + \eta \left(K*\nabla  \rho,\nabla X \right)  - \beta (c(\Pi \mathcal{R}^{\mathbf{h}}_{\mathbf{r}}u)-c(\Pi u),X) - \beta (c(\Pi u)-c(u),X) \Big] \mathrm{~d}t, 
	\end{align}
for all $X\in  \mathcal{S}(\mathcal{M},\textbf{p}, \mathcal{V}_h )$, where $F_N(\cdot,\cdot)$ is defined in \eqref{5.AGWF}.
\end{theorem}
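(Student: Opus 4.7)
The identity is a Galerkin-consistency statement, and the plan is to insert the exact solution $u$ between $U_h$ and $\Pi\mathcal{R}^{\mathbf{h}}_{\mathbf{r}} u$:
\[ F_N(U_h, X) - F_N(\Pi\mathcal{R}^{\mathbf{h}}_{\mathbf{r}} u, X) = \bigl[F_N(U_h, X) - F_N(u, X)\bigr] + \bigl[F_N(u, X) - F_N(\Pi\mathcal{R}^{\mathbf{h}}_{\mathbf{r}} u, X)\bigr]. \]
For the first bracket I would substitute the fully-discrete scheme \eqref{5.3.2} on the $U_h$-side and test the strong equation satisfied by $u$ against $X$; exploiting the continuity of $u$ (so all jumps $[u]^n$ vanish and $u_+^0 = u_0$), both $F_N$-values reduce to $\int_0^{t_N}\langle f, X\rangle \mathrm{d}t$ up to their respective initial terms, and the bracket collapses to $\langle U_h^0 - u_0, X_+^0\rangle$.

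For the second bracket I would use the decomposition $u - \Pi\mathcal{R}^{\mathbf{h}}_{\mathbf{r}} u = -(\rho + \Pi\xi)$ together with linearity of $F_N$ in its non-advection/reaction parts, evaluating the $\rho$-contribution and the $\Pi\xi$-contribution separately. The $\rho$-part simplifies completely in the time block: since $\rho^n = 0$ and the defining orthogonality \eqref{5.8} of the $hp$-interpolant gives $\int_{J_n}\langle\rho, X'\rangle \mathrm{d}t = 0$, integration by parts on $\int_{J_n}\langle\rho', X\rangle \mathrm{d}t$ produces $-\langle\rho_+^{n-1}, X_+^{n-1}\rangle$, which telescopes exactly with the jump contributions $\langle[\rho]^n, X_+^n\rangle = \langle\rho_+^n, X_+^n\rangle$ and the initial term $\langle\rho_+^0, X_+^0\rangle$, so that only the bilinear and memory pieces $\int_0^{t_N}\!\bigl[\nu a(\rho, X) + \eta(K*\nabla\rho, \nabla X)\bigr] \mathrm{d}t$ survive. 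For the $\Pi\xi$-part, the parallel integration-by-parts argument now uses $(\Pi\xi)^n = \xi(t_n)$ (rather than $=0$) together with the continuity in time of $\xi = \mathcal{R}^{\mathbf{h}}_{\mathbf{r}} u - u$; the orthogonality of $\Pi$ converts $\int_{J_n}\langle\Pi\xi, X'\rangle \mathrm{d}t$ into $\int_{J_n}\langle\xi, X'\rangle \mathrm{d}t$, and a second integration by parts transfers the time derivative onto $\xi$. Summing in $n$, the jump and initial contributions collapse to $\langle\xi^0, X_+^0\rangle + \int_0^{t_N}\langle\xi', X\rangle \mathrm{d}t$, while the bilinear piece $\int \nu a(\Pi\xi, X) \mathrm{d}t$ is the only spatial residual that remains.

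Combining the two brackets, the initial terms merge as $\langle U_h^0 - u_0, X_+^0\rangle - \langle\xi^0, X_+^0\rangle = \langle U_h^0 - \mathcal{R}^{\mathbf{h}}_{\mathbf{r}} u_0, X_+^0\rangle$ since $\xi^0 = \mathcal{R}^{\mathbf{h}}_{\mathbf{r}} u_0 - u_0$. For the nonlinear advection and reaction contributions I would simply add and subtract $B(\Pi u)$ in $B(u) - B(\Pi\mathcal{R}^{\mathbf{h}}_{\mathbf{r}} u)$ (and likewise $c(\Pi u)$ in the reaction difference); this produces the four split nonlinear terms on the right-hand side of \eqref{5.3.5} and cleanly separates the temporal interpolation error from the spatial projection error. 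The main obstacle is the careful bookkeeping for $\Pi\xi$: one must track that $(\Pi\xi)^n = \xi(t_n)$ while $\xi$ itself is continuous across $t_n$, and verify that the telescoping combines the integrated boundary contributions with $\langle[\Pi\xi]^n, X_+^n\rangle$ and $\langle(\Pi\xi)_+^0, X_+^0\rangle$ in precisely the right way to eliminate the $\Pi\xi$-boundary data in favor of $\xi^0$ and $\xi'$; everything else reduces to symbol-pushing.
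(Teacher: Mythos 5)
Your proposal is correct and follows essentially the same route as the paper: the paper likewise reduces $F_N(U_h,X)-F_N(u,X)$ to the initial term via consistency, splits the remaining difference through the intermediate functions $\Pi u$ and $u$ (equivalently, your add-and-subtract of $B(\Pi u)$, $c(\Pi u)$), kills the $\rho$-block using $\rho^n=0$ together with $\int_{J_n}\langle\rho,X'\rangle\,\mathrm{d}t=0$, and handles $\Pi\xi$ by the orthogonality $\int_{J_n}\langle\Pi\xi,X'\rangle\,\mathrm{d}t=\int_{J_n}\langle\xi,X'\rangle\,\mathrm{d}t$ followed by a second integration by parts, with the jump terms telescoping to $\langle\xi^0,X_+^0\rangle+\int_0^{t_N}\langle\xi',X\rangle\,\mathrm{d}t$; the only cosmetic difference is that the paper invokes the adjoint form \eqref{5.AGWF} of $F_N$ to shortcut some of the telescoping you carry out by hand. (Both your write-up and the paper's statement drop the residual memory term $\eta(K*\nabla\Pi\xi,\nabla X)$ that this argument actually leaves behind alongside $\nu a(\Pi\xi,X)$ — this omission is inherited from the paper itself, whose later estimates reinstate it inside $\Xi$.)
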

\begin{proof}
Using the definition of $F_N(\cdot,\cdot)$, we obtain
	\begin{align}
		F_N(U_h,X) - F_N(u,X) = \langle (U_h^0- u_0), X_{+}^{0} \rangle. 
	\end{align}
Additionally, the decomposition given in \eqref{5.3.3} implies
\begin{align}\label{5.38}
	\no &F_N(U_h,X) - F_N(\Pi \mathcal{R}^{\mathbf{h}}_{\mathbf{r}}u,X)\\ &= \langle (U_h^0- u_0), X_{+}^{0} \rangle - \left(F_N(\Pi \mathcal{R}^{\mathbf{h}}_{\mathbf{r}}u,X) - F_N(\Pi u,X)+	F_N(\Pi u,X) - F_N(u,X)\right).
\end{align}
Since the projection \eqref{5.8} gives $(\Pi \xi)^n = \xi^n$ and $\rho^n = 0$, and with the expression for $F_N$ defined in \eqref{5.AGWF}, we achieve
\begin{align}\label{5.39}
\nonumber &F_N(\Pi \mathcal{R}^{\mathbf{h}}_{\mathbf{r}}u,X)- F_N(\Pi u,X) +	F_N(\Pi u,X) - F_N(u,X)\\&\no= \langle \xi^N , X^{N} \rangle -\sum_{n=1}^{N-1}\langle \xi^n , [X]^n \rangle + \sum_{n=1}^N \int_{t_{n-1}}^{t_n}\Big[-\langle \Pi \xi + \rho,X'\rangle\\\nonumber&\quad  + \nu a(\Pi \xi + \rho,X) + \alpha(B(\Pi \mathcal{R}^{\mathbf{h}}_{\mathbf{r}}u)-B(\Pi u),X) + \alpha(B(\Pi u)-B( u),X)  \\&\quad + \eta \left(K*\nabla (\Pi \xi + \rho),\nabla X \right)  - \beta (c(\Pi \mathcal{R}^{\mathbf{h}}_{\mathbf{r}}u)-c(\Pi u),X) - \beta (c(\Pi u)-c(u),X)\Big] \mathrm{~d}t.
\end{align}
Applying integration by parts to $\int_{t_{n-1}}^{t_n} \langle \Pi \xi, X' \rangle \mathrm{~d}t$, we have the equality
\begin{align}\label{5.40}
	\int_{t_{n-1}}^{t_n}\langle\Pi\xi, X'\rangle \mathrm{~d}t = \int_{t_{n-1}}^{t_n}\langle\xi, X'\rangle \mathrm{~d}t = \langle\xi^n, X^n\rangle-\langle\xi^{n-1}, X^{n-1}_{+}\rangle-\int_{t_{n-1}}^{t_n}\langle\xi', X\rangle \mathrm{~d}t.
\end{align}
 Substituting equation \eqref{5.40} into \eqref{5.39}, we get
\begin{align}\label{5.41}
\nonumber &F_N(\Pi \mathcal{R}^{\mathbf{h}}_{\mathbf{r}}u,X)- F_N(\Pi u,X) +	F_N(\Pi u,X) - F_N(u,X)\\&= \langle \xi^0 , X_+^{0} \rangle - \sum_{n=1}^N \int_{t_{n-1}}^{t_n}\langle \rho,X'\rangle \mathrm{~d}t +\int_{0}^{t_N}\langle\xi', X\rangle \mathrm{~d}t \\\nonumber&\quad  +\int_{0}^{t_N} \bigg[\nu a(\rho,X) +\nu a(\Pi\xi,X) + \alpha(B(\Pi \mathcal{R}^{\mathbf{h}}_{\mathbf{r}}u)-B(\Pi u),X) + \alpha(B(\Pi u)-B( u),X)  \\&\quad + \eta \left(K*\nabla  \rho,\nabla X \right)  - \beta (c(\Pi \mathcal{R}^{\mathbf{h}}_{\mathbf{r}}u)-c(\Pi u),X) - \beta (c(\Pi u)-c(u),X)\bigg] \mathrm{~d}t.
\end{align}
Finally, inserting this expression into \eqref{5.38} and utilizing the fact that $\int_{t_{n-1}}^{t_n} \langle \rho, X' \rangle \mathrm{~d}t = 0,$ concludes the proof.
\end{proof}
 \begin{lemma}\label{5.l5}
	For $1\leq n\leq N,$ we have

 \begin{align}\label{5.42}
	\nonumber &\|\psi^n\|_{\L^2} ^2 + \nu \int_{0}^{t_n}\|\nabla\psi\|^2_{\L^2}\mathrm{~d}t+ \frac{\beta}{4}\int_0^{t_n}\|U_h^{\delta} \Upsilon\|_{\L^2}^2 \mathrm{~d}t+ \frac{\beta}{4} \int_0^{t_n}\|(\Pi \mathcal{R}^{\mathbf{h}}_{\mathbf{r}} u)^{\delta} \Upsilon\|_{\L^2}^2 \mathrm{~d}t \\\nonumber&\leq \|U^0-\mathcal{R}^{\mathbf{h}}_{\mathbf{r}}u_0\|_{\L^2} + 2\left(\nu+ \frac{C_K\eta^2}{\nu} \right) \int_{0}^{t_n} \|\nabla\rho\|^2_{\L^2} \mathrm{~d}t + \Xi \\&\quad
	 + \int_0^{t_n}C(\alpha,\nu)\left(\|U\|^{\frac{8\delta}{4-d}}_{\L^{4\delta}}+\|\Pi \mathcal{R}^{\mathbf{h}}_{\mathbf{r}} U\|^{\frac{8\delta}{4-d}}_{\L^{4\delta}}\right)\|\psi\|_{\L^2}^2\mathrm{~d}t+ C(\beta,\gamma, \delta) \int_{0}^{t_n}\|\psi\|_{\L^2} \mathrm{~d}t+ \frac{1}{2\nu}\int_{0}^{t_n} \|\xi'\|^2_{\L^2} \mathrm{~d}t \\&\quad  -  2\alpha\int_0^{t_n} \big(B(\Pi \mathcal{R}^{\mathbf{h}}_{\mathbf{r}}u)-B(\Pi u),\psi\big) \mathrm{~d}t +2\beta \int_0^{t_n}\big(c(\Pi \mathcal{R}^{\mathbf{h}}_{\mathbf{r}}u)-c(\Pi u),\psi\big) \mathrm{~d}t \\&\quad  -  2\alpha\int_0^{t_n} \big(B(\Pi u)-B(u),\psi\big) \mathrm{~d}t +2\beta \int_0^{t_n}\big(c(\Pi u)-c(u),\psi\big) \mathrm{~d}t,
\end{align}
where $\Xi = \nu\int_0^{t_n}\|\nabla(\Pi \xi -\xi)\| _{\L^2}^2 \mathrm{~d}t + \nu\int_0^{t_n}\|\nabla\xi\| _{\L^2}^2 \mathrm{~d}t +\eta\int_0^{t_n}\|K*\nabla(\Pi \xi -\xi)\| _{\L^2}^2 \mathrm{~d}t + \eta\int_0^{t_n}\|K*\nabla\xi\| _{\L^2}^2 \mathrm{~d}t.$
\end{lemma}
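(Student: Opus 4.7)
The plan is to repeat the energy argument of Lemma~\ref{5.l3} in the fully-discrete setting, starting from the error identity \eqref{5.3.5} of Theorem~\ref{5.th3}. Since $\psi = U_h - \Pi\mathcal{R}^{\mathbf{h}}_{\mathbf{r}}u \in \mathcal{S}(\mathcal{M},\mathbf{p},\mathcal{V}_h)$, it is an admissible test function. Choosing $X = \psi$ and manipulating the $F_N$-terms on the left exactly as in \eqref{5.14} (telescoping the time derivative against the jump contributions) produces
\[
\|\psi_+^0\|_{\L^2}^2 + \|\psi^n\|_{\L^2}^2 + \sum_{m=1}^{n-1}\|[\psi]^m\|_{\L^2}^2 + 2\nu\!\int_0^{t_n}\!\|\nabla\psi\|_{\L^2}^2\,\mathrm{d}t + 2\eta\,(K*\nabla\psi,\nabla\psi)
\]
on the left. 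The memory contribution is non-negative by the positivity \eqref{5.pk} and can be discarded, while the initial term $2\langle U_h^0-\mathcal{R}^{\mathbf{h}}_{\mathbf{r}}u_0,\psi_+^0\rangle$ is handled by Cauchy--Schwarz and Young to absorb $\|\psi_+^0\|_{\L^2}^2$ on the left and leave $\|U_h^0-\mathcal{R}^{\mathbf{h}}_{\mathbf{r}}u_0\|_{\L^2}^2$ on the right.

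Next I would estimate the linear right-hand side contributions. The terms $-\nu a(\rho,\psi)$ and $-\eta(K*\nabla\rho,\nabla\psi)$ reproduce the bounds \eqref{5.15}--\eqref{5.16} of Lemma~\ref{5.l3}, contributing $2(\nu+C_K\eta^2/\nu)\int_0^{t_n}\|\nabla\rho\|_{\L^2}^2\,\mathrm{d}t$ together with a $\tfrac{\nu}{2}\int\|\nabla\psi\|_{\L^2}^2$ that is absorbed on the left. The additional term $-\nu a(\Pi\xi,\psi)$ is split via $\Pi\xi = (\Pi\xi-\xi) + \xi$, and Cauchy--Schwarz/Young on each piece yields the two $\nabla\xi$-type summands of $\Xi$; the kernel term $-\eta(K*\nabla\Pi\xi,\nabla\psi)$ is handled the same way after an application of Lemma~\ref{5.l11}, producing the remaining two summands of $\Xi$. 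The new term $-\int_0^{t_n}\langle\xi',\psi\rangle\,\mathrm{d}t$ is bounded by $|\langle\xi',\psi\rangle|\le\|\xi'\|_{\L^2}\|\psi\|_{\L^2}$ followed by Young with weight $(2\nu)^{-1}$ and Poincaré to dominate $\|\psi\|_{\L^2}$ by $\|\nabla\psi\|_{\L^2}$, which gives precisely the $\tfrac{1}{2\nu}\int_0^{t_n}\|\xi'\|_{\L^2}^2\,\mathrm{d}t$ contribution in \eqref{5.42}.

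For the nonlinearities, only the pairs $B(U_h)-B(\Pi\mathcal{R}^{\mathbf{h}}_{\mathbf{r}}u)$ and $c(U_h)-c(\Pi\mathcal{R}^{\mathbf{h}}_{\mathbf{r}}u)$ need to be estimated absorptively: I would repeat verbatim the Taylor-expansion arguments \eqref{5.18} and \eqref{5.25}--\eqref{5.27} with $(U,\Pi u)$ replaced by $(U_h,\Pi\mathcal{R}^{\mathbf{h}}_{\mathbf{r}}u)$. This reproduces the $C(\alpha,\nu)$-weighted $L^{4\delta}$-powered $\|\psi\|_{\L^2}^2$ term, the $C(\beta,\gamma,\delta)\|\psi\|_{\L^2}$ term, and retains the two positive contributions $\tfrac{\beta}{4}\|U_h^{\delta}\psi\|_{\L^2}^2$ and $\tfrac{\beta}{4}\|(\Pi\mathcal{R}^{\mathbf{h}}_{\mathbf{r}}u)^{\delta}\psi\|_{\L^2}^2$ on the left. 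The four cross-differences $B(\Pi\mathcal{R}^{\mathbf{h}}_{\mathbf{r}}u)-B(\Pi u)$, $B(\Pi u)-B(u)$, $c(\Pi\mathcal{R}^{\mathbf{h}}_{\mathbf{r}}u)-c(\Pi u)$, $c(\Pi u)-c(u)$ are not absorbed but are kept intact on the right-hand side of \eqref{5.42}, to be bounded later in terms of $\xi$ and $\rho$.

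The main obstacle will be the bookkeeping of the absorbed $\tfrac{\nu}{4}\int\|\nabla\psi\|_{\L^2}^2$ contributions: four separate Young steps (arising from $\rho$, $\xi'$, the kernel-$\rho$ piece, and the advective nonlinearity) each return such a term, and their sum must not exceed $\nu\int_0^{t_n}\|\nabla\psi\|_{\L^2}^2$ so that a full $\nu\int\|\nabla\psi\|_{\L^2}^2$ survives on the left of \eqref{5.42}; retuning the Young constants (e.g.\ to weight $1/8$ in some places) is to be expected. A secondary subtlety is justifying the pairing $\langle\xi',\psi\rangle$ as an $\L^2$ inner product, which follows from the regularity $\partial_t u\in \L^{\infty}(0,T;\L^2)$ quoted from \cite{GBHE} together with the $\L^2$-stability of the projector $\mathcal{R}^{\mathbf{h}}_{\mathbf{r}}$, so no additional hypothesis is required.
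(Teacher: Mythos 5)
Your proposal is correct and follows essentially the same route as the paper: the authors likewise take $X=\psi$ in \eqref{5.3.5}, repeat the energy argument of Lemma \ref{5.l3} with $a(\rho,\Upsilon)$ replaced by $\langle\xi',\psi\rangle+a(\rho,\psi)$, use $\|\psi\|_{\L^2}\leq\|\nabla\psi\|_{\L^2}$ for the $\xi'$ term, and split $a(\Pi\xi,X)=a(\Pi\xi-\xi,X)+a(\xi,X)$ to generate $\Xi$. Your additional remarks on retuning the Young constants and on justifying the pairing $\langle\xi',\psi\rangle$ are sensible details that the paper leaves implicit.
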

\begin{proof}
 We choose $X= \psi$ in \eqref{5.3.5}, follow the proof similar to Lemma \ref{5.l3} where in place of $a(\rho, \Upsilon)$ in $J_1^n$ we have $\langle \xi',\psi\rangle + a(\rho, \psi) $, and using the inequality $\|\psi\|_{\L^2} \leq \|\nabla \psi\|_{\L^2}$. Also, rewriting the term $a(\Pi\xi,X) = a(\Pi\xi-\xi,X) + a(\xi,X)$, the desired result then readily follows.
\end{proof}
Further aim is estimate $\Big(2\alpha\int_0^{t_n} \big(B(\Pi \xi)-B(\rho),\psi\big) \mathrm{~d}t  +2\beta \int_0^{t_n}\big(c(\Pi \xi)-c(\rho),\psi\big) \mathrm{~d}t\Big)$ using the following lemma
\begin{lemma}\label{5.43}
	For $u\in \L^2(0,T; \L^{4\delta}(\Omega)), 1\leq n \leq N, 0\leq q_j \leq p_j,$ and $u\in \H^{q_j+1}(J_j; \H_0^1),$ we have 
	\begin{align}\label{5.44}
	&\int_{0}^{t_N} \bigg[ \alpha(B(\Pi \mathcal{R}^{\mathbf{h}}_{\mathbf{r}}u)-B(\Pi u),\psi) + \alpha(B(\Pi u)-B( u),\psi)   - \beta (c(\Pi \mathcal{R}^{\mathbf{h}}_{\mathbf{r}}u)-c(\Pi u),\psi) - \beta (c(\Pi u)-c(u),\psi)\bigg] \mathrm{~d}t \\&\leq C \left(\int_0^{t_n} \left( \|\nabla \rho\|^2_{\L^2}+ \|\psi\|^2_{\L^2}  \right) \mathrm{~d}t\right) + \Xi.
\end{align}
\end{lemma}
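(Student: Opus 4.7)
The strategy mirrors Lemma~\ref{5.l4}. The four terms split naturally into a \emph{spatial-projection} pair (those involving $\Pi\mathcal{R}^{\mathbf{h}}_{\mathbf{r}}u-\Pi u = \Pi\xi$) and a \emph{temporal-projection} pair (those involving $\Pi u - u = \rho$). For the temporal pair, I would reuse the computation of Lemma~\ref{5.l4} verbatim, with $\Upsilon$ replaced by $\psi$: integration by parts on the divergence form of $B$, Taylor's formula on the $(\delta+1)$- and $(2\delta+1)$-powers appearing in $c$, followed by H\"older's, Gagliardo--Nirenberg, and Young's inequalities, produces exactly the bound
\begin{align*}
\int_0^{t_n}\Big[\alpha(B(\Pi u)-B(u),\psi)-\beta(c(\Pi u)-c(u),\psi)\Big]\mathrm{~d}t \leq \frac{\nu}{4}\int_0^{t_n}\|\nabla\psi\|_{\L^2}^2\mathrm{~d}t + C\int_0^{t_n}\Big(\|\nabla\rho\|_{\L^2}^2+\|\psi\|_{\L^2}^2\Big)\mathrm{~d}t.
\end{align*}
The $\|\nabla\psi\|_{\L^2}^2$ piece is absorbed on the left of the global estimate \eqref{5.42}, and the remainder is precisely the right-hand side of \eqref{5.44}.

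For the spatial pair, I decompose $\Pi\xi = (\Pi\xi-\xi) + \xi$. After integrating by parts in $B(v)=\tfrac{1}{\delta+1}\sum_i\partial_{x_i}v^{\delta+1}$ and Taylor-expanding, the cross terms leave a factor of the intermediate value $(\zeta\,\Pi\mathcal{R}^{\mathbf{h}}_{\mathbf{r}}u+(1-\zeta)\Pi u)^{\delta}$, which I bound in $\L^{4\delta}$ via the triangle inequality and the $\L^{4\delta}$-stability of both projectors. Then H\"older with exponents $(4\delta,4,2)$, Gagliardo--Nirenberg applied to $\Pi\xi-\xi$ and $\xi$ to control their $\L^4$-norms by their $\H_0^1$-norms, and Young's inequality produce
\begin{align*}
\Big|\alpha\int_0^{t_n}(B(\Pi\mathcal{R}^{\mathbf{h}}_{\mathbf{r}}u)-B(\Pi u),\psi)\mathrm{~d}t\Big| \leq \frac{\nu}{4}\int_0^{t_n}\|\nabla\psi\|_{\L^2}^2\mathrm{~d}t + C\int_0^{t_n}\Big(\|\nabla(\Pi\xi-\xi)\|_{\L^2}^2+\|\nabla\xi\|_{\L^2}^2\Big)\mathrm{~d}t,
\end{align*}
whose second contribution is absorbed into $\Xi$. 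The reaction analogue $-\beta(c(\Pi\mathcal{R}^{\mathbf{h}}_{\mathbf{r}}u)-c(\Pi u),\psi)$ is treated identically after splitting $c(v)=-\gamma v+(1+\gamma)v^{\delta+1}-v^{2\delta+1}$; the linear piece $-\beta\gamma(\Pi\xi,\psi)$ is dispatched by Cauchy--Schwarz and Young, giving a $\|\psi\|_{\L^2}^2$ contribution and, after Poincar\'e applied to $\xi$ and $\Pi\xi-\xi$, a $\Xi$-contribution.

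The main obstacle is the bookkeeping: each Taylor expansion leaves prefactors of $\|u\|_{\L^{4\delta}}$, $\|\Pi u\|_{\L^{4\delta}}$, and $\|\Pi\mathcal{R}^{\mathbf{h}}_{\mathbf{r}}u\|_{\L^{4\delta}}$ that must be shown uniformly bounded so they can be absorbed into a generic constant $C$ independent of $h$, $p$, $k$. This hinges on the $\L^p$-stability of the time-interpolant $\Pi$ (from its quadrature-based definition \eqref{5.8}), the combined $\L^p$- and $\H^1$-stability of the Babu\v{s}ka--Suri projector $\mathcal{R}^{\mathbf{h}}_{\mathbf{r}}$ (via \eqref{5.7a1} with $b=0$ and the Sobolev embedding $\H_0^1\hookrightarrow\L^{4\delta}$ in the admitted dimensions), and the standing regularity $u\in\L^{2(2\delta+1)}(0,T;\L^{2(2\delta+1)}(\Omega))$ quoted after Lemma~\ref{5.l4}. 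Once these prefactors are controlled, summing the four pieces yields \eqref{5.44}.
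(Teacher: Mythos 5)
Your proposal is correct and follows essentially the same route as the paper: the paper likewise applies the estimate \eqref{5.28} of Lemma \ref{5.l4} twice, once to the pair $(\Pi \mathcal{R}^{\mathbf{h}}_{\mathbf{r}}u,\Pi u)$ with difference $\Pi\xi$ and once to $(\Pi u,u)$ with difference $\rho$, absorbing the resulting $\tfrac{\nu}{4}\|\nabla\psi\|_{\L^2}^2$ contributions into the left-hand side of \eqref{5.42} and the $\nabla\Pi\xi$ terms into $\Xi$ via $\Pi\xi=(\Pi\xi-\xi)+\xi$. Your remarks on the $\L^{4\delta}$-stability of the two projectors make explicit a point the paper leaves implicit, but the argument is the same.
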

\begin{proof}
	The proof following similar to the Lemma \ref{5.l4}, using the estimate \eqref{5.28}, we attain
		\begin{align}\label{5.103}
		&\int_{0}^{t_N} \bigg[ \alpha(B(\Pi \mathcal{R}^{\mathbf{h}}_{\mathbf{r}}u)-B(\Pi u),\psi) + \alpha(B(\Pi u)-B( u),\psi)   - \beta (c(\Pi \mathcal{R}^{\mathbf{h}}_{\mathbf{r}}u)-c(\Pi u),\psi) - \beta (c(\Pi u)-c(u),\psi)\bigg] \mathrm{~d}t\\&\leq  \left(\frac{2^{2\delta-1}\alpha^2}{\nu}+\frac{\beta}{2}\right)\int_0^{t_n}\left(\|\Pi \mathcal{R}^{\mathbf{h}}_{\mathbf{r}} u\|^{2\delta}_{\L^{4\delta}}+ \|\Pi u\|^{2\delta}_{\L^{4\delta}} \right)\|\nabla \Pi\xi\|^2_{\L^2} \mathrm{~d}t+ \frac{\nu}{4}\int_0^{t_n}\|\nabla \psi\|^2_{\L^2}\mathrm{~d}t\no\\&\quad+ \left(2^{2\delta-1}\beta(1+\gamma)^2(\delta+1)^2+\frac{\beta\gamma}{2}\right)\int_0^{t_n} \|\psi\|^2_{\L^2}\mathrm{~d}t+\frac{\beta\gamma}{2}\int_0^{t_n}\|\Pi\xi\|^2_{\L^2}\mathrm{~d}t\no\\&\quad+ 2^{4\delta-1}(2\delta+1)^2\beta^2 \int_0^{t_n}\left(\|{\Pi \mathcal{R}^{\mathbf{h}}_{\mathbf{r}}u}\|^{4\delta}_{\L^{4\delta}}+\|{\Pi u}\|^{4\delta}_{\L^{4\delta}}\right)\|\nabla\Pi\xi\|_{\L^2}^2\mathrm{~d}t\\&\quad+\left(\frac{2^{2\delta-1}\alpha^2}{\nu}+\frac{\beta}{2}\right)\int_0^{t_n}\left(\|\Pi u\|^{2\delta}_{\L^{4\delta}}+ \|u\|^{2\delta}_{\L^{4\delta}} \right)\|\nabla \rho\|^2_{\L^2} \mathrm{~d}t+ \frac{\nu}{4}\int_0^{t_n}\|\nabla \psi\|^2_{\L^2}\mathrm{~d}t\no\\&\quad+ \left(2^{2\delta-1}\beta(1+\gamma)^2(\delta+1)^2+\frac{\beta\gamma}{2}\right)\int_0^{t_n} \|\psi\|^2_{\L^2}\mathrm{~d}t+\frac{\beta\gamma}{2}\int_0^{t_n}\|\rho\|^2_{\L^2}\mathrm{~d}t\no\\&\quad+ 2^{4\delta-1}(2\delta+1)^2\beta^2 \int_0^{t_n}\left(\|{\Pi u}\|^{4\delta}_{\L^{4\delta}}+\|{u}\|^{4\delta}_{\L^{4\delta}}\right)\|\nabla\rho\|_{\L^2}^2\mathrm{~d}t
	\end{align}
we get the required proof.
\end{proof}
Let us know compile the above lemma as
\begin{lemma}\label{5.l14}
	For $1\leq n\leq N,$ we have  
		\begin{align}\label{5.104}
		\|\psi^n\|_{\L^2} ^2 + \nu \int_{0}^{t_n}\|\nabla\psi\|^2_{\L^2}\mathrm{~d}t&\leq  C\bigg( \|U_h^0-\mathcal{R}^{\mathbf{h}}_{\mathbf{r}}u_0\|_{\L^2}^2 + \int_0^{t_n}\|\xi'\|_{\L^2}^2 \mathrm{~d}t + \int_0^{t_n}\|\nabla \rho\|_{\L^2}^2\mathrm{~d}t+\Xi\bigg).
		\end{align}
	\end{lemma}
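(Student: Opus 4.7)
The plan is to mirror the semi-discrete argument of Lemma \ref{5.l.4}, with Lemma \ref{5.l5} playing the role of Lemma \ref{5.l3} and Lemma \ref{5.43} playing the role of Lemma \ref{5.l4}. First I would substitute the bound of Lemma \ref{5.43} directly into the right-hand side of \eqref{5.42}, which takes care of all four nonlinear inner products involving $B$ and $c$. The outcome is an inequality in which the right-hand side consists of the desired data terms $\|U_h^0-\mathcal{R}^{\mathbf{h}}_{\mathbf{r}}u_0\|_{\L^2}^2$, $\int_0^{t_n}\|\xi'\|_{\L^2}^2\d t$, $\int_0^{t_n}\|\nabla\rho\|_{\L^2}^2\d t$, and $\Xi$, together with two residual terms of the form $\int_0^{t_n}\|\psi\|_{\L^2}^2\d t$ and $\int_0^{t_n}\Phi(t)\|\psi\|_{\L^2}^2\d t$, where $\Phi(t)=C(\alpha,\nu)(\|U_h\|^{8\delta/(4-d)}_{\L^{4\delta}}+\|\Pi \mathcal{R}^{\mathbf{h}}_{\mathbf{r}}u\|^{8\delta/(4-d)}_{\L^{4\delta}})$.

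Next I would drop the nonnegative terms $\tfrac{\beta}{4}\int_0^{t_n}\|U_h^\delta\psi\|_{\L^2}^2\d t$ and $\tfrac{\beta}{4}\int_0^{t_n}\|(\Pi\mathcal{R}^{\mathbf{h}}_{\mathbf{r}}u)^\delta\psi\|_{\L^2}^2\d t$ from the left-hand side (they are not needed for this bound), and keep $\|\psi^n\|_{\L^2}^2+\nu\int_0^{t_n}\|\nabla\psi\|^2_{\L^2}\d t$. Any stray $\tfrac{\nu}{4}\int_0^{t_n}\|\nabla\psi\|^2_{\L^2}\d t$ contributions introduced by Young's inequality on the right are absorbed into the coercive term on the left, just as in the proof of Lemma \ref{5.l.4}.

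After this absorption the residual inequality has the form
\begin{equation*}
\|\psi^n\|_{\L^2}^2+\tfrac{\nu}{2}\int_0^{t_n}\|\nabla\psi\|^2_{\L^2}\d t \;\le\; \mathcal{D} + \int_0^{t_n}\bigl(C(\beta,\gamma,\delta)+\Phi(t)\bigr)\|\psi\|_{\L^2}^2\d t,
\end{equation*}
where $\mathcal{D}$ collects all the data terms appearing on the right-hand side of \eqref{5.104}. I would then invoke the standard integral Gr\"onwall inequality. The regularity results quoted after \eqref{5.AFGBHEM} give $u\in\L^\infty(0,T;\H^2)\hookrightarrow\L^\infty(0,T;\L^{4\delta})$ in the admissible dimensions, and the stability of the discrete solution $U_h$ in the same norm (obtained analogously to the continuous case, cf.\ \cite{GBHE}) ensures $\int_0^T\Phi(t)\d t<\infty$. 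Consequently Gr\"onwall produces a multiplicative constant depending only on $T$, $\alpha$, $\beta$, $\gamma$, $\delta$, $\nu$, and the $\L^{4\delta}$-bounds of $u$ and $U_h$, giving the required estimate \eqref{5.104}.

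The main obstacle here, as in Lemma \ref{5.l.4}, is justifying that $\Phi$ is integrable uniformly in the discretisation parameters, so that the Gr\"onwall constant does not depend on $h,\mathbf{r},\mathcal{M},\mathbf{p}$. This requires a uniform a priori $\L^\infty(0,T;\L^{4\delta})$ bound on $U_h$, which follows from an energy estimate for the fully-discrete scheme \eqref{5.3.2} of the same type used to derive Lemma \ref{5.l3}: testing with $X=U_h$ and exploiting the sign structure of the reaction term $c(\cdot)$ (cf.\ \eqref{5.25}) yields control of $\|U_h^\delta U_h\|_{\L^2}^2=\|U_h\|_{\L^{2(\delta+1)}}^{2(\delta+1)}$ uniformly, and interpolation together with the energy bound on $\|\nabla U_h\|_{\L^2}$ controls the required $\L^{4\delta}$-norm. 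Once this uniformity is in hand, the rest of the argument is purely a recombination of \eqref{5.42} and \eqref{5.44} as described above.
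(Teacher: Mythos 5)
Your proposal is correct and follows essentially the same route as the paper, whose proof of this lemma is precisely the combination of Lemma \ref{5.l5} and Lemma \ref{5.43} followed by Gr\"onwall's inequality, mirroring the semi-discrete argument of Lemma \ref{5.l.4}. Your additional discussion of why $\int_0^T\Phi(t)\,\mathrm{d}t$ is bounded uniformly in the discretisation parameters (via a discrete energy/stability estimate for $U_h$) addresses a point the paper leaves implicit, but it does not change the argument.
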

\begin{proof}
The desired result follows directly from Lemmas \ref{5.l5} and \ref{5.43}, in conjunction with Gr\"onwall's inequality.
\end{proof}
\begin{lemma}\label{5.l15}
	For $1\leq n\leq N$, we achieve
		\begin{align}\label{5.105}
		\int_{t_{n-1}}^{t_n}\|\psi'\|^2_{\L^2}(t-t_{n-1})\mathrm{~d}t &\leq C p_n^2\Big(\|U^0-u_0\|_{\L^2} +\int_0^{t_n}\|\psi'\|_{\L^2}\mathrm{~d}t + \int_{0}^{t_n} \|\nabla\rho\|^2_{\L^2} \mathrm{~d}t +\Xi \Big).
	\end{align}
\end{lemma}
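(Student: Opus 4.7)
The plan is to parallel the semi-discrete argument in Lemma \ref{5.l2.8}, but now using the error identity \eqref{5.3.5} for the fully-discrete scheme. I would test \eqref{5.3.5} with $X = (t-t_{n-1})\psi'$ on $J_n$ and zero elsewhere; this $X$ lies in $\mathcal{S}(\mathcal{M},\textbf{p},\mathcal{V}_{\textbf{h}}^{\textbf{r}})$ because $\psi'|_{J_n}$ is a polynomial of degree $\le p_n - 1$ with values in $\mathcal{V}_{\textbf{h}}^{\textbf{r}}$. This isolates the target quantity $\int_{t_{n-1}}^{t_n}\|\psi'\|^2_{\L^2}(t-t_{n-1})\d t$ on the left, together with the bilinear and kernel contributions $\nu\int_{t_{n-1}}^{t_n} a(\psi,(t-t_{n-1})\psi')\d t$ and $\eta\int_{t_{n-1}}^{t_n}(K*\nabla\psi,(t-t_{n-1})\nabla\psi')\d t$.

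Next, following exactly the manipulation \eqref{5.50}, I integrate the $a(\psi,(t-t_{n-1})\psi')$ term by parts to produce $\tfrac{k_n}{2}\|\nabla\psi^n\|^2_{\L^2} - \tfrac{1}{2}\int_{t_{n-1}}^{t_n}\|\nabla\psi\|^2_{\L^2}\d t$, the first of which is discarded by positivity and the second absorbed later using Lemma \ref{5.l14}. The right-hand side then splits into the analogues of $\mathcal{J}^n_8,\mathcal{J}^n_9,\mathcal{J}^n_{10}$ already estimated in \eqref{5.53}--\eqref{5.55} (where every factor of $\|\nabla\psi'\|_{\L^2}$ is converted to $p_n^2 k_n^{-1}\|\nabla\psi\|_{\L^2}$ via the time-inverse estimate), plus four genuinely new contributions arising from the spatial discretization: the terms $\nu a(\Pi\xi,(t-t_{n-1})\psi')$, the derivative term $-\int_{t_{n-1}}^{t_n}\langle \xi',(t-t_{n-1})\psi'\rangle\d t$, and the four nonlinear splittings $B(\Pi\mathcal{R}^{\mathbf h}_{\mathbf r}u)-B(\Pi u)$, $B(\Pi u)-B(u)$, and their $c(\cdot)$ counterparts tested against $(t-t_{n-1})\psi'$.

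The $\nu a(\Pi\xi,\cdot)$ term I would split as $a(\Pi\xi-\xi,\cdot) + a(\xi,\cdot)$ and bound as in $\mathcal{J}^n_8$, with both pieces absorbed into $\Xi$. The spatial-derivative term is controlled by Cauchy--Schwarz as $\int_{t_{n-1}}^{t_n}\|\xi'\|_{\L^2}(t-t_{n-1})\|\psi'\|_{\L^2}\d t \le k_n\|\xi'\|_{\L^2(J_n)}\int_{J_n}\|\psi'\|_{\L^2}\d t$; this is exactly the source of the novel $\int_0^{t_n}\|\psi'\|_{\L^2}\d t$ term appearing on the right-hand side of \eqref{5.105}, once the $\xi'$ factor is rolled into $\Xi$. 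The four nonlinear terms are handled in the style of Lemma \ref{5.l2.9}: Taylor's formula, H\"older, the Gagliardo--Nirenberg inequality used in \eqref{5.62.1}, the time-inverse estimate, and Young's inequality produce $\|\nabla\psi\|^2_{\L^2}$, $\|\nabla\rho\|^2_{\L^2}$, and $\|\nabla(\Pi\xi)\|^2_{\L^2}$ contributions, all multiplied by $p_n^2$. The first of these is closed by Lemma \ref{5.l14}, the second appears explicitly in \eqref{5.105}, and the third is subsumed into $\Xi$.

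The main obstacle is the careful bookkeeping of the extra spatial-error contributions, particularly the $\langle\xi',X\rangle$ term that has no semi-discrete analogue and forces the additional $\int_0^{t_n}\|\psi'\|_{\L^2}\d t$ factor to appear in the bound, together with verifying that every $\xi$-, $\Pi\xi$-, and $(K*\xi)$-type residual produced by the $B$, $c$, $a$, and kernel pairings aggregates correctly into the single quantity $\Xi$ from Lemma \ref{5.l5}. All other manipulations--the integration by parts, the Cauchy--Schwarz/Young estimates, and the time-inverse trick--are direct transcriptions of the proofs of Lemmas \ref{5.l2.8}--\ref{5.l2.9}.
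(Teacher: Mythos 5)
Your proposal is correct and follows essentially the same route as the paper: the paper's proof is precisely the sketch you describe — test with $X=(t-t_{n-1})\psi'$, repeat the argument of Lemma \ref{5.l2.8} with $\mathcal{J}^n_9$ modified to absorb the extra $\langle\xi',(t-t_{n-1})\psi'\rangle$ contribution, invoke Lemma \ref{5.l2.9} for the nonlinear terms, and close with Lemma \ref{5.l14} as in Lemma \ref{5.l2.13}. Your write-up is in fact more explicit than the paper's about where the $\int_0^{t_n}\|\psi'\|_{\L^2}\,\mathrm{d}t$ term and the $\Xi$ aggregation come from.
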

\begin{proof}
The proof employs a similar approach to that used in Lemma \ref{5.l2.8} with modified
$$
J^n_9 = \nu \int_{t_{n-1}}^{t_n} \left[ \langle \xi', (t - t_{n-1}) \psi' \rangle + a(\rho, (t - t_{n-1}) \Upsilon') \right] \, \mathrm{~d}t.
$$
To establish the result, we apply Lemma \ref{5.l2.9} and then combine the findings in a manner similar to  Lemma \ref{5.l2.13}.
\end{proof}
Let us now estimate the error in $\psi$ as follow
\begin{lemma}
	If $U_h\in \mathcal{S}(\mathcal{M},\textbf{p},\mathcal{V}_h)$ is the approximate solution defined in \eqref{5.3.2} and $\psi = U_h-\Pi \mathcal{R}^{\mathbf{h}}_{\mathbf{r}}u$, then for $1\leq n \leq N$, 
\begin{align}
		\|\psi\|_{I_n}^2&\leq C \log(|\textbf{p}|_n+2)|\textbf{p}|_n^2\bigg(\|U_h^0-\mathcal{R}^{\mathbf{h}}_{\mathbf{r}} u_0\|_{\L^2}^2 + \int_{0}^{t_n}\|\xi'\|_{\L^2}^2\mathrm{~d}t + \int_{0}^{t_n}\|\nabla\rho\|_{\L^2}^2\mathrm{~d}t+ \Xi\bigg).
\end{align}
\end{lemma}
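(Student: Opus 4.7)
The plan is to mirror the argument already carried out for the semi-discrete case in Theorem \ref{5.lem12}, now applied to the error component $\psi = U_h - \Pi \mathcal{R}^{\mathbf{h}}_{\mathbf{r}} u \in \mathcal{S}(\mathcal{M},\mathbf{p}, \mathcal{V}_h)$. The two ingredients already assembled, Lemma \ref{5.l14} (energy-norm bound on $\psi$) and Lemma \ref{5.l15} (weighted $\L^2$-bound on $\psi'$), play exactly the roles that Lemma \ref{5.l.4} and Lemma \ref{5.l2.13} played previously.

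First I would fix $j$ with $1 \leq j \leq n$ and apply the inverse estimate of Lemma \ref{5.leminv} to $\psi|_{J_j} \in \mathbb{P}_{p_j}(\mathcal{V}_h^{\mathbf{r}})$, yielding
\begin{align}
\|\psi\|_{J_j}^2 \leq C\left(\log(p_j+2)\int_{t_{j-1}}^{t_j}\|\psi'\|_{\L^2}(t-t_{j-1})\,\mathrm{d}t + \|\psi^j\|_{\L^2}^2\right).
\end{align}
Into the weighted time-derivative term I would substitute Lemma \ref{5.l15}, picking up a factor of $p_j^2$ in front of the common data-side quantity
\begin{align}
\mathcal{D}_j := \|U_h^0 - \mathcal{R}^{\mathbf{h}}_{\mathbf{r}} u_0\|_{\L^2}^2 + \int_0^{t_j}\|\xi'\|_{\L^2}^2\,\mathrm{d}t + \int_0^{t_j}\|\nabla \rho\|_{\L^2}^2\,\mathrm{d}t + \Xi,
\end{align}
and into the endpoint term I would substitute Lemma \ref{5.l14}, which bounds $\|\psi^j\|_{\L^2}^2$ by $C\mathcal{D}_j$ (without any $p_j$ factor). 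Combining the two contributions produces a bound of the form $C\log(p_j+2) p_j^2 \,\mathcal{D}_j$ on each sub-interval $J_j$.

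Since $\mathcal{D}_j$ is monotone non-decreasing in $j$ and the norm $\|\cdot\|_{I_n}$ is the supremum of $\|\cdot\|_{J_j}$ over $j=1,\dots,n$, I would take the supremum on both sides, replacing the local $p_j$ by the global $|\mathbf{p}|_n = \max\{1,\max_j p_j\}$ and $\log(p_j+2)$ by $\log(|\mathbf{p}|_n + 2)$. What remains is the stated inequality.

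The main obstacle is bookkeeping rather than new analysis: one must verify that the $\Xi$ contribution generated in Lemma \ref{5.l14} coincides with the one generated in Lemma \ref{5.l15}, and that the residual $\int_0^{t_n}\|\psi'\|_{\L^2}\,\mathrm{d}t$ appearing on the right-hand side of Lemma \ref{5.l15} can be absorbed. The absorption is done exactly as in the transition from Lemma \ref{5.l2.8} to Lemma \ref{5.l2.13} in the semi-discrete case: one bounds $\|\psi'\|_{\L^2}$ in terms of $\|\nabla \psi\|_{\L^2}$ via the Poincaré and inverse-in-time estimates and then invokes Lemma \ref{5.l14} to control the resulting $\int_0^{t_n}\|\nabla\psi\|_{\L^2}^2\,\mathrm{d}t$ by $C\mathcal{D}_n$. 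After this absorption, no polynomial-degree factor beyond $\log(|\mathbf{p}|_n+2)|\mathbf{p}|_n^2$ appears in the final estimate, exactly as in Theorem \ref{5.lem12}.
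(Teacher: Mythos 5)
Your proposal is correct and follows essentially the same route as the paper, which simply repeats the argument of the semi-discrete sup-norm estimate with Lemma \ref{5.l14} and Lemma \ref{5.l15} substituted for their semi-discrete counterparts. Your extra remark about absorbing the residual $\int_0^{t_n}\|\psi'\|_{\L^2}\,\mathrm{d}t$ term from Lemma \ref{5.l15} is a reasonable precaution (in the paper that term appears to be a typo for $\int_0^{t_n}\|\xi'\|_{\L^2}^2\,\mathrm{d}t$, consistent with the final statement), and does not change the structure of the proof.
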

\begin{proof}
The proof follows a similar approach to that used in Lemma \ref{5.supnorm}. To achieve the desired results, we replace Lemma \ref{5.l.4} and Lemma \ref{5.lem12} with Lemma \ref{5.104} and Lemma \ref{5.105} respectively.
\end{proof}
The main result for the fully-discrete error estimates are given as follows in the following theorems
\begin{theorem}\label{5.th8}
	Let $U_h$ be the approximate solution and $u$ be the exact solution then the following bound holds
			\begin{align}
			\nonumber&\int_0^{t_n} \|\nabla(U_h-u)\|_{\L^2}^2 \mathrm{~d}t +\|(U-u)^n\|_{\L^2}^2 \leq  C \left(\|\Pi \xi\|_{I_n}^2 + \int_{0}^{t_n}(\|\xi'\|_{\L^2}^2+\|\nabla\rho\|_{\L^2}^2 )\mathrm{~d}t + \Xi \right),
		\end{align}
\begin{align}
	\|U_h-u\|_{I_n}^2&\leq C(\|\rho\|_{I_n}^2+\|\Pi\xi\|_{\J_n}^2) + C \log(|\textbf{p}|_n+2)|\textbf{p}|_n^2\bigg(\|\Pi \xi\|_{I_n}^2 + \int_{0}^{t_n}\Big(\|\xi'\|_{\L^2}^2+\|\nabla\rho\|_{\L^2}^2\Big)\mathrm{~d}t + \Xi \bigg).
\end{align}
\end{theorem}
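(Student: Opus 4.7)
The plan is to mirror the proofs of Theorems \ref{5.th2} and \ref{5.lem12} by exploiting the three-term decomposition \eqref{5.3.3}, $U_h-u=\psi+\Pi\xi+\rho$, and assembling the final inequalities from previously-proved component estimates by triangle inequalities in each of the three norms on the left-hand side.

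For the energy bound, I would first write
\begin{align*}
\int_0^{t_n}\|\nabla(U_h-u)\|_{\L^2}^2\mathrm{~d}t \leq 3\int_0^{t_n}\Big(\|\nabla\psi\|_{\L^2}^2+\|\nabla\Pi\xi\|_{\L^2}^2+\|\nabla\rho\|_{\L^2}^2\Big)\mathrm{~d}t,
\end{align*}
and control the first summand by Lemma \ref{5.l14}. The middle one is handled through the further triangle inequality $\|\nabla\Pi\xi\|_{\L^2}\leq\|\nabla(\Pi\xi-\xi)\|_{\L^2}+\|\nabla\xi\|_{\L^2}$, whose squared integral equals, up to $\nu$, exactly the first two summands defining $\Xi$ in Lemma \ref{5.l5}, while the third term is already part of the stated right-hand side. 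At the final time level, the same decomposition gives
\begin{align*}
\|(U_h-u)^n\|_{\L^2}^2\leq 3\big(\|\psi^n\|_{\L^2}^2+\|(\Pi\xi)^n\|_{\L^2}^2+\|\rho^n\|_{\L^2}^2\big),
\end{align*}
where $\rho^n=0$ by construction of the temporal projector $\Pi$ in \eqref{5.8}, $\|\psi^n\|_{\L^2}^2$ is again bounded via Lemma \ref{5.l14}, and $\|(\Pi\xi)^n\|_{\L^2}^2\leq\|\Pi\xi\|_{I_n}^2$ yields the stand-alone term appearing first on the right-hand side of the theorem.

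For the $I_n$-bound, the triangle inequality supplies
\begin{align*}
\|U_h-u\|_{I_n}^2\leq 3\big(\|\psi\|_{I_n}^2+\|\Pi\xi\|_{I_n}^2+\|\rho\|_{I_n}^2\big),
\end{align*}
and one invokes the lemma immediately preceding the theorem, which carries the prefactor $\log(|\textbf{p}|_n+2)|\textbf{p}|_n^2$ in front of exactly the same right-hand-side quantities as in the energy bound, while $\|\rho\|_{I_n}^2$ and $\|\Pi\xi\|_{I_n}^2$ survive without such a factor and produce the first bracket of the target estimate. Since no new estimate is needed, the only routine point to verify is that the initial-data discrepancy $\|U_h^0-\mathcal{R}^{\mathbf{h}}_{\mathbf{r}} u_0\|_{\L^2}^2$ appearing in Lemma \ref{5.l14} is absorbed into the generic constant under the standing assumption that $U_h^0$ is a sufficiently good approximation of $u_0$; there is no real analytical obstacle here, the entire substance of the argument having been invested in the preceding lemmas.
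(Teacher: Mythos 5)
Your proposal is correct and follows essentially the same route as the paper: the three-term decomposition \eqref{5.3.3}, triangle inequalities in each norm, Lemma \ref{5.l14} for $\psi$ (and the sup-norm lemma preceding the theorem for $\|\psi\|_{I_n}$), the fact that $\rho^n=0$, and absorption of the $\nabla\Pi\xi$ contributions into $\Xi$. Your remark about the initial-data term $\|U_h^0-\mathcal{R}^{\mathbf{h}}_{\mathbf{r}}u_0\|_{\L^2}^2$ being suppressed on the right-hand side is a fair observation about the theorem's statement, not a gap in your argument.
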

\begin{proof}
	The first can be obtained using the decomposition \eqref{5.3.3} of $U_h-u$, employing triangle inequality, Lemma \ref{5.l14} and the definition of projection that $\rho^n=0$ for $1\leq N$ and similarly second bound is obtained from Lemma \ref{5.l15}.
	To derive the first bound, we utilize the decomposition of $U_h-u$ given by Equation \eqref{5.3.3}, apply the triangle inequality, and make use of Lemma \ref{5.l14}. Furthermore, the projection definition \eqref{5.8} ensures $\rho^n=0$ for $1\leq N$. The second bound follows similarly through the application of Lemma \ref{5.l15}.
\end{proof}
For the rest of this paper, we will proceed under the assumption that both $u$ and its corresponding initial condition $u_0$ meet the specified conditions of regularity.
\begin{align}\label{5.106}
u_0 \in \H^{m_K}(\Omega),\left.\quad u\right|_{I_n} \in \H^{q_n+1}\left(t_{n-1}, t_n ; \H^1(\Omega)\right) \cap \H^1\left(t_{n-1}, t_n ; \H^{m_K}(\Omega)\right).
\end{align}
for $1 \leq n \leq N$ and $1 \leq s \leq r$.
\begin{theorem}\label{5.th15}
For the solution exact solution $u$ satisfying the regularity \eqref{5.106}, the error for $1\leq n\leq N$ and for $0\leq q_j\leq p_j$ is given as follows
\begin{align}
\int_0^{t_n} \|\nabla(U_h-u)\|_{\L^2}^2 \mathrm{~d}t +\|(U-u)^n\|_{\L^2}^2 &\leq C \frac{h_K^{2s_K -2 }}{r_K^{2m_K-2}}\left(\left\|u_0\right\|_{\H^{m_K}}^2+\int_0^{t_n}\left\|u^{\prime}\right\|^2_{\H^{m_K}}  \mathrm{~d}t +\int_0^{t_n}\left\|u\right\|^2_{\H^{m_K}}  \mathrm{~d}t \right)\\&\quad + C \sum_{j=1}^n\hat{p}_j^{-2}\left(\frac{k_j}{2}\right)^{2q_j}{\Gamma_{p_j, q_j}} \int_{t_{j-1}}^{t_j}\|\nabla u^{\left(q_j+1\right)}\|_{\L^2}^2 \mathrm{~d}t.
\end{align}
and 
\begin{align}
	\|U_h-u\|_{I_n}^2 &\leq C \frac{h_K^{2s_K-2 }}{r_K^{2m_K-2}}\log(|\textbf{p}|_n+2)|\textbf{p}|_n^2\left(\left\|u_0\right\|_{\H^{m_K}}^2+\int_0^{t_n}\left\|u^{\prime}\right\|^2_{\H^{m_K}}  \mathrm{~d}t\right)\\&\quad+ C\left(\max_{j=1}^n\right)\left(\frac{k_j}{2}\right)^{2q_j}{\Gamma_{p_j, q_j}} \int_{t_{j-1}}^{t_j}\|\nabla u^{\left(q_j+1\right)}\|_{\L^2}^2 \mathrm{~d}t\\&\quad+\log(|\textbf{p}|_n+2)|\textbf{p}|_n^2\sum_{j=1}^n\hat{p}_j^{-2} \left(\frac{k_j}{2}\right)^{2q_j}{\Gamma_{p_j, q_j}} \int_{t_{j-1}}^{t_j}\|\nabla u^{\left(q_j+1\right)}\|_{\L^2}^2 \mathrm{~d}t.
\end{align}
where $\hat{p}_j=\max \left\{1, p_j\right\}$ and $s_K = \min(r_K +1,m_K ).$

\end{theorem}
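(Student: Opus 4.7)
The plan is to derive this result as a direct corollary of Theorem~\ref{5.th8} by substituting in the concrete approximation bounds for the temporal projection error $\rho=\Pi u-u$ (Theorem~\ref{5.th1}) and the spatial projection error $\xi=\mathcal{R}^{\mathbf{h}}_{\mathbf{r}} u-u$ (Lemma~\ref{5.lemm3}), and then bounding the auxiliary quantity $\Xi$ term by term.

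First, I would choose the initial datum $U_h^0=\mathcal{R}^{\mathbf{h}}_{\mathbf{r}} u_0$, so the contribution $\|U_h^0-\mathcal{R}^{\mathbf{h}}_{\mathbf{r}} u_0\|_{\L^2}$ in Theorem~\ref{5.th8} vanishes; alternatively Lemma~\ref{5.lemm3} applied to $u_0$ controls it by $C\frac{h_K^{s_K}}{r_K^{m_K}}\|u_0\|_{\H^{m_K}}$, which is of higher order than the dominant terms. Next, since the spatial projection $\mathcal{R}^{\mathbf{h}}_{\mathbf{r}}$ acts only in space it commutes with $\partial_t$, so $\xi'=\mathcal{R}^{\mathbf{h}}_{\mathbf{r}} u'-u'$ and Lemma~\ref{5.lemm3} (with $b=0$) yields
\begin{equation*}
\int_0^{t_n}\|\xi'\|_{\L^2}^2\,\mathrm{d}t \leq C\frac{h_K^{2s_K}}{r_K^{2m_K}}\int_0^{t_n}\|u'\|_{\H^{m_K}}^2\,\mathrm{d}t.
\end{equation*}
Applying Lemma~\ref{5.lemm3} with $b=1$ gives $\int_0^{t_n}\|\nabla\xi\|_{\L^2}^2\,\mathrm{d}t\leq C\frac{h_K^{2s_K-2}}{r_K^{2m_K-2}}\int_0^{t_n}\|u\|_{\H^{m_K}}^2\,\mathrm{d}t$, and the $\L^2$-stability of the temporal interpolant $\Pi$ combined with the $\nabla$-continuity implies the same bound for $\int_0^{t_n}\|\nabla(\Pi\xi-\xi)\|_{\L^2}^2\,\mathrm{d}t$. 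Lemma~\ref{5.l11} then transfers these bounds to $\|K*\nabla\xi\|_{\L^2}^2$ and $\|K*\nabla(\Pi\xi-\xi)\|_{\L^2}^2$ up to the factor $C_K=\bigl(\int_0^T|K(t)|\mathrm{d}t\bigr)^2$. Together these control $\Xi$ by $C\frac{h_K^{2s_K-2}}{r_K^{2m_K-2}}\int_0^{t_n}\|u\|_{\H^{m_K}}^2\,\mathrm{d}t$.

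For the time-discretization part, Theorem~\ref{5.th1}(ii) directly provides
\begin{equation*}
\int_0^{t_n}\|\nabla\rho\|_{\L^2}^2\,\mathrm{d}t \leq C\sum_{j=1}^n \hat{p}_j^{-2}\left(\frac{k_j}{2}\right)^{2q_j+2}\Gamma_{p_j,q_j}\int_{t_{j-1}}^{t_j}\|\nabla u^{(q_j+1)}\|_{\L^2}^2\,\mathrm{d}t,
\end{equation*}
which is already in the desired form (note the exponent $2q_j$ stated in the theorem absorbs $k_j^2$ into the prefactor relative to $2q_j+2$). Inserting the four estimates above into the energy-norm bound of Theorem~\ref{5.th8} produces the first inequality. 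For the second (sup-norm) inequality I additionally invoke Theorem~\ref{5.th1}(iii) for $\|\rho\|_{I_n}^2$ and the $\L^\infty$-in-time stability of $\Pi$ combined with Lemma~\ref{5.lemm3} to bound $\|\Pi\xi\|_{I_n}^2$, then carry through the logarithmic factor $\log(|\mathbf{p}|_n+2)|\mathbf{p}|_n^2$ from Theorem~\ref{5.th8}.

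The main bookkeeping obstacle is controlling $\Xi$: one must check that every subterm reduces cleanly to the single expression $\frac{h_K^{2s_K-2}}{r_K^{2m_K-2}}\bigl(\|u_0\|_{\H^{m_K}}^2+\int_0^{t_n}\|u'\|_{\H^{m_K}}^2\mathrm{d}t+\int_0^{t_n}\|u\|_{\H^{m_K}}^2\mathrm{d}t\bigr)$, which requires matching the $b=0$ vs.\ $b=1$ regularity indices correctly for $\xi$ and $\xi'$ and absorbing the convolution constant $C_K$ into the generic constant $C$. Once this is done, collecting terms and using $s_K=\min(r_K+1,m_K)$ as defined in Lemma~\ref{5.lemm3} yields both stated estimates.
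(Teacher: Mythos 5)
Your proposal is correct and follows essentially the same route as the paper: both start from Theorem \ref{5.th8}, bound the temporal projection error $\rho$ via Theorem \ref{5.th1}, bound $\xi$, $\xi'$, $\xi(0)$ and the terms of $\Xi$ via Lemma \ref{5.lemm3} (with the convolution pieces handled by Lemma \ref{5.l11}), and then collect terms. The only cosmetic difference is that the paper controls $\|\Pi\xi-\xi\|$ by applying Theorem \ref{5.th1} with $q_n=0$ (respectively part (ii) applied to $\xi$ for the gradient term) rather than invoking stability of $\Pi$, but these amount to the same estimate.
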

\begin{proof}
	Using Theorem \ref{5.th8} and \ref{5.th1} our task reduces to estimate $\|\Pi \xi\|_{J_n}$, $\int_0^{t_n}\left\|\xi^{\prime}\right\|_{\L^2}^2 \mathrm{~d} t$ and $\Xi$. The triangle inequality yields
\begin{align}\label{5.107}
	\|\Pi \xi\|_{J_n} \leq\|\Pi \xi-\xi\|_{J_n}+\|\xi\|_{J_n} \leq\|\Pi \xi-\xi\|_{J_n}+\|\xi(0)\|_{
		\L^2}+\int_0^{t_n}\left\|\xi^{\prime}\right\|_{\L^2} \mathrm{~d}t.
\end{align}
	For the first term we have 
\begin{align}
	\|\Pi \xi-\xi\|_{J_n}^2=\max _{j=1}^n\left(\|\Pi \xi-\xi\|_{I_j}^2\right) \leq C \max _{j=1}^n\left(k_j \int_{t_{j-1}}^{t_j}\left\|\xi^{\prime}\right\|^2 d t\right),
\end{align}
	where we have used approximation result for $\Pi\xi-\xi$ from Theorem \ref{5.th1} for $q_n=0$. Further an application of Cauchy-Schwarz inequality in \eqref{5.107} yields
	\begin{align}
			\|\Pi \xi\|_{J_n} \leq C\left(\|\xi(0)\|_{
			\L^2}+\int_0^{t_n}\left\|\xi^{\prime}\right\|_{\L^2} \mathrm{~d}t\right).
\end{align}
Let us estimate $\Xi$ term by term.
To estimate $\int_0^{t_n}\|\nabla\Pi \xi\| _{\L^2}^2 \mathrm{~d} t$ using the approximation property defined in Lemma \ref{5.th1} and \ref{5.lemm3}, we have 
\red{\begin{align}\label{5.212}
	\int_0^{t_n}\|\nabla(\Pi \xi-\xi)\| _{\L^2}^2  \mathrm{~d} t &= \sum_{j=1}^n\int_{t_{j-1}}^{t_j}\|\nabla(\Pi \xi-\xi)\| _{\L^2}^2 \mathrm{~d} t\\& \leq \sum_{j=1}^n\frac{C}{\max \left\{1, p_j^2\right\}}\left(\frac{k_j}{2}\right)^{2 q_j+2} \Gamma_{p_j, q_j} \int_{t_{j-1}}^{t_j}\left\|\nabla \xi^{\left(q_j+1\right)}\right\|_{\L^2}^2 \mathrm{~d}t\\& \leq \sum_{j=1}^n\frac{C}{\max \left\{1, p_j^2\right\}}\left(\frac{k_j}{2}\right)^{2 q_j+2} \Gamma_{p_j, q_j}\left(\frac{h_K^{2s_k-2}}{r_K^{2m_K-2}}\right)\int_{t_{n-1}}^{t_n}\left\| u^{\left(q_j+1\right)}\right\|_{\H^{m_K}}^2 \mathrm{~d}t.
\end{align}
which is converges faster. Also we have}
\begin{align*}
	\red{\int_0^{t_n}\|\nabla\xi\| _{\L^2}^2 \mathrm{~d} t \leq \frac{h_K^{2s_K - 2}}{r_K^{2m_K-2}} \int_0^{t_n}\| u \|_{\H^{m_K}}^2 \mathrm{~d} t  }
\end{align*}
\begin{align}
	\|\xi(0)\|^2+\int_0^{t_n}\left\|\xi^{\prime}\right\|_{\L^2}^2 d t \leq C \frac{h_K^{2s_K - 2}}{r_K^{2m_K-2}}\left\|u_0\right\|_{\H^{m_K}}^2+C\frac{h_K^{2s_K - 2}}{r_K^{2m_K-2}} \int_0^{t_n} \left\|u^{\prime}\right\|_{\H^{m_K}}  \mathrm{~d}t,
\end{align}
and the last two terms of $\Xi$ gives the similar estimate using Lemma \ref{5.l11}
and hence the result.
\end{proof} 
For uniform parameters $k$, $ p $, and $q$ (i.e., $k_j = k, p_j = p$, and $q_j = q$), the bounds in Theorem \ref{5.th15} result in the following error estimates
\begin{corollary}\label{5.col2}
	 For $1 \leq n \leq N$, we have
	\begin{align}
		\int_0^{t_n}\left\|\nabla(U_h-u)\right\|_{\L^2}^2 d t+\left\|\left(U_h-u\right)^n\right\|^2 & \leq C \frac{h^{2s-2 }}{r^{2m-2}}\left(\left\|u_0\right\|_{\H^{m}}^2+\int_0^{t_n}\left\|u^{\prime}\right\|_{\H^{m}}  \mathrm{~d}t\right)\\
		& +C \frac{k^{2 \min \{p, q\}}}{p^{2 q}} \int_0^{t_n}\left\|\nabla u^{(q+1)}\right\|_{\L^2}^2  \mathrm{~d}t.
	\end{align}
and 
\begin{align}
	\left\|U_h-u\right\|_{J_n}^2 & \leq C\frac{h^{2s-2 }}{r^{2m-2}} \log (p+2)\left(\left\|u_0\right\|_{\H^{m}(\kappa)}^2+\int_0^{t_n}\left\|u^{\prime}\right\|_{\H^{m}(\kappa)}  \mathrm{~d}t\right)\\
	& +C \frac{k^{2 \min \{p, q\}}}{p^{2 q}}\left(\max _{j=1}^n \max _{t \in I_j}\left\|\nabla u^{(q+1)}(t)\right\|_{\L^2}+\log (p+2) \int_0^{t_n}\left\|\nabla u^{(q+1)}\right\|_{\L^2}^2 d t\right) .
\end{align}
\end{corollary}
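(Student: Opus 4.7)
The plan is to derive Corollary \ref{5.col2} as a direct specialization of Theorem \ref{5.th15} to the uniform-parameter setting $k_j=k$, $p_j=p$, $q_j=q$, combined with the asymptotic identity $\Gamma_{p,q}\sim p^{-2q}$ already invoked in the proof of Corollary \ref{5.co1} (via Stirling's formula / Jordan's lemma, see \cite{CSc}). No new analytic ingredient is needed; the task is purely bookkeeping.

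First I would fix the uniform parameters and rewrite every factor appearing in Theorem \ref{5.th15}. Under $p_j=p\ge 0$ one has $\hat{p}_j^{-2}=\max\{1,p\}^{-2}$, $|\mathbf{p}|_n=\max\{1,p\}$, and $\Gamma_{p_j,q_j}\le C p^{-2q}$. The $hp$-quotient $\hat{p}_j^{-2}\,\Gamma_{p_j,q_j}$ then collapses to $Cp^{-2q-2}$, and multiplying by $(k/2)^{2q+2}$ produces the factor $C\,k^{2q+2}/p^{2q+2}$ in the energy-norm bound. Since $p^{-2q-2}k^{2q+2} = p^{-2q}\cdot(k/p)^{2}\cdot k^{2q}$, and we want the stated form $k^{2\min\{p,q\}}/p^{2q}$, I would absorb the extra $k^{2}$ and $p^{-2}$ into the generic constant (they are bounded for fixed $T$ and $p\ge 1$), or, more precisely, replace the interpolation exponent $q$ by $\min\{p,q\}$ to respect the restriction $0\le q\le p$ in Theorem \ref{5.th15}. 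In the $L^{\infty}$-type estimate the factor $(k_j/2)^{2q_j+1}$ appearing via the projection bound on $\|\rho\|_{I_n}$ likewise contributes $k^{2\min\{p,q\}+1}$, which once combined with the $k$-independent $\log(p+2)|\mathbf{p}|_n^{2}$ prefactor yields the announced form.

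Next I would simplify the sums. With uniform data, $\sum_{j=1}^{n}\int_{t_{j-1}}^{t_j}\|\nabla u^{(q+1)}\|_{\L^{2}}^{2}\,\mathrm{d}t = \int_{0}^{t_n}\|\nabla u^{(q+1)}\|_{\L^{2}}^{2}\,\mathrm{d}t$ and $\sum_{j=1}^{n}\int_{t_{j-1}}^{t_j}\|u'\|_{\H^{m}}^{2}\,\mathrm{d}t=\int_{0}^{t_n}\|u'\|_{\H^{m}}^{2}\,\mathrm{d}t$. The max in the $L^{\infty}$-bound becomes $\max_{j}\max_{t\in I_j}\|\nabla u^{(q+1)}(t)\|_{\L^{2}}$, which I leave as stated. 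The spatial factor $h_{K}^{2s_{K}-2}/r_{K}^{2m_{K}-2}$ collapses to $h^{2s-2}/r^{2m-2}$ once $K$-uniformity of the triangulation and $\mathbf{r}$ is enforced (recall $s=\min(r+1,m)$).

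The only place that requires mild care is the $\log(|\mathbf{p}|_n+2)|\mathbf{p}|_n^{2}\hat{p}_j^{-2}$ grouping in the $L^{\infty}$-in-time bound: under uniform $p$ this is $\log(p+2)$ (the $|\mathbf{p}|_{n}^{2}$ and $\hat{p}_j^{-2}$ cancel for $p\ge 1$), which explains the precise placement of $\log(p+2)$ in both displayed inequalities. For the $p=0$ case, $\hat{p}_j=1$ and both $|\mathbf{p}|_{n}$ and $\log(|\mathbf{p}|_n+2)$ are absolute constants, so the bound degenerates correctly to the backward-Euler rate. I do not anticipate any genuine obstacle: the proof is a one-line citation of Theorem \ref{5.th15} together with $\Gamma_{p,q}\sim p^{-2q}$ and the uniform-parameter simplifications above.

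\begin{proof}
The estimates follow at once from Theorem \ref{5.th15} upon specializing $k_j=k$, $p_j=p$, $q_j=q$ for all $1\le j\le n$, using $\hat p_j=\max\{1,p\}$, $|\mathbf{p}|_n=\max\{1,p\}$, the asymptotic bound $\Gamma_{p,q}\le C p^{-2q}$ (Stirling's formula, see \cite{CSc}), and the restriction $0\le q\le p$ that replaces $q$ by $\min\{p,q\}$ in the $k$-exponent. The spatial factors collapse to $h^{2s-2}/r^{2m-2}$ under the quasi-uniformity of $\mathcal{T}$ and the bounded-variation hypothesis on $\mathbf{r}$, while the time sums reduce to integrals over $(0,t_n]$.
\end{proof}
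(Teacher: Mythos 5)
Your proposal is correct and follows essentially the same route as the paper: the paper's own proof is a one-line citation of Theorem \ref{5.th15} together with the asymptotics $\Gamma_{p,q}\sim p^{-2q}$, and your additional bookkeeping (collapsing the sums, tracking the $\hat p_j^{-2}$, $|\mathbf{p}|_n^{2}$ and $\log(|\mathbf{p}|_n+2)$ factors under uniform parameters) simply makes explicit what the paper leaves implicit.
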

\begin{proof}
	These estimates follow readily from Theorem \ref{5.th15}  and the fact that $\Gamma_{p, q}$ behaves like $p^{-2 q}$ for $p \rightarrow \infty$. 
\end{proof}
\begin{remark}
The estimates in Corollary \ref{5.col2} demonstrate that the discrete scheme converges either as $k, h \to 0$ or as $p, r \to \infty$. It is noted that the first estimate is optimal with respect to the four parameters $k$, $h$, $p$, and $r$, while the second estimate is one power short of being optimal in $p$. For smooth solutions $u$, spectral convergence rates are achieved by increasing the polynomial degrees $p$ and $r$ on fixed partitions.
\end{remark}
\begin{remark}
	The regularity assumption for $u$ needs to be rigorously established. Moreover, for the exponential convergence, proving the analytic regularity remains an active area of ongoing research.
\end{remark}
	\subsection{$hp$-Discontinuous Galerkin FEM in space}\label{5.sub3}
In this section, we will discuss the fully-discrete finite element scheme that employs both $hp$-Discontinuous Galerkin (DG) finite element methods (FEM) in space and $hp$-DG time-stepping. For the mesh discretization using $hp$-DG elements, let $\mathcal{T}$ denote the set of elements (triangles in 2D and tetrahedra in 3D), and let $\mathcal{E}$ be the set of edges. Within $\mathcal{E}_h$, $\mathcal{E}^i_h$ represents the set of internal edges, while $\mathcal{E}^{\partial}_h$ represents the set of boundary edges. For an edge $E = K_+ \cap K_- \in \mathcal{E}^i_h$, which is shared between the two mesh cells $K_\pm$, we denote the traces of a function $w \in C^0(\mathcal{T})$ on $E$ for $K_\pm$ by $w_\pm$, respectively.
We begin by considering the DG space formulation, which is given as follows:

\begin{align}\label{5.dgsubspace1}
	\mathcal{V}_{\textbf{h,r}}^{DG}=\{{v}\in \L^2(\Omega): \forall\  K\in \mathcal{T} : {v}|_K \in \mathbb{P}_{r_K}(K)\},
\end{align}
where $\textbf{h}$ and $\textbf{r}$ are as defined in the previous section.
The trial space for the fully-discrete case, given the partition $\mathcal{M}=\{\J_n\}_{n=1}^N $ of $(0,T)$ and the degree vector $\mathbf{p} = (p_1, p_2, \cdots, p_N)$, is defined as follows:
 \begin{align}\label{5.3.1}
	\mathcal{S}(\mathcal{M},\textbf{p},\mathcal{V}_{\textbf{h,r}}^{DG})=\{U_h : [0,T] \rightarrow \mathcal{V}_{\textbf{h,r}}^{DG} : {U_h}|_{\J_n} \in \mathbb{P}_{p_n}(\mathcal{V}_{\textbf{h,r}}^{DG}), 1 \leq n \leq N\}.
\end{align}

To define the DG formulation, we introduce the average operator $\{\!\{\cdot\}\!\}$ and the jump operator $[\![\cdot]\!]$ on the edge $E$ as follows:
	\begin{align*}
		\{\!\{w\}\!\}=\frac{1}{2}(w_+ + w_-) \quad  \text{and}	\quad[\![w]\!]={w_+\mathbf{{n}_+} + w_-\mathbf{{n}_-}},
	\end{align*}
	respectively. If $w \in C^1(\mathcal{T})$, we define the jump of the normal derivative as
	$
	[\![\partial w / \partial \mathbf{n}] \!] = \nabla(w_+ - w_-) \cdot \mathbf{n}_+,
	$
	where $\mathbf{n}_\pm$ are the unit outward normal vectors for the mesh cells $K_\pm$. For edges $E$ on the boundary, where $E \in K_+ \cap \partial \Omega$, we have
	$
	[\![w]\!] = w_+ \mathbf{n}_+
	$
	and
	$	\{\!\{w\}\!\} = w_+.$
	The exterior trace of a function $w$ is denoted by $w^e$. For boundary edges, we set $w^e = 0$.
	For each triangulation, we define the piecewise gradient operator $\nabla_h: \H^1(\mathcal{T}) \to L^2(\Omega; \mathbb{R}^d)$ such that for any $w \in \H^1(\mathcal{T})$, we have
	$$
	(\nabla_h w)|_K = \nabla w|_K \quad \forall K \in \mathcal{T}.
$$
The following approximation lemma is useful in the further analysis
\red{\begin{lemma}\label{5.lemm3.1}
		Suppose that a triangulation $\mathcal{T}$ of $\Omega$ is formed by $d$-dimensional simplices or parallelepipeds. Then for an arbitrary $u \in \H^{m}(\Omega, \mathcal{T})$, $m = (m_K, K \in \mathcal{T})$, and for each $r = (r_K, K \in \mathcal{T})$, $r_K \in \mathbb{N}$, there exists a projector
		$$
		\mathcal{R}^{\mathbf{h}}_{\mathbf{r}} : \H^s(\Omega, \mathcal{T}) \to 	\mathcal{S}(\mathcal{M},\textbf{p},\mathcal{V}_{\textbf{h,r}}^{DG}), \quad
		(	\Pi^{\mathbf{h}}_{\mathbf{r}} u)|_K = 	\Pi^{\mathbf{h}}_{\mathbf{r}} (u|_K),
		$$
		such that for $0 \leq b \leq m_K$,
		\begin{align}\label{5.7a1.1}
			\| u -\mathcal{R}^{\mathbf{h}}_{\mathbf{r}}  u \|_{\H^b(K)} \leq C \frac{h_K^{s_K - b}}{r_K^{m_K-b}} \| u \|_{\H^{m_K}(K)} \quad \forall K \in \mathcal{T},
		\end{align}
		where $s_K = \min(r_K +1,m_K )$ and $C$ is a constant independent of $u, h_K,$ and $r_K$ but
		dependent on $m = \max\limits_{K\in\mathcal{T}} m_K.$
\end{lemma}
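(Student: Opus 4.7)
The plan is to construct the projector $\mathcal{R}^{\mathbf{h}}_{\mathbf{r}}$ element by element, exploiting the fact that, unlike the conforming case treated in Lemma \ref{5.lemm3}, the DG space $\mathcal{V}_{\mathbf{h},\mathbf{r}}^{DG}$ imposes no continuity across inter-element interfaces. The global approximation problem therefore decouples into independent local problems on each $K\in\mathcal{T}$, and the commutativity property $(\mathcal{R}^{\mathbf{h}}_{\mathbf{r}} u)|_K = \mathcal{R}^{\mathbf{h}}_{\mathbf{r}}(u|_K)$ is built into the construction from the start. This makes the DG proof strictly simpler than its conforming counterpart, where one has to enforce global continuity by patching local interpolants.

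First, on the reference element $\widehat{K}$ (the unit $d$-simplex or the unit cube, depending on the mesh geometry), I would invoke the classical Babu\v{s}ka--Suri $hp$-approximation theory from \cite{BSu}: for any $\widehat{u}\in\H^{m_K}(\widehat{K})$ and each $r_K\in\mathbb{N}$, there exists a polynomial projection $\widehat{\pi}_{r_K}\widehat{u}\in\mathbb{P}_{r_K}(\widehat{K})$ satisfying
$$
\|\widehat{u}-\widehat{\pi}_{r_K}\widehat{u}\|_{\H^b(\widehat{K})} \le C\, r_K^{-(m_K-b)}\,\|\widehat{u}\|_{\H^{m_K}(\widehat{K})},\qquad 0\le b\le m_K,
$$
with $C$ independent of $r_K$ (but depending on $m$). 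On each physical element $K\in\mathcal{T}$, I then define
$$
(\mathcal{R}^{\mathbf{h}}_{\mathbf{r}} u)\big|_K := \bigl(\widehat{\pi}_{r_K}(u\circ F_K)\bigr)\circ F_K^{-1},
$$
where $F_K:\widehat{K}\to K$ is the usual affine bijection. By construction, $(\mathcal{R}^{\mathbf{h}}_{\mathbf{r}} u)|_K\in\mathbb{P}_{r_K}(K)$ for every $K$, so $\mathcal{R}^{\mathbf{h}}_{\mathbf{r}} u\in\mathcal{V}_{\mathbf{h},\mathbf{r}}^{DG}$, and the local commutativity property is immediate.

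Next, standard scaling arguments for affine transformations pull the reference-element bound back to $K$. Under $F_K$, Sobolev seminorms of order $b$ transform with a factor proportional to $h_K^{-b+d/2}$ (assuming shape-regularity, so that aspect-ratio constants are absorbed into $C$). Chaining these scalings with the reference estimate and using $s_K=\min(r_K+1,m_K)$ to account for the fact that polynomials of degree $r_K$ can exactly reproduce polynomials up to degree $r_K$ yields
$$
\|u-\mathcal{R}^{\mathbf{h}}_{\mathbf{r}} u\|_{\H^b(K)} \le C\,\frac{h_K^{s_K-b}}{r_K^{m_K-b}}\,\|u\|_{\H^{m_K}(K)},\qquad 0\le b\le m_K,
$$
which is exactly \eqref{5.7a1.1}.

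The main obstacle is not conceptual but a bookkeeping check: one must verify that the reference projector $\widehat{\pi}_{r_K}$ is simultaneously optimal in both $h$ and $p$ on each admissible element shape. For parallelepipeds this follows directly from truncated tensor-product Legendre expansions, while for simplices it requires the projection-based interpolant of Babu\v{s}ka--Suri \cite{BSu}, which is precisely designed so that truncating in the orthogonal polynomial basis yields a bound of the desired form. Once these ingredients are in place, no inter-element matching conditions need to be checked (in sharp contrast to Lemma \ref{5.lemm3}), and the global estimate follows by taking the construction on each $K$ independently.
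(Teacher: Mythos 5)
The paper gives no proof of this lemma: it is stated as the elementwise (DG) analogue of Lemma \ref{5.lemm3} and implicitly attributed to the classical $hp$-approximation theory of Babu\v{s}ka and Suri \cite{BSu}, so there is no in-paper argument to compare against. Your reconstruction of the standard proof behind that citation follows the right route: since $\mathcal{V}_{\mathbf{h},\mathbf{r}}^{DG}$ imposes no interelement continuity, the projector is defined element by element through the reference element, the local commutation property $(\mathcal{R}^{\mathbf{h}}_{\mathbf{r}}u)|_K=\mathcal{R}^{\mathbf{h}}_{\mathbf{r}}(u|_K)$ is automatic, and the global statement is just the collection of the local ones.

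One step is stated too loosely to stand as written. If the reference-element estimate carries the full norm $\|\widehat{u}\|_{\H^{m_K}(\widehat{K})}$ on the right-hand side, then scaling back to $K$ produces contributions $h_K^{j-b}\,|u|_{\H^{j}(K)}$ for every $0\le j\le m_K$, and the $j=0$ term behaves like $h_K^{-b}$, which does not yield the claimed factor $h_K^{s_K-b}$. That factor only emerges after one uses that $\widehat{\pi}_{r_K}$ reproduces polynomials, writes $\widehat{u}-\widehat{\pi}_{r_K}\widehat{u}=(\widehat{u}-q)-\widehat{\pi}_{r_K}(\widehat{u}-q)$ for a suitable $q\in\mathbb{P}_{s_K-1}\subset\mathbb{P}_{r_K}$, and invokes a Deny--Lions/Bramble--Hilbert bound to replace $\|\widehat{u}-q\|_{\H^{m_K}(\widehat{K})}$ by the seminorm $|\widehat{u}|_{\H^{s_K}(\widehat{K})}$, which scales homogeneously as $h_K^{s_K-d/2}$. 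You allude to this (polynomial reproduction up to degree $r_K$, and the ``bookkeeping check'') but do not carry it out; it is the only load-bearing computation in the argument, and without it the exponent of $h_K$ in \eqref{5.7a1.1} is not obtained. With that step made explicit, your proof is complete and coincides with the standard one underlying the paper's citation.
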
}
Using the above discretisation, the fully-discrete weak formulation of \eqref{5.GBHE} is given by:  Find $U^{DG}_h \in \mathcal{S}(\mathcal{M},\textbf{p},	\mathcal{V}_{\textbf{h,r}}^{DG})$ such that
\begin{align}\label{5.3.2Dg}
	\nonumber	F_N^{DG}(U_h^{DG},X) &= \langle (U_h^{DG})^{0}, X_{+}^0\rangle + \int_0^{t_N}\langle f(t),X(t) \rangle \mathrm{~d}t  \quad \forall X\in  \mathcal{S}(\mathcal{M},\textbf{p},	\mathcal{V}_{\textbf{h,r}}^{DG} ),\\
	U_h^{DG}(0) &= (U_h^{DG})^{0},
\end{align}
for a suitable approximation $U_h^{0,DG} \in\mathcal{V}_{\textbf{h,r}}^{DG} $ to $u_0$, where

 \begin{align}\label{5.GWFDG}
	\nonumber F^{DG}_N(U,X) &= \langle U_{+}^{0}, X_{+}^{0} \rangle +\sum_{n=1}^{N-1}\langle [U]^n, X_{+}^n \rangle + \sum_{n=1}^N \int_{t_{n-1}}^{t_n}\Big[\langle U',X\rangle + \nu a^{DG}(U,X) + \alpha b^{DG}(U,U,X) \\&\quad + \eta a^{DG}\left(K*U, X \right)  - \beta (c(U),X) \Big] \mathrm{~d}t,
\end{align} 
with
\begin{align}\label{2.adg}
	a_{DG}({u},{v})=(\nabla_h {u}, \nabla_h {v})-\sum_{E\in\mathcal{E}_h}\int_E {\{\!\{\nabla_h {u}\}\!\}}\!\cdot\![\![{v}]\!]\mathrm{~d}{s}-\sum_{E\in\mathcal{E}_h}\int_E {\{\!\{\nabla_h {v}\}\!\}}\!\cdot\![\![{u}]\!]\mathrm{~d}{s}
	+\frac{\gamma r_E^2}{h_E}\sum_{E\in\mathcal{E}_h}\int_E [\![{u}]\!]\!\cdot\![\![{v}]\!]\mathrm{~d}{s}.
\end{align}
Here the constant $\gamma>0$ represents the penality parameter chosen sufficiently larger 
independent of $\textbf{h}$, $\textbf{r}$ to ensure the stability and  well-posedness of the DG discretization and
$$r_E =
\begin{cases}
	\max\{r_K, r_{K'}\}, & \mathcal{E}_h = \partial K \cap \partial K' \in \mathcal{E}_h^i, \\
	r_K, & \mathcal{E}_h = \partial K \cap \partial \Omega \in \mathcal{E}_h^{\partial}.
\end{cases},\qquad h_E =\begin{cases}
\max\{h_{K}, h_{K'}\}, & \mathcal{E}_h = \partial K \cap \partial K' \in \mathcal{E}_h^i, \\
h_{K}, & \mathcal{E}_h = \partial K \cap \partial \Omega \in \mathcal{E}_h^{\partial}.
\end{cases}$$ The discrete convection term is defined as

\begin{align}\label{2.bDG}
	\nonumber	b_{DG}(\mathbf{w};u,v)=\frac{1}{\delta+2}\Bigg(& \sum_{K\in \mathcal{T}} \int_{K} \mathbf{w}\cdot \nabla u v \mathrm{~d}x 
	+\sum_{K\in \mathcal{T}} \int_{\partial K} \hat{\mathbf{w}}_{h,u}^{up}{v} \mathrm{~d}s\\& -\sum_{K\in \mathcal{T}} \int_{K} \mathbf{w}\cdot \nabla v {u} \mathrm{~d}x 
	-\sum_{K\in \mathcal{T}} \int_{\partial K} \hat{\mathbf{w}}_{h,v}^{up}{u} \mathrm{~d}s\Bigg).
\end{align}
where the upwind flux, $\hat{\mathbf{w}}_{h,u}^{up}=\frac{1}{2}\left[\mathbf{w}\cdot\mathbf{n}_K -|\mathbf{w}\cdot \mathbf{n}_K|\right](u^e\!-\!u)$ with $\mathbf{w}=(w,w)^T$. 
The length of the edge $E$ is represented by the parameter $h_E$. The following discrete norm is used for further error analysis: 
\begin{align}\label{5.1.2}
	|\!|\!|{ v}|\!|\!|_{DG}^2:= \sum_{K\in\mathcal{T}}\|\nabla_h v\|_{\L^2(\mathcal{T})}^2 + \sum_{E\in\mathcal{E}_h}\gamma_h\|[\![v]\!]\|_{\L^2(E)}^2.
\end{align}
\begin{remark}
The DG formulation described in \eqref{5.GWFDG} was introduced by the authors in \cite{GBHE2} for the $h$-version Discontinuous Galerkin Finite Element Method (DGFEM) in space with backward Euler discretization in time. Note that the non-linear discrete operator $b_{DG}(\cdot, \cdot, \cdot)$ is constructed such that $b_{DG}(u, u, u) = 0$ for all $u \in \mathcal{V}_{\textbf{h,r}}^{DG}$. This property facilitates proving the well-posedness and stability results without any restrictions on the parameters, as discussed in detail in \cite{GBHE2}.
\end{remark}
Under the DG norm defined in \eqref{5.1.2}, the discrete diffusion operator statises the following estimate
\begin{lemma}[Coercivity and Continuity]
	For any $u\in \mathcal{V}_{\textbf{h,r}}^{DG}$, the operator $a_{DG}(\cdot,\cdot)$ satisfies the following estimate
\begin{align}\label{5.1.3}
	a_{DG}(v,v) &\geq C_{\text{cor}} |\!|\!| v |\!|\!|_{DG}^2
	\qquad \hspace{2mm}\quad\quad\forall v\in \mathcal{V}_{\textbf{h,r}}^{DG}, \\
	a_{DG}(u,v) &\leq C_{\text{con}} |\!|\!| u |\!|\!|_{DG} |\!|\!| v |\!|\!|_{DG} 	\qquad \forall u,v\in \mathcal{V}_{\textbf{h,r}}^{DG},\label{5.1.3b}
\end{align}
The constants $ C_{\text{cor}} $ and $ C_{\text{con}} $ depend on the parameters and the shape regularity of the mesh but are independent of the discretization parameters $ \textbf{h} $ and $ \textbf{r} $.
\begin{proof}
	The proof for coercivity and continuity follows directly form \cite[Prop 3.1]{PSc}.
\end{proof}
\end{lemma}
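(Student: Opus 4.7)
The plan is to view $a_{DG}(\cdot,\cdot)$ as a standard symmetric interior penalty (SIPG) bilinear form on an $hp$-mesh and reduce the lemma to two basic ingredients: the Cauchy--Schwarz inequality (for the volume and penalty parts) and an $hp$-trace inverse inequality. Recall that for $v\in\mathbb{P}_{r_K}(K)$ and an edge $E\subset\partial K$ one has the estimate
\begin{equation*}
\|\nabla v\|_{L^2(E)}^2 \;\le\; C\,\frac{r_K^2}{h_K}\,\|\nabla v\|_{L^2(K)}^2,
\end{equation*}
with $C$ depending only on the shape-regularity of $\mathcal{T}$. This is the single technical tool that drives the whole argument. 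Because the first condition in the lemma couples the gradient and the jump via $\{\!\{\nabla_h v\}\!\}\cdot[\![v]\!]$, the proof is essentially unchanged from the elliptic setting treated in \cite[Prop.~3.1]{PSc}.

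For coercivity I would set $u=v$ in \eqref{2.adg}, so the two consistency terms combine to give the single cross-term $-2\sum_E\int_E\{\!\{\nabla_h v\}\!\}\cdot[\![v]\!]\,ds$. I would then estimate this cross-term by Cauchy--Schwarz on each edge, introducing the weight $h_E/r_E^2$ on one factor and $r_E^2/h_E$ on the other and applying Young's inequality with a small parameter $\varepsilon>0$:
\begin{equation*}
2\Bigl|\sum_{E\in\mathcal{E}_h}\!\int_E\{\!\{\nabla_h v\}\!\}\cdot[\![v]\!]\,ds\Bigr|
\;\le\; \varepsilon\sum_{E}\frac{h_E}{r_E^2}\|\{\!\{\nabla_h v\}\!\}\|_{L^2(E)}^2
+\frac{1}{\varepsilon}\sum_{E}\frac{r_E^2}{h_E}\|[\![v]\!]\|_{L^2(E)}^2.
\end{equation*}
The trace inverse inequality absorbs the first sum into $C\varepsilon\|\nabla_h v\|_{L^2(\Omega)}^2$ (using the bounded-variation assumption on $\mathbf{r}$ so that neighbouring polynomial degrees are comparable), while the second sum is of the same form as the penalty term. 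Choosing $\varepsilon$ small enough that $C\varepsilon<\tfrac12$ and $\gamma$ large enough that $\gamma-1/\varepsilon>\tfrac12\gamma$ yields \eqref{5.1.3} with an explicit constant $C_{\mathrm{cor}}$ depending on $\gamma$ and the shape-regularity, but independent of $\mathbf{h},\mathbf{r}$.

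For continuity I would bound each of the four pieces of $a_{DG}(u,v)$ separately: the volume part by the straightforward Cauchy--Schwarz inequality, the two consistency terms by edge-wise Cauchy--Schwarz followed by the same weight-splitting trick (putting $h_E/r_E^2$ on the gradient factor and $r_E^2/h_E$ on the jump factor, then using the trace inverse inequality), and the penalty term directly by Cauchy--Schwarz. Assembling these bounds gives $a_{DG}(u,v)\le C_{\mathrm{con}}|\!|\!| u|\!|\!|_{DG}|\!|\!| v|\!|\!|_{DG}$. The main (and essentially only) obstacle is handling the $hp$-scaling correctly in the cross terms: one must make sure the trace inequality is applied with the correct $r_E^2/h_E$ factor so that the bound is uniform in both $\mathbf{h}$ and $\mathbf{r}$, which is exactly why the penalty is weighted by $r_E^2/h_E$ in the definition of the DG norm.
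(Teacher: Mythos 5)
Your argument is correct and is exactly the standard $hp$-SIPG coercivity/continuity proof (edge-wise Cauchy--Schwarz with the $h_E/r_E^2$ weight split, the $hp$-trace inverse inequality, Young's inequality, and a sufficiently large penalty parameter) that underlies the result the paper simply cites from \cite[Prop.~3.1]{PSc}. The paper gives no argument beyond that citation, so your sketch is a faithful expansion of the same approach; you also correctly identify where shape-regularity and the bounded-variation assumption on $\mathbf{r}$ are needed to make the constants independent of $\mathbf{h}$ and $\mathbf{r}$.
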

 Let us now discuss the well-posedness of the fully-discrete DG scheme presented in \eqref{5.GWFDG}.
 \begin{lemma} The existence, uniqueness and stability of the fully-discrete DG scheme  \eqref{5.GWFDG} is given as
 	\begin{enumerate} 
 		\item 	For  $u_0\in \L^2(\Omega), f \in \L^2(0,T;\L^2(\Omega))$ there exist at least one solution $U_h^{DG}\in  \mathcal{V}_{\textbf{h,r}}^{DG}$. Moreover, for $(U_h^{DG})^{0}\in\L^{d\delta}(\Omega)$,  the weak solution to the system \eqref{5.GWFDG} is unique. 
 		\item 	Assume that $f\in\L^2(0,T;\L^2(\Omega))$ and $u_0\in \L^2(\Omega)$ then the fully-discretized solution $U_h^{DG}$ of the \eqref{5.GBHE} defined in \eqref{5.GWFDG} is stable in the sense that  
 		\begin{align}\label{2.DG13} 
 			& \|(U_h^{DG})^{n}\|_{\L^2} ^2 + \nu \int_{0}^{t_n}	|\!|\!|{ U_h^{DG}}|\!|\!|_{DG}^2\mathrm{~d}t \leq\left(\|u_0\|_{\L^2}^2+\frac{1}{\nu}\int_0^T\|f(t)\|_{\L^2}^2\mathrm{~d}t\right)e^{\beta(1+\gamma^2)T}.
 		\end{align}
 	\end{enumerate}
 	\begin{proof}
 	The proof of existence and uniqueness is analogous to that in \cite[Theorem 2.6]{GBHE2}. The stability estimate follows from the coercivity of the operator $a_{DG}$ defined in \eqref{5.1.3} and the property $ b_{DG}(u, u, u) = 0$, as demonstrated in \cite[Lemma 2.12]{GBHE2}, where the time-stepping estimates are derived similarly to \eqref{5.14}, but with $ U_h^{DG} $ replacing $\Upsilon $.
 	\end{proof}
 \end{lemma}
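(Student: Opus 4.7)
The plan has two parts: existence and uniqueness are obtained on each time slab $\J_n$ by reducing \eqref{5.3.2Dg} to a finite-dimensional nonlinear system and invoking Brouwer's fixed point theorem together with a local Lipschitz argument, while the stability estimate is proved by testing with $X = U_h^{DG}$, telescoping in time, and closing with Gr\"onwall's lemma. The three structural ingredients throughout are the coercivity \eqref{5.1.3}, the skew-symmetry $b_{DG}(u,u,u)=0$ of the discrete convection form, and the positivity \eqref{5.pk} of the memory kernel.

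For existence on $\J_n$, with $U_h^{DG}|_{[0,t_{n-1}]}$ regarded as data, the restriction of \eqref{5.3.2Dg} is a nonlinear system on the finite-dimensional space $\mathbb{P}_{p_n}(\mathcal{V}_{\textbf{h,r}}^{DG})$. I would define a continuous self-map on a ball of this space via the DG equation; the a-priori bound obtained by testing with a candidate solution (using coercivity, $b_{DG}(U,U,U)=0$, Young's inequality on the polynomial reaction and Cauchy--Schwarz on $f$) keeps the map inside a fixed ball, and Brouwer's theorem closes the argument. Induction on $n$ yields global existence. For uniqueness, write $W = U_1 - U_2$ for two putative solutions and test the equation satisfied by $W$ against $W$ itself; the differences $B(U_1) - B(U_2)$ and $c(U_1) - c(U_2)$ are locally Lipschitz with constants depending on $\|U_i\|_{\L^{d\delta}}$, as in Sec.~2.1.2 of \cite{GBHE}. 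The hypothesis $(U_h^{DG})^0 \in \L^{d\delta}(\Omega)$, propagated through the stability estimate of part (2), provides the uniform $\L^{d\delta}$ control needed to close a Gr\"onwall bound on $\|W\|_{\L^2}$, mirroring \cite[Theorem~2.6]{GBHE2} but transposed to polynomial time slabs.

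For stability, I take $X = U_h^{DG}$ in \eqref{5.3.2Dg} and sum in $n$. The time-derivative and jump contributions combine via $\int_{\J_n}\langle U', U\rangle\,dt = \tfrac{1}{2}(\|U^n\|_{\L^2}^2 - \|U_+^{n-1}\|_{\L^2}^2)$ to produce $\tfrac{1}{2}\|U^N\|_{\L^2}^2$ plus nonnegative jump squares on the left, paralleling the semi-discrete identity in Lemma~\ref{5.l3}. The diffusion contribution yields $\nu\int_0^{t_N}|\!|\!|U|\!|\!|_{DG}^2\,dt$ by coercivity; the convective term drops by $b_{DG}(U,U,U)=0$; the reaction is controlled by expanding $\beta(c(U), U) = -\beta\gamma\|U\|_{\L^2}^2 + \beta(1+\gamma)\int_{\Omega} U^{\delta+2} - \beta\|U^{\delta+1}\|_{\L^2}^2$ and using Young's inequality $\beta(1+\gamma)|U|^{\delta+2}\leq \tfrac{\beta(1+\gamma)^2}{4}U^2 + \beta U^{2\delta+2}$ so that the $\L^{2\delta+2}$ piece is absorbed and the remainder is bounded by $\beta(1+\gamma^2)\|U\|_{\L^2}^2$; the source is absorbed by $\langle f, U\rangle \leq \tfrac{1}{2\nu}\|f\|_{\L^2}^2 + \tfrac{\nu}{2}\|U\|_{\L^2}^2$, the quadratic piece of which is dominated by the diffusion via a discrete Poincar\'e inequality in the DG norm. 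A continuous Gr\"onwall argument then delivers the claimed factor $e^{\beta(1+\gamma^2)T}$.

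The main obstacle is justifying nonnegativity of the memory contribution $\eta\int_0^{t_N} a_{DG}(K*U, U)\,dt$, since \eqref{5.pk} is stated for $\L^2$-valued functions but the integrand here involves the discrete bilinear form with its penalty and consistency terms. Because $a_{DG}$ is symmetric and positive semidefinite on $\mathcal{V}_{\textbf{h,r}}^{DG}$ by \eqref{5.1.3}, it induces a semi-inner product on that space, and the same quadratic-form argument underpinning \eqref{5.pk} carries over, giving the required positivity. With this in hand the stability computation closes exactly as in the semi-discrete proof of Lemma~\ref{5.l3}.
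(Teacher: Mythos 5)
Your proposal is correct and follows essentially the same route the paper takes: existence and uniqueness by the standard finite-dimensional fixed-point/Gr\"onwall argument of the cited reference, and stability by testing with $U_h^{DG}$ itself, using coercivity of $a_{DG}$, the property $b_{DG}(u,u,u)=0$, and the telescoping of the jump terms exactly as in the semi-discrete identity \eqref{5.14}. Your explicit justification that the kernel positivity \eqref{5.pk} transfers to the symmetric positive-semidefinite form $a_{DG}$ is a point the paper leaves implicit, and is a welcome addition rather than a deviation.
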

\subsection{Error estimates} To derive the error estimates for the fully-discrete case \ref{5.3.2Dg}, we use the decomposition \eqref{5.3.3}. 
\begin{theorem}
	If $u$ is a solution of problem \eqref{5.GBHE}, and $U_h^{DG} \in \mathcal{S}(\mathcal{M},\textbf{p},\mathcal{V}_{\textbf{h,r}}^{DG})$ is the approximate solution defined by  \eqref{5.3.2}, then
		\begin{align}\label{5.3.5.1}
			&\nonumber F_{N}^{DG}(U_h^{DG}, X) - F_N^{DG}(\Pi \mathcal{R}^{\mathbf{h}}_{\mathbf{r}}u, X)\\& \no=  \langle ((U_h^{DG})^{0}-\mathcal{R}^{\mathbf{h}}_{\mathbf{r}}u_0), X_{+}^0\rangle - \int_{0}^{t_N}\langle\xi',X\rangle \mathrm{~d}t   - \int_{0}^{t_N}\Big[ \nu a_{DG}(\rho,X)+\nu a_{DG}(\Pi\xi,X) + \alpha (b_{DG}(\Pi \mathcal{R}^{\mathbf{h}}_{\mathbf{r}}u,\Pi \mathcal{R}^{\mathbf{h}}_{\mathbf{r}}u,X)\\&\quad-b_{DG}(\Pi u,\Pi u, X) )+ \alpha (b_{DG}(\Pi u,\Pi u,X)-b_{DG}( u,u, X) ) + \eta \left(K*a_{DG} (\rho(s),\nabla X) \right) + \eta \left(K*a_{DG}(\Pi\xi(s),\nabla X) \right)  \\&\quad- \beta (c(\Pi \mathcal{R}^{\mathbf{h}}_{\mathbf{r}}u)-c(\Pi u),X) - \beta (c(\Pi u)-c(u),X) \Big] \mathrm{~d}t,  
		\end{align}
		where $F_N^{DG}(\cdot,\cdot)$ is defined in \eqref{5.GWFDG} and $X\in 	\mathcal{S}(\mathcal{M},\textbf{p},\mathcal{V}_{\textbf{h,r}}^{DG})$.
\end{theorem}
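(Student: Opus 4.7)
The proof mirrors Theorem~\ref{5.th3} in the conforming case, with DG consistency of the spatial operators on the smooth exact solution as the extra ingredient. My plan is to exploit linearity of $a_{DG}(\cdot,X)$ and of the memory form $(K\!*\!a_{DG}(\cdot,X))$ in their first slot, while splitting the nonlinear terms $b_{DG}(\cdot,\cdot,X)$ and $c(\cdot)$ through the three-term decomposition $U_h^{DG}-u=\psi+\Pi\xi+\rho$ of~\eqref{5.3.3}.

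First I would establish DG consistency on the exact solution. Because $u\in \L^\infty(0,T;\H^2(\Omega)\cap\H_0^1(\Omega))$, the interior jumps $[\![u]\!]$ and the boundary trace of $u$ vanish, so the flux and penalty contributions in $a_{DG}(u,X)$ collapse to $(\nabla u,\nabla_h X)$, and the upwind correction $\hat{\mathbf{w}}^{up}_{h,u}=\tfrac{1}{2}[\mathbf{w}\!\cdot\!\mathbf{n}_K-|\mathbf{w}\!\cdot\!\mathbf{n}_K|](u^e-u)$ in $b_{DG}(u,u,X)$ is identically zero because $u^e=u$ across every internal edge. The same observation, passed under the convolution, covers the memory term. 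Consequently,
\begin{equation*}
F_N^{DG}(u,X)\;=\;\langle u_0,X_+^0\rangle+\int_0^{t_N}\langle f,X\rangle\,\mathrm{d}t\qquad\forall\,X\in\mathcal{S}(\mathcal{M},\textbf{p},\mathcal{V}_{\textbf{h,r}}^{DG}),
\end{equation*}
and subtracting this identity from the scheme~\eqref{5.3.2Dg} yields the consistency relation $F_N^{DG}(U_h^{DG},X)-F_N^{DG}(u,X)=\langle (U_h^{DG})^0-u_0,X_+^0\rangle$.

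Next I insert $\pm F_N^{DG}(\Pi\mathcal{R}^{\mathbf{h}}_{\mathbf{r}}u,X)$ and $\pm F_N^{DG}(\Pi u,X)$ and expand each difference using the integrated-by-parts form of $F_N^{DG}$ (the DG analogue of Remark~\ref{5.remark1}). Linearity of $a_{DG}$ and of the memory form in the first slot gives $a_{DG}(\Pi\mathcal{R}^{\mathbf{h}}_{\mathbf{r}}u-u,X)=a_{DG}(\Pi\xi,X)+a_{DG}(\rho,X)$, producing the four diffusion/memory terms appearing in~\eqref{5.3.5.1}, while inserting $\pm\Pi u$ into the nonlinear terms supplies the two advective and two reactive splits shown in the statement. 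The projection~\eqref{5.8} supplies $(\Pi\xi)^n=\xi^n$, $\rho^n=0$, and $\int_{t_{n-1}}^{t_n}\langle\rho,X'\rangle\,\mathrm{d}t=0$. On each sub-interval I then apply
\begin{equation*}
\int_{t_{n-1}}^{t_n}\langle\Pi\xi,X'\rangle\,\mathrm{d}t=\int_{t_{n-1}}^{t_n}\langle\xi,X'\rangle\,\mathrm{d}t=\langle\xi^n,X^n\rangle-\langle\xi^{n-1},X_+^{n-1}\rangle-\int_{t_{n-1}}^{t_n}\langle\xi',X\rangle\,\mathrm{d}t
\end{equation*}
and sum over $n$. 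The boundary contributions $\langle\xi^N,X^N\rangle$ and $\sum_{n=1}^{N-1}\langle\xi^n,[X]^n\rangle$ generated by the alternative form are precisely cancelled by the telescoping of the right-hand side, leaving behind $\langle\xi^0,X_+^0\rangle+\int_0^{t_N}\langle\xi',X\rangle\,\mathrm{d}t$. Combining with the consistency relation and using $\xi^0=\mathcal{R}^{\mathbf{h}}_{\mathbf{r}}u_0-u_0$ folds the two initial boundary terms into $\langle (U_h^{DG})^0-\mathcal{R}^{\mathbf{h}}_{\mathbf{r}}u_0,X_+^0\rangle$, which is exactly the right-hand side of~\eqref{5.3.5.1}.

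The step I expect to be most delicate is the DG consistency of the nonlinear advection: one must verify that, for smooth $u$, the skew-symmetric form $b_{DG}(u,u,X)$ together with its upwind fluxes collapses to $\alpha(B(u),X)$. Once this (and the purely algebraic consistency for $c(\cdot)$) is in place, the remainder is an essentially mechanical replay of the calculation~\eqref{5.38}--\eqref{5.41} from Theorem~\ref{5.th3}, with each conforming spatial form $a(\cdot,\cdot)$, $(B(\cdot),\cdot)$, $(K*\nabla(\cdot),\nabla\cdot)$ replaced by its DG counterpart.
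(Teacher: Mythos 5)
Your proposal is correct and follows essentially the same route as the paper, which simply refers the reader back to the conforming-space argument of Theorem \ref{5.th3}: consistency of the scheme on the exact solution, insertion of $\pm F_N^{DG}(\Pi\mathcal{R}^{\mathbf{h}}_{\mathbf{r}}u,X)$ and $\pm F_N^{DG}(\Pi u,X)$, the three-term decomposition \eqref{5.3.3}, the projection identities $(\Pi\xi)^n=\xi^n$, $\rho^n=0$, $\int_{J_n}\langle\rho,X'\rangle\,\mathrm{d}t=0$, and the telescoping integration by parts for $\Pi\xi$. Your explicit verification of DG consistency for smooth $u$ (vanishing jumps, vanishing upwind corrections, also under the convolution) is the one ingredient the paper leaves implicit, and you have supplied it correctly.
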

\begin{proof}
	The proof is similar to Theorem \ref{5.th3}.
\end{proof}
 For the error estimates we invoke the following Lemma from \cite{GBHE2}, which gives the estimate for the non-linear terms under DG setting in space 
 \begin{lemma}\label{5.lem111}
 	For $A_{DG}(U_h^{DG},\chi) = a_{DG}(U_h^{DG},\chi) + b_{DG}(U_h^{DG},u,\chi)-c(U_h^{DG},\chi)$, for $\chi\in\mathcal{V}_{\textbf{h,r}}^{DG}$,   there holds
 	\begin{align*}
 		-\alpha[b_{DG}(U_h^{DG};U_h^{DG},w)-b_{DG}(v_h;&v_h,w)]\le  \frac{\nu}{2}|\!|\!| w|\!|\!|^2_{DG}+C(\alpha,\nu)\left(\|U_h^{DG}\|^{\frac{8\delta}{4-d}}_{\L^{4\delta}}+\|v_h\|^{\frac{8\delta}{4-d}}_{\L^{4\delta}}\right)\|w\|_{\L^2}^2,\\
 		A_{DG}(U_h^{DG},w)-A_{DG}(v_h,w)&\ge  \frac{\nu}{2}|\!|\!| w|\!|\!|^2_{DG} +\frac{\beta}{4}(\|(U_h^{DG})^{\delta}w\|_{\L^2}^2+\|v^{\delta}_hw\|_{\L^2}^2)\nonumber\\
 		&\quad+\left(\beta\gamma-C(\beta,\alpha,\delta) - C(\alpha,\nu)\Big(\|U_h^{DG}\|^{\frac{8\delta}{4-d}}_{\L^{4\delta}}+\|v_h\|^{\frac{8\delta}{4-d}}_{\L^{4\delta}}\Big)\right)\|w\|_{\L^2}^2,
 	\end{align*}
 	where $U_h^{DG},v_h\in \mathcal{V}_{\textbf{h,r}}^{DG}$, $w=U_h^{DG}-v_h$, $C(\alpha, \nu) = \left(\frac{4+d}{4\nu}\right)^{\frac{4+d}{4-d}}\left(\frac{4-d}{8}\right)(\frac{2^{\delta-1}C\alpha}{(\delta+2)(\delta+1)})^{\frac{4-d}{8}}$ and  $C(\beta,\alpha,\delta)= \frac{\beta}{2}2^{2\delta}(1+\gamma)^2(\delta+1)^2$ is a positive constant depending on parameters.
 \end{lemma}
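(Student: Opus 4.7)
The plan is to mirror the semi-discrete arguments used in Lemmas \ref{5.l3} and \ref{5.l4}, but replace the conforming bilinear form $a(\cdot,\cdot)$, the continuous gradient norm, and the convection operator $B$ by their DG counterparts $a_{DG}(\cdot,\cdot)$, $|\!|\!|\cdot|\!|\!|_{DG}$, and $b_{DG}(\cdot,\cdot,\cdot)$. The key structural observation that makes this feasible is that $b_{DG}$ was designed in \cite{GBHE2} so that $b_{DG}(u;u,u)=0$ for any $u\in \mathcal{V}_{\textbf{h,r}}^{DG}$; this skew-symmetry is the discrete analogue of the integration-by-parts identity used in \eqref{5.18} and \eqref{5.32}, and will absorb the highest-order occurrences of $U_h^{DG}$ and $v_h$ in the nonlinearity.

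For the first inequality, I would write
\begin{align*}
b_{DG}(U_h^{DG};U_h^{DG},w)-b_{DG}(v_h;v_h,w)
&= b_{DG}(U_h^{DG};U_h^{DG},w)-b_{DG}(v_h;U_h^{DG},w)\\
&\quad + b_{DG}(v_h;U_h^{DG},w)-b_{DG}(v_h;v_h,w),
\end{align*}
so that each summand is linear in $w$ in one slot. Expanding each piece with the definition \eqref{2.bDG}, using the fact that the upwind flux generates only bounded jump contributions, and applying Taylor's formula to the polynomial nonlinearity $u^{\delta+1}$ (as in \eqref{5.18}) yields integrands of the form $(\zeta U_h^{DG}+(1-\zeta)v_h)^{\delta}\,w\,\nabla_h w$ plus boundary terms controlled by $|\!|\!|w|\!|\!|_{DG}$. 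Then apply H\"older's inequality with exponents $(4\delta, 4, 4)$, the DG Gagliardo--Nirenberg interpolation $\|w\|_{\L^4}\lesssim \|w\|_{\L^2}^{(4-d)/4}|\!|\!|w|\!|\!|_{DG}^{d/4}$ (standard for broken Sobolev spaces, cf.\ \cite{PSc}), and Young's inequality with exponents $\frac{8}{4+d}$ and $\frac{8}{4-d}$ to split off $\frac{\nu}{2}|\!|\!|w|\!|\!|_{DG}^2$. The remaining factor is exactly $C(\alpha,\nu)(\|U_h^{DG}\|^{8\delta/(4-d)}_{\L^{4\delta}}+\|v_h\|^{8\delta/(4-d)}_{\L^{4\delta}})\|w\|_{\L^2}^2$.

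For the second inequality, I would combine three contributions. The diffusion contribution $a_{DG}(w,w)\ge C_{\mathrm{cor}}|\!|\!|w|\!|\!|_{DG}^2$ comes from the coercivity bound \eqref{5.1.3}; the convection contribution is handled by applying the first inequality with opposite sign so that after absorbing $\tfrac{\nu}{2}|\!|\!|w|\!|\!|_{DG}^2$ one is left with the penalised $\L^2$ term; and the reaction contribution is treated by replicating the algebraic identity \eqref{5.24}--\eqref{5.27} verbatim (the $c(\cdot)$ term involves no derivatives and is purely pointwise, so the DG setting does not change its estimate). This produces exactly the $\beta\gamma\|w\|_{\L^2}^2$ coercivity and the $\tfrac{\beta}{4}(\|(U_h^{DG})^{\delta}w\|_{\L^2}^2+\|v_h^{\delta}w\|_{\L^2}^2)$ positivity, with negative contributions swept into $C(\beta,\alpha,\delta)\|w\|_{\L^2}^2$.

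The main obstacle I expect is the proper handling of the face terms in $b_{DG}$: the upwind flux produces boundary integrals of jumps of $U_h^{DG}$ and $v_h$ against $w$, and these must be controlled by $|\!|\!|w|\!|\!|_{DG}$ without picking up extra powers of $h_E^{-1}$ or $r_E^2$ that would be uniform-in-discretisation unfriendly. The classical remedy is the inverse trace inequality $h_E^{1/2}r_E^{-1}\|v\|_{\L^2(E)}\lesssim \|v\|_{\L^2(K)}$ together with a scaled Young's inequality so that the $L^{4\delta}$ norms in the bound are genuinely evaluated on the elements (not the faces). Because this is precisely the technical ingredient already verified in \cite[Lemmas 2.10--2.12]{GBHE2}, the proof can legitimately be concluded by citing those results, with the Taylor/H\"older/Young apparatus above serving as the self-contained template for the reader.
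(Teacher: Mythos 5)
Your proposal is consistent with the paper, which in fact offers no proof of this lemma at all and simply imports it from \cite{GBHE2}; your sketch correctly identifies the two DG-specific ingredients (the skew-symmetry $b_{DG}(u;u,u)=0$ and the control of the upwind face terms by $|\!|\!|\cdot|\!|\!|_{DG}$ via inverse trace inequalities) and otherwise mirrors the paper's own conforming-case estimates \eqref{5.18} and \eqref{5.24}--\eqref{5.27}, which is exactly how the cited result is obtained. The add--subtract splitting of $b_{DG}$, the Taylor/H\"older/Gagliardo--Nirenberg/Young chain for the convection term, and the verbatim reuse of the pointwise reaction-term algebra are all sound, so the argument is correct and matches the intended route.
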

Further, to discuss the fully-discrete we discuss an Lemma analogues to estimate \eqref{5.212} for the diffusion term as
\red{\begin{lemma}\label{5.AdG}
		There holds
	\begin{align}\label{5.diff}
		\Xi^1 &= \nu \int_0^{t_n}a_{DG}(\Pi\xi,\psi) +\eta \int_0^{t_n} (K*a_{DG}(\Pi\xi,\psi))  \mathrm{~d}t\\& \leq C\bigg(\sum_{K \in \mathcal{T}} \left( \frac{h_K^{2s_K - 2}}{r_K^{2m_K-3}}\right) \int_0^{t_n}\| u \|_{\H^{m_K}(K)}\mathrm{~d}t +\frac{\nu}{4}\int_0^{t_n}|\!|\!| \psi |\!|\!|^2_{DG}  \mathrm{~d}t\bigg) + h.o.t.,
	\end{align}
where $h.o.t.$ are higher order terms.
\end{lemma}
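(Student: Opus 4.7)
My plan is to exploit continuity of $a_{DG}$ in the DG energy norm, absorb the $\psi$ contribution via Young's inequality, and then estimate $|\!|\!|\Pi\xi|\!|\!|_{DG}$ through the triangle-inequality split $\Pi\xi=(\Pi\xi-\xi)+\xi$ combined with the projection estimates of Theorem \ref{5.th1} (in time) and Lemma \ref{5.lemm3.1} (in space).

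For the diffusion integral, continuity \eqref{5.1.3b} followed by Young's inequality yields
\begin{align*}
\nu\int_0^{t_n} a_{DG}(\Pi\xi,\psi)\,\mathrm{~d}t\le \frac{\nu}{8}\int_0^{t_n}|\!|\!|\psi|\!|\!|_{DG}^2\,\mathrm{~d}t+C\int_0^{t_n}|\!|\!|\Pi\xi|\!|\!|_{DG}^2\,\mathrm{~d}t.
\end{align*}
For the memory integral I would apply continuity pointwise in $t$, then Cauchy--Schwarz and Young's inequality to isolate $|\!|\!|\psi|\!|\!|_{DG}^2$, and invoke Lemma \ref{5.l11} (with $C_K=(\int_0^T|K(t)|\,\mathrm{~d}t)^2$) to control the convolution of $|\!|\!|\Pi\xi|\!|\!|_{DG}$. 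This produces a structurally identical term with the extra factor $C_K\eta^2/\nu$.

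It then remains to estimate $|\!|\!|\Pi\xi|\!|\!|_{DG}^2\le 2|\!|\!|\Pi\xi-\xi|\!|\!|_{DG}^2+2|\!|\!|\xi|\!|\!|_{DG}^2$. The volumetric part $\|\nabla_h\xi\|_{\L^2(K)}^2$ of $|\!|\!|\xi|\!|\!|_{DG}^2$ is bounded by Lemma \ref{5.lemm3.1} with $b=1$ as $C h_K^{2s_K-2}r_K^{-(2m_K-2)}\|u\|_{\H^{m_K}(K)}^2$. For the jump part $(\gamma r_E^2/h_E)\|[\![\xi]\!]\|_{\L^2(E)}^2$ I would invoke the sharp $hp$-edge approximation bound $\|\xi\|_{\L^2(\partial K)}^2\le C h_K^{2s_K-1}r_K^{-(2m_K-1)}\|u\|_{\H^{m_K}(K)}^2$ from standard $hp$-FEM theory; multiplying by $r_E^2/h_E$ gives the dominant order $h_K^{2s_K-2}r_K^{-(2m_K-3)}\|u\|_{\H^{m_K}(K)}^2$, which after summation over $K$ and integration in $t$ reproduces the leading term in the statement. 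The temporal piece $|\!|\!|\Pi\xi-\xi|\!|\!|_{DG}^2$ is handled by applying Theorem \ref{5.th1}(ii) pointwise in space together with the same spatial estimate; it contributes an extra factor $k_j^{2q_j+2}p_j^{-2}\Gamma_{p_j,q_j}$ and is absorbed into $h.o.t.$.

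The main obstacle is the jump estimate for $|\!|\!|\xi|\!|\!|_{DG}^2$: the sharp exponent $r_K^{2m_K-3}$ in the denominator cannot be recovered by combining a generic $hp$-trace inequality with the volumetric $\H^1$-bound (which would only yield $r_K^{2m_K-4}$); it requires a direct $hp$-edge approximation result that balances $h$- and $r$-powers optimally. Once this is available, continuity of $a_{DG}$, Young's inequality, and Lemma \ref{5.l11} complete the argument routinely.
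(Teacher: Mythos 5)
Your proposal is correct and follows essentially the same route as the paper: continuity of $a_{DG}$ plus Young's inequality, the triangle-inequality split $\Pi\xi=(\Pi\xi-\xi)+\xi$, Theorem \ref{5.th1} for the temporal part, the spatial projection estimate for the volumetric part, and Lemma \ref{5.l11} for the convolution term. The only difference is presentational: where the paper obtains the $h_K^{2s_K-2}r_K^{-(2m_K-3)}$ contribution by citing an argument from the literature, you make explicit that it comes from the penalized jump terms via the sharp $hp$-edge approximation bound, which is exactly the right mechanism.
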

\begin{proof}
	Using the continuity estimate defined in \eqref{5.1.3b}, we have 
	\begin{align}
		\int_0^{t_n} \nu a_{DG}(\Pi\xi,\psi)  \mathrm{~d}t\leq \nu	\int_0^{t_n} |\!|\!| \Pi\xi |\!|\!|_{DG} |\!|\!| \psi |\!|\!|_{DG}  \mathrm{~d}t\leq 	\nu\int_0^{t_n}|\!|\!| \Pi\xi |\!|\!|_{DG}^2  \mathrm{~d}t+\frac{\nu}{4} 	\int_0^{t_n}|\!|\!| \psi |\!|\!|^2_{DG}  \mathrm{~d}t.
	\end{align}
Now, further aim is to estimate $\int_0^{t_n}|\!|\!| \Pi\xi |\!|\!|_{DG}^2$. Rewriting the term and using an argument similar to \cite[Theorem 4.5]{HSS}, we have  
\begin{align}
	\int_0^{t_n} |\!|\!| \Pi\xi |\!|\!|_{DG}^2 &\leq \int_0^{t_n} |\!|\!| \Pi\xi - \xi |\!|\!|_{DG}^2 + \int_0^{t_n} |\!|\!| \xi |\!|\!|_{DG}^2 \\
	&\leq\sum_{j=1}^n \sum_{K \in \mathcal{T}} \frac{C}{\max \left\{1, p_j^2\right\}}\left(\frac{k_j}{2}\right)^{2 q_j+2} \Gamma_{p_j, q_j} \int_{t_{j-1}}^{t_j} |\!|\!| \xi^{\left(q_j+1\right)} |\!|\!| _{DG}^2 \mathrm{~d}t\\&\quad+\sum_{K \in \mathcal{T}} \left(\frac{h_K^{2s_K - 2}}{r_K^{2m_K-2}} + \frac{h_K^{2s_K - 2}}{r_K^{2m_K-3}}\right) \int_0^{t_n}\| u \|_{\H^{m_K}(K)}\mathrm{~d}t\\&\leq \sum_{j=1}^n \sum_{K \in \mathcal{T}} \frac{C}{\max \left\{1, p_j^2\right\}}\left(\frac{k_j}{2}\right)^{2 q_j+2} \Gamma_{p_j, q_j}\left(\frac{h_K^{2s_K - 2}}{r_K^{2m_K-2}} + \frac{h_K^{2s_K - 2}}{r_K^{2m_K-3}}\right) \int_{t_{j-1}}^{t_j} \| u^{(q_j+1)} \|_{\H^{m_K}(K)}^2 \mathrm{~d}t\\&\quad+\sum_{K \in \mathcal{T}} \left(\frac{h_K^{2s_K - 2}}{r_K^{2m_K-2}} + \frac{h_K^{2s_K - 2}}{r_K^{2m_K-3}}\right) \int_0^{t_n}\| u \|_{\H^{m_K}(K)}\mathrm{~d}t.
\end{align}
Further, using Lemma \ref{5.l11}, the similar estimate holds true for $\eta \int_0^{t_n} (K*a_{DG}(\Pi\xi,\psi))  \mathrm{~d}t$.
\end{proof}}
Using the above results Lemma \ref{5.lem111} and \ref{5.AdG}, we obtain the following estimate
 \begin{lemma}\label{5.l5.1}
	For $1\leq n\leq N,$ we have
		\begin{align}\label{5.42.1}
			\nonumber &\|\psi^n\|_{\L^2} ^2 + \nu \int_{0}^{t_n}	|\!|\!|{\psi}|\!|\!|_{DG}^2\mathrm{~d}t+ \frac{\beta}{4}\int_0^{t_n}\|(U_h^{DG})^{\delta} \Upsilon\|_{\L^2}^2 \mathrm{~d}t+ \frac{\beta}{4} \int_0^{t_n}\|(\Pi \mathcal{R}^{\mathbf{h}}_{\mathbf{r}} u)^{\delta} \Upsilon\|_{\L^2}^2 \mathrm{~d}t \\\nonumber&\leq \|U^0-\mathcal{R}^{\mathbf{h}}_{\mathbf{r}}u_0\|_{\L^2} + 2\left(\nu+ \frac{C_K\eta^2}{\nu} \right) \int_{0}^{t_n} 	|\!|\!|\rho|\!|\!|_{DG}^2 \mathrm{~d}t +\Xi^1\\&\quad + \int_0^{t_n}C(\alpha,\nu)\left(\|U\|^{\frac{8\delta}{4-d}}_{\L^{4\delta}}+\|\Pi \mathcal{R}^{\mathbf{h}}_{\mathbf{r}} U\|^{\frac{8\delta}{4-d}}_{\L^{4\delta}}\right)\|\psi\|_{\L^2}^2\mathrm{~d}t+ C(\beta,\gamma, \delta) \int_{0}^{t_n}\|\psi\|_{\L^2} \mathrm{~d}t+ \frac{1}{2\nu}\int_{0}^{t_n} \|\xi'\|^2_{\L^2} \mathrm{~d}t\\&\quad -2\alpha \int_0^{t_n}(b_{DG}(\Pi \mathcal{R}^{\mathbf{h}}_{\mathbf{r}}u,\Pi \mathcal{R}^{\mathbf{h}}_{\mathbf{r}}u,\psi)-b_{DG}( \Pi u,\Pi u, \psi) )\mathrm{~d}t +2\beta \int_0^{t_n}\big(c(\Pi \mathcal{R}^{\mathbf{h}}_{\mathbf{r}}u)-c(\Pi u),\psi\big) \mathrm{~d}t \\&\quad -2\alpha \int_0^{t_n}(b_{DG}(\Pi u,\Pi u,\psi)-b_{DG}( u, u, \psi) )\mathrm{~d}t +2\beta \int_0^{t_n}\big(c(\Pi u)-c(u),\psi\big) \mathrm{~d}t.
				\end{align}
	\end{lemma}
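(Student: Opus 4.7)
The plan is to mirror the argument of Lemma \ref{5.l5}, replacing the conforming bilinear form $a(\cdot,\cdot)$ by the DG form $a_{DG}(\cdot,\cdot)$, the convective operator $B(\cdot)$ by $b_{DG}(\cdot,\cdot,\cdot)$, and the $\H^1_0$-seminorm by $|\!|\!|\cdot|\!|\!|_{DG}$, while invoking Lemma \ref{5.lem111} for the combined advection--reaction contributions and Lemma \ref{5.AdG} for the consistency error of the DG diffusion. First, I would set $X=\psi$ in the error identity \eqref{5.3.5.1}. Since $\psi\in\mathcal{S}(\mathcal{M},\mathbf{p},\mathcal{V}_{\textbf{h,r}}^{DG})$ is discontinuous in time but polynomial on each $J_n$, summing the pointwise identity $\langle\psi',\psi\rangle=\tfrac12\tfrac{d}{dt}\|\psi\|_{\L^2}^2$ over $n$, together with the algebraic identity $2\sum_{n=1}^{N-1}\langle[\psi]^n,\psi_+^n\rangle+2\langle\psi_+^0,\psi_+^0\rangle=\|\psi_+^0\|^2+\|\psi^n\|^2+\sum_n\|[\psi]^n\|^2$, produces the skeleton bound with $\|\psi^n\|_{\L^2}^2+2\nu\int_0^{t_n}a_{DG}(\psi,\psi)\,\mathrm{d}t$ on the left. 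Coercivity \eqref{5.1.3} converts the latter to $2\nu C_{\mathrm{cor}}\int_0^{t_n}|\!|\!|\psi|\!|\!|_{DG}^2\,\mathrm{d}t$.

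Next I would estimate the right-hand side term by term. The initial error $\langle U_h^{0,DG}-\mathcal{R}^{\mathbf h}_{\mathbf r}u_0,\psi_+^0\rangle$ is absorbed by Cauchy--Schwarz and Young as in \eqref{5.14}. For $\int_0^{t_n}\langle\xi',\psi\rangle\,\mathrm{d}t$ I would apply Cauchy--Schwarz, Poincaré, and Young to produce $\tfrac{1}{2\nu}\int_0^{t_n}\|\xi'\|_{\L^2}^2\,\mathrm{d}t+\tfrac{\nu}{8}\int_0^{t_n}|\!|\!|\psi|\!|\!|_{DG}^2\,\mathrm{d}t$. The diffusion consistency terms $\nu\int_0^{t_n}a_{DG}(\rho+\Pi\xi,\psi)\,\mathrm{d}t$ and the memory counterparts $\eta\int_0^{t_n}(K*a_{DG}(\rho+\Pi\xi,\psi))\,\mathrm{d}t$ are handled by continuity \eqref{5.1.3b} together with Lemma \ref{5.l11} (so that the convolution with $K$ only costs a factor $C_K$), giving rise to the $\Xi^1$ term and a $|\!|\!|\rho|\!|\!|_{DG}^2$ contribution; the extra $\nu$ and $\nu^{-1}C_K\eta^2$ prefactors emerge exactly as in \eqref{5.15}--\eqref{5.16}.

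For the nonlinear pieces, I would split each into a ``discrete to interpolant'' part and an ``interpolation'' part:
\begin{align*}
b_{DG}(\Pi\mathcal{R}^{\mathbf h}_{\mathbf r}u,\Pi\mathcal{R}^{\mathbf h}_{\mathbf r}u,\psi)-b_{DG}(u,u,\psi)&=\big[b_{DG}(\Pi\mathcal{R}^{\mathbf h}_{\mathbf r}u,\Pi\mathcal{R}^{\mathbf h}_{\mathbf r}u,\psi)-b_{DG}(\Pi u,\Pi u,\psi)\big]\\
&\quad+\big[b_{DG}(\Pi u,\Pi u,\psi)-b_{DG}(u,u,\psi)\big],
\end{align*}
and analogously for $c$. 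Applying the first estimate in Lemma \ref{5.lem111} to the ``discrete to interpolant'' difference (with $U_h^{DG}$ and $\Pi\mathcal{R}^{\mathbf h}_{\mathbf r}u$ as the two arguments and $w=\psi$), I obtain a term of the form $\tfrac{\nu}{8}|\!|\!|\psi|\!|\!|_{DG}^2+C(\alpha,\nu)(\|U_h^{DG}\|_{\L^{4\delta}}^{8\delta/(4-d)}+\|\Pi\mathcal{R}^{\mathbf h}_{\mathbf r}u\|_{\L^{4\delta}}^{8\delta/(4-d)})\|\psi\|_{\L^2}^2$, which is the shape required on the right of \eqref{5.42.1}. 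The reaction contribution is treated by the positivity/monotonicity part of Lemma \ref{5.lem111} to produce the $\tfrac{\beta}{4}\|(U_h^{DG})^\delta\Upsilon\|_{\L^2}^2+\tfrac{\beta}{4}\|(\Pi\mathcal{R}^{\mathbf h}_{\mathbf r}u)^\delta\Upsilon\|_{\L^2}^2$ terms on the left, plus an $\|\psi\|_{\L^2}^2$ remainder absorbed into $C(\beta,\gamma,\delta)\int_0^{t_n}\|\psi\|_{\L^2}\,\mathrm{d}t$. The ``interpolation parts'' $b_{DG}(\Pi u,\Pi u,\psi)-b_{DG}(u,u,\psi)$ and $c(\Pi u)-c(u)$ are left as boundary integrals on the right of \eqref{5.42.1}, exactly matching the statement. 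Finally, positivity of the kernel \eqref{5.pk} lets us drop the nonnegative memory self-term on the left, and collecting everything yields \eqref{5.42.1}.

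The main obstacle is not any single estimate but the bookkeeping: one must verify that the DG form $a_{DG}$ still satisfies $\langle \cdot,\cdot\rangle$-style duality manipulations used in the conforming case (in particular that integration by parts in time is not needed on $a_{DG}(\Pi\xi,\psi)$, since both arguments are discrete), and that the coercivity constant $C_{\mathrm{cor}}$ survives all absorptions of $|\!|\!|\psi|\!|\!|_{DG}^2$ from the various Young inequalities applied to diffusion, memory, advection, and reaction. A careful choice of the small parameters (say, $\nu/8$ in each of the four steps) guarantees this, after which the proof closes in the same way as Lemma \ref{5.l5}.
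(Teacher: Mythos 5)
Your proposal is correct and follows essentially the same route as the paper, which simply takes $X=\psi$ in the DG error identity, repeats the argument of Lemma \ref{5.l5}/Lemma \ref{5.l3} with $a$, $B$, and $\|\nabla\cdot\|_{\L^2}$ replaced by $a_{DG}$, $b_{DG}$, and $|\!|\!|\cdot|\!|\!|_{DG}$, and invokes Lemma \ref{5.lem111} for the nonlinear terms and Lemma \ref{5.AdG} for $\Xi^1$. In fact your write-up supplies more detail than the paper does (e.g.\ the explicit splitting of the nonlinear terms and the remark that kernel positivity transfers to the symmetric positive semidefinite form $a_{DG}$), and your observation about tracking the coercivity constant $C_{\mathrm{cor}}$ in front of $\int_0^{t_n}|\!|\!|\psi|\!|\!|_{DG}^2\,\mathrm{d}t$ is a legitimate bookkeeping point that the paper's stated inequality also elides.
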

Further the non-linear terms are given by the following lemma 
\begin{lemma}\label{5.43.1}
	For $u\in \L^2(0,T; \L^{4\delta}(\Omega)), 1\leq n \leq N, 0\leq q_j \leq p_j,$ and $u\in \H^{q_j+1}(J_j;\H_0^1),$ we have 
	\begin{align}\label{5.44.1}
		&-\alpha \int_0^{t_n}(b_{DG}(\Pi \mathcal{R}^{\mathbf{h}}_{\mathbf{r}}u,\Pi \mathcal{R}^{\mathbf{h}}_{\mathbf{r}}u,\psi)-b_{DG}( \Pi u,\Pi u, \psi) )\mathrm{~d}t +\beta \int_0^{t_n}\big(c(\Pi \mathcal{R}^{\mathbf{h}}_{\mathbf{r}}u)-c(\Pi u),\psi\big) \mathrm{~d}t \\& -\alpha \int_0^{t_n}(b_{DG}(\Pi u,\Pi u,\psi)-b_{DG}( u, u, \psi) )\mathrm{~d}t +\beta \int_0^{t_n}\big(c(\Pi u)-c(u),\psi\big) \mathrm{~d}t\\&\leq C \left(\int_0^{t_n} \left( 	|\!|\!|\rho|\!|\!|_{DG}^2 + \|\psi\|^2_{\L^2} \right) \mathrm{~d}t + \Xi^1\right).
	\end{align}
\end{lemma}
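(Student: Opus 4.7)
The plan is to mirror the strategy of Lemma \ref{5.l4} and Lemma \ref{5.43} (the continuous/conforming case), but now carrying out the estimates in the DG norm $|\!|\!|\cdot|\!|\!|_{DG}$ and exploiting the structural identity $b_{DG}(u,u,u)=0$ together with Lemma \ref{5.lem111}. The four terms on the left split naturally into two groups: the ``discrete minus discrete'' differences involving $\mathcal{R}^{\mathbf{h}}_{\mathbf{r}} u$, and the ``discrete minus continuous'' differences involving $u$. I would handle each group separately.

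For the first group, I would apply Lemma \ref{5.lem111} with the choice $U_h^{DG}=\Pi\mathcal{R}^{\mathbf{h}}_{\mathbf{r}}u$ and $v_h=\Pi u$, so that $w=\Pi\xi$. Testing against $\psi$ (rather than $w$) requires one extra step: I write
\begin{align*}
-\alpha\bigl[b_{DG}(\Pi\mathcal{R}^{\mathbf{h}}_{\mathbf{r}}u,\Pi\mathcal{R}^{\mathbf{h}}_{\mathbf{r}}u,\psi)-b_{DG}(\Pi u,\Pi u,\psi)\bigr] &= -\alpha\bigl[b_{DG}(\Pi\mathcal{R}^{\mathbf{h}}_{\mathbf{r}}u,\Pi\xi,\psi)+\tilde b_{DG}(\Pi\xi;\Pi u,\Pi\mathcal{R}^{\mathbf{h}}_{\mathbf{r}}u,\psi)\bigr],
\end{align*}
where the second piece captures the nonlinear dependence of the convective ``velocity'' $w=u^\delta\mathbf 1$ on $u$, linearised by Taylor's formula along the segment $\zeta\Pi\mathcal{R}^{\mathbf{h}}_{\mathbf{r}}u+(1-\zeta)\Pi u$. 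Each summand is then estimated by H\"older's inequality in the scales $\L^{4\delta}$, $\L^4$, $\L^2$ (exactly as in \eqref{5.18}, \eqref{5.32}), controlling $\|\Pi\xi\|_{\L^4}$ by $|\!|\!|\Pi\xi|\!|\!|_{DG}$ via a discrete Sobolev embedding and Young's inequality, and similarly for the boundary/upwind contributions using the jump penalty in $|\!|\!|\cdot|\!|\!|_{DG}$. The reaction difference $\beta(c(\Pi\mathcal{R}^{\mathbf{h}}_{\mathbf{r}}u)-c(\Pi u),\psi)$ is handled verbatim as in \eqref{5.24}--\eqref{5.27}, since $c$ is a pointwise operator and the DG structure plays no role.

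For the second group, I would expand $b_{DG}(\Pi u,\Pi u,\psi)-b_{DG}(u,u,\psi)$ by the same trilinear splitting with increment $\rho=\Pi u-u$, and bound each piece using H\"older's inequality and the regularity $u\in\L^{2(2\delta+1)}\cap \H^{q_j+1}(\J_j;\H_0^1)$; the boundary jump terms of $\rho$ are controlled by the penalty contribution to $|\!|\!|\rho|\!|\!|_{DG}^2$. The reaction piece $\beta(c(\Pi u)-c(u),\psi)$ is treated exactly as in \eqref{5.32}--\eqref{5.37} of Lemma \ref{5.l4}.

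After combining all pieces, the diffusion-type remainders $\nu a_{DG}(\Pi\xi,\psi)$ and $\eta(K*a_{DG}(\Pi\xi,\psi))$ that arise from integration-by-parts/boundary fluxes are absorbed into $\Xi^1$ via Lemma \ref{5.AdG}, while all terms of the form $c_1|\!|\!|\psi|\!|\!|_{DG}^2$ generated by Young's inequality are absorbed into the coercivity term on the left-hand side of Lemma \ref{5.l5.1} (so only $\|\psi\|_{\L^2}^2$ survives on the right, consistent with the statement). The main obstacle is the book-keeping of the upwind flux and interior-penalty boundary contributions in the nonlinear splitting: the velocity field $(\Pi\mathcal{R}^{\mathbf{h}}_{\mathbf{r}}u)^\delta\mathbf 1$ is not continuous across element edges, so Lemma \ref{5.lem111} must be invoked with some care in order that the jump terms carried by $\Pi\xi$ and $\rho$ end up in $|\!|\!|\Pi\xi|\!|\!|_{DG}$ and $|\!|\!|\rho|\!|\!|_{DG}$ respectively, rather than in uncontrolled boundary residuals; everything else is a routine propagation of the arguments already used in Lemma \ref{5.l4}.
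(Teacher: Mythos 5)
Your proposal is correct and follows essentially the same route as the paper, whose entire proof of this lemma is the single remark that it is ``similar to'' the conforming case (Lemma \ref{5.43}), i.e.\ the two-group splitting into $(\Pi\mathcal{R}^{\mathbf{h}}_{\mathbf{r}}u,\Pi u)$ and $(\Pi u,u)$ differences, the Taylor/H\"older/Young estimates of \eqref{5.28}, and absorption of the residual diffusion terms into $\Xi^1$ and of the $|\!|\!|\psi|\!|\!|_{DG}^2$ contributions into the coercivity of Lemma \ref{5.l5.1}. You in fact supply more detail than the paper does on the one genuinely DG-specific point — the upwind-flux and jump contributions in the trilinear splitting of $b_{DG}$ — which the paper leaves entirely implicit.
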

\begin{proof}
	The proof is similar to Theorem \ref{5.43}.
\end{proof}
Combining the results obtained in the above two lemmas, we obtain
\begin{lemma}\label{5.l14.1}
	For $1\leq n\leq N,$ we have  
	\begin{align}\label{5.104.1}
		\|\psi^n\|_{\L^2} ^2 + \nu \int_{0}^{t_n}	|\!|\!|\psi|\!|\!|_{DG}^2 \mathrm{~d}t\leq  C \|(U_h^{DG})^{0}-\mathcal{R}^{\mathbf{h}}_{\mathbf{r}}u_0\|_{\L^2}^2 + C \int_0^{t_n}\|\xi'\|_{\L^2}^2 \mathrm{~d}t +C \int_0^{t_n}	|\!|\!|\rho|\!|\!|_{DG}^2 \mathrm{~d}t  +\Xi^1.
	\end{align}
\end{lemma}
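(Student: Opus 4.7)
The approach mirrors the proof of Lemma \ref{5.l14} in the conforming setting, substituting the DG ingredients at each step. First, I would start from the bound in Lemma \ref{5.l5.1}, which provides an inequality for $\|\psi^n\|_{\L^2}^2 + \nu \int_{0}^{t_n} |\!|\!|\psi|\!|\!|_{DG}^2 \mathrm{~d}t$ (together with the reaction-positive squared terms $\frac{\beta}{4}\int_0^{t_n}(\|(U_h^{DG})^\delta \psi\|_{\L^2}^2 + \|(\Pi\mathcal{R}^{\mathbf{h}}_{\mathbf{r}} u)^\delta \psi\|_{\L^2}^2)\mathrm{~d}t$) in terms of (i) the desired data terms $\|(U_h^{DG})^0 - \mathcal{R}^{\mathbf{h}}_{\mathbf{r}} u_0\|_{\L^2}^2$, $\int_0^{t_n}\|\xi'\|_{\L^2}^2\mathrm{~d}t$, $\int_0^{t_n}|\!|\!|\rho|\!|\!|_{DG}^2 \mathrm{~d}t$, and $\Xi^1$, (ii) a pair of nonlinear defect terms coming from $b_{DG}$ and $c$, and (iii) a Gr\"onwall-type weighted term $\int_0^{t_n} C(\alpha,\nu)\big(\|U_h^{DG}\|_{\L^{4\delta}}^{8\delta/(4-d)} + \|\Pi\mathcal{R}^{\mathbf{h}}_{\mathbf{r}} u\|_{\L^{4\delta}}^{8\delta/(4-d)}\big)\|\psi\|_{\L^2}^2 \mathrm{~d}t + C(\beta,\gamma,\delta)\int_0^{t_n}\|\psi\|_{\L^2}\mathrm{~d}t$. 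The linear lower-order term $\int_0^{t_n}\|\psi\|_{\L^2}\mathrm{~d}t$ is converted to $\int_0^{t_n}\|\psi\|_{\L^2}^2\mathrm{~d}t$ (plus a harmless constant absorbed into $\mathcal{D}_n$) via Young's inequality.

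Second, I would invoke Lemma \ref{5.43.1} to absorb the nonlinear defect terms (from the differences $b_{DG}(\Pi\mathcal{R}^{\mathbf{h}}_{\mathbf{r}} u,\Pi\mathcal{R}^{\mathbf{h}}_{\mathbf{r}} u,\psi)-b_{DG}(\Pi u,\Pi u,\psi)$, $b_{DG}(\Pi u,\Pi u,\psi)-b_{DG}(u,u,\psi)$, and their reaction counterparts) into a combination of $\int_0^{t_n}|\!|\!|\rho|\!|\!|_{DG}^2\mathrm{~d}t$, $\int_0^{t_n}\|\psi\|_{\L^2}^2 \mathrm{~d}t$, and $\Xi^1$; the residual $|\!|\!|\psi|\!|\!|_{DG}^2$ contributions produced along the way (e.g.\ from Lemma \ref{5.lem111} applied with the choice $\epsilon = \nu/2$) can be absorbed into the coercive left-hand side $\nu\int_0^{t_n}|\!|\!|\psi|\!|\!|_{DG}^2\mathrm{~d}t$ by choosing the Young's-inequality parameter small enough. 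After this step the inequality reduces to the Gr\"onwall-ready form
\begin{equation*}
\|\psi^n\|_{\L^2}^2 + \tfrac{\nu}{2}\int_{0}^{t_n}|\!|\!|\psi|\!|\!|_{DG}^2 \mathrm{~d}t \;\le\; \mathcal{D}_n + \int_0^{t_n} g(t)\,\|\psi\|_{\L^2}^2 \mathrm{~d}t,
\end{equation*}
where $\mathcal{D}_n$ is precisely the right-hand side claimed in the lemma and $g(t) := C + C\big(\|U_h^{DG}(t)\|_{\L^{4\delta}}^{8\delta/(4-d)} + \|\Pi\mathcal{R}^{\mathbf{h}}_{\mathbf{r}} u(t)\|_{\L^{4\delta}}^{8\delta/(4-d)}\big)$. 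An application of the continuous Gr\"onwall inequality then yields the claim with a multiplicative constant of the form $\exp\big(\|g\|_{L^1(0,T)}\big)$.

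The main obstacle, and genuinely the only non-routine point, is to verify that $\|g\|_{L^1(0,T)}$ is bounded uniformly in the discretization parameters $\mathbf{h},\mathbf{r},\mathbf{p},\mathcal{M}$. For the term $\|\Pi\mathcal{R}^{\mathbf{h}}_{\mathbf{r}} u\|_{\L^{4\delta}}$ this follows from the regularity of the exact solution (recalled after Lemma \ref{5.l4}) combined with the stability of the projections $\Pi$ and $\mathcal{R}^{\mathbf{h}}_{\mathbf{r}}$ in $\L^{4\delta}$. For $\|U_h^{DG}\|_{\L^{4\delta}}$, however, one must combine the discrete stability bound \eqref{2.DG13}, which only controls $|\!|\!|U_h^{DG}|\!|\!|_{DG}$, with a discrete Sobolev embedding $\mathcal{V}_{\mathbf{h,r}}^{DG} \hookrightarrow \L^{4\delta}(\Omega)$ valid for $d\leq 3$ and the admissible ranges of $\delta$ already used in the analysis of \cite{GBHE2}. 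Once this uniform integrability of $g$ is established, Gr\"onwall closes the argument and delivers exactly the stated estimate.
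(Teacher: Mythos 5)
Your proposal follows essentially the same route as the paper: combine the energy inequality of Lemma \ref{5.l5.1} with the nonlinear-defect bound of Lemma \ref{5.43.1}, absorb the residual $|\!|\!|\psi|\!|\!|_{DG}^2$ contributions into the coercive left-hand side, and close with Gr\"onwall's inequality, exactly as in the conforming analogue (Lemma \ref{5.l14}). Your additional remark on verifying that the Gr\"onwall weight involving $\|U_h^{DG}\|_{\L^{4\delta}}$ is bounded uniformly in the discretization parameters is a point the paper leaves implicit, but it does not change the argument.
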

The main results for the fully-discrete error estimates are given in the following theorems
\begin{theorem}\label{5.th8.1}
	Let $U_h^{DG}$ be the approximate solution and $u$ be the exact solution then the following bound holds
	\begin{align}
		\nonumber&\int_0^{t_n}|\!|\!|U_h^{DG}-u|\!|\!|_{DG}^2 \mathrm{~d}t +\|(U-u)^n\|_{\L^2}^2 \leq  C \left(\|\Pi \xi\|_{I_n}^2 + \int_{0}^{t_n}(\|\xi'\|_{\L^2}^2+|\!|\!|\rho|\!|\!|_{DG}^2 )\mathrm{~d}t +\Xi^1 \right),
	\end{align}
	\begin{align}
		\|U_h^{DG}-u\|_{I_n}^2\leq C(\|\rho\|_{I_n}^2+\|\Pi\xi\|_{\J_n}^2) + C \log(|\textbf{p}|_n+2)|\textbf{p}|_n^2\left(\|\Pi \xi\|_{I_n}^2 + \int_{0}^{t_n}(\|\xi'\|_{\L^2}^2+|\!|\!|\rho|\!|\!|_{DG}^2 )\mathrm{~d}t + \Xi^1\right).
	\end{align}
\end{theorem}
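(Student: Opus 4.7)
The plan is to follow the pattern of Theorem~\ref{5.th8}, adapted to the fully discrete DG-in-space scheme. I would start from the three-term decomposition \eqref{5.3.3}, now understood with the DG projector of Lemma~\ref{5.lemm3.1},
\[
U_h^{DG}-u \;=\; \psi + \Pi\xi + \rho,
\]
where $\psi = U_h^{DG}-\Pi\mathcal{R}^{\mathbf{h}}_{\mathbf{r}}u$ is the combined space-time discrete error, $\Pi\xi=\Pi\mathcal{R}^{\mathbf{h}}_{\mathbf{r}}u-\Pi u$ is the spatial projection error lifted in time by $\Pi$, and $\rho=\Pi u - u$ is the temporal projection error.

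For the first estimate, I would apply the triangle inequality in $|\!|\!|\cdot|\!|\!|_{DG}$ to the integrand and in $\L^2$ to the endpoint term at $t_n$. The $\psi$-contributions are immediately absorbed into the right-hand side by Lemma~\ref{5.l14.1}, which bounds $\|\psi^n\|_{\L^2}^2 + \nu\int_0^{t_n} |\!|\!|\psi|\!|\!|_{DG}^2\,\mathrm{~d}t$ by precisely the target quantity. The defining property~\eqref{5.8} of the temporal projector gives $\rho^n=0$ and $(\Pi\xi)^n=\xi^n$, so the endpoint contribution from $\Pi\xi$ is absorbed into $\|\Pi\xi\|_{I_n}^2$. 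The integral term $\int_0^{t_n} |\!|\!|\Pi\xi|\!|\!|_{DG}^2\,\mathrm{~d}t$ is precisely what Lemma~\ref{5.AdG} packages inside $\Xi^1$, while the $\rho$-integral appears explicitly in the stated bound.

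For the sup-norm estimate, the same triangle inequality yields $\|U_h^{DG}-u\|_{I_n} \le \|\psi\|_{I_n} + \|\Pi\xi\|_{I_n} + \|\rho\|_{I_n}$, and the last two terms already appear on the right-hand side of the theorem. To control $\|\psi\|_{I_n}$, I would invoke the polynomial inverse estimate of Lemma~\ref{5.leminv} on $\psi\in\mathcal{S}(\mathcal{M},\textbf{p},\mathcal{V}_{\textbf{h,r}}^{DG})$, combine it with the energy bound of Lemma~\ref{5.l14.1}, and pair it with a DG-in-space analogue of Lemmas~\ref{5.l2.8}--\ref{5.l2.13} bounding $\int_{t_{n-1}}^{t_n}\|\psi'\|_{\L^2}^2(t-t_{n-1})\,\mathrm{~d}t$ by $C|\textbf{p}|_n^2$ times the right-hand side of the first estimate. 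The prefactor $C\log(|\textbf{p}|_n+2)|\textbf{p}|_n^2$ then emerges exactly as in Theorem~\ref{5.lem12}.

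The main technical obstacle will be the DG counterpart of the time-weighted derivative bound (Lemma~\ref{5.l2.8}). The conforming argument leans on the identity $a(\Upsilon,(t-t_{n-1})\Upsilon')=\tfrac{1}{2}(t-t_{n-1})\tfrac{d}{dt}\|\nabla\Upsilon\|_{\L^2}^2$ coming from the symmetry of $a$; since $a_{DG}$ is also symmetric, the same identity holds with $\|\nabla\cdot\|_{\L^2}$ replaced by $|\!|\!|\cdot|\!|\!|_{DG}$. The non-linear terms in DG form are handled by Lemma~\ref{5.lem111}, the memory term by positivity of the kernel together with Lemma~\ref{5.l11}, and the additional $\langle \xi',(t-t_{n-1})\psi'\rangle$ contribution by Cauchy--Schwarz as in Lemma~\ref{5.l15}. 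Once this DG weighted derivative estimate is combined with Lemma~\ref{5.l14.1} in the spirit of Lemma~\ref{5.l2.13}, the two claimed bounds follow by the triangle inequality applied to the decomposition above.
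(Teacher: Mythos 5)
Your proposal is correct and follows essentially the same route as the paper: the paper simply states that the proof is analogous to Theorem~\ref{5.th8}, i.e.\ it uses the decomposition \eqref{5.3.3}, the triangle inequality, the energy bound of Lemma~\ref{5.l14.1} together with $\rho^n=0$ for the first estimate, and the DG analogue of the time-weighted derivative bound (in the spirit of Lemmas~\ref{5.l2.8}--\ref{5.l2.13} and \ref{5.l15}) combined with the inverse estimate of Lemma~\ref{5.leminv} for the sup-norm estimate. Your identification of the symmetric identity for $a_{DG}$ and of Lemma~\ref{5.lem111} for the non-linear terms matches the ingredients the paper implicitly relies on.
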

\begin{proof}
The proof is similar to \ref{5.th8}.
\end{proof}
\begin{theorem}\label{5.th15.1}
	For the solution exact solution $u$ satisfying the regularity \eqref{5.106}, the error for $1\leq n\leq N$ and for $0\leq q_j\leq p_j$ is given as follows
	\begin{align}
			&\int_0^{t_n}|\!|\!|U_h^{DG}-u|\!|\!|_{DG}^2\mathrm{~d}t +\|(U-u)^n\|_{\L^2}^2 \\&\leq C \sum_{K \in \mathcal{T}} \left( \frac{h_K^{2s_K - 2}}{r_K^{2m_K-3}}\right)\left(\|u_0\|_{\H^{m_K}(K)}^2 + \int_0^{t_n} \|u^{\prime}\|_{\H^{m_K}(K)} \, \mathrm{d}t + \int_0^{t_n} \| u \|_{\H^{m_K}(K)} \, \mathrm{d}t \right)\\&\quad+ C \sum_{j=1}^n\hat{p}_j^{-2}\left(\frac{k_j}{2}\right)^{2q_j}{\Gamma_{p_j, q_j}} \int_{t_{j-1}}^{t_j} \|u^{q_j+1}\|_{\H^{m_K}(K)}^2 \mathrm{~d}t.
	\end{align}
	and 
	\begin{align}
		\|U_h^{DG}-u\|_{I_n}^2 &\leq \left( \frac{h_K^{2s_K - 2}}{r_K^{2m_K-3}}\right)\log(|\textbf{p}|_n+2)|\textbf{p}|_n^2\left(\|u_0\|_{\H^{m_K}(K)}^2 + \int_0^{t_n} \|u^{\prime}\|_{\H^{m_K}(K)} \, \mathrm{d}t + \int_0^{t_n} \| u \|_{\H^{m_K}(K)} \, \mathrm{d}t \right)\\&\quad+ C\left(\max_{j=1}^n\right)\left(\frac{k_j}{2}\right)^{2q_j}{\Gamma_{p_j, q_j}} \int_{t_{j-1}}^{t_j}\|u^{q_j+1}\|_{\H^{m_K}(K)}^2 \mathrm{~d}t\\&\quad+\log(|\textbf{p}|_n+2)|\textbf{p}|_n^2\sum_{j=1}^n\hat{p}_j^{-2} \left(\frac{k_j}{2}\right)^{2q_j}{\Gamma_{p_j, q_j}} \int_{t_{j-1}}^{t_j}\|u^{q_j+1}\|_{\H^{m_K}(K)}^2 \mathrm{~d}t.
	\end{align}
	where $\hat{p}_j=\max \left\{1, p_j\right\}$ and $s_K = \min(r_K +1,m_K )$.
\end{theorem}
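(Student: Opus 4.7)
The plan is to specialize the abstract error bounds of Theorem \ref{5.th8.1} by inserting explicit approximation estimates for each of the error components $\rho$, $\xi$, $\Pi\xi$, and the auxiliary quantity $\Xi^1$. Since Theorem \ref{5.th8.1} already reduces the fully-discrete error to a sum of interpolation/projection errors, the remaining work is entirely a matter of quantifying those errors under the regularity hypothesis \eqref{5.106}.

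First I would handle the purely temporal error $\rho = \Pi u - u$. Because $|\!|\!|\cdot|\!|\!|_{DG}$ reduces to the broken $H^1$ seminorm plus jump penalties, and $\rho$ is continuous in space (no jumps are introduced by the temporal interpolant $\Pi$), the quantity $\int_0^{t_n} |\!|\!|\rho|\!|\!|_{DG}^2\,\mathrm{d}t$ is controlled exactly by $\int_0^{t_n}\|\nabla \rho\|_{\L^2}^2\,\mathrm{d}t$, to which Theorem \ref{5.th1}(ii) applies sub-interval by sub-interval, yielding the expected $\hat{p}_j^{-2}(k_j/2)^{2q_j+2}\Gamma_{p_j,q_j}$ factor. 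Similarly, $\|\rho\|_{I_n}^2$ is bounded using Theorem \ref{5.th1}(iii). Next I would bound the spatial error term $\int_0^{t_n}\|\xi'\|_{\L^2}^2\,\mathrm{d}t$ by applying Lemma \ref{5.lemm3.1} to $u'(t)$ on each element $K$ with $b=0$, giving the factor $h_K^{2s_K}/r_K^{2m_K}\,\|u'\|_{\H^{m_K}(K)}^2$ (which is absorbed into the stated $h_K^{2s_K-2}/r_K^{2m_K-3}$ rate). The initial-data error $\|U_h^{(DG),0}-\mathcal{R}^{\mathbf{h}}_{\mathbf{r}}u_0\|_{\L^2}$ is handled analogously by choosing $U_h^{(DG),0}=\mathcal{R}^{\mathbf{h}}_{\mathbf{r}}u_0$ (or by an identical projection bound).

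The term $\Xi^1$ is essentially provided by Lemma \ref{5.AdG}, which already combines the spatial DG approximation estimate for $\Pi\xi$ with the coercivity/continuity of $a_{DG}$ and the convolution bound from Lemma \ref{5.l11}. The dominant contribution there is $\sum_{K}(h_K^{2s_K-2}/r_K^{2m_K-3})\int_0^{t_n}\|u\|_{\H^{m_K}(K)}\,\mathrm{d}t$, which matches the stated rate. For $\|\Pi\xi\|_{I_n}^2$ and $\|\Pi\xi\|_{J_n}^2$ appearing in the $L^\infty$-type bound, I would use the triangle inequality $\|\Pi\xi\|_{J_n}\le\|\Pi\xi-\xi\|_{J_n}+\|\xi\|_{J_n}$, then bound $\|\Pi\xi-\xi\|_{J_n}$ via Theorem \ref{5.th1}(iii) (with $q_n=0$ sufficing on each slab, giving $k_j\int\|\xi'\|^2$), and $\|\xi\|_{J_n}$ via Lemma \ref{5.lemm3.1} applied to $u(t)$ with $b=0$.

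Finally I would assemble the pieces: substitute the four bounds into Theorem \ref{5.th8.1}, absorb lower-order terms, and collect the resulting powers of $h_K$, $r_K$, $k_j$, $p_j$. The logarithmic factor $\log(|\mathbf{p}|_n+2)|\mathbf{p}|_n^2$ in the second estimate comes directly from Theorem \ref{5.th8.1} and is simply propagated through the spatial and temporal pieces. The step I expect to be the main obstacle is bookkeeping rather than mathematics: tracking the correct $h_K/r_K$ exponent through $\Xi^1$, since $a_{DG}$ mixes broken gradient norms (which lose one power of $r_K$ to inverse trace inequalities when controlling $\|\{\!\{\nabla \xi\}\!\}\|$ on edges by $\|u\|_{\H^{m_K}}$) with interior jumps (which scale differently), so that the combined rate ends up as $h_K^{2s_K-2}/r_K^{2m_K-3}$ rather than the $r_K^{2m_K-2}$ rate seen in the conforming Theorem \ref{5.th15}. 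Once this exponent is pinned down consistently across $\Xi^1$ and the $\int\|\xi'\|^2$ contribution, the combined estimate follows directly from Theorems \ref{5.th8.1}, \ref{5.th1}, and Lemma \ref{5.lemm3.1}.
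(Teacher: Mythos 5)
Your proposal follows essentially the same route as the paper's proof: reduce to Theorem \ref{5.th8.1}, bound $\|\Pi\xi\|_{J_n}$ by the triangle inequality together with Theorem \ref{5.th1} (with $q_n=0$ on each slab), control $\int_0^{t_n}\|\xi'\|_{\L^2}^2\,\mathrm{d}t$ and $\|\xi(0)\|_{\L^2}$ via the projector estimate of Lemma \ref{5.lemm3.1}, invoke Lemma \ref{5.AdG} for $\Xi^1$, and apply Theorem \ref{5.th1} to $\rho$. Your additional remarks on why the broken norm of $\rho$ collapses to $\|\nabla\rho\|_{\L^2}$ and on the provenance of the $r_K^{2m_K-3}$ exponent are consistent with (and slightly more explicit than) what the paper records.
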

\begin{proof}
	Using Theorem \ref{5.th8} and \ref{5.th1} our task reduces to estimate $\|\Pi \xi\|_{J_n}$, $\int_0^{t_n}\left\|\xi^{\prime}\right\|_{\L^2}^2 d t$ and $\Xi^1$. The triangle inequality yields
	\begin{align}\label{5.107.1}
		\|\Pi \xi\|_{J_n} \leq\|\Pi \xi-\xi\|_{J_n}+\|\xi\|_{J_n} \leq\|\Pi \xi-\xi\|_{J_n}+\|\xi(0)\|_{
			\L^2}+\int_0^{t_n}\left\|\xi^{\prime}\right\|_{\L^2} \mathrm{~d}t.
	\end{align}
	For the first term we have 
	\begin{align}
		\|\Pi \xi-\xi\|_{J_n}^2=\max _{j=1}^n\left(\|\Pi \xi-\xi\|_{I_j}^2\right) \leq C \max _{j=1}^n\left(k_j \int_{t_{j-1}}^{t_j}\left\|\xi^{\prime}\right\|_{\L^2}^2 d t\right),
	\end{align}
	where we have used approximation result for $\Pi\xi-\xi$ from Theorem \ref{5.th1} for $q_n=0$. Further an application of Cauchy-Schwarz inequality in \eqref{5.107.1} yields
	\begin{align}
		\|\Pi \xi\|_{J_n} \leq C\left(\|\xi(0)\|_{
			\L^2}+\int_0^{t_n}\left\|\xi^{\prime}\right\|_{\L^2} \mathrm{~d}t\right).
	\end{align}
Finally, from the approximation property \eqref{5.7a1.1}, we have 
	\begin{align}
		\|\xi(0)\|_{\L^2}^2+\int_0^{t_n}\left\|\xi^{\prime}\right\|_{\L^2}^2 d t \leq C \frac{h_K^{2s_K }}{r_K^{2m_K}}\left\|u_0\right\|_{\H^{m_K}(\kappa)}^2+C\frac{h_K^{2s_K }}{r_K^{2m_K}} \int_0^{t_n} \left\|u^{\prime}\right\|_{\H^{m_K}(\kappa)}  \mathrm{~d}t,
	\end{align}
	and hence the result.
\end{proof} 
For uniform parameters $k$, $ p $, and $q$ (i.e., $k_j = k, p_j = p$, and $q_j = q$), the bounds in Theorem \ref{5.th15} result in the following error estimates
\begin{corollary}\label{5.col2.1}
	For $1 \leq n \leq N$, we have
	\begin{align}
		&\int_0^{t_n}\left\|U_h^{DG}-u\right\|^2 d t+\left\|\left(U_h^{DG}-u\right)^n\right\|^2 \\& \leq C  \sum_{K \in \mathcal{T}} \left( \frac{h^{2s- 2}}{r^{2m-3}}\right)\left(\|u_0\|_{\H^{m}(K)}^2 + \int_0^{t_n} \|u^{\prime}\|_{\H^{m}(K)} \, \mathrm{d}t + \int_0^{t_n} \| u \|_{\H^{m}(K)} \, \mathrm{d}t \right)+C \frac{k^{2 \min \{p, q\}}}{p^{2 q}} \int_0^{t_n}\|u^{q_j+1}\|_{\H^{m_K}(K)}^2,
	\end{align}
	and 
	\begin{align}
		\left\|U_h^{DG}-u\right\|_{J_n}^2 & \leq C \left( \frac{h^{2s- 2}}{r^{2m-3}}\right) \log (p+2)\left(\|u_0\|_{\H^{m}(K)}^2 + \int_0^{t_n} \|u^{\prime}\|_{\H^{m}(K)} \, \mathrm{d}t + \int_0^{t_n} \| u \|_{\H^{m}(K)} \, \mathrm{d}t \right)\\
		& +C \frac{k^{2 \min \{p, q\}}}{p^{2 q}}\left(\max _{j=1}^n \max _{t \in I_j}|\!|\!|u^{q_j+1}|\!|\!|_{DG}^2+\log (p+2) \int_0^{t_n}\|u^{q_j+1}\|_{\H^{m_K}(K)}^2 \mathrm{~d} t\right) .
	\end{align}
\end{corollary}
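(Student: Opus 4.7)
The plan is to derive both estimates by specialising Theorem~\ref{5.th15.1} to the uniform parameter regime $k_j=k$, $p_j=p$, $q_j=q$ for all $1\le j\le n$, and then simplifying the resulting sums and maxima. First I would substitute these uniform values into the bounds of Theorem~\ref{5.th15.1}. The term $\hat p_j^{-2}\Gamma_{p_j,q_j}$ becomes simply $\hat p^{-2}\Gamma_{p,q}$, which is independent of $j$, so that the sum $\sum_{j=1}^n\int_{t_{j-1}}^{t_j}(\cdots)\,\mathrm{d}t$ collapses into a single integral over $(0,t_n)$. Likewise $(k_j/2)^{2q_j}$ becomes the $j$-independent factor $(k/2)^{2q}$, which can be pulled outside the sum.

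Next I would invoke the asymptotic behaviour of the Gamma ratio, namely $\Gamma_{p,q}\sim p^{-2q}$ as $p\to\infty$, which follows from Stirling's formula or Jordan's Lemma as cited in the proof of Corollary~\ref{5.co1}. This converts the factor $\hat p^{-2}\Gamma_{p,q}\,(k/2)^{2q}$ into $(k^{2q})/(p^{2q+2})$ up to constants, which combined with the factor of $p^2$ appearing in the $\log(p+2)|\mathbf{p}|_n^2$ terms (noting $|\mathbf{p}|_n=\max\{p,1\}=p$ for $p\ge 1$ in the uniform case) gives the announced rate $k^{2q}/p^{2q}$. The cut-off $\min\{p,q\}$ in the exponent of $k$ arises because Theorem~\ref{5.th1}(ii)--(iii) requires $q_j\le p_j$, so under uniform parameters only $\min\{p,q\}$ temporal derivatives are effectively used; this reproduces the standard min-exponent appearing in the $hp$-DG literature.

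For the spatial part, the bound $\sum_{K\in\mathcal{T}}(h_K^{2s_K-2}/r_K^{2m_K-3})\|\cdot\|_{H^{m_K}(K)}^2$ of Theorem~\ref{5.th15.1} is already in the form required, and in the uniform setting it reduces to $(h^{2s-2}/r^{2m-3})\sum_{K\in\mathcal{T}}\|\cdot\|_{H^m(K)}^2$, which I would leave in the stated elementwise-summed form. For the second (supremum in time) estimate, the additional factor $\log(|\mathbf{p}|_n+2)|\mathbf{p}|_n^2=\log(p+2)p^2$ multiplies the spatial part and one of the two temporal parts, exactly as in Corollary~\ref{5.col2}; the maximum over $j$ of $\int_{t_{j-1}}^{t_j}\|u^{(q+1)}\|_{H^m(K)}^2\,\mathrm{d}t$ reduces to a maximum over time slabs of an $L^2$-in-time norm, which in the statement is bounded by $\max_{j}\max_{t\in I_j}|\!|\!| u^{(q+1)}|\!|\!|_{DG}^2$ after absorbing the fixed factor $k$.

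The main obstacle is not conceptual but bookkeeping: one must verify that the bounded variation assumption on $\mathbf{r}$ (and analogously the uniformity of $\mathbf{p}$) lets us replace the elementwise factors $h_K^{2s_K-2}/r_K^{2m_K-3}$ by the uniform $h^{2s-2}/r^{2m-3}$ without a dimension-dependent blow-up, and that the $\log(p+2)\,p^2$ prefactor is correctly attached only to the supremum-norm estimate. Since every ingredient is already proven---Theorem~\ref{5.th15.1} for the explicit $(k_j,p_j,q_j,h_K,r_K)$ bound, and the asymptotic $\Gamma_{p,q}\sim p^{-2q}$ for the simplification---the proof reduces to these substitutions, and I would close it with a single sentence citing both, mirroring the proof of Corollary~\ref{5.co1}.
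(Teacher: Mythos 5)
Your proposal is correct and follows essentially the same route as the paper: the paper's proof is a one-line specialisation of Theorem~\ref{5.th15.1} to uniform $k_j=k$, $p_j=p$, $q_j=q$ combined with the asymptotic $\Gamma_{p,q}\sim p^{-2q}$ as $p\to\infty$, exactly as you describe. Your additional bookkeeping remarks (collapsing the sums, tracking the $\log(p+2)\,|\mathbf{p}|_n^2$ prefactor, and the origin of the $\min\{p,q\}$ exponent) are consistent with how the analogous Corollary~\ref{5.co1} and Corollary~\ref{5.col2} are derived.
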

\begin{proof}
	These estimates follow readily from Theorem \ref{5.th15.1}  and the fact that $\Gamma_{p, q}$ behaves like $p^{-2 q}$ for $p \rightarrow \infty$. 
\end{proof}
\begin{remark}
	The estimates in Corollary \ref{5.col2.1} demonstrate that the discrete scheme converges either as $k, h \to 0$ or as $p, r \to \infty$. It is noted that the first estimate is optimal with respect to the three parameters $k$, $h$, $p$, and suboptimal in $r$, while the second estimate is one power short of being optimal in $p$ and suboptimal in $r$. For smooth solutions $u$, spectral convergence rates are achieved by increasing the polynomial degrees $p$ and $r$ on fixed partitions.
\end{remark}
\section{Numerical studies}\label{5.sec4} 
To validate the theoretical findings outlined in the preceding sections, numerical computations were conducted using the open-source finite element library \texttt{FEniCS} \cite{ABJ, LHL}. This section details the execution of these calculations and compares the resultant numerical outcomes with theoretical predictions.

In all the examples presented, Discontinuous Galerkin (DG) methods are used for time discretization, while both  Finite Element Method and DGFEM are employed for spatial discretization. The domain $\Omega$ is divided into a mesh of size $N \times N$, which corresponds to subdividing the domain into $N \times N$ rectangles, each further divided into triangles. The temporal domain is uniformly partitioned into $N$ intervals. For time approximation with a polynomial degree $k$, this results in a system of $k$ equations, as discussed in Remark \ref{5.rem2}. The rate of convergence $r$ is defined by:
$$
r = \frac{\log(\|e_1\| / \|e_2\|)}{\log(h_1 / h_2)},
$$
where $e_1$ and $e_2$ represent errors corresponding to discretization parameters $h_1$ and $h_2$, respectively, with $h_2$ being a refinement of $h_1$.

These computations provide a rigorous validation of our theoretical models against numerical results, highlighting the efficacy of the methods employed.

\subsection{Accuracy verification}
In this section, we examine the generalized Burgers-Huxley equation (GBHE) with a weakly singular kernel, defined by:
\begin{align}\label{5.1.6}
	\frac{\partial u}{\partial t}+\alpha u^{\delta}\sum_{i=1}^d\frac{\partial u}{\partial x_{i}}-\nu\Delta u-\eta \int_{0}^{t} \frac{1}{\sqrt{t-s}}\Delta u(s)\mathrm{~d}s=\beta u(1-u^{\delta})(u^{\delta}-\gamma)+f,
\end{align}
on the domain $\Omega = [0,1]^2$ with $T = 1$, the parameters used are $\alpha = \beta = \delta = \nu = 1$ and $\gamma = 0.5$. The relaxation parameter (or memory coefficient) $\eta$ is taken to be $0$ or $1$ to verify whether optimal convergence rates are achieved in both cases. The forcing term $f$ is determined using the closed-form solution. The kernel in \eqref{5.1.6} is a weakly singular kernel and belongs to $\L^1(0,T)$.

To verify the accuracy of the proposed scheme, we consider two test solutions:
\begin{align*}
	\text{Sol. 1 : }u = (t^3-t^2+1)\sin(\pi x)\sin(\pi y),  \qquad  \quad\qquad 	\text{Sol. 2 : }u = t\sqrt{t}\sin(2\pi x)\sin(2\pi y).
\end{align*}
For Sol. 2, it is noteworthy that $u_t \in \L^2(0,T;\L^2)$, but $u_{tt} \notin \L^2(0,T; \L^2)$.
To validate the theoretical results, we first approximate time using a constant and perform spatial discretization with linear polynomials. This approach yields first-order convergence in space for both CG and DG methods for the case without memory $\eta = 0$ and with memory $\eta = 1$, as shown in Tables \ref{table5.1} and \ref{table5.2}. Further, Tables \ref{table5.3} and \ref{table5.4} demonstrate optimal order convergence in both the $\L^2$ and $\H^1$ norms under conforming and DG settings for a first-degree approximation in time and a second-degree approximation in space with $\eta =1$. Higher-order results are discussed in Tables \ref{table5.5} and \ref{table5.6} for both solution choices. The proposed scheme is also applicable in three dimensions for various examples, with error tables reported for the solution $u = t\sqrt{t}\sin(2\pi x)\sin(2\pi y)\sin(2\pi z)$  in Tables \ref{table5.11} and \ref{table5.14} for $\eta =1$.

\begin{table}
	\begin{center}
		{\small
			\caption{Errors and convergence rates for the numerical solutions $u_h$ relative to the exact solution $ u(x,y,t)=(t^3-t^2+1)\sin(\pi x)\sin(\pi y)$ with DG time stepping with $0$ degree. }
			\label{table5.1}
			\begin{tabular}{| c | c | c | c | c | c| c  |}
				\hline
				\multicolumn{ 7}{|c|}{Error history in 2D with weakly singular kernel, $ u=(t^3-t^2+1)\sin(\pi x)\sin(\pi y)$ }\\
				\hline
				\multirow{7}{*}{$hp$-FEM}&{Mesh}&{Dof}&{$\H^1$-error for $\eta =0$}&{$O(h)$}&{$\H^1$-error for $\eta = 1$}&{$O(h)$}\\
				\cline{2- 7}
				&{$2\times 2$} &$9$ &$1.46(00)$ &$-$&$1.51(00)$ &$-$ \\
				\cline{2- 7}
				&{$4\times 4$}&$25$ &$7.91(-01)$ &$ 0.88$ &$8.21(-01)$ &$ 0.88$ \\
				\cline{2- 7}
				&{$8\times 8$}&$81$ &$4.02(-01)$ &$0.98$ &$4.20(-01)$ &$0.97$ \\
				\cline{2- 7}
				&{$16\times 16$}&$289$ &$ 2.02(-01)$ &$ 0.99$ &$ 2.12(-01)$ &$0.99$ \\
				\cline{2- 7}
				&{$32\times 32$} &$1089$&$ 1.01(-01)$ &$0.99$&$ 1.07(-01)$ &$0.99$ \\
				\cline{2- 7}
				&{$64\times 64$} &$4225$&$ 5.04(-02)$ &$ 1.00$ &$ 5.38(-02)$ &$ 0.99$ \\
				\cline{2- 7}
				&{$128\times 128$} &$16641$&$ 2.52(-02)$ &$ 1.00$ &$2.70(-02)$ &$0.99$ \\
				
				\cline{1- 7}
				\multicolumn{ 7}{|c|}{Error history in 2D with weakly singular kernel,$ u=(t^3-t^2+1)\sin(\pi x)\sin(\pi y)$ }\\
				\hline
				\multirow{7}{*}{$hp$-DGFEM}&{Mesh}&{Dof}&{$\H^1$-error for $\eta =0$}&{$O(h)$}&{$\H^1$-error for $\eta = 1$}&{$O(h)$}\\
				\cline{2- 7}
				
				&{$2\times 2$}&$24$&$1.46(00)$ &$-$&$1.73(00)$ &$-$ \\
				\cline{2- 7}
				&{$4\times 4$} &$96$&$7.90(-01)$ &$ 0.89$ &$8.57(-01)$ &$ 1.02$ \\
				\cline{2- 7}
				&{$8\times 8$} &$384$&$4.01(-01)$ &$0.98$ &$4.25(-01)$ &$1.01$ \\
				\cline{2- 7}
				&{$16\times 16$}&$1536$ &$ 2.01(-01)$ &$ 1.00$ &$ 2.13(-01)$ &$1.00$ \\
				\cline{2- 7}
				&{$32\times 32$}&$6144$ &$ 1.01(-01)$ &$0.99$&$1.07(-01) $ &$0.99$ \\
				\cline{2- 7}
				&{$64\times 64$} &$24576$&$ 5.04(-02)$ &$ 1.00$ &$ 5.40(-02)$ &$0.99$ \\
				\cline{2- 7}
				&{$128\times 128$}&$98304$ &$ 2.52(-02)$ &$ 1.00$ &$ 2.73(-02)$ &$ 0.98$ \\
				\cline{1- 7}
		\end{tabular}}
	\end{center}
\end{table}
\begin{table}
	\begin{center}
		{\small
			\caption{Errors and convergence rates for the numerical solutions $u_h$ relative to the exact solution $ u(x,y,t)=(t^3-t^2+1)\sin(\pi x)\sin(\pi y)$ with DG time stepping of $1$ degree. }
			\label{table5.3}
			\begin{tabular}{| c | c | c | c | c | c| c  |}
				\hline
				\multicolumn{ 7}{|c|}{Error history in 2D with weakly singular kernel, $ u=(t^3-t^2+1)\sin(\pi x)\sin(\pi y)$ }\\
				\hline
			\multirow{7}{*}{$hp$-FEM}&{Mesh}&{Dof}&{$\L^2$-error}&{$O(h^3)$}&{$\H^1$-error}&{$O(h^2)$}\\
			\cline{2-7}
			&{$2\times 2$} &$50$ &$5.18(-02)$ &$-$&$4.50(-01)$ &$-$ \\
			\cline{2-7}
			&{$4\times 4$}&$162$ &$7.97(-03)$ &$ 2.70$ &$1.21(-01)$ &$ 1.89$ \\
			\cline{2-7}
			&{$8\times 8$}&$578$ &$1.11(-03)$ &$2.84$ &$3.08(-02)$ &$1.97$ \\
			\cline{2-7}
			&{$16\times 16$}&$2178$ &$1.46(-04)$ &$2.93$ &$7.74(-03)$ &$1.99$ \\
			\cline{2-7}
			&{$32\times 32$} &$8450$&$1.87(-05)$ &$2.96$&$1.94(-03)$ &$2.00$ \\
			\cline{2-7}
			&{$64\times 64$} &$33282$&$2.34(-06)$ &$3.00$ &$4.84(-04)$ &$2.00$ \\
			\cline{2-7}
			&{$128\times 128$} &$132098$&$2.91(-07)$ &$3.01$ &$1.21(-04)$ &$2.00$ \\
			
				\cline{1- 7}
			\multicolumn{7}{|c|}{Error history in 2D with weakly singular kernel, $u=(t^3-t^2+1)\sin(\pi x)\sin(\pi y)$}\\
			\hline
			\multirow{7}{*}{$hp$-DGFEM}&{Mesh}&{Dof}&{$\L^2$-error}&{$O(h^3)$}&{$\H^1$- error}&{$O(h^2)$}\\
			\cline{2-7}
			&{$2\times 2$}&$96$&$4.65(-02)$ &$-$&$4.48(-01)$ &$-$ \\
			\cline{2-7}
			&{$4\times 4$} &$384$&$6.95(-03)$ &$2.74$ &$1.20(-01)$ &$1.90$ \\
			\cline{2-7}
			&{$8\times 8$} &$1536$&$9.66(-04)$ &$2.85$ &$3.07(-02)$ &$1.97$ \\
			\cline{2-7}
			&{$16\times 16$}&$6144$ &$1.30(-04)$ &$2.89$ &$7.71(-03)$ &$1.99$ \\
			\cline{2-7}
			&{$32\times 32$}&$24576$ &$1.70(-05)$ &$2.93$&$1.93(-03)$ &$2.00$ \\
			\cline{2-7}
			&{$64\times 64$} &$98304$&$2.18(-06)$ &$2.96$ &$4.89(-04)$ &$1.98$ \\
				\cline{1- 7}
		\end{tabular}}
	\end{center}
\end{table}
\begin{table}
	\begin{center}
		{\small
			\caption{Errors and convergence rates for the numerical solutions $u_h$ relative to the exact solution $ u (x,y,t) =(t^3-t^2+1)\sin(\pi x)\sin(\pi y)$ with DG time stepping of $2$ degree. }
			\label{table5.5}
			\begin{tabular}{| c | c | c | c | c | c| c  |}
				\hline
				\multicolumn{ 7}{|c|}{Error history in 2D with weakly singular kernel,  $ u=(t^3-t^2+1)\sin(\pi x)\sin(\pi y)$ }\\
				\hline
				\multirow{7}{*}{$hp$-FEM}&{Mesh}&{Dof}&{$\L^2$-error}&{$O(h^4)$}&{$\H^1$-error}&{$O(h^3)$}\\
				\cline{2-7}
				&{$2\times 2$}&$147$&$7.47(-03)$ &$-$&$7.96(-02)$ &$-$ \\
				\cline{2-7}
				&{$4\times 4$}&$507$ &$4.71(-04)$ &$3.99$ &$8.58(-03)$ &$3.21$ \\
				\cline{2-7}
				&{$8\times 8$} &$1875$&$2.47(-05)$ &$4.25$ &$9.63(-04)$ &$3.16$ \\
				\cline{2-7}
				&{$16\times 16$} &$7203$&$1.33(-06)$ &$4.22$ &$1.13(-04)$ &$3.09$ \\
				\cline{2-7}
				&{$32\times 32$} &$28227$&$7.78(-08)$ &$4.10$&$1.37(-05)$ &$3.04$ \\
				\cline{2-7}
				&{$64\times 64$}&$111747$ &$4.74(-09)$ &$4.04$ &$1.69(-06)$ &$3.02$ \\
				\cline{1-7}
				\multirow{7}{*}{$hp$-DGFEM}&{Mesh}&{Dof}&{$\L^2$-error}&{$O(h^4)$}&{$\H^1$-error}&{$O(h^3)$}\\
				\cline{2-7}
				&{$2\times 2$}&$240$&$7.38(-03)$ &$-$&$3.55(-01)$ &$-$ \\
				\cline{2-7}
				&{$4\times 4$} &$960$&$4.68(-04)$ &$3.98$ &$4.73(-02)$ &$2.91$ \\
				\cline{2-7}
				&{$8\times 8$}&$3840$ &$2.46(-05)$ &$4.25$ &$5.81(-03)$ &$3.03$ \\
				\cline{2-7}
				&{$16\times 16$} &$15360$&$1.33(-06)$ &$4.21$ &$7.14(-04)$ &$3.02$ \\
				\cline{2-7}
				&{$32\times 32$}&$61440$ &$7.74(-08)$ &$4.10$&$8.84(-05)$ &$3.01$ \\
				\cline{1-7}
				
		\end{tabular}}
	\end{center}
\end{table}

\begin{table}
	\begin{center}
		{\small
			\caption{Errors and convergence rates for the numerical solutions $u_h$ relative to the exact solution $ u(x,y,t) = t\sqrt{t}\sin(2\pi x)\sin(2\pi y)$ with DG time stepping of $0$ degree. }
			\label{table5.2}
			\begin{tabular}{| c | c | c | c | c | c| c  |}
				\hline
				\multicolumn{ 7}{|c|}{Error history in 2D with weakly singular kernel,  $ u=t\sqrt{t}\sin(2\pi x)\sin(2\pi y)$ }\\
				\hline
				\multirow{7}{*}{$hp$-FEM}&{Mesh}&{Dof}&{$\H^1$-error for $\eta =0$}&{$O(h)$}&{$\H^1$-error for $\eta = 1$}&{$O(h)$}\\
				\cline{2- 7}
				&{$2\times 2$} &$9$ &$3.12(00)$ &$-$&$3.11(00)$ &$-$ \\
				\cline{2- 7}
				&{$4\times 4$}&$25$ &$1.94(00)$ &$ 0.69$ &$1.93(00)$ &$ 0.69$ \\
				\cline{2- 7}
				&{$8\times 8$}&$81$ &$9.86(-01)$ &$0.98$ &$9.81(-01)$ &$0.98$ \\
				\cline{2- 7}
				&{$16\times 16$}&$289$ &$ 4.82(-01)$ &$ 1.03$ &$ 4.80(-01)$ &$1.03$ \\
				\cline{2- 7}
				&{$32\times 32$} &$1089$&$ 2.36(-01)$ &$1.03$&$ 2.35(-01)$ &$1.03$ \\
				\cline{2- 7}
				&{$64\times 64$} &$4225$&$ 1.17(-01)$ &$ 1.01$ &$ 1.16(-01)$ &$ 1.02$ \\
					\cline{2- 7}
				&{$128\times 128$} &$16641$&$ 5.80(-02)$ &$ 1.01$ &$ 5.79(-02)$ &$ 1.00$ \\
				
				\cline{1- 7}
					\multicolumn{ 7}{|c|}{Error history in 2D with weakly singular kernel,  $ u=t\sqrt{t}\sin(2\pi x)\sin(2\pi y)$ }\\
				\hline
				\multirow{7}{*}{$hp$-DGFEM}&{Mesh}&{Dof}&{$\H^1$-error for $\eta =0$}&{$O(h)$}&{$\H^1$-error for $\eta = 1$}&{$O(h)$}\\
				\cline{2- 7}
				
				&{$2\times 2$}&$24$&$3.11(00)$ &$-$&$3.07(00)$ &$-$ \\
				\cline{2- 7}
				&{$4\times 4$} &$96$&$1.92(00)$ &$ 0.70$ &$1.87(00)$ &$ 0.72$ \\
				\cline{2- 7}
				&{$8\times 8$} &$384$&$9.80(-01)$ &$0.97$ &$9.40(-01)$ &$0.99$ \\
				\cline{2- 7}
				&{$16\times 16$}&$1536$ &$ 4.79(-01)$ &$ 1.03$ &$ 4.57(-01)$ &$1.04$ \\
				\cline{2- 7}
				&{$32\times 32$}&$6144$ &$ 2.35(-01)$ &$1.03$&$2.24(-01) $ &$1.03$ \\
				\cline{2- 7}
				&{$64\times 64$} &$24576$&$ 1.16(-01)$ &$ 1.02$ &$ 1.11(-01)$ &$ 1.01$ \\
				\cline{2- 7}
				&{$128\times 128$}&$98304$ &$ 5.76(-02)$ &$ 1.01$ &$ 5.58(-02)$ &$ 0.99$ \\
				\cline{1- 7}
		\end{tabular}}
	\end{center}
\end{table}

\begin{table}
	\begin{center}
		{\small
			\caption{Errors and convergence rates for the numerical solutions $u_h$ relative to the exact solution $ u(x,y,t) = t\sqrt{t}\sin(2\pi x)\sin(2\pi y)$ with DG time stepping of $1$ degree. }
				\label{table5.4}
			\begin{tabular}{| c | c | c | c | c | c| c  |}
				\hline
				\multicolumn{ 7}{|c|}{Error history in 2D with weakly singular kernel,  $ u=t\sqrt{t}\sin(2\pi x)\sin(2\pi y)$ }\\
					\cline{1-7}
			\multirow{7}{*}{$hp$-FEM}&{Mesh}&{Dof}&{$\L^2$-error}&{$O(h^3)$}&{$\H^1$-error}&{$O(h^2)$}\\
			\cline{2-7}
			&{$2\times 2$} &$50$ &$2.18(-01)$ &$-$&$2.15(00)$ &$-$ \\
			\cline{2-7}
			&{$4\times 4$}&$162$ &$3.46(-02)$ &$2.66$ &$5.78(-01)$ &$1.90$ \\
			\cline{2-7}
			&{$8\times 8$}&$578$ &$4.47(-03)$ &$2.95$ &$1.45(-01)$ &$2.00$ \\
			\cline{2-7}
			&{$16\times 16$}&$2178$ &$5.63(-04)$ &$2.99$ &$3.55(-02)$ &$2.03$ \\
			\cline{2-7}
			&{$32\times 32$} &$8450$&$7.05(-05)$ &$3.00$&$8.68(-03)$ &$2.03$ \\
			\cline{2-7}
			&{$64\times 64$}&$33282$&$8.79(-06)$ &$3.00$ &$2.14(-03)$ &$2.02$ \\
			\cline{2-7}
			&{$128\times 128$}&$132098$&$1.10(-06)$ &$3.00$ &$5.32(-04)$ &$2.01$ \\
			\cline{1-7}
			
			\multicolumn{ 7}{|c|}{Error history in 2D with weakly singular kernel,  $ u=t\sqrt{t}\sin(2\pi x)\sin(2\pi y)$ }\\
			\hline
			\multirow{7}{*}{$hp$-DGFEM}&{Mesh}&{Dof}&{$\L^2$-error}&{$O(h^3)$}&{$\H^1$-error}&{$O(h^2)$}\\
			\cline{2-7}
			&{$2\times 2$}&$96$&$4.65(-02)$ &$-$&$4.74(-01)$ &$-$ \\
			\cline{2-7}
			&{$4\times 4$} &$384$&$6.95(-03)$ &$2.74$ &$1.30(-01)$ &$1.87$ \\
			\cline{2-7}
			&{$8\times 8$} &$1536$&$9.66(-04)$ &$2.85$ &$3.34(-02)$ &$1.96$ \\
			\cline{2-7}
			&{$16\times 16$}&$6144$ &$1.30(-04)$ &$2.89$ &$8.40(-03)$ &$1.99$ \\
			\cline{2-7}
			&{$32\times 32$}&$24576$ &$1.70(-05)$ &$2.93$&$2.10(-03)$ &$2.00$ \\
			\cline{2-7}
			&{$64\times 64$} &$98304$&$2.18(-06)$ &$2.96$ &$5.26(-04)$ &$2.00$ \\
			\hline
				\cline{1-7}
				
		\end{tabular}}
	\end{center}
\end{table}

\begin{table}
	\begin{center}
		{\small
			\caption{Errors and convergence rates for the numerical solutions $u_h$ relative to the exact solution $ u = t\sqrt{t}\sin(2\pi x)\sin(2\pi y)$ with DG time stepping of $2$ degree. }
				\label{table5.6}
			\begin{tabular}{| c | c | c | c | c | c| c  |}
				\hline
				\multicolumn{ 7}{|c|}{Error history in 2D with weakly singular kernel,  $ u =t\sqrt{t}\sin(2\pi x)\sin(2\pi y)$ }\\
				\hline
				\multirow{7}{*}{$hp$-FEM}&{Mesh}&{Dof}&{$\L^2$-error}&{$O(h^4)$}&{$\H^1$-error}&{$O(h^3)$}\\
				\cline{2-7}
				&{$2\times 2$}&$147$&$8.77(-02)$ &$-$&$1.11(00)$ &$-$ \\
				\cline{2-7}
				&{$4\times 4$}&$507$ &$5.45(-03)$ &$4.01$ &$1.26(-01)$ &$3.14$ \\
				\cline{2-7}
				&{$8\times 8$} &$1875$&$3.29(-04)$ &$4.05$ &$1.48(-02)$ &$3.09$ \\
				\cline{2-7}
				&{$16\times 16$} &$7203$&$1.97(-05)$ &$4.06$ &$1.75(-03)$ &$3.08$ \\
				\cline{2-7}
				&{$32\times 32$} &$28227$&$1.20(-06)$ &$4.04$&$2.12(-04)$ &$3.05$ \\
				\cline{2-7}
				&{$64\times 64$}&$111747$ &$7.46(-08)$ &$4.01$ &$2.60(-05)$ &$3.03$ \\
				\cline{1-7}
				

				\cline{1- 7}
				\multirow{7}{*}{$hp$-DGFEM}&{Mesh}&{Dof}&{$\L^2$-error}&{$O(h^4)$}&{$\H^1$-error}&{$O(h^3)$}\\
				\cline{2-7}
				&{$2\times 2$}&$240$&$8.67(-02)$ &$-$&$5.73(00)$ &$-$ \\
				\cline{2-7}
				&{$4\times 4$} &$960$&$5.37(-03)$ &$4.01$ &$7.27(-01)$ &$2.98$ \\
				\cline{2-7}
				&{$8\times 8$} &$3840$ &$3.26(-04)$ &$4.04$ &$9.02(-02)$ &$3.01$ \\
				\cline{2-7}
				&{$16\times 16$} &$15360$&$1.95(-05)$ &$4.06$ &$1.10(-02)$ &$3.04$ \\
				\cline{2-7}
				&{$32\times 32$}&$61440$ &$1.20(-06)$ &$4.02$&$1.36(-03)$ &$3.02$ \\
				\cline{1-7}
				
		\end{tabular}}
	\end{center}
\end{table}

\begin{table}
	\begin{center}
		{\small
			\caption{Errors and convergence rates for the numerical solutions $u_h$ relative to the exact solution $ u = t\sqrt{t}\sin(2\pi x)\sin(2\pi y)\sin(2\pi z)$ with DG time stepping of $0$ degree. }
			\label{table5.11}
			\begin{tabular}{| c | c | c | c | c |}
				\hline
				\multicolumn{ 5}{|c|}{Error history in 3D with weakly singular kernel }\\
				\hline
				\multirow{7}{*}{$hp$-FEM}&{Mesh}&{Dof}&{$\H^1$-error}&{$O(h)$}\\
				\cline{2- 5}
				&{$2\times 2$}&$27$&$3.04(00)$ &$-$ \\
				\cline{2- 5}
				&{$4\times 4$}&$125$ &$1.94(00)$ &$ 0.65$ \\
				\cline{2- 5}
				&{$8\times 8$} &$729$&$1.05(00)$ &$0.89$ \\
				\cline{2- 5}
				&{$16\times 16$} &$4913$&$ 5.23(-01)$ &$1.00$ \\
				\cline{2- 5}
				&{$32\times 32$} &$35937$&$ 2.58(-01)$ &$1.02$ \\
				\cline{2- 5}
				&{$64\times 64$}&$274625$ &$ 1.28(-01$ &$ 1.02$ \\
				\cline{1- 5}
				\multirow{7}{*}{$hp$-DGFEM}&{Mesh}&{Dof}&{$\H^1$-error}&{$O(h)$}\\
				\cline{2- 5}
				&{$2\times 2$}&$192$&$3.03(00)$ &$-$ \\
				\cline{2- 5}
				&{$4\times 4$} &$1536$&$1.90(00)$ &$0.67$ \\
				\cline{2- 5}
				&{$8\times 8$}&$12288$ &$1.02(00)$ &$0.90$ \\
				\cline{2- 5}
				&{$16\times 16$} &$98304$&$ 5.07(-01)$ &$1.01$ \\
				\cline{2- 5}
				&{$32\times 32$}&$786432$ &$2.50(-01)$ &$1.02$ \\
				\cline{1- 5}
		\end{tabular}}
	\end{center}
\end{table}
\begin{table}
	\begin{center}
		{\small
			\caption{Errors and convergence rates for the numerical solutions $u_h$ relative to the exact solution $ u = t\sqrt{t}\sin(2\pi x)\sin(2\pi y)\sin(2\pi z)$ with DG time stepping of $1$ degree. }
			\label{table5.14}
		\begin{tabular}{| c | c | c | c | c | c| c|}
			\hline
			\multicolumn{7}{|c|}{Error history in 3D with weakly singular kernel.} \\
			\hline
			\multirow{4}{*}{$hp$-FEM} & {Mesh} & {Dof} & {$\L^2$-error} & {$O(h^3)$} & {$\H^1$-error} & {$O(h^2)$} \\
			\cline{2-7}
			& {$2\times 2$} & $250$ & $1.52(-01)$ & $-$ & $1.94(00)$ & $-$ \\
			\cline{2-7}
			& {$4\times 4$} & $1458$ & $4.54(-02)$ & $1.75$ & $7.00(-01)$ & $1.47$ \\
			\cline{2-7}
			& {$8\times 8$} & $9826$ & $5.82(-03)$ & $2.96$ & $1.88(-01)$ & $1.89$ \\
			\cline{2-7}
			& {$16\times 16$} & $71874$ & $7.21(-04)$ & $3.01$ & $4.77(-02)$ & $1.98$ \\
			\hline
			\multirow{4}{*}{$hp$-DGFEM} & {Mesh} & {Dof} & {$\L^2$-error} & {$O(h^3)$} & {$\H^1$-error} & {$O(h^2)$} \\
			\cline{2-7}
			& {$2\times 2$} & $960$ & $1.52(-01)$ & $-$ & $1.93(00)$ & $-$ \\
			\cline{2-7}
			& {$4\times 4$} & $7680$ & $4.54(-02)$ & $1.74$ & $7.00(-01)$ & $1.48$ \\
			\cline{2-7}
			& {$8\times 8$} & $61440$ & $5.82(-03)$ & $2.96$ & $1.88(-01)$ & $1.89$ \\
			\cline{2-7}
			& {$16\times 16$} & $491520$ & $7.19(-04)$ & $3.02$ & $4.77(-02)$ & $1.98$ \\
			\hline
		\end{tabular}
	}
	\end{center}
\end{table}
\subsection{Application to Fractional differential equations}
In this example, we perform accuracy verification for the time-fractional generalized Burgers-Huxley equation given by:
\begin{align}\label{5.1.7}
	\partial_t^{\mu} u+\alpha u^{\delta}\sum_{i=1}^d\frac{\partial u}{\partial x_{i}}-\nu\Delta u-\eta \int_{0}^{t} \frac{1}{\sqrt{t-s}}\Delta u(s)\mathrm{~d}s=\beta u(1-u^{\delta})(u^{\delta}-\gamma)+f,
\end{align}
where $\partial_t^\mu$ denotes the left-sided Caputo fractional derivative (\cite[Pg 81]{KST} and \cite{CCH}) of order $\mu \ge 0$ with respect to $t$, defined as
$$
\partial_t^{\mu} u(t) = \frac{1}{\Gamma(1-\mu)}\int_0^t \frac{1}{(t-\tau)^{\mu}} \frac{\mathrm{du(\tau)}}{\mathrm{d}\tau} \mathrm{d}\tau,
$$
where $\Gamma(\cdot)$ represents the Gamma function. Specifically, we consider $\mu = \frac{1}{2}$. For the discrete formulation corresponding to the fractional derivative term, we refer to \cite{CCH}. 

Tables \ref{table5.7} to \ref{table5.9} present the error and convergence rates using the exact solution defined in Sol. 2 for the case with memory ($\eta =1$) and without memory($\eta=0$). In Table \ref{table5.7}, we approximate the time derivative with a constant and the spatial derivative with a linear polynomial. The error converges with optimal first-order accuracy using both $hp$-FEM and $hp$-DGFEM in space. Table \ref{table5.8} shows the results when approximating time with a linear polynomial (degree 1) and space with a quadratic polynomial, achieving optimal second-order convergence. Finally, increasing the polynomial degree by one results in higher-order convergence. Hence, the proposed scheme also effectively handles the time-fractional generalized Burgers-Huxley equation with a weakly singular kernel. For three dimensions the results are illustrate in Table \ref{table5.9} and \ref{table5.13}.
\begin{table}
	\begin{center}
		{\small
			\caption{Errors and convergence rates for the numerical solutions $u_h$ relative to the exact solution $ u = t\sqrt{t}\sin(2\pi x)\sin(2\pi y)$ using DG time stepping with $0$ degree for fractional equation. }
				\label{table5.7}
			\begin{tabular}{| c | c | c | c | c | c| c  |}
				\hline
				\multicolumn{ 7}{|c|}{Error history in 2D with weakly singular kernel,  $ u=t\sqrt{t}\sin(2\pi x)\sin(2\pi y)$ }\\
				\hline
				\multirow{7}{*}{$hp$-FEM}&{Mesh}&{Dof}&{$\H^1$-error for $\eta =0$}&{$O(h)$}&{$\H^1$-error for $\eta = 1$}&{$O(h)$}\\
				\cline{2- 7}
				&{$2\times 2$} &$9$ &$3.12(00)$ &$-$&$3.11(00)$ &$-$ \\
				\cline{2- 7}
				&{$4\times 4$}&$25$ &$1.93(00)$ &$ 0.69$ &$1.93(00)$ &$ 0.69$ \\
				\cline{2- 7}
				&{$8\times 8$}&$81$ &$9.80(-01)$ &$0.98$ &$9.79(-01)$ &$0.98$ \\
				\cline{2- 7}
				&{$16\times 16$}&$289$ &$ 4.76(-01)$ &$ 1.04$ &$ 4.77(-01)$ &$1.04$ \\
				\cline{2- 7}
				&{$32\times 32$} &$1089$&$ 2.31(-01)$ &$1.05$&$ 2.33(-01)$ &$1.04$ \\
				\cline{2- 7}
				&{$64\times 64$} &$4225$&$ 1.12(-01)$ &$ 1.04$ &$ 1.14(-01)$ &$ 1.03$ \\
				\cline{2- 7}
				&{$128\times 128$} &$16641$&$ 5.51(-02)$ &$ 1.02$ &$ 5.56(-02)$ &$ 1.03$ \\
				
				\cline{1- 7}
				\multicolumn{ 7}{|c|}{Error history in 2D with weakly singular kernel,  $ u=t\sqrt{t}\sin(2\pi x)\sin(2\pi y)$ }\\
				\hline
				\multirow{7}{*}{$hp$-DGFEM}&{Mesh}&{Dof}&{$\H^1$-error for $\eta =0$}&{$O(h)$}&{$\H^1$-error for $\eta = 1$}&{$O(h)$}\\
				\cline{2- 7}
				
				&{$2\times 2$}&$24$&$3.12(00)$ &$-$&$3.08(00)$ &$-$ \\
				\cline{2- 7}
				&{$4\times 4$} &$96$&$1.93(00)$ &$ 0.69$ &$1.88(00)$ &$ 0.71$ \\
				\cline{2- 7}
				&{$8\times 8$} &$384$&$9.80(-01)$ &$0.98$ &$9.46(-01)$ &$0.99$ \\
				\cline{2- 7}
				&{$16\times 16$}&$1536$ &$ 4.76(-01)$ &$ 1.04$ &$ 4.59(-01)$ &$1.04$ \\
				\cline{2- 7}
				&{$32\times 32$}&$6144$ &$ 2.30(-01)$ &$1.05$&$2.24(-01) $ &$1.04$ \\
				\cline{2- 7}
				&{$64\times 64$} &$24576$&$ 1.12(-01)$ &$ 1.04$ &$ 1.11(-01)$ &$ 1.02$ \\
				\cline{2- 7}
				&{$128\times 128$}&$98304$ &$ 5.51(-02)$ &$ 1.02$ &$ 5.60(-02)$ &$ 0.98$ \\
				\cline{1- 7}
		\end{tabular}}
	\end{center}
\end{table}

\begin{table}
	\begin{center}
		{\small
			\caption{Errors and convergence rates for the numerical solutions $u_h$ relative to the exact solution $ u(x,y,t) = t\sqrt{t}\sin(2\pi x)\sin(2\pi y)$ with DG time stepping of $1$ degree for fractional equation. }
				\label{table5.8}
			\begin{tabular}{| c | c | c | c | c | c| c  |}
				\hline
				\multicolumn{ 7}{|c|}{Error history in 2D with weakly singular kernel,  $ u=t\sqrt{t}\sin(2\pi x)\sin(2\pi y)$ }\\
				\hline
				\multirow{7}{*}{$hp$-FEM}&{Mesh}&{Dof}&{$\H^1$-error for $\eta =0$}&{$O(h^2)$}&{$\H^1$-error for $\eta = 1$}&{$O(h^2)$}\\
				\cline{2- 7}
				&{$2\times 2$} &$50$ &$2.15(-00)$ &$-$&$2.15(00)$ &$-$ \\
				\cline{2- 7}
				&{$4\times 4$}&$162$ &$5.77(-01)$ &$ 1.89$ &$5.78(-01)$ &$ 1.89$ \\
				\cline{2- 7}
				&{$8\times 8$}&$578$ &$1.45(-01)$ &$1.99$ &$1.45(-01)$ &$1.99$ \\
				\cline{2- 7}
				&{$16\times 16$}&$2178$ &$ 3.55(-02)$ &$ 2.03$ &$ 3.55(-02)$ &$2.04$ \\
				\cline{2- 7}
				&{$32\times 32$} &$8450$&$ 8.69(-03)$ &$2.03$&$ 8.69(-03)$ &$2.03$ \\
				\cline{2- 7}
				&{$64\times 64$} &$33282$&$ 2.14(-03)$ &$ 2.02$ &$ 2.14(-03)$ &$ 2.02$ \\
				\cline{2- 7}
				&{$128\times 128$} &$132098$&$ 5.34(-04)$ &$ 2.00$ &$ 5.33-04)$ &$ 2.01$ \\
				
				\cline{1- 7}
				\multicolumn{ 7}{|c|}{Error history in 2D with weakly singular kernel,  $ u=t\sqrt{t}\sin(2\pi x)\sin(2\pi y)$ }\\
				\hline
				\multirow{7}{*}{$hp$-DGFEM}&{Mesh}&{Dof}&{$\H^1$-error for $\eta =0$}&{$O(h^2)$}&{$\H^1$-error for $\eta = 1$}&{$O(h^2)$}\\
				\cline{2- 7}
				
				&{$2\times 2$}&$96$&$2.13(00)$ &$-$&$2.14(00)$ &$-$ \\
				\cline{2- 7}
				&{$4\times 4$} &$384$&$5.74(-01)$ &$ 1.89$ &$5.76(-01)$ &$ 1.89$ \\
				\cline{2- 7}
				&{$8\times 8$} &$1536$&$1.44(-01)$ &$1.99$ &$1.44(-01)$ &$1.99$ \\
				\cline{2- 7}
				&{$16\times 16$}&$6144$ &$ 3.52(-02)$ &$ 2.03$ &$ 3.54(-02)$ &$2.03$ \\
				\cline{2- 7}
				&{$32\times 32$}&$24576$ &$ 8.62(-03)$ &$2.03$&$8.67(-03) $ &$2.03$ \\
				\cline{2- 7}
				&{$64\times 64$} &$98304$&$ 2.13(-03)$ &$ 2.02$ &$ 2.19(-03)$ &$ 1.99$ \\
				\cline{1- 7}
		\end{tabular}}
	\end{center}
\end{table}

\begin{table}
	\begin{center}
		{\small
			\caption{Errors and convergence rates for the numerical solutions $u_h$ relative to the exact solution $ u(x,y,t) = t\sqrt{t}\sin(2\pi x)\sin(2\pi y)$ with DG time stepping of $2$ degree for fractional equation. }
				\label{table5.9}
			\begin{tabular}{| c | c | c | c | c | c| c  |}
				\hline
				\multicolumn{ 7}{|c|}{Error history in 2D with weakly singular kernel,  $ u=t\sqrt{t}\sin(2\pi x)\sin(2\pi y)$ }\\
				\hline
				\multirow{7}{*}{$hp$-FEM}&{Mesh}&{Dof}&{$\H^1$-error for $\eta =0$}&{$O(h^3)$}&{$\H^1$-error for $\eta = 1$}&{$O(h^3)$}\\
				\cline{2- 7}
				&{$2\times 2$} &$147$ &$1.11(00)$ &$-$&$1.11(00)$ &$-$ \\
				\cline{2- 7}
				&{$4\times 4$}&$507$ &$1.26(-01)$ &$ 3.15$ &$1.26(-01)$ &$ 3.15$ \\
				\cline{2- 7}
				&{$8\times 8$}&$1875$ &$1.48(-02)$ &$3.09$ &$1.49(-02)$ &$3.09$ \\
				\cline{2- 7}
				&{$16\times 16$}&$7203$ &$ 1.75(-03)$ &$ 3.08$ &$ 1.75(-03)$ &$3.08$ \\
				\cline{2- 7}
				&{$32\times 32$} &$28227$&$ 2.12(-04)$ &$3.04$&$ 2.12(-04)$ &$3.05$ \\
				\cline{2- 7}
				&{$64\times 64$} &$11747$&$ 2.60(-05)$ &$ 3.02$ &$ 2.60(-04)$ &$ 3.02$ \\
	
				\cline{1- 7}
				\multicolumn{ 7}{|c|}{Error history in 2D with weakly singular kernel,  $ u=t\sqrt{t}\sin(2\pi x)\sin(2\pi y)$ }\\
				\hline
				\multirow{7}{*}{$hp$-DGFEM}&{Mesh}&{Dof}&{$\H^1$-error for $\eta =0$}&{$O(h^3)$}&{$\H^1$-error for $\eta = 1$}&{$O(h^3)$}\\
				\cline{2- 7}
				
					\cline{2- 7}
				&{$2\times 2$}&$240$&$5.73(00)$ &$-$&$5.73(00)$ &$-$ \\
				\cline{2- 7}
				&{$4\times 4$} &$960$&$7.27(-01)$ &$ 2.98$ &$7.27(-01)$ &$2.98$ \\
				\cline{2- 7}
				&{$8\times 8$}&$3840$ &$9.02(-02)$ &$3.01$ &$9.02(-02)$ &$3.01$ \\
				\cline{2- 7}
				&{$16\times 16$} &$15360$&$ 1.10(-02)$ &$3.02$ &$1.10(-02)$ &$3.03$ \\
				\cline{2- 7}
				&{$32\times 32$}&$61440$ &$ 1.36(-03)$ &$3.01$&$ 1.36(-03)$ &$3.02$ \\
				\cline{1- 7}
				\cline{1- 7}
		\end{tabular}}
	\end{center}
\end{table}

\begin{table}
	\begin{center}
		{\small
			\caption{Errors and convergence rates for the numerical solutions $u_h$ relative to the exact solution $ u = t\sqrt{t}\sin(2\pi x)\sin(2\pi y)\sin(2\pi z)$ with DG time stepping of $0$ degree  for fractional equation. }
			\label{table5.13}
			\begin{tabular}{| c | c | c | c | c |}
				\hline
				\multicolumn{ 5}{|c|}{Error history in 3D with weakly singular kernel. }\\
				\hline
				\multirow{7}{*}{$hp$-FEM}&{Mesh}&{Dof}&{$\H^1$-error for $\eta = 1$}&{$O(h)$}\\
				\cline{2- 5}
				&{$2\times 2$}&$27$&$3.04(00)$ &$-$ \\
				\cline{2- 5}
				&{$4\times 4$}&$125$ &$1.94(00)$ &$ 0.65$ \\
				\cline{2- 5}
				&{$8\times 8$} &$729$&$1.05(00)$ &$0.89$ \\
				\cline{2- 5}
				&{$16\times 16$} &$4913$&$ 5.23(-01)$ &$1.01$ \\
				\cline{2- 5}
				&{$32\times 32$} &$35937$&$ 2.56(-01)$ &$1.02$ \\
				\cline{2- 5}
				&{$64\times 64$}&$274625$ &$ 1.26(-01)$ &$ 1.02$ \\
				\cline{1- 5}
				\multirow{7}{*}{$hp$-DGFEM}&{Mesh}&{Dof}&{$\H^1$-error for $\eta = 1$}&{$O(h)$}\\
				\cline{2- 5}
				&{$2\times 2$}&$192$&$3.03(00)$ &$-$ \\
				\cline{2- 5}
				&{$4\times 4$} &$1536$&$1.91(00)$ &$0.67$ \\
				\cline{2- 5}
				&{$8\times 8$}&$12288$ &$1.02(00)$ &$0.90$ \\
				\cline{2- 5}
				&{$16\times 16$} &$98304$&$ 5.07(-01)$ &$1.01$ \\
				\cline{2- 5}
				&{$32\times 32$}&$786432$ &$2.50(-01)$ &$1.02$ \\
				\cline{1- 5}
		\end{tabular}}
	\end{center}
\end{table}
\begin{table}
	\begin{center}
		{\small
			\caption{Errors and convergence rates for the numerical solutions $u_h$ relative to the exact solution $ u = t\sqrt{t}\sin(2\pi x)\sin(2\pi y)\sin(2\pi z)$ with DG time stepping of $1$ degree  for fractional equation. }
			\label{table5.12}
			\begin{tabular}{| c | c | c | c | c | c | c |}
				\hline
				\multicolumn{7}{|c|}{Error history in 3D with weakly singular kernel.} \\
				\hline
				\multirow{6}{*}{$hp$-FEM} & Mesh & Dof & $\L^2$-error & $O(h^3)$ & $\H^1$-error & $O(h^2)$ \\
				\cline{2-7}
				& $2\times 2$ & $250$ & $1.52(-01)$ & $-$ & $1.93(00)$ & $-$ \\
				\cline{2-7}
				& $4\times 4$ & $1458$ & $4.53(-02)$ & $1.75$ & $7.00(-01)$ & $1.47$ \\
				\cline{2-7}
				& $8\times 8$ & $9826$ & $5.81(-03)$ & $2.96$ & $1.88(-01)$ & $1.89$ \\
				\cline{2-7}
				& $16\times 16$ & $71874$ & $7.21(-04)$ & $3.01$ & $4.77(-02)$ & $1.98$ \\
				\hline
				\multirow{5}{*}{$hp$-DGFEM} & Mesh & Dof & $\H^1$-error for $\eta = 1$ & $O(h^3)$ & $\H^1$-error & $O(h^2)$ \\
				\cline{2-7}
				& $2\times 2$ & $960$ & $1.51(-01)$ & $-$ & $1.93(00)$ & $-$ \\
				\cline{2-7}
				& $4\times 4$ & $7680$ & $4.49(-02)$ & $1.75$ & $6.95(-01)$ & $1.48$ \\
				\cline{2-7}
				& $8\times 8$ & $61440$ & $5.78(-03)$ & $2.96$ & $1.88(-01)$ & $1.89$ \\
				\cline{2-7}
				& $16\times 16$ & $491520$ & $7.11(-04)$ & $3.02$ & $4.73(-02)$ & $1.99$ \\
				\hline
			\end{tabular}
		}
	\end{center}
\end{table}
\subsection{Prey-predator system}

Consider the prey-predator system of partial differential equations with prey growth damped by the Allee effect, as described in \cite{MLi} and references therein given as:

\begin{align}
	\frac{\partial u}{\partial t} - \Delta u &= \gamma u (u - \beta)(1 - u) - \frac{uv}{1 + \alpha u}, \\
	\frac{\partial v}{\partial t} - \epsilon \Delta v &= \frac{uv}{1 + \alpha u}v - \delta v,
\end{align}

with the initial conditions:

$$
u_0(x,y) = \begin{cases} 
	p, \quad &\text{if } |x - \frac{L}{2}| \leq \Delta \text{ and } |y - \frac{L}{2}| \leq \Delta, \\
	0,  &\text{otherwise},
\end{cases}
\quad
v_0(x,y) = \begin{cases} 
	q, \quad &\text{if } |x - \frac{L}{2} - a| \leq \Delta \text{ and } |y - \frac{L}{2} - b| \leq \Delta, \\
	0, & \text{otherwise}.
\end{cases}
$$

Here, the dimensionless variables $ u $ represents the prey, and $ v $ represents the predator at time $ t $ and position $ (x,y) $. The constant $ \epsilon = \frac{D_2}{D_1} $ is the ratio of the diffusion coefficients, where $ D_1 $ and $ D_2 $ are the diffusion coefficients of the prey and predator, respectively. The parameters $ \alpha, \beta, \gamma, \delta $ are ecological constants, and $ L $ represents the length of the spatial domain.

For this problem, the time domain is $ [0,90] $, and the spatial domain is $ \Omega = [0,200]^2 $. The parameters are chosen as follows: $ \epsilon = 1 $, $ \alpha = 0.2 $, $ \beta = 0.1 $, $ \delta = 0.37 $, $ p = 1 $, $ q = 0.5 $, $ L = 200 $, $ a = 5 $, $ b = 30 $, and $ \Delta_{11} = \Delta_{12} = \Delta_{21} = \Delta_{22} = 20 $.

The spatial domain $ \Omega $ is discretized into a mesh of $ 64 \times 64 $ elements, where each square is further divided into a pair of triangles. The time domain is discretized with a zero-degree polynomial and a time step size of $ k_n = 0.1 $.

Figure \ref{5.f} illustrates the solution plots of the prey and predator densities, showing the existence of multiple attractors and the persistence of the species under three different cases of the memory coefficient $ \eta $. Specifically:

\begin{itemize}
	\item Figure \ref{5.f} (A) represents the case with no memory effect ($ \eta = 0 $).
	\item Figures \ref{5.f} (B) and \ref{5.f} (C) show the cases for $ \eta = 0.01 $ and $ \eta = 0.1 $, respectively.
\end{itemize}

The results indicate that the memory term significantly influences the dynamics of the system. In particular, the plots for $ \eta = 0.1 $ demonstrate that the memory effect prolongs the dynamics, suggesting that past interactions contribute to the current densities of both prey and predator.

\begin{figure}[ht!]
	\centering
	\caption{The solution plot for Prey(1st row) and predator (2nd row) density for different memory coefficient.}\label{5.f}
	\begin{subfigure}[b]{0.8\textwidth}
		\centering
		\includegraphics[width=0.48\textwidth]{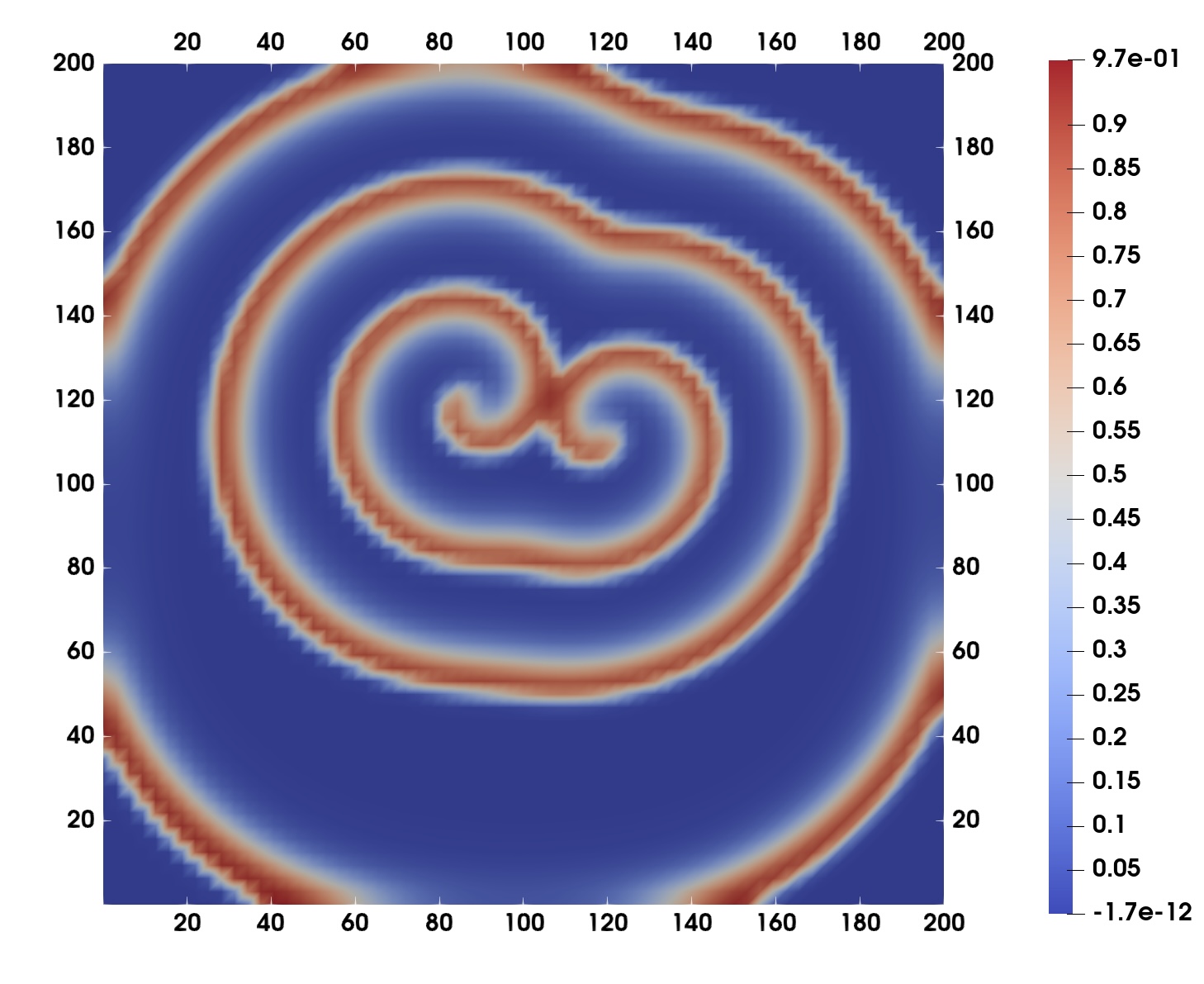}
		\includegraphics[width=0.48\textwidth]{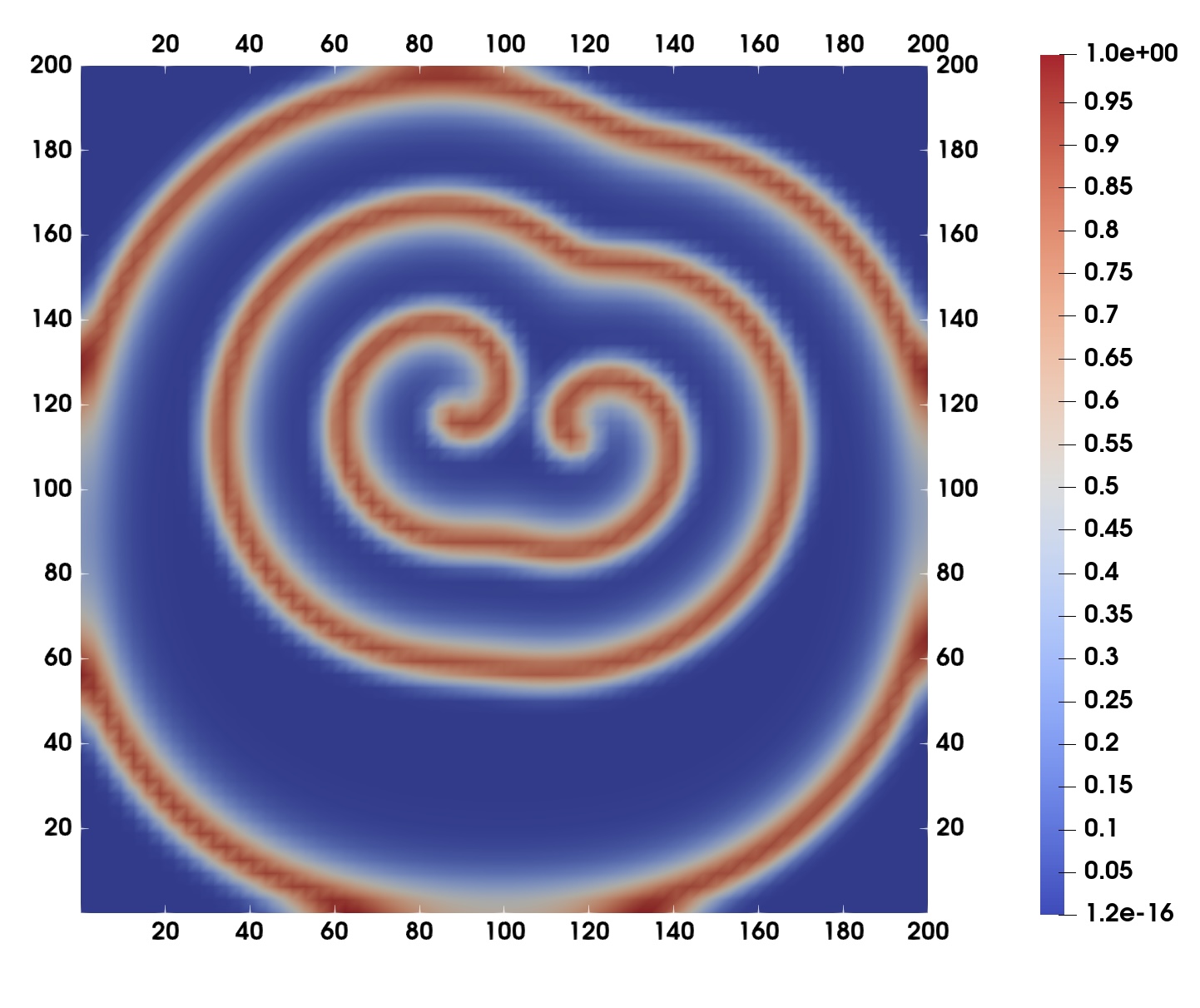}
		\caption{$\eta=0$}
	\end{subfigure}\\
	\begin{subfigure}[b]{0.8\textwidth}
		\centering
		\includegraphics[width=0.48\textwidth]{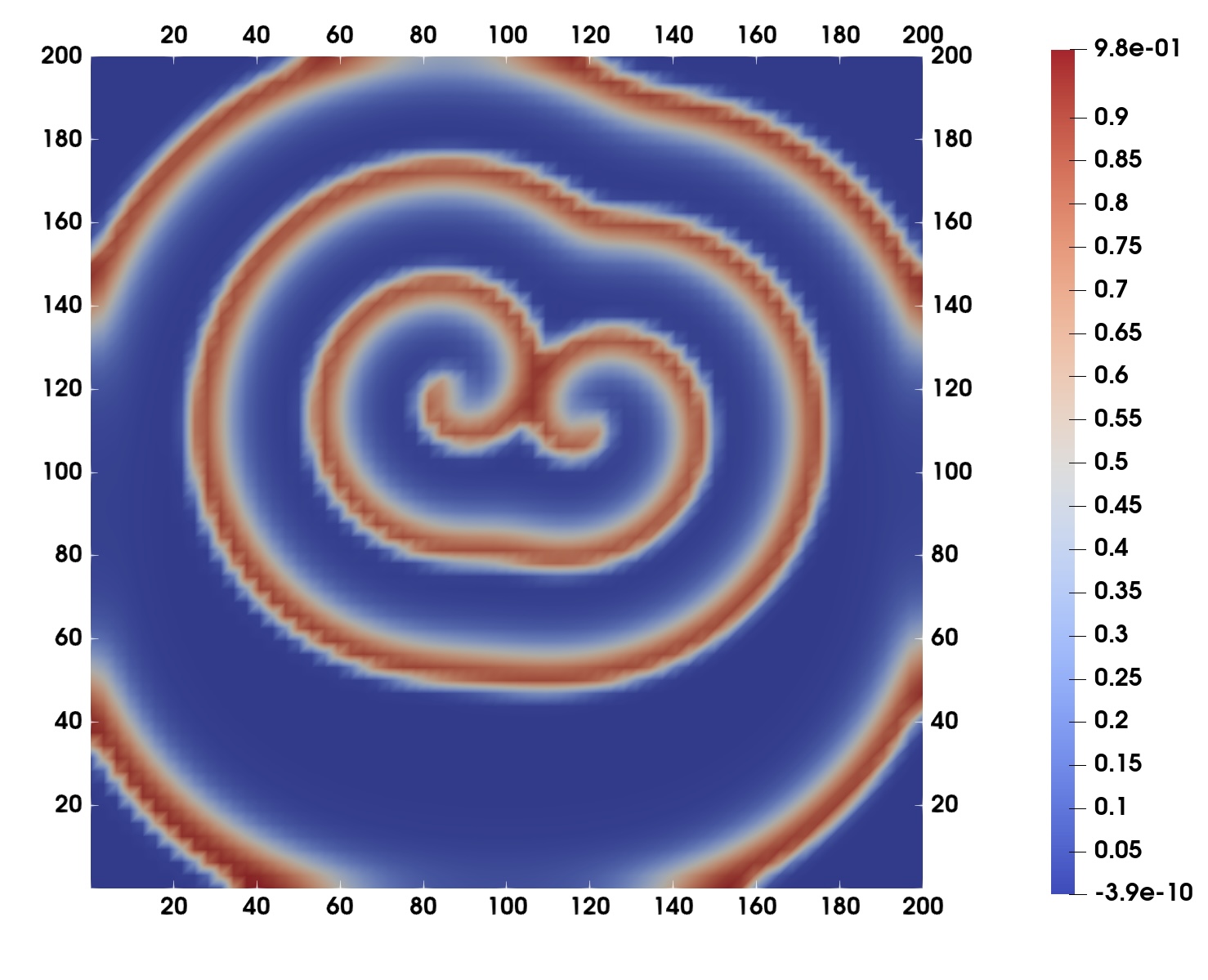}
		\includegraphics[width=0.48\textwidth]{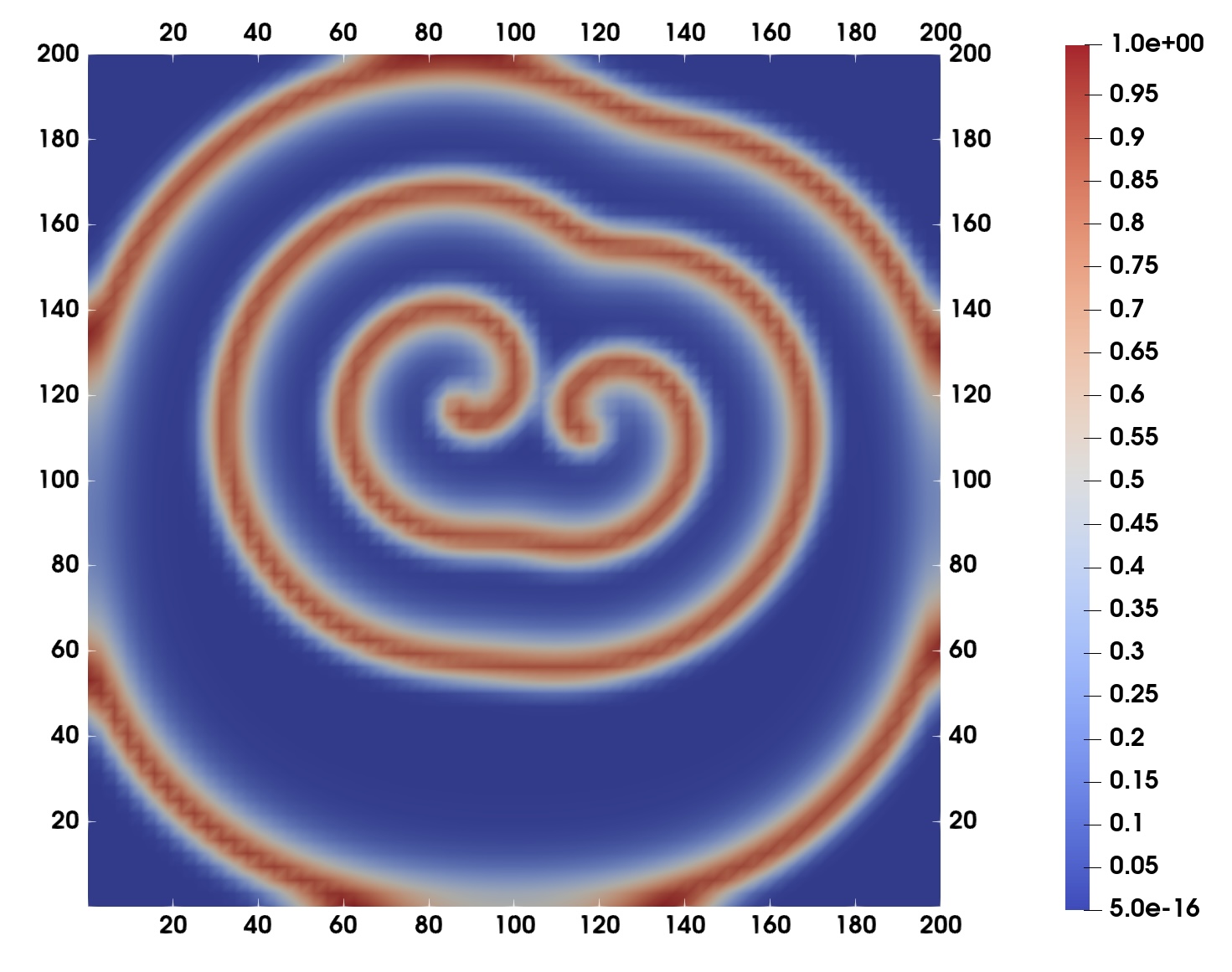}
		\caption{ $
			\eta=0.01$}
	\end{subfigure}\\
	\begin{subfigure}[b]{0.8\textwidth}
		\centering
		\includegraphics[width=0.48\textwidth]{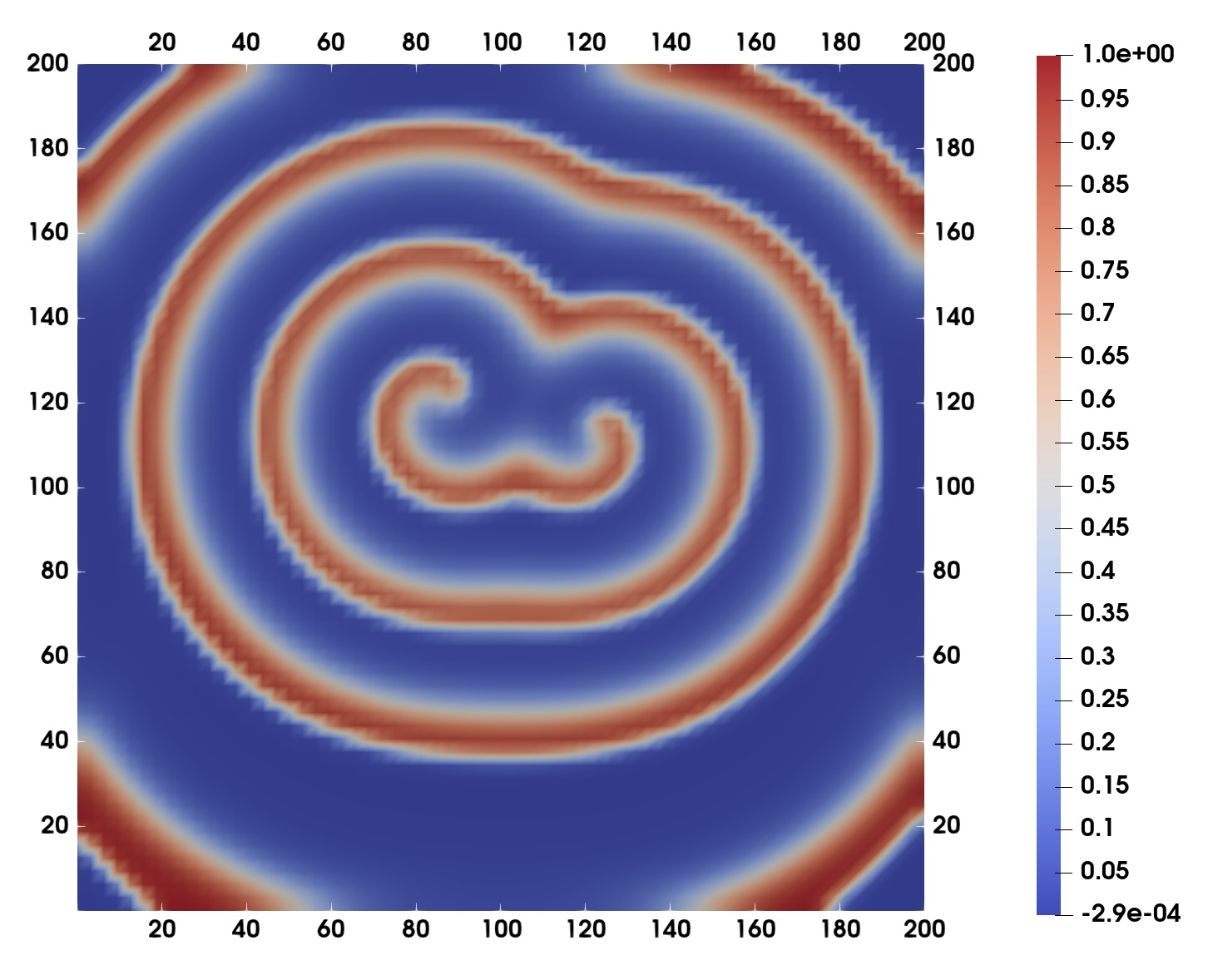}
		\includegraphics[width=0.48\textwidth]{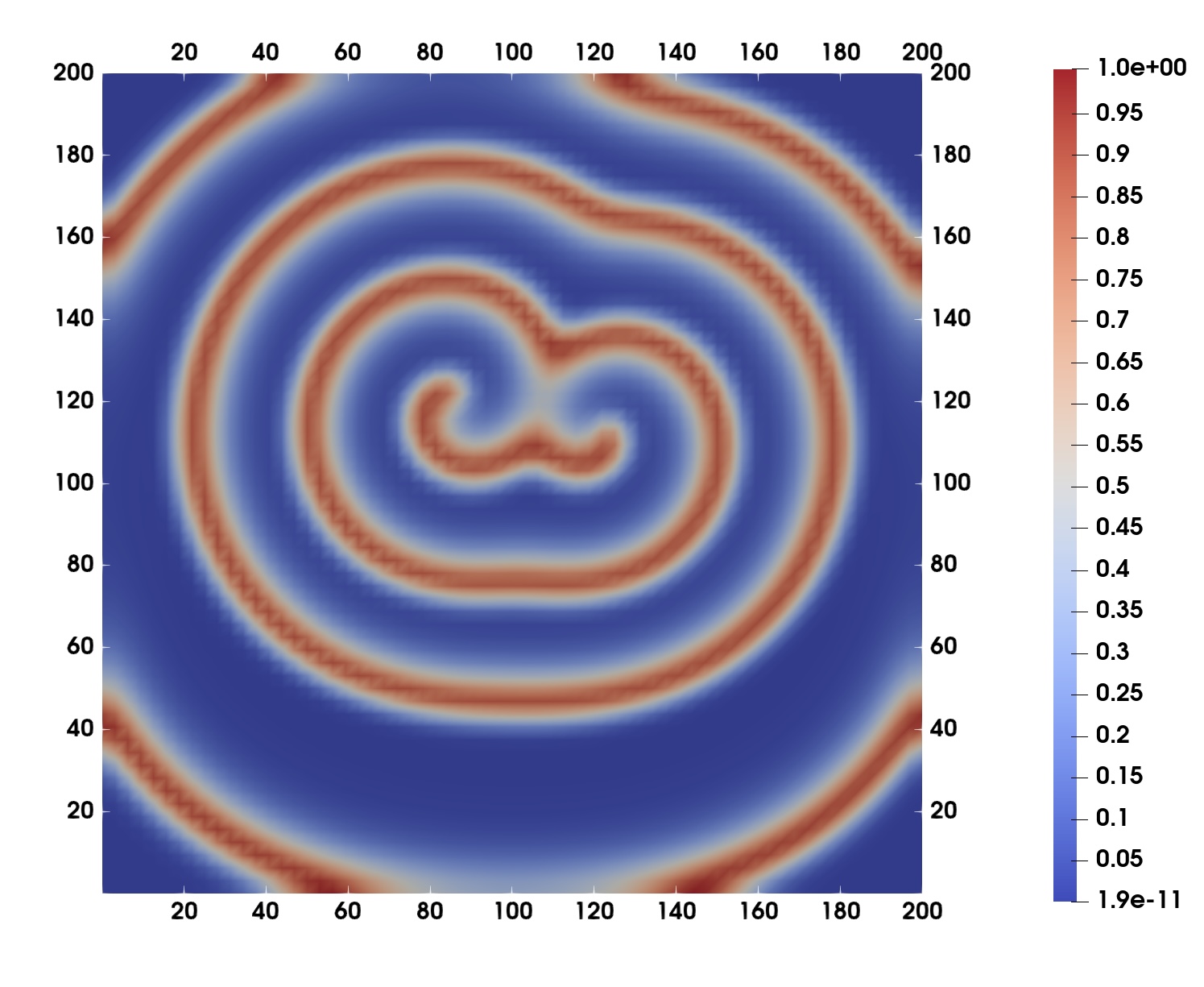}
		\caption{$\eta=0.1$}
	\end{subfigure}
\end{figure}

\section{Acknowledgement}
S. Mahajan gratefully acknowledges the financial support provided by the Ministry of Education, Government of India, through the Prime Minister Research Fellowship (PMRF ID: 2801816), which made this research possible. We also extend our sincere thanks to Dr. Manil T. Mohan, Associate Professor in the Department of Mathematics at IIT Roorkee, for his insightful discussions and for introducing us to this model. \\
\textbf{Data availability}
In case of reasonable request, datasets generated during the research discussed in the paper will be available from the corresponding author.\\
\textbf{Declarations}
The authors declare that they have no conflict of interest.

\bibliographystyle{abbrv}
\bibliography{ref}
\include{ref}
\end{document}